\renewcommand{\d}{{\mathrm{d}}}
\newcommand{\meas}{\mc{M}}
\newcommand{\measp}{\mc{M}_+}
\newcommand{\prob}{\mc{P}}
\newcommand{\restr}{{\mbox{\LARGE$\llcorner$}}}
\newcommand{\RadNik}[2]{{\tfrac{\d#1}{\d#2}}}
\newcommand{\CDynM}{c_D}
\newcommand{\CPath}{c_p}
\newcommand{\BDyn}{B}
\newcommand{\BDynM}{B_D}
\newcommand{\BDynMH}{h_D}
\newcommand{\BDynMHTild}{\tilde h_D}
\newcommand{\BStat}{B_S}
\newcommand{\BStatPre}{B_{S,\tn{pre}}}
\newcommand{\BStatFlip}{B_{S,\tn{flip}}}
\newcommand{\HStat}{h_S}
\newcommand{\HStatAux}{\hat{h}_{S}}
\newcommand{\HStatInv}{\overline h_S}
\newcommand{\HStatBoth}{h}
\newcommand{\zKink}{\ol{s}} 
\newcommand{\zKinkN}{\underline{s}} 
\newcommand{\BDynMHeps}{h_{D,\veps}} 
\newcommand{\BDynMeps}{B_{D,\veps}} 
\newcommand{\BDynMepsN}{B_{D,\veps_j}} 
\newcommand{\BStatPreEpsN}{B_{S,\tn{pre},\veps_j}} 
\newcommand{\Flow}{F}
\newcommand{\FlowInv}{F^{\tn{inv}}}
\newcommand{\SimLoc}{C_S}
\newcommand{\SimLocC}{c_S}
\newcommand{\SimFR}{\SimLoc^{\tn{H}}}
\newcommand{\SimTV}{\SimLoc^{\tn{TV}}}
\newcommand{\SimDisc}{\SimLoc^{\tn{d}}}
\newcommand{\SimJS}{\SimLoc^{\tn{JS}}}
\newcommand{\SimChi}{\SimLoc^{\chi^2}}
\newcommand{\SimE}[1]{\SimLoc^{\tn{E},#1}}
\newcommand{\SimEp}{\SimE{p}}
\newcommand{\SCc}{c_{\tn{sc}}}
\newcommand{\SCB}{B_{\tn{sc}}}
\newcommand{\StrongConvBoundFlow}{K}
\title{Dynamic Models of Wasserstein-$1$-Type Unbalanced Transport}
\author{Bernhard Schmitzer \and Benedikt Wirth}
\date{}
\begin{document}
\maketitle
\begin{abstract}
We consider a class of convex optimization problems modelling temporal mass transport and mass change between two given mass distributions
(the so-called dynamic formulation of unbalanced transport),
where we focus on those models for which transport costs are proportional to transport distance.
For those models we derive an equivalent, computationally more efficient static formulation,
we perform a detailed analysis of the model optimizers and the associated optimal mass change and transport,
and we examine which static models are generated by a corresponding equivalent dynamic one.
Alongside we discuss thoroughly how the employed model formulations relate to other formulations found in the literature.
\end{abstract}
\tableofcontents

\section{Introduction}
Optimal transport seeks the optimal way of transporting mass from a given initial distribution $\rho_0$ to a final distribution $\rho_1$, both on some domain $\Omega\subset\R^n$.
In Kantorovich's classical formulation, the transport is described by a transport plan or coupling $\pi$,
where $\pi(x,y)$ is the amount of mass transported from $x$ to $y$ so that, formally, $\rho_0=\pi(\cdot\times\Omega)$ and $\rho_1=\pi(\Omega\times\cdot)$.
Among all those couplings, the optimal one minimizes
\begin{equation*}
\int_{\Omega\times\Omega}c(x,y)\,\d\pi(x,y)\,,
\end{equation*}
where $c(x,y)$ denotes the cost per mass unit for transport from $x$ to $y$.
Solving the above minimization problem is computationally very costly due to the high dimensionality of $\pi$.
An equivalent convex formulation (originally for the case $c(x,y)=\|x-y\|^2$) in much lower dimensions is provided by the celebrated Benamou--Brenier formula \cite{BenamouBrenier2000},
which describes the transport via a material flow on $\Omega$ during a time interval $[0,1]$.
For the special case $c(x,y)=\|x-y\|$, in which the total transport cost is known as the Wasserstein-$1$ or $W_1$ distance between $\rho_0$ and $\rho_1$ and in which the transport cost is proportional to the transport distance, one can even eliminate the time coordinate, reducing the problem dimensionality yet further.

\subsection{Unbalanced optimal transport}
In applications that need to quantify how close two mass distributions $\rho_0$ and $\rho_1$ are to each other,
pure optimal transport does typically not suffice as a similarity measure since it requires $\rho_0$ and $\rho_1$ to have the same mass.
Therefore, optimal transport models have recently been extended to the case of so-called unbalanced transport, where the masses are allowed to change during the transport.
Early proposals for unbalanced transport problems can be found, for instance, in \cite{Benamou-Unbalanced-2003,PeleECCV2008}.

A more systematic investigation started from dynamic formulations based on the Benamou--Brenier formula for the Wasserstein-2 distance by adding a source term to the mass conservation constraint and a suitable corresponding penalty to the energy functional.
In \cite{ChizatOTFR2015,KMV-OTFisherRao-2015,LieroMielkeSavare-HellingerKantorovich-2015a} the source penalty was chosen to be the Fisher--Rao or Hellinger distance, which leads to the Wasserstein--Fisher--Rao (WFR) or Hellinger--Kantorovich (HK) distance. In \cite{PiccoliRossi-GeneralizedWasserstein2016,MaRuSi2017} variants of the total variation norm are studied as penalties. All models are careful to retain some form of 1-homogeneity (at least in space) to allow for spatially singular measures.

In \cite{ChizatDynamicStatic2015,LieroMielkeSavare-HellingerKantorovich-2015a} equivalent expressions for the WFR/HK distance are derived, based on an extension of the `static' Kantorovich formulation.
In \cite{LieroMielkeSavare-HellingerKantorovich-2015a} the mass change is modelled by relaxing the exact marginal constraints $\rho_0=\pi(\cdot\times\Omega)$ and $\rho_1=\pi(\Omega\times\cdot)$ to soft marginal constraints where one penalizes the deviation with suitable entropy functionals.
In \cite{ChizatDynamicStatic2015} transport is described by two so-called semi-couplings $(\gamma_0,\gamma_1)$ with $\rho_0=\gamma_0(\cdot\times\Omega)$ and $\rho_1=\gamma_1(\Omega\times\cdot)$.
Intuitively, $\gamma_0(x,y)$ describes the mass starting out at $x$ with destination $y$, while $\gamma_1(x,y)$ is the mass arriving in $y$ from $x$. The Kantorovich functional is adapted suitably. It is shown that for a family of dynamic unbalanced problems (beyond WFR/HK) one can find corresponding semi-coupling formulations.
For the WFR/HK distance the static formulas given in \cite{LieroMielkeSavare-HellingerKantorovich-2015a} and \cite{ChizatDynamicStatic2015} are related via dualization and a change of variables \cite[Corollary 5.9]{ChizatDynamicStatic2015}.

Due to its special structure, unbalanced extensions of the Wasserstein-1 distance have attracted particular attention.
Marginal constraint relaxations of Wasserstein-1 where the deviation of the $\pi$-marginals from $\rho_0$ and $\rho_1$ is penalized by the total variation norm are studied, for instance, in \cite{PiccoliRossi-GeneralizedWasserstein2014,LellmannKantorovichRubinstein2014}. This is closely related to the optimal partial transport problem studied in \cite{Caffarelli-McCann-FreeBoundariesOT-2010}. The article \cite{PiccoliRossi-GeneralizedWasserstein2016} cited above gives essentially a dynamic reformulation of this distance. It is observed that this extension leads to a modified form of the Kantorovich--Rubinstein formula. 
A family of more general unbalanced extensions of this formula (in a certain sense the most general family) is studied in \cite{SchmitzerWirthUnbalancedW1-2017}.

Roughly speaking, the above discussion mentions three types of formulations for unbalanced transport problems: `dynamic' formulations based on the Benamou--Brenier formula, `static' semi-coupling extensions of the Kantorovich formulation, and unbalanced Wasserstein-1-type extensions of the Kantorovich--Rubinstein formula.
	In the following, we refer to these families by the shorthands (Dyn), (SC), and (W1T).
	Via convex duality, each of these formulations can be expressed in a primal and a dual form, which we denote by a suffix (P) or (D). (By convention we refer to the measure formulation as primal, even though measures are identified with the topological dual of continuous functions.)
	
	As discussed, it was observed that various unbalanced transport distances can be expressed in more than one formulation.
	In this article we study systematically the correspondence between unbalanced extensions of the Wasserstein-1 distance in the formulations (Dyn), (SC), and (W1T).
	A schematic relation between different (primal and dual) formulations, established correspondences, and new correspondences established in this article is shown in Figure~\ref{fig:Diagram}. Precise definitions for all formulas are given throughout the article at the indicated positions. We return to a more in-depth discussion in Section~\ref{sec:FormulationRelation}, when the technical definitions have been established.
	
\begin{figure}[hbt]
	\centering
	\begin{tikzpicture}[x=1cm,y=1cm,
		probNode/.style={
		anchor=north,
		inner sep=1.5pt,shape=rectangle,draw=black,line width=1pt,
		text width=4.4cm,align=center,minimum height=2.4cm},
		relNote/.style={},
		numNode/.style={shape=rectangle,draw,fill=white,inner sep=2pt,anchor=west}
	]

%
	\node[probNode] (1P) at (0,0) []{
		{\small Dynamic Primal (Def.~\ref{def:Dynamic})}\\
		{\scriptsize $
		\inf \{ \int_{[0,1] \times \Omega} \|\omega\| + \CDynM(\rho,\zeta)\,\d x\,\d t | \allowbreak (\rho,\omega,\zeta) \in \mc{CE}(\rho_0,\rho_1) \}
		$}
		};
			
	\node[probNode] (1D) at ($(1P.south)+(0,-1)$) []{
		{\small Dynamic Dual (Prop.~\ref{prop:DynamicDual})}\\
		{\scriptsize $
		\sup \{ \int_{\Omega} \phi(1,\cdot)\,\d \rho_1 \! - \! \int_{\Omega} \phi(0,\cdot) \, \d \rho_0
		 | \allowbreak
		 \phi \in \! C^1([0,1]\!\times\!\Omega),\, \|\nabla \phi\| \leq 1, \allowbreak
		 (\partial_t \phi,\phi) \in \BDynM \} $}};
		
	\node [probNode] (2P) at ($(1P.east)+(.8,0)$) [anchor=west]{
		{\small Semi-Coupling \mbox{Primal} (Def.~\ref{def:SemiCouplingPrimal})}\\
		{\scriptsize $\inf \{ \int_{\Omega^2} \SCc(x_0,\gamma_0,x_1,\gamma_1)\,\d x_0\, \d x_1
			| \allowbreak (\gamma_0,\gamma_1) \in \Gamma(\rho_0,\rho_1) \}$} };

	\node[probNode] (2D) at ($(2P.south)+(0,-1)$) []{
		{\small Semi-Coupling Dual (Prop.~\ref{prop:SemiCouplingDual})}\\
		{\scriptsize $
		\sup \{ \int_{\Omega} \alpha\,\d \rho_0 \! + \! \int_{\Omega} \beta \, \d \rho_1
		 | \allowbreak
		 (\alpha,\beta) \in \! C(\Omega)^2,\, (\alpha(x),\beta(y)) \in \SCB(x,y) \} $}};

	\node [probNode] (3P) at ($(2P.east)+(.8,0)$) [anchor=west]{
		{\small $W_1$-type Static Primal (Def.~\ref{def:StaticPrimal})}\\
		{\scriptsize 
		$\inf \{
			W_1(\rho_0,\rho_0') + \SimLoc(\rho_0',\rho_1') + W_1(\rho_1',\rho_1)
			|
			(\rho_0',\rho_1') \in \measp(\Omega) \}
		$
		}
		};

	\node[probNode] (3D) at ($(3P.south)+(0,-1)$) []{
		{\small $W_1$-type Static Dual (Prop.~\ref{prop:StaticEquivalence})}\\
		{\scriptsize $
		\sup \{ \int_{\Omega} \alpha\,\d \rho_0 \! + \! \int_{\Omega} \beta \, \d \rho_1
		 | \allowbreak
		 (\alpha,\beta) \in \! \Lip(\Omega)^2,\, (\alpha(x),\beta(x)) \in \BStat(x) \} $}};

	\node[numNode] at ($(1P.north west)-(0.1,0)$) []{Dyn-P}; 
	\node[numNode] at ($(1D.north west)-(0.1,0)$) []{Dyn-D}; 
	\node[numNode] at ($(2P.north west)-(0.1,0)$) []{SC-P}; 
	\node[numNode] at ($(2D.north west)-(0.1,0)$) []{SC-D}; 
	\node[numNode] at ($(3P.north west)-(0.1,0)$) []{W1T-P}; 
	\node[numNode] at ($(3D.north west)-(0.1,0)$) []{W1T-D}; 
		
	\draw[<->] (1P.south) -- (1D.north) node [midway, right] {\cite{ChizatDynamicStatic2015}};
	\draw[<->] (2P.south) -- (2D.north) node [midway, right] {\cite{ChizatDynamicStatic2015}};
	\draw[<->] (3P.south) -- (3D.north) node [midway, right] {\cite{SchmitzerWirthUnbalancedW1-2017}};

	\draw[->] (1P.east) -- (2P.west) node [midway, above] {\cite{ChizatDynamicStatic2015}};
	
	\draw[->] (1D.south) -- ($(1D.south)+(0,-1)$) -- ($(3D.south)+(0,-1)$)
		node [midway,below] {\small{(Prop.~\ref{thm:equivalence})}} -- (3D.south);
	\draw[->] (3D.west) -- (2D.east) node [midway,above,anchor=west,rotate=90] {\small{(Prop.~\ref{prop:SCW1TEquivalence})}};
	\draw[->,dashed] ($(3D.south)+(-.5,0)$) -- ($(3D.south)+(-.5,-.7)$) -- ($(1D.south)+(.5,-.7)$)
                node [midway,above]{\small{(Prop.~\ref{thm:staticAsDynamic})}} -- ($(1D.south)+(.5,0)$);

	\draw[->,dashed] (1D.east) -- (2D.west)  node [midway,above]{\small{[$\dagger$]}};

\end{tikzpicture}
	\caption[]{%
	Relating different dynamic and static unbalanced transport formulations. A solid arrow $A \rightarrow B$ indicates that every problem of class (A) induces a corresponding formulation in class (B). %
	A dashed link indicates that a correspondence has been established for a subset of class (A).
	The dashed link marked with [$\dagger$] has been established for special cases: balanced transport for various cost functions, in particular for the Wasserstein-2 distance, is discussed in \cite{BrenierExtendedMoKa2003}, the Wasserstein--Fisher--Rao (Hellinger--Kantorovich) distance is treated in \cite{LieroMielkeSavare-HellingerKantorovich-2015a}.
	The links marked with round brackets are established in this article.
	A more detailed discussion of the relation between the different formulations is given in Section \ref{sec:FormulationRelation}.}
	\label{fig:Diagram}
\end{figure}

\subsection{Outline and contribution}

Restricting to a $W_1$-type penalization of transport, this article aims at augmenting the picture of unbalanced transport shown in Figure~\ref{fig:Diagram} by several relations.
The article is organized as follows.
\begin{itemize}
\item \emph{Section~\ref{sec:Reminder}}: In Section~\ref{sec:ReminderDynamic} we introduce a family of \emph{dynamic unbalanced transport problems}, (Dyn), where the penalty for transport is linear in its distance. This is the $W_1$-type subset of the more general family of transport problems studied in \cite{ChizatDynamicStatic2015}.
	In Section~\ref{sec:ReminderStatic} on the other hand, we introduce a family of \emph{static $W_1$-type unbalanced transport problems}, (W1T), based on generalizing the Kantorovich--Rubinstein formula. This is a subset of the family introduced in \cite{SchmitzerWirthUnbalancedW1-2017}.
\item \emph{Section~\ref{sec:DynStatEquivalence}: We establish for every (Dyn-D) problem a corresponding (W1T-D) problem such that the resulting optimal values are identical (\thref{thm:equivalence}).} This includes explicit relations between feasible candidates (e.g.~\thref{lem:DynamicDualConstruction}) and model parameters (\thref{cor:HStatFromDyn}).
\item \emph{Sections~\ref{sec:primalRelation} and \ref{sec:dynamicFromStaticTimeConc}: We examine the relation between primal optimizers of (W1T-P) and (Dyn-P).} For any optimizer of (W1T-P) we construct in Section~\ref{sec:primalRelation} an optimizer for (Dyn-P) (\thref{prop:DynamicPrimalOptimizers}). These dynamic optimizers exhibit a very particular structure which is characteristic for $W_1$-type transport problems: transport only occurs instantaneously at times 0 and 1, while in between only mass growth and shrinkage take place.
In Section~\ref{sec:dynamicFromStaticTimeConc} we give a sufficient condition for the dynamic model which implies that any optimizer of (Dyn-P) is of this particular form (\thref{cor:AllDynamicOptimizersStructure}).
Essentially, this temporal structure is the reason for (W1T-P) having a mass change penalty in between two Wasserstein-1 distances (as opposed to, for instance, a Wasserstein-1 distance between two mass change penalties, studied in \cite{PiccoliRossi-GeneralizedWasserstein2014}, which has no equivalent formulation in (Dyn-P))
\item \emph{Section~\ref{sec:characterization}: We characterize minimizers of (W1T-P) models concerning the spatial relation between mass growth, shrinkage, and transport (\thref{thm:transportChar}).}
This provides an intuition of how the unbalanced transport operates and automatically implies a corresponding characterization of (Dyn-P) model minimizers.
In particular, mass transport can neither occur into a region of previous or subsequent mass decrease nor out of a region of previous or subsequent mass increase.
Moreover, we derive a model-dependent distance threshold (which may be infinite) beyond which no transport occurs (\thref{prop:MaximalTransportDistance}).
\item \emph{Sections~\ref{sec:SemiCouplingFormulations}-\ref{sec:FormulationRelation}: We establish the equivalence of (W1T-D) models to corresponding (SC-D) models (\thref{prop:SCW1TEquivalence}).}
By the above results this implies a correspondence between (Dyn) and (SC) models.
Using the particular $W_1$-type structure, the relation between the corresponding model parameters is more explicit than the corresponding result in \cite{ChizatDynamicStatic2015}, but it should be noted that the latter covers more general transport problems.
\item \emph{Section~\ref{sec:staticAsDynamic}: We characterize precisely, which (W1T-D) models have an equivalent (Dyn-D) model (\thref{thm:staticAsDynamic}).}
In addition we provide corresponding simple sufficient as well as necessary conditions.
\item \emph{Section~\ref{sec:twoDiracs}: We perform a detailed analysis of the optimal unbalanced transport between two Dirac masses (\thref{exm:twoDiracs}).}
This provides information on maximum and minimum transport distances as well as on how the optimal mass changes depend on the previous or subsequent transport.
\item \emph{Section~\ref{sec:staticAsDynamicExamples}: We provide novel examples of (Dyn-D) models by seeking the dynamic formulation of known (W1T-D) models (Table~\ref{tab:discrepancies}).}
Though most of these induce the same topology on the space of nonnegative measures (\thref{thm:topology}), they penalize mass changes differently.
We also give a static counterexample for which no dynamic formulation exists (Remark \ref{rem:NoDynamicCounterexample}).
\end{itemize}

\subsection{Setting and notation}
Throughout the article, $\Omega$ denotes the closure of a fixed open bounded connected subset of $\R^n$
(note that the results would also hold for an arbitrary compact metric length space $\Omega$).
We will interpret $\Omega$ as a metric space with the metric $d:\Omega\times\Omega\to[0,\infty)$ induced by shortest paths in $\Omega$.
The Euclidean norm on $\R^n$ is indicated by $\|\cdot\|$.

For a metric space $X$ we denote the set of continuous functions $f:X\to\R$ by $C(X)$
and the space of Lipschitz continuous functions with Lipschitz constant no larger than $1$ by $\Lip(X)$  (for $X=\Omega$, the Lipschitz constant is with respect to the metric $d$).
If $X$ has the structure of a differentiable manifold, then $C^1(X)$ denotes the set of continuously differentiable functions $f:X\to\R$.
For the analogous function spaces of vector-valued functions into $\R^n$ we write $C(X;\R^n)$, $\Lip(X;\R^n)$, and $C^1(X;\R^n)$.

Now let $X$ be a Borel measurable subset of a Euclidean space.
By $\measp(X)$ and $\meas(X)$ we denote the space of nonnegative and of signed Radon measures (regular countably additive measures) on $X$, respectively.
The subset of probability measures on $X$ is denoted $\prob(X)$.
Given $\rho,\rho'\in\meas(X)$, we indicate that $\rho$ is absolutely continuous with respect to $\rho'$ by $\rho\ll\rho'$, and we denote the corresponding Radon--Nikodym derivative by $\RadNik\rho{\rho'}$.
Given a measurable subset $A\subset X$, the restriction of $\rho$ to $A$ is denoted $\rho\restr A$.
For a measure $\pi\in\meas(X\times X)$ its two marginals are denoted by $P_X \pi$ and $P_Y \pi$ and are defined for any Borel set $A$ via
\begin{equation*}
P_X\pi(A)=\pi(A\times X)\,,\qquad
P_Y\pi(A)=\pi(X\times A)\,.
\end{equation*}

Finally, the domain of a function $f:X\to[-\infty,\infty]$ is indicated as $\dom f=\{x\in X\ |\ f(x)\notin\{-\infty,\infty\}\}$,
the indicator function of a set $A\subset X$ is defined as $\iota_A(x)=0$ if $x\in A$ and $\iota_A(x)=\infty$ otherwise,
and the interior of a set $A\subset X$ is abbreviated as $\inter A$.
For a normed vector space $X$ we denote its topological dual by $X^\ast$.
If $f:X\to[-\infty,\infty]$ is a proper function, then the Legendre--Fenchel conjugate of $f$ is defined as $f^\ast:X^\ast\to(-\infty,\infty]$, $f^\ast(y)=\sup_{x\in X}\langle x,y\rangle_{X,X^\ast}-f(x)$.
For a linear operator $A:X\to Y$ we denote its adjoint by $A^\ast:Y^\ast\to X^\ast$.
As usual we identify the topological duals of $C(\Omega,\R^n)$ and $C([0,1] \times \Omega,\R^n)$ with $\meas(\Omega)^n$ and $\meas([0,1] \times \Omega)^n$.


\section{Reminder: models for unbalanced optimal transport}
\label{sec:Reminder}
Here we recall different extensions of the classical Wasserstein-$1$ metric that allow for mass changes during the mass transport.
In particular, we recapitulate the class of dynamic models from \cite{ChizatDynamicStatic2015} as well as a class of static unbalanced optimal transport models from \cite{SchmitzerWirthUnbalancedW1-2017}.
Establishing the equivalence of those two model classes belongs to the main aims of this work.
We will use subscripts $D$ and $S$ to indicate dynamic and static formulations throughout;
primal and dual energies are denoted $P$ and $D$, respectively.

\subsection[Dynamic W1-type models]{Dynamic $W_1$-type models}
\label{sec:ReminderDynamic}

In the dynamic model formulation we consider a time-varying measure $\rho$ which moves at a flux $\omega$ and simultaneously changes mass at rate $\zeta$.
The relation between $\rho$, $\omega$, and $\zeta$ is described by the following weak continuity equation.

\begin{definition}[Weak continuity equation with source \protect{\cite[Def.\,4.1]{ChizatDynamicStatic2015}}]
	\thlabel{def:ContinuityEquation}
	For $(\rho_0,\rho_1) \in \measp(\Omega)^2$ denote by $\mc{CE}(\rho_0,\rho_1)$ the affine subset of $\meas([0,1] \times \Omega)^{1+n+1}$ of triplets of measures $(\rho,\omega,\zeta)$ satisfying the continuity equation $\partial_t \rho + \nabla \cdot \omega = \zeta$ in the distributional sense, interpolating between $\rho_0$ and $\rho_0$ and satisfying homogeneous Neumann boundary conditions. More precisely, we require
	\begin{align}\label{eqn:ContinuityEquation}
		\int_{[0,1] \times \Omega} (\partial_t \phi)\,\d\rho
			+ \int_{[0,1] \times \Omega} (\nabla \phi) \cdot \d\omega
			+ \int_{[0,1] \times \Omega} \phi\,\d\zeta =
		\int_\Omega \phi(1,\cdot)\,\d\rho_1 - \int_{\Omega} \phi(0,\cdot)\,\d\rho_0
	\end{align}
	for all $\phi \in C^1([0,1] \times \Omega)$.
\end{definition}
This definition does not require that the map $t \mapsto \rho_t$, which takes $t$ to the corresponding time-disintegration of $\rho$, is (weakly) continuous. Instantaneous movement of mass via $\omega$ is characteristic for the $W_1$ distance and the unbalanced extensions that we study (cf.~Remark \ref{rem:InterpretationDynamicOptimizers}).
A Wasserstein-1 type model for unbalanced transport now penalizes the flux $\omega$ via its total variation as well as the mass change $\zeta$ via an infinitesimal cost $\CDynM$.

\begin{definition}[Dynamic unbalanced $W_1$-type model \protect{\cite[Def.\,4.2-3]{ChizatDynamicStatic2015}}]
	\thlabel{def:Dynamic}
	Let $\CDynM : \R^2 \to [0,\infty]$ be lower semi-continuous, convex, 1-homogeneous, and let it satisfy
	\begin{align}
		\label{eq:CDynM}
		\CDynM(\rho,\zeta) \begin{cases}
			= +\infty & \tn{if } \rho < 0, \\
			= 0 & \tn{if } \rho \geq 0,\, \zeta=0, \\
			> 0 & \tn{else.}
			\end{cases}
	\end{align}
	The associated \emph{dynamic cost functional} is defined as $P_D : \meas([0,1] \times \Omega)^{1+n+1} \to [0,\infty]$,
	\begin{align}
		\label{eq:DynamicEnergy}
    P_D(\rho,\omega,\zeta) = \int_{[0,1] \times \Omega} \left\|\RadNik{\omega}{\mu}\right\|+\CDynM\left(\RadNik{\rho}{\mu},\RadNik{\zeta}{\mu} \right)\,\d\mu\,,
	\end{align}
	where $\mu \in \measp([0,1] \times \Omega)$ is any measure such that $(\rho,\omega,\zeta) \ll \mu$. By the 1-homogeneity of $\|\cdot\|$ and $\CDynM$ this definition does not depend on the choice of $\mu$.
	The corresponding \emph{primal dynamic unbalanced $W_1$-type transport problem} for fixed marginals $(\rho_0,\rho_1) \in \measp(\Omega)^2$ reads
	\begin{align}
		\label{eq:DynamicProblem}
		W_D(\rho_0,\rho_1) = \inf \left\{ P_D(\rho,\omega,\zeta)\ \middle| \ (\rho,\omega,\zeta) \in \mc{CE}(\rho_0,\rho_1) \right\}\,.
	\end{align}
\end{definition}

The cost $W_D$ also admits an equivalent dual formulation.

\begin{proposition}[Dynamic dual problem]
	\thlabel{prop:DynamicDual}
	For a dynamic problem as in \thref{def:Dynamic} let $\BDynM \subset \R^2$ be the closed convex set characterized by $\iota_{\BDynM} = \CDynM^\ast$. Introduce $\BDyn = \{(\alpha,\beta,\gamma) \in \R^{1+n+1} \ | \ \allowbreak \|\beta\|
		\allowbreak \leq 1,\, (\alpha,\gamma) \in \BDynM \}$ and $D_D : C^1([0,1] \times \Omega) \to \RCupInf$,
	\begin{equation}
		\label{eq:DynamicDualFunctional}
		D_D(\phi) =\begin{cases}
			\int_\Omega \phi(1,\cdot)\,\d \rho_1 - \int_\Omega \phi(0,\cdot)\,\d \rho_0 & 
				\text{if } (\partial_t \phi(t,x), \nabla \phi(t,x), \phi(t,x))\in\BDyn \\
				& \qquad \text{ for all }(t,x)\in[0,1]\times\Omega,\\
			- \infty&\text{else.}
			\end{cases}
	\end{equation}
	Then
	\begin{align}
		\label{eq:DynamicDual}
		W_D(\rho_0,\rho_1) = \sup \{ D_D(\phi) \,|\, \phi \in C^1([0,1] \times \Omega) \}\,.
	\end{align}
	Furthermore, minimizers of problem \eqref{eq:DynamicProblem} exist if the infimum is finite.
\end{proposition}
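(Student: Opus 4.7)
The plan is to derive the dual formula via Fenchel-Rockafellar duality and to obtain existence of primal minimizers via the direct method. Weak duality $\sup D_D \leq W_D$ is direct: the $1$-homogeneous convex lsc integrand $f(\rho,\omega,\zeta) = \|\omega\| + \CDynM(\rho,\zeta)$ of $P_D$ has Legendre-Fenchel conjugate $f^\ast = \iota_B$ with $B$ as defined. Thus, for any $(\rho,\omega,\zeta) \in \mc{CE}(\rho_0,\rho_1)$ dominated by some $\mu \in \measp([0,1]\times\Omega)$ and any dual-feasible $\phi$, the pointwise Fenchel inequality
\begin{equation*}
(\partial_t\phi)\RadNik{\rho}{\mu} + (\nabla\phi) \cdot \RadNik{\omega}{\mu} + \phi\,\RadNik{\zeta}{\mu} \;\leq\; f\!\left(\RadNik{\rho}{\mu},\RadNik{\omega}{\mu},\RadNik{\zeta}{\mu}\right)
\end{equation*}
holds $\mu$-a.e.; integrating against $\mu$ and invoking the weak continuity equation \eqref{eqn:ContinuityEquation} gives $D_D(\phi) \leq P_D(\rho,\omega,\zeta)$.

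For strong duality I would regard the dual as the primal of a Fenchel-Rockafellar pair. Set $V = C^1([0,1]\times\Omega)$, $Y = C([0,1]\times\Omega;\R^{1+n+1})$, and $L\phi = (\partial_t\phi, \nabla\phi, \phi)$. With the linear form $\mc{F}(\phi) = \int\phi(0,\cdot)\,\d\rho_0 - \int\phi(1,\cdot)\,\d\rho_1$ and the indicator $\mc{G} = \iota_{C_B}$ on $Y$, where $C_B = \{v \in Y \,:\, v(t,x) \in B \text{ for all }(t,x)\}$, one has $-\sup D_D = \inf_\phi [\mc{F}(\phi) + \mc{G}(L\phi)]$. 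The Rockafellar dual $\sup_\nu [-\mc{F}^\ast(-L^\ast\nu) - \mc{G}^\ast(\nu)]$ over $\nu = (\rho,\omega,\zeta) \in Y^\ast = \meas([0,1]\times\Omega)^{1+n+1}$ decouples: integration by parts identifies $-\mc{F}^\ast(-L^\ast\nu)$ as the indicator of $\mc{CE}(\rho_0,\rho_1)$, while the Bouchitt\'e-Buttazzo representation of support functions of pointwise convex constraint sets of vector-valued measures yields $\mc{G}^\ast(\nu) = P_D(\nu)$. Hence the Rockafellar dual equals $-W_D$ provided a qualification holds.

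\emph{The qualification is the main technical obstacle.} Since $\CDynM(\rho,0) = 0$ for $\rho \geq 0$ puts $(0,0)$ on the boundary of $\BDynM$, the natural candidate $\phi_0 = 0$ fails to give $L\phi_0 \in \inter C_B$. I would instead take $\phi_0(t,x) = -\eta(t-\tfrac{1}{2})$ with $\eta > 0$ small: the strict positivity $\CDynM(\rho,\zeta) > 0$ for $\rho \geq 0$, $\zeta \neq 0$, together with the $1$-homogeneity of $\CDynM$, ensures that the segment $\{(-\eta, -\eta s) : s \in [-\tfrac{1}{2}, \tfrac{1}{2}]\}$ lies in $\inter \BDynM$ for $\eta$ small enough, so that $L\phi_0 \in \inter C_B$ and the Fenchel-Rockafellar qualification is met.

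Existence of minimizers when $W_D(\rho_0,\rho_1) < \infty$ then follows by the direct method. A minimizing sequence $(\rho_k,\omega_k,\zeta_k)$ yields uniform total-variation bounds on all three components via the $\|\cdot\|$-term of $P_D$, the $1$-homogeneous coercivity of $\CDynM$ in the $\zeta$-direction, and the weak continuity equation tested with $\phi\equiv 1$ and $\phi(t,x)=t$; Banach-Alaoglu then extracts weak-$\ast$ subsequential limits. The affine class $\mc{CE}(\rho_0,\rho_1)$ is preserved under weak-$\ast$ convergence as it is the preimage of $\{0\}$ under a weak-$\ast$ continuous linear operator, and $P_D$ is weak-$\ast$ lsc by Reshetnyak's theorem, being an integral functional with a positively $1$-homogeneous convex lsc integrand. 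Any subsequential limit is therefore a minimizer.
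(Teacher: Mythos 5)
Your strong-duality argument follows the same Fenchel--Rockafellar structure as the paper's own proof: the same function spaces, the same linear operator $L\phi=(\partial_t\phi,\nabla\phi,\phi)$, the same identification of the two conjugates with $\iota_{\mc{CE}(\rho_0,\rho_1)}$ (via integration by parts) and with $P_D$ (via integral-functional duality, Rockafellar or Bouchitt\'e--Buttazzo). The one genuine variation is the qualification function. You take the affine ansatz $\phi_0(t,x)=-\eta(t-\tfrac12)$, while the paper uses $\phi(t,x)=c\exp(-\tfrac{a}{c}t)$ with $(-a,c)\in\inter\BDynM$, whose orbit is a radial segment toward $(0,0)\in\BDynM$. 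Both are valid. Your horizontal segment $\{(-\eta,u):|u|\le\eta/2\}$ lies in $\inter\BDynM$ for small $\eta$ because, after reducing to $|\zeta|=1$ by $1$-homogeneity and observing that $\CDynM$ is positive and lower semicontinuous on the compact set $[0,1]\times\{\pm1\}$, one gets a uniform lower bound $m>0$ there, and $\eta<2m$ suffices. Your ansatz is arguably slightly cleaner than the exponential.

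Your separate direct-method argument for existence has a real gap, and is also superfluous. The phrase ``$1$-homogeneous coercivity of $\CDynM$ in the $\zeta$-direction'' does not give uniform total-variation bounds: for $\CDynM(\rho,\zeta)=\zeta^2/(4\rho)$, one has $\CDynM(\rho,1)\to0$ as $\rho\to\infty$, so $P_D$ alone does not control $|\zeta|$ when $\rho$ is unbounded. Testing the continuity equation with $\phi\equiv1$ only bounds $\zeta([0,1]\times\Omega)$, not $|\zeta|([0,1]\times\Omega)$, and testing with $\phi(t,x)=t$ gives $\rho([0,1]\times\Omega)\le\rho_1(\Omega)+|\zeta|([0,1]\times\Omega)$, which is circular without an independent bound on $|\zeta|$. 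What actually works is the coupled pointwise bound $|\zeta|\le\tfrac{a}{c}\rho+\tfrac1c\CDynM(\rho,\zeta)$ coming from $(-a,\pm c)\in\BDynM$, and closing the system requires one to choose $a/c<1$; that in turn uses $\BDynMH'(0)=0$, which is a nontrivial consequence of the hypotheses established only later in \thref{thm:DynConjugateSet}. None of this is needed: the Fenchel--Rockafellar theorem you already invoked yields attainment of the minimum on the dual side once the constraint qualification holds and the optimal value is finite, which is precisely how the paper obtains the existence claim at no extra cost.
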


\begin{proof}
	The proof is essentially identical to \cite[Thm.~2.1]{ChizatOTFR2015} and \cite[Prop.~4.2]{ChizatDynamicStatic2015}.
	Problem \eqref{eq:DynamicDual} can be rewritten as
	\begin{align*}
		\sup_{\phi \in C^1([0,1] \times \Omega)}
			-F(A\,\phi) - G(-\phi)
	\end{align*}
	with
	\begin{align*}
		F & : C([0,1] \times \Omega;\R^{1 + n +1}) \to \RCupInf, &
			(\alpha,\beta,\gamma) & \mapsto \int_{[0,1] \times \Omega} \iota_B(\alpha(t,x),\beta(t,x),\gamma(t,x))\,\d(t,x), \\
		G & : C^1([0,1] \times \Omega) \to \RCupInf, &
			\phi & \mapsto \int_\Omega \phi(1,\cdot)\,\d \rho_1 - \int_\Omega \phi(0,\cdot)\,\d \rho_0,\\
		A & : C^1([0,1] \times \Omega) \to C([0,1] \times \Omega; \R^{1+n+1}),\!\!\!\! &
			\phi & \mapsto (\partial_t \phi, \nabla \phi, \phi).
	\end{align*}
	Note that $F$ and $G$ are convex and lower semi-continuous. $A$ is linear and bounded.
	Further, there is some $\phi \in C^1([0,1] \times \Omega)$ such that $G(-\phi) < \infty$ and $F$ is continuous at $A\,\phi$:
	for instance, let $(-a,c) \in \inter \BDynM$ with $a,c>0$ and
	set $\phi(t,x) = c\exp(-\frac{a}ct)$, then $t \mapsto (\partial_t \phi(t,x), \phi(t,x))$ will move along a line from $(-a,c)$ towards $(0,0)$,
	which can easily be shown to remain in the interior of $\BDynM$ (the reader may refer to \thref{thm:DynConjugateSet}, which will show this calculation in detail).
	
	Using the above, by virtue of the Fenchel-Rockafellar Theorem \cite[Thm.~3]{Rockafellar-Duality1967} one obtains
	\begin{align*}
		\sup_{\phi \in C^1([0,1] \times \Omega)}
			-F(A\,\phi) - G(-\phi)
		= \min_{(\rho,\omega,\zeta) \in \measp([0,1]\times\Omega)^{1+n+1}}
			G^\ast(A^\ast (\rho,\omega,\zeta)) + F^\ast(\rho,\omega,\zeta)\,,
	\end{align*}
	where the minimizer on the right-hand side exists if it is finite.
	We find
	\begin{multline*}
		G^\ast(A^\ast(\rho,\omega,\zeta)) = \sup_{\phi \in C^1([0,1] \times \Omega)}
			\int_{[0,1] \times \Omega}
				\partial_t \phi \,\d \rho
			+ \int_{[0,1] \times \Omega}
				\nabla \phi \cdot \d \omega \\
			+ \int_{[0,1] \times \Omega}
				\phi \, \d \zeta
			- \int_\Omega \phi(1,\cdot)\,\d \rho_1 + \int_\Omega \phi(0,\cdot)\,\d \rho_0
	\end{multline*}
	which is the indicator function of $\mc{CE}(\rho_0,\rho_1)$.
	With \cite[Thm.\,5]{Rockafellar-IntegralConvexFunctionals71} one obtains $F^\ast=P_D$.
\end{proof}

\subsection[Static W1-type models]{Static $W_1$-type models}
\label{sec:ReminderStatic}
A different class of unbalanced optimal transport models was introduced in \cite{SchmitzerWirthUnbalancedW1-2017} of which we here consider a particular subclass.
It is based on an infimal convolution type extension of the Wasserstein-$1$ distance by a penalty $\SimLocC(m_0,m_1)$ for changing a mass $m_0$ into $m_1$.

\begin{definition}[Static unbalanced $W_1$-type model \protect{\cite[Def.\,2.21]{SchmitzerWirthUnbalancedW1-2017}}]
	\thlabel{def:StaticPrimal}
	A \emph{local discrepancy} is a function $\SimLocC : \R^2 \to [0,\infty]$ satisfying the following properties,
	\begin{enumerate}[(i)]
		\item $\SimLocC$ is convex, 1-homogeneous, and lower semi-continuous,
		\item $\SimLocC(m,m)=0$ if $m \in [0,\infty)$ and $\SimLocC(m_0,m_1) > 0$ if $m_0 \neq m_1$,
		\item $\SimLocC(m_0,m_1) = \infty$ if $m_0<0$ or $m_1<0$.
	\end{enumerate}
	A local discrepancy $\SimLocC$ induces a \emph{discrepancy} $\SimLoc : \measp(\Omega)^2 \to [0,\infty]$ via
	\begin{align}
		\SimLoc(\rho_0,\rho_1) & = \int_\Omega
			\SimLocC\left( \RadNik{\rho_0}{\gamma}(x), \RadNik{\rho_1}{\gamma}(x) \right) \,d\gamma(x)\,,
	\end{align}
	where $\gamma\in\measp(\Omega)$ is any measure with $\rho_0,\rho_1\ll\gamma$ (the functional is independent of the particular choice).
	$\SimLoc$ measures the cost of changing $\rho_0$ into $\rho_1$ by pointwise mass changes, where each mass change is penalized according to $\SimLocC$.
	The associated \emph{static cost functional} is defined as $P_S:\measp(\Omega\times\Omega)^2\to[0,\infty]$,
	\begin{equation}\label{eq:StaticPrimal}
	P_S(\pi_0,\pi_1)=\int_{\Omega\times\Omega} d(x,y)\,\d \pi_0(x,y) + \SimLoc(P_Y \pi_0, P_X \pi_1) + \int_{\Omega\times\Omega} d(x,y)\,\d \pi_1(x,y)\,,
	\end{equation}
	and the corresponding \emph{primal static unbalanced $W_1$-type transport problem} reads
	\begin{equation}
		\label{eq:StaticPrimalProblem}
		W_S(\rho_0,\rho_1) = \inf \left\{P_S(\pi_0,\pi_1) \ \middle|\
			(\pi_0, \pi_1) \in \measp(\Omega\times\Omega)^2,\,
			P_X \pi_0 = \rho_0,\,
			P_Y \pi_1 = \rho_1
			\right\}\,.
	\end{equation}
\end{definition}

The cost $W_S$ also admits an equivalent dual formulation.

\begin{proposition}[Static dual problem \protect{\cite[Corollary 2.32]{SchmitzerWirthUnbalancedW1-2017}}]\thlabel{prop:StaticEquivalence}
For a static problem as in \thref{def:StaticPrimal} let $\BStat\subset\R^2$ be the closed convex set characterized by $\iota_{\BStat}=\SimLocC^\ast$.
Introduce $D_S : C(\Omega)^2 \to \RCupInf$,
\begin{equation}\label{eq:StaticDualObjective}
D_S(\alpha,\beta) = \begin{cases}
\int_\Omega \alpha \, \d\rho_0 + \int_\Omega \beta \, \d\rho_1 & \tn{if } (\alpha,\beta) \in \Lip(\Omega)^2, \, (\alpha(x),\beta(x)) \in \BStat \tn{ for all } x \in \Omega, \\
- \infty & \tn{else,}
\end{cases}
\end{equation}
then
\begin{equation}\label{eq:StaticDual}
W_S(\rho_0,\rho_1) = \sup \left\{ D_S(\alpha,\beta) \,\middle|\,(\alpha,\beta) \in C(\Omega)^2 \right\}\,.
\end{equation}
Furthermore, minimizers of problem \eqref{eq:StaticPrimalProblem} exist if the infimum is finite.
Finally, the sets $\BStat$ characterized by $\iota_{\BStat}=\SimLocC^\ast$ for some local discrepancy $\SimLocC$ are exactly the sets of the form
	\begin{align}
		\label{eq:BStatHForm}
		\BStat = \{(\alpha,\beta) \in \R^2 \,:\,\beta \leq \HStat(-\alpha) \}
			= \{(\alpha,\beta) \in \R^2 \,:\, \alpha \leq \HStatInv(-\beta) \}
	\end{align}
	where $\HStat : \R \to \RCupMInf$ (or equivalently $\HStatInv : \R \to \RCupMInf$) satisfies (we drop the indices)
	\begin{enumerate}
		\item $\HStatBoth$ is concave, upper semi-continuous, and monotonically increasing,
		\item $\HStatBoth(z)\leq z$ for $z \in \R$, $\HStatBoth(0)=0$,
		\item $\HStatBoth$ is differentiable at $0$ and $\HStatBoth'(0)=1$.
	\end{enumerate}
	Note that on their respective domains, $\HStat=-\HStatInv^{-1}(-\cdot)$ and $\HStatInv=-\HStat^{-1}(-\cdot)$.
\end{proposition}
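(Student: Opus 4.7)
The plan is to prove \eqref{eq:StaticDual}, existence of primal minimizers, and the characterization of $\BStat$ in sequence. The first two claims follow from Fenchel-Rockafellar duality applied analogously to the proof of \thref{prop:DynamicDual}; the third is a direct convex-analytic computation on $\R^2$.

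For the duality, I would introduce
\begin{align*}
F &: C(\Omega;\R^2) \to \RCupInf,
& (\tilde\alpha,\tilde\beta) &\mapsto \iota_{\{(\tilde\alpha(x),\tilde\beta(x)) \in \BStat \text{ for all } x\in\Omega\}}(\tilde\alpha,\tilde\beta), \\
G &: \Lip(\Omega)^2 \to \R,
& (\alpha,\beta) &\mapsto \int_\Omega \alpha\,\d\rho_0 + \int_\Omega \beta\,\d\rho_1,
\end{align*}
with $A:\Lip(\Omega)^2\hookrightarrow C(\Omega;\R^2)$ the inclusion, so that $\sup D_S = \sup_{(\alpha,\beta)}-F(A(\alpha,\beta)) - G(-(\alpha,\beta))$. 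A small constant pair $(a,-a)$ with $|a|$ chosen so that $(a,-a)\in\inter\BStat$ verifies the constraint qualification; such an $a$ exists because $(0,0)\in\BStat$ and $\BStat$ extends to $-\infty$ in both coordinate directions (a consequence of $\SimLocC=+\infty$ on $\{m_0<0\}\cup\{m_1<0\}$). The Fenchel-Rockafellar theorem then yields $\sup D_S = \min_{(\mu_0,\mu_1)\in\meas(\Omega)^2}[G^*(A^*(\mu_0,\mu_1)) + F^*(\mu_0,\mu_1)]$ with the minimum attained. By Kantorovich-Rubinstein, $G^*\circ A^*$ equals $W_1(\rho_0,\mu_0) + W_1(\mu_1,\rho_1)$ together with mass-matching constraints $|\mu_i|=|\rho_i|$; by Rockafellar's theorem on conjugates of integral functionals combined with $\iota_{\BStat}^*=\SimLocC$, we get $F^*(\mu_0,\mu_1)=\SimLoc(\mu_0,\mu_1)$ for nonnegative $\mu_i$ and $+\infty$ otherwise. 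Hence the attained minimum equals the infimal-convolution form $\inf_{\mu_0,\mu_1\in\measp(\Omega)}[W_1(\rho_0,\mu_0) + \SimLoc(\mu_0,\mu_1) + W_1(\mu_1,\rho_1)] = W_S(\rho_0,\rho_1)$, the last equality by resolving each $W_1$ into its Kantorovich plan. Existence of $P_S$-minimizers is then obtained by the classical direct method on the feasible set, which is weak-$*$ compact thanks to the fixed marginals $\rho_0,\rho_1$.

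For the characterization of $\BStat$, note that $\BStat$ is closed convex. Testing the defining inequality against $(m,m)$ with $m\geq 0$ forces $\alpha+\beta\leq 0$, and condition (iii) of \thref{def:StaticPrimal} ensures $\BStat$ extends to $-\infty$ in either coordinate. These structural constraints force $\BStat$ to have the hypograph form $\{\beta\leq\HStat(-\alpha)\}$ of \eqref{eq:BStatHForm}, with the explicit formula (using 1-homogeneity to rescale)
\begin{equation*}
\HStat(z) = \inf_{r\geq 0}[\SimLocC(r,1) + zr]\,.
\end{equation*}
Properties (1)–(3) follow: concavity and upper semicontinuity because $\HStat$ is an infimum of affine functions in $z$; monotonicity because the coefficients $r\geq 0$; $\HStat(z)\leq z$ by evaluating the infimum at $r=1$ using $\SimLocC(1,1)=0$; $\HStat(0)=0$ because $r=1$ is the unique minimizer of $r\mapsto\SimLocC(r,1)$ by condition (ii); and $\HStat'(0)=1$ because $-\HStat$ is (up to sign) the Legendre transform of $r\mapsto\SimLocC(r,1)$ evaluated at $-(\cdot)$, whose unique argmin at $r=1$ pins down the supporting slope at $0$. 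The second description in \eqref{eq:BStatHForm} follows by solving $\beta\leq\HStat(-\alpha)$ for $\alpha$, giving $\HStatInv = -\HStat^{-1}(-\cdot)$ on the relevant domains thanks to monotonicity of $\HStat$. Conversely, any $\HStat$ satisfying (1)–(3) determines via \eqref{eq:BStatHForm} a closed convex set whose support function is a local discrepancy satisfying (i)–(iii), which can be verified term by term.

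The most delicate point will be the tangency $\HStat'(0)=1$ when $\SimLocC(\cdot,1)$ is neither strictly convex nor smooth at $r=1$: the two one-sided derivatives $\HStat'(0^\pm)$ must be treated separately via the Fenchel-conjugate structure, using the uniqueness of the argmin at $r=1$ to force both sided slopes to equal $1$ even in the presence of a cusp or flat minimum. A secondary technical point is ensuring that the integral-functional conjugate formula for $F^*$ cleanly handles mutually singular parts of $\mu_0,\mu_1$, which works out because the $1$-homogeneity of $\SimLocC$ makes its recession function coincide with itself.
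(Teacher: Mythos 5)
Note that the paper does not itself prove this proposition: it is cited from \cite[Cor.\,2.32]{SchmitzerWirthUnbalancedW1-2017}, so there is no in-paper proof to compare against. Your overall strategy---Fenchel--Rockafellar duality following the pattern of \thref{prop:DynamicDual}, plus a direct conjugate computation for $\BStat$---is reasonable, and the formula $\HStat(z)=\inf_{r\geq 0}[\SimLocC(r,1)+zr]$ with the ensuing derivation of properties (1)--(3), including the tangency $\HStat'(0)=1$ from the uniqueness of the argmin $r=1$ of $\SimLocC(\cdot,1)$, is correct.

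There is, however, a step that fails as written: the constraint qualification. You assert that some constant pair $(a,-a)$ lies in $\inter\BStat$. But $\HStat(z)\leq z$ says precisely that $\alpha+\beta\leq 0$ on all of $\BStat$, and since $\HStat(0)=0$ with $\HStat'(0)=1$ this hyperplane is tangent to $\partial\BStat$ at $(0,0)$; hence every point of the form $(a,-a)$ lies on a supporting hyperplane of $\BStat$ and is never in its interior. The justification you give (that $(0,0)\in\BStat$ and $\BStat$ recedes to $-\infty$ in both coordinates) does not yield the claim. The fix is simple---take $(-c,-c)$ for small $c>0$, which lies in the open quadrant $(-\infty,0)^2\subset\inter\BStat$---but as stated the hypothesis of the Fenchel--Rockafellar theorem is not verified. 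A secondary issue: you place $G$ and $A$ on $\Lip(\Omega)^2$, which is a bounded convex subset of $C(\Omega)^2$ rather than a normed vector space, so the duality theorem does not apply directly; the $1$-Lipschitz constraint should instead be absorbed into $G$ (or $F$) as an indicator on the full space $C(\Omega)^2$, after which $G^*$ produces the two $W_1$ terms via Kantorovich--Rubinstein. Likewise the final identification with \eqref{eq:StaticPrimalProblem}, whose primal variables are couplings rather than intermediate marginals, deserves an explicit step recovering $(\pi_0,\pi_1)$ from the optimal $(\mu_0,\mu_1)$.
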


\section{Equivalence of dynamic and static problems}\label{sec:equivalence}
We first show the equivalence between dynamic and static problems in their dual formulations, after which we shall turn to the primal interpretation.

\subsection{Equivalence of optimal values}
\label{sec:DynStatEquivalence}

There are some structural similarities between the dynamic dual problem \eqref{eq:DynamicDual} and the static problem \eqref{eq:StaticDual}.
If one identifies $\alpha=-\phi(0,\cdot)$ and $\beta=\phi(1,\cdot)$, then the finite part of \eqref{eq:DynamicDualFunctional} corresponds to the finite part of the objective in \eqref{eq:StaticDualObjective}. The constraint $\|\nabla \phi(t,x)\| \leq 1$ guarantees that $\alpha$, $\beta \in \Lip(\Omega)$.
In this section we show that the constraint $(\partial_t \phi(t,x),\phi(t,x)) \in \BDynM$ for $(t,x) \in [0,1] \times \Omega$ can be translated into a constraint $(\alpha(x),\beta(x)) \in \BStat$ for $x \in \Omega$ for a suitable choice of $\BStat$, such that problems \eqref{eq:DynamicDual} and \eqref{eq:StaticDual} are in fact equivalent.

\begin{proposition}[Equivalence between dynamic and static problems]\thlabel{thm:equivalence}
	\thlabel{prop:W1DynamicStaticEquivalence}
	Consider the dynamic dual problem \eqref{eq:DynamicDual} and its characterizing set $\BDynM$.
	Set
	\begin{gather}
		\label{eq:BStatPre}
		\BStatPre = \left\{ (-\phi(0),\phi(1)) \,\middle|\,
			\phi \in C^1([0,1]) \,,
			(\partial_t \phi(t),\phi(t)) \in \BDynM \tn{ for all } t \in [0,1] 
			\right\}\,.
	\end{gather}
	Then, by choosing
	\begin{align}
		\label{eq:BStat}
		\BStat = \ol{\BStatPre}
			+ (-\infty,0]^2
	\end{align}
	for the static dual problem \eqref{eq:StaticDual} one finds
	\begin{align*}
		W_D(\rho_0,\rho_1) = W_S(\rho_0,\rho_1)\,.
	\end{align*}
\end{proposition}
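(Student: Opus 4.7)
My plan is to prove the two inequalities $W_D \leq W_S$ and $W_S \leq W_D$ separately, relying on the natural boundary correspondence $\alpha(x) := -\phi(0, x)$ and $\beta(x) := \phi(1, x)$, which makes the dual objectives $D_D$ and $D_S$ coincide.

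For the easy direction $W_D \leq W_S$, I would start from any $\phi \in C^1([0,1]\times\Omega)$ feasible for the dynamic dual and define $\alpha, \beta$ as above. The spatial constraint $\|\nabla \phi(t, x)\| \leq 1$ propagates immediately to $\alpha, \beta \in \Lip(\Omega)$. For each fixed $x \in \Omega$ the one-dimensional curve $t \mapsto \phi(t, x)$ lies in $C^1([0, 1])$ and satisfies $(\partial_t \phi(t, x), \phi(t, x)) \in \BDynM$ for every $t$; by the very definition \eqref{eq:BStatPre} this forces $(\alpha(x), \beta(x)) \in \BStatPre \subseteq \BStat$. The two dual objectives then agree, $D_D(\phi) = D_S(\alpha, \beta)$, and passing to the supremum over $\phi$ gives $W_D \leq W_S$.

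For the reverse inequality I would lift any dual-feasible pair $(\alpha, \beta) \in \Lip(\Omega)^2$ for $W_S$ to a dynamic-feasible $\phi \in C^1([0,1]\times\Omega)$ with $D_D(\phi) \geq D_S(\alpha, \beta) - \eta$ for arbitrary $\eta > 0$. The first step is a perturbation: since $\BStat = \overline{\BStatPre} + (-\infty, 0]^2$, I can replace $(\alpha, \beta)$ by $(\alpha_\eta, \beta_\eta) \in \Lip(\Omega)^2$ with $(\alpha_\eta(x), \beta_\eta(x)) \in \BStatPre$ pointwise and $|D_S(\alpha_\eta, \beta_\eta) - D_S(\alpha, \beta)| \leq \eta$, exploiting the finiteness of $\rho_0(\Omega) + \rho_1(\Omega)$ and the monotonicity encoded by the Minkowski summand. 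The second step is a canonical lift: for each $x$, definition \eqref{eq:BStatPre} furnishes a $C^1$ path $\psi_x : [0, 1] \to \R$ with $\psi_x(0) = -\alpha_\eta(x)$, $\psi_x(1) = \beta_\eta(x)$, and $(\dot\psi_x, \psi_x) \in \BDynM$ throughout. I would pin down a specific selection --- for instance one that first evolves along an extremal curve of the upper boundary of $\BDynM$ (whose description is related by Legendre duality to the $\HStat$ of \thref{prop:StaticEquivalence}) and then relaxes onto the target value --- chosen so that $x \mapsto \psi_x$ is Lipschitz into $C^1([0,1])$. After mollifying $\phi(t, x) := \psi_x(t)$ in $(t, x)$, convexity and closedness of $\BDynM$ preserve the pointwise inclusion under averaging, while the Lipschitz bounds on $(\alpha_\eta, \beta_\eta)$ and on $x \mapsto \psi_x$ yield $\|\nabla \phi\| \leq 1$ up to an error that can be absorbed into $\eta$. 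Letting $\eta \to 0$ then closes the argument.

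The hard part is precisely this canonical lifting: one must select $\psi_x$ so that its trajectory stays inside $\BDynM$ at \emph{all} intermediate times $t \in [0, 1]$ (not merely at the endpoints) \emph{and} varies smoothly enough in $x$ to survive mollification. This is the content the authors isolate as \thref{lem:DynamicDualConstruction}; its proof relies on the explicit convex-geometric structure of the boundary of $\BDynM$ and its Legendre-dual relationship to $\HStat$, and is the single place where the Wasserstein-$1$-type homogeneity of the model is essentially used.
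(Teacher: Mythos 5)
Your overall plan is correct and aligns with the paper's architecture: the inequality $W_D\leq W_S$ follows by the direct identification $\alpha=-\phi(0,\cdot)$, $\beta=\phi(1,\cdot)$; the reverse inequality requires (i) reducing the constraint set from $\BStat$ to $\BStatPre$ (and approximating Lipschitz by $C^1$ functions), and (ii) lifting a pointwise-feasible pair $(\alpha,\beta)$ to a dynamic-dual-feasible $\phi$ with $\|\nabla\phi\|\leq 1$. You correctly identify step (ii) as the crux and correctly observe that it is what the paper isolates as \thref{lem:DynamicDualConstruction}. Your suggestion to mollify rather than invoke a Stone--Weierstra\ss\ result (as the paper does) is a workable alternative at that stage, since scaling by $\lambda<1$ keeps a perturbed candidate inside $\BDynM$ and inside $\BStatPre$.

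However, there is a genuine gap where you say ``\emph{I would pin down a specific selection --- for instance one that first evolves along an extremal curve of the upper boundary of $\BDynM$ $\ldots$ and then relaxes onto the target value}.'' This sketch of the lift is not merely incomplete --- it would in fact fail. The reason is that an admissible one-point trajectory must satisfy $\partial_t\phi\leq\BDynMH(\phi)$, so the maximal trajectory $\Flow_t(-\alpha(x))$ is \emph{decreasing}; since $(\alpha(x),\beta(x))\in\BStatPre$ means $\beta(x)\leq\Flow_1(-\alpha(x))$ (see \eqref{eq:BStatPreFlowForm}), any ``flow then hold'' trajectory would need to continue the flow for time \emph{greater than} $1$ to reach $\beta(x)$, so no such hitting time $t_0(x)\in[0,1]$ exists. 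Even if one repairs this by allowing a steeper decline at some point, the resulting hitting time $t_0(x)$ will depend on $x$, and the extra term $\dot\psi_x(t_0(x))\,\nabla t_0(x)$ appearing in $\nabla_x\phi$ will generically violate the bound $\|\nabla\phi\|\leq 1$. The paper's lift does something more subtle: it writes $\phi(t,x)=\LocInterp(t,-\alpha(x))+\LocInterpInv(t,\beta(x))$, where $\LocInterp$ decays from $-\alpha(x)$ to $0$ and $\LocInterpInv$ grows from $0$ to $\beta(x)$, with the \emph{partitioned} derivative bounds $\partial_z\LocInterp\in[0,1-t]$ and $\partial_z\LocInterpInv\in[0,t]$ (\thref{lem:LocInterp}\eqref{item:LocalInterpConvexity}). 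Only this specific superposition, after summing the two contributions, yields $\|\nabla_x\phi\|\leq(1-t)+t=1$. This convex-geometric identity, together with the concavity/monotonicity arguments establishing the two derivative bounds (which themselves require the kink-smoothing \thref{lem:QKinkSmoothing} in degenerate cases), is the technical heart of the proposition and is what your proposal leaves unproven.

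Finally, note that your perturbation from $\BStat$ to $\BStatPre$ also needs an intermediate truncation step that you do not spell out: one must first cut off $\alpha$ below $-\sup\dom\BDynMH$ and $\beta$ below $\inf\dom\BDynMH$ (this raises the objective and stays feasible) before the rescaling $\lambda\nearrow 1$ becomes valid, since the Minkowski summand $(-\infty,0]^2$ can push $(\alpha(x),\beta(x))$ outside $\ol{\dom\BDynMH}^2$ where no trajectory in $\BDynM$ exists at all.
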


The function $\HStat$ that characterizes $\BStat$ as described in \thref{prop:StaticEquivalence} will be given in \thref{cor:HStatFromDyn}.
A direct consequence of the proposition is that also the primal dynamic problem \eqref{eq:DynamicProblem}
is equivalent to a primal static problem \eqref{eq:StaticPrimalProblem}.
Here, the corresponding static mass change penalty $\SimLocC$ can be obtained from the dynamic mass change penalty $\CDynM$
by first calculating the set $\BDynM$ via \thref{prop:DynamicDual}, applying the above proposition to obtain the set $\BStat$, and finally calculating $\SimLocC$ via \thref{prop:StaticEquivalence}. This will be done explicitly in \thref{prop:W1DynamicStaticEquivalencePrimal}.

The proof is divided into several auxiliary lemmas. The strategy is as follows:
The constraints of \eqref{eq:DynamicDual} are $(\partial_t \phi(t,x), \phi(t,x)) \in \BDynM$ and $\|\nabla \phi(t,x)\| \leq 1$ for all $(t,x) \in [0,1] \times \Omega$. Let us ignore the constraint on $\nabla \phi$ for now. Since $\rho_1$ is nonnegative, for fixed $\phi(0,\cdot)$ the function $\phi(1,\cdot)$ in \eqref{eq:DynamicDualFunctional} will want to be as large as the $\BDynM$-constraint allows. With the identification $(\alpha,\beta)=(-\phi(0,\cdot),\phi(1,\cdot))$ this yields an upper bound on $\beta$ for fixed $\alpha$ and defines the set $\BStatPre$. This already implies $W_D \leq W_S$. Note that the set $\BStatPre$ does not necessarily satisfy the formal structural assumptions for $\BStat$, as specified by \eqref{eq:BStatHForm}, which is why some post-processing from the preliminary $\BStatPre$ to $\BStat$ is required. We show in the proof of \thref{prop:W1DynamicStaticEquivalence}, however, that replacing $\BStat$ with $\BStatPre$ does not change the optimal value of \eqref{eq:StaticDual}.
From feasible (in the sense of $\BStatPre$) static dual variables $(\alpha,\beta)$ for \eqref{eq:StaticDualObjective} we then reconstruct a feasible dynamic dual variable $\phi$ for \eqref{eq:DynamicDualFunctional} such that $\phi(0,\cdot)=-\alpha$, $\phi(1,\cdot)=\beta$. The Lipschitz constraint on $(\alpha,\beta)$ suffices to imply $\|\nabla\phi(t,x)\|\leq 1$, thus establishing the converse inequality $W_S \leq W_D$.
We now study the dynamic cost $\CDynM$ and the corresponding set $\BDynM$ in more detail.

\begin{lemma}[Properties of $\BDynM$]\thlabel{thm:DynConjugateSet}
The set $\BDynM$ from \thref{prop:DynamicDual} satisfies
\begin{equation*}
\BDynM = \left\{ (\psi, \phi) \in \R^2\;\colon\;\psi \leq \BDynMH(\phi) \right\}\,,
\end{equation*}
where $\BDynMH:\R\to\R\cup\{-\infty\}$ is concave, upper semi-continuous, nonpositive, increasing on $(-\infty,0]$ and decreasing on $[0,\infty)$ with $\BDynMH(0)=\BDynMH'(0)=0$
as well as $\overline{\dom\BDynMH}=[-\CDynM(0,-1),\CDynM(0,1)]$.
\end{lemma}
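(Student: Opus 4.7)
The plan is to exploit that $\CDynM$, being convex, $1$-homogeneous and lower semi-continuous, is the support function of $\BDynM$, so that $(\psi,\phi) \in \BDynM$ iff $\psi\rho + \phi\zeta \leq \CDynM(\rho,\zeta)$ for every $(\rho,\zeta) \in \R^2$. Since $\CDynM(\rho,\zeta) = +\infty$ whenever $\rho < 0$, only the test directions with $\rho \geq 0$ are active, and these constraints are preserved under decreasing $\psi$. Thus $\BDynM$ is closed and downward-closed in its first coordinate, so defining $\BDynMH(\phi) := \sup\{\psi : (\psi,\phi) \in \BDynM\}$ (with $\sup\emptyset = -\infty$) immediately yields the representation $\BDynM = \{(\psi,\phi) : \psi \leq \BDynMH(\phi)\}$ via closedness.

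Next I would derive an explicit formula for $\BDynMH$. Splitting the test directions into $\rho > 0$ (where $1$-homogeneity and the substitution $s := \zeta/\rho$ turn the constraint into $\psi \leq \CDynM(1,s) - \phi s$ for every $s \in \R$) and $\rho = 0$ (which pins $\phi$ into $[-\CDynM(0,-1), \CDynM(0,1)]$), one obtains
\begin{equation*}
\BDynMH(\phi) = -g^\ast(\phi) \quad\text{for } \phi \in [-\CDynM(0,-1), \CDynM(0,1)],
\end{equation*}
where $g(s) := \CDynM(1,s)$ and $g^\ast$ is its Legendre conjugate, with $\BDynMH(\phi) = -\infty$ otherwise. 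From this closed-form representation, concavity and upper semi-continuity of $\BDynMH$ are immediate from convex duality. The assumption \eqref{eq:CDynM} gives $g \geq 0$ and $g(0) = 0$, hence $\BDynMH \leq 0$ and $\BDynMH(0) = -\min g = 0$. The two monotonicity assertions on $(-\infty,0]$ and $[0,\infty)$ then follow from concavity together with $0$ being a maximizer.

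The domain identification $\overline{\dom \BDynMH} = [-\CDynM(0,-1), \CDynM(0,1)]$ is the main technical point. I would use the standard recession analysis for $1$-homogeneous convex functions, giving $g(s)/s \to \CDynM(0,1)$ as $s \to +\infty$ and $g(s)/(-s) \to \CDynM(0,-1)$ as $s \to -\infty$. Consequently $\phi s - g(s) \to -\infty$ on both sides whenever $\phi$ lies strictly inside the candidate interval, so $g^\ast(\phi) < \infty$ there; conversely, $\phi s - g(s) \to +\infty$ once $\phi$ strictly exceeds either endpoint, forcing $g^\ast(\phi) = +\infty$. This delivers equality on the interior, and the closure takes care of the endpoints.

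For the final assertion $\BDynMH'(0) = 0$, note that by \eqref{eq:CDynM} one has $g(s) > 0$ for $s \neq 0$ while $g(0) = 0$, so $0$ is the unique minimizer of $g$ and $\partial g^\ast(0) = \arg\min g = \{0\}$. Since $0 \in \inter\dom g^\ast$ (because $\CDynM(0,\pm 1) > 0$ by \eqref{eq:CDynM}), this singleton subdifferential upgrades to differentiability of $g^\ast$ at $0$ with gradient $0$, whence $\BDynMH'(0) = 0$. The only non-routine piece of the argument is the recession step in paragraph three that matches the endpoints of $\dom \BDynMH$ to the prescribed values $\pm\CDynM(0,\pm 1)$; everything else reduces to textbook facts about convex conjugates.
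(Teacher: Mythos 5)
Your proposal is correct but follows a genuinely different route than the paper. You slice $\CDynM$ at $\rho=1$ to get $g(s)=\CDynM(1,s)$ and derive the closed-form identity $\BDynMH=-g^\ast$ on $[-\CDynM(0,-1),\CDynM(0,1)]$, after which every claimed property becomes a textbook fact about Legendre conjugates: concavity/upper semi-continuity from duality, the domain from the recession relation $g^\infty(\pm 1)=\CDynM^\infty(0,\pm 1)=\CDynM(0,\pm 1)$ (which uses that a $1$-homogeneous convex lsc function equals its own recession function), and $\BDynMH'(0)=0$ from the singleton subdifferential $\partial g^\ast(0)=\arg\min g=\{0\}$ at an interior point of $\dom g^\ast$. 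The paper instead works directly with $\iota_{\BDynM}=\CDynM^\ast$ without introducing $g$: downward-closedness in $\psi$, $\BDynMH\leq 0$ and $\BDynMH(0)=0$ are read off from specific choices of $(\rho,\zeta)$, the domain is obtained by computing $\CDynM(0,\pm 1)=\iota_{\BDynM}^\ast(0,\pm 1)$ directly, and the key step $\BDynMH'(0)=0$ is proven by contradiction: assuming $\BDynMH(\varepsilon)<-C\varepsilon$ leads via $\CDynM=\iota_{\BDynM}^\ast$ to $\CDynM(\rho,\zeta)=0$ for some $\rho,\zeta>0$, contradicting \eqref{eq:CDynM}. Your approach is slicker once the formula $\BDynMH=-g^\ast$ is in hand and packages all the work into standard convex-duality lemmas; the paper's version is more self-contained and elementary, trading the machinery for a short ad hoc contradiction. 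Both are valid; the only slightly delicate step in yours is the recession identity $g^\infty(1)=\CDynM(0,1)$, which does hold but deserves the one-line justification via $\CDynM^\infty=\CDynM$.
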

\begin{proof}
Since $\CDynM$ is one-homogeneous, $\BDynM$ must be a closed convex set.
Due to $\CDynM(\rho,\zeta) = \infty$ for $\rho<0$ we have
$$\iota_{\BDynM}(\psi_1,\phi)=\sup_{\rho\geq0,\zeta}\psi_1\rho+\phi\zeta-\CDynM(\rho,\zeta)\leq\sup_{\rho\geq0,\zeta}\psi_2\rho+\phi\zeta-\CDynM(\rho,\zeta)=\iota_{\BDynM}(\psi_2,\phi)$$
for all $\psi_1\leq\psi_2$ so that $(\psi,\phi) \in \BDynM$ implies $(-\infty,\psi] \times \{\phi\} \subset \BDynM$.
Thus there exists a function $\BDynMH$ such that $(\psi,\phi) \in \BDynM \Leftrightarrow \psi \leq \BDynMH(\phi)$. Concavity and upper semi-continuity of $\BDynMH$ follow from convexity and closedness of $\BDynM$.
Now $\CDynM(\rho,0)= 0$ for $\rho \geq 0$ implies
$$\iota_{\BDynM}(\psi,\phi)=\sup_{\rho\geq0,\zeta}\psi\rho+\phi\zeta-\CDynM(\rho,\zeta)\geq\sup_{\rho\geq0}\psi\rho+\phi\cdot0-\CDynM(\rho,0)=\begin{cases}0&\text{if }\psi\leq0\\\infty&\text{else}\end{cases}$$
so that $\BDynMH\leq 0$.
Furthermore, $\iota_{\BDynM}(0,0)=\sup_{\rho,\zeta}-\CDynM(\rho,\zeta)=0$ so that $\BDynMH(0)=0$.
The monotonicity properties now follow from $\BDynMH(0)=0$, $\BDynMH \leq 0$, and concavity.
Next, by the assumptions on $\CDynM$ it is its own convex envelope. Therefore $\CDynM = \iota_{\BDynM}^\ast$ and in particular
\begin{align*}
0 < \CDynM(0,1) & = \sup \{ \phi \ |\ \BDynMH(\phi) > -\infty \}, &
0 < \CDynM(0,-1) & = \sup \{ \phi \ |\ \BDynMH(-\phi) > -\infty \}.
\end{align*}
Finally, note that the left and the right derivative of $\BDynMH$ in $0$ exist due to concavity
and are given by the monotone limits $\BDynMH^{l/r}(0)=\lim_{\varepsilon\searrow0}\frac{\BDynMH(\mp\varepsilon)}{\mp\varepsilon}$.
We show $\BDynMH^{r}(0)=0$ (the other equality follows analogously).
For a contradiction assume the existence of $C>0$ such that for all $\varepsilon>0$ we have $\frac{\BDynMH(\varepsilon)}\varepsilon<-C$.
Thus, for any $\rho,\zeta>0$ with $\zeta<C\rho$ we have
\begin{multline*}
\CDynM(\rho,\zeta)
=\iota_{\BDynM}^\ast(\rho,\zeta)
=\sup_{\phi,\,\psi\leq\BDynMH(\phi)}\psi\rho+\phi\zeta
=\sup_{\phi\geq0,\,\psi\leq\BDynMH(\phi)}\psi\rho+\phi\zeta\\
\leq\sup_{\phi\geq0,\,\psi\leq-C\phi}\psi\rho+\phi\zeta
=\sup_{\phi\geq0}\phi(\zeta-C\rho)
=0\,,
\end{multline*}
which contradicts the positivity of $\CDynM$ for $\rho,\zeta>0$.
\end{proof}

For an illustration of the sets $\BDynM$ and $\BStat$ as well as the functions $\BDynMH$ and $\HStat$ see Figure~\ref{fig:involvedFunctions}.

Now we turn to the structure of $\BStat$. As described above, for fixed $\phi(0,\cdot)$ the value of $\phi(1,\cdot)$ will intuitively try to be as large as the constraint $(\partial_t \phi, \phi) \in \BDynM$ allows. So, ignoring regularity, from \thref{thm:DynConjugateSet} we infer that $\phi(1,x)$ will be given by the solution to the differential equation $\partial_t \phi(t,x)=\BDynMH(\phi(t,x))$ for initial value $\phi(0,x)$.
This upper bound is rigorously established in \thref{lem:Flow}.
We first consider the case where the function $\BDynMH$ introduced in \thref{thm:DynConjugateSet} satisfies an additional technical assumption. Functions not satisfying this assumption will be treated via an extra smoothing argument in \thref{lem:QKinkSmoothing}.

\begin{assumption}
	\thlabel{asp:QNoKink}
	Let $\zKink=\max\{ z \in \R \,|\, \BDynMH(z)=0 \}$. We assume that $\BDynMH$ is differentiable at $\zKink$ (left-differentiable if $\zKink\in\partial(\dom\BDynMH)$).
\end{assumption}
Note that functions $\BDynMH$ with $\zKink=0$ automatically satisfy \thref{asp:QNoKink} due to $\BDynMH'(0)=0$.

\begin{lemma}[Properties of flow]
	\thlabel{lem:Flow}
	For $s>0$ we define the flow of $\BDynM$ as $\Flow_s : \R \to [-\infty,\infty)$,
	\begin{align}
		\Flow_s(z) &= \sup \left\{ \phi(s) \ \middle|\ \phi \in C^1([0,s]),\, \phi(0)=z,\,(\partial_t \phi(t),\phi(t)) \in \BDynM \tn{ for } t \in [0,s] \right\}\nonumber\\
		&=\sup \left\{ \phi(s) \ \middle|\ \phi \in C^1([0,s]),\, \phi(0)=z,\,\partial_t \phi(t)\leq\BDynMH(\phi(t)) \tn{ for } t \in [0,s] \right\}\,,\label{eq:CMassFlow}
	\end{align}
	where the supremum of the empty set is $-\infty$.
	For $s=0$ we set $\Flow_0(z) = z-\iota_{\dom \BDynMH}(z)$.
	The flow satisfies the following properties for all $s\geq0$.
	\begin{enumerate}[(i)]
		\item If $\Flow_s(z) > -\infty$, the map $[0,s] \ni t \mapsto F_t(z)$ solves the initial value problem \label{item:FlowIVP}
		\begin{align}
			\label{eqn:FlowIVP}
			\partial_t \phi(t) & = \BDynMH(\phi(t)) \tn{ for } t \in [0,s], &
			\phi(0) & = z\,.
		\end{align}
		\item If $\Flow_s(z) > -\infty$, then for all $t \in [0,s]$ we have $\Flow_t(z) > -\infty$ and $\Flow_s(z)=\Flow_t(\Flow_{s-t}(z))$, and the map $[0,s] \ni t \mapsto \Flow_t(z)$ is contained in $\{ \phi \in C^1([0,s]) \,\colon\, (\partial_t \phi(t),\phi(t)) \in \BDynM \tn{ for } t \in [0,s]\}$.
			\label{item:FlowBasic}
		\item $\Flow_s(z) \leq z$.
			\label{item:FlowDiagonal}
		\item $\Flow_s$ is strictly increasing on its domain.
			\label{item:FlowIncreasing}
		\item $\Flow_s$ is concave.
			\label{item:FlowSpaceConcave}
		\item For $z \in \dom \BDynMH \cap [0,\infty)$, $\Flow_s(z) \geq 0$ with $\Flow_s(z)>0$ if $z>0$.
			\label{item:FlowPositive}
		\item For $z \in \dom \BDynMH \cap [0,\infty)$ the map $[0,\infty) \ni t \mapsto \Flow_t(z)$ is convex.
			\label{item:FlowTimeConvex}
		\item $\Flow_s$ is differentiable in $0$ with $\Flow_s'(0)=1$.
			\label{item:FlowZeroDerivative}
		\item $\Flow_s$ is non-expansive on $\dom \BDynMH \cap [0,\infty)$.
			\label{item:FlowContraction}
		\item $\Flow_s$ is locally Lipschitz differentiable on $\inter \dom \BDynMH \cap [0,\infty)\setminus\zKink$ with $\Flow_s'(z)=1$ for $z<\zKink$ and $\Flow_s'(z)=\frac{\BDynMH(\Flow_s(z))}{\BDynMH(z)}$ for $z>\zKink$.
			Under \thref{asp:QNoKink} it is differentiable on all of $\inter\dom \BDynMH \cap [0,\infty)$.
			\label{item:FlowDerivative}
	\end{enumerate}
\end{lemma}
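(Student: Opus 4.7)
My plan proceeds in three phases. The first and fundamental step is item (i): showing that whenever $\Flow_s(z) > -\infty$, the map $t \mapsto \Flow_t(z)$ is the solution of the ODE $\phi'(t) = \BDynMH(\phi(t))$ with $\phi(0)=z$. Using the second characterization \eqref{eq:CMassFlow} from \thref{thm:DynConjugateSet}, any admissible $\phi$ is a subsolution, $\phi'(t)\leq\BDynMH(\phi(t))$, and I want to show the pointwise sup is attained by the maximal subsolution, which is precisely the ODE solution. Existence of a local solution follows from concavity of $\BDynMH$ (which is locally Lipschitz on the interior of its domain), and comparison for subsolutions versus solutions of concave ODEs gives $\phi(t) \leq \phi^*(t)$ where $\phi^*$ is the maximal solution; Zorn/continuation then extends $\phi^*$ as long as $\phi^*(t) \in \dom\BDynMH$. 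This immediately gives (ii), since a solution on $[0,s]$ restricts to a solution on any $[0,t]$ and the ODE has the semigroup property, and because $\phi^*$ itself belongs to the competitor set by \thref{thm:DynConjugateSet}.

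With (i)--(ii) in hand, items (iii)--(vii) follow by elementary properties of the ODE. For (iii), $\BDynMH\leq 0$ forces $\phi' \leq 0$. For (iv), strict monotonicity in $z$ is ODE comparison: if $z_1<z_2$, the ODE solutions cannot cross by uniqueness from above. For (v), if $\phi_i$ solves the ODE with initial data $z_i$, then $\phi := \lambda\phi_1+(1-\lambda)\phi_2$ satisfies $\phi'=\lambda\BDynMH(\phi_1)+(1-\lambda)\BDynMH(\phi_2)\leq \BDynMH(\phi)$ by concavity of $\BDynMH$, so $\phi$ is admissible and $\Flow_s(\lambda z_1+(1-\lambda)z_2)\geq \lambda\Flow_s(z_1)+(1-\lambda)\Flow_s(z_2)$. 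For (vi), the constant $\phi\equiv 0$ solves the ODE (since $\BDynMH(0)=0$), forcing $\Flow_s(z)\geq 0$ for $z\geq 0$; strict positivity for $z>0$ splits into the trivial case $z\in(0,\zKink]$ (where $\phi\equiv z$ is the solution, so $\Flow_s(z)=z$) and $z>\zKink$, where the integral argument $\int_{\zKink}^{z} \tfrac{dy}{-\BDynMH(y)}=\infty$ (valid when $\zKink=0$ because $\BDynMH'(0)=0$) prevents the solution from reaching $\zKink$ in finite time. For (vii), we differentiate $\partial_t\Flow_t(z)=\BDynMH(\Flow_t(z))$ to get $\partial_t^2\Flow_t(z)=\BDynMH'(\Flow_t(z))\,\BDynMH(\Flow_t(z))\geq 0$ on $[\zKink,\infty)$ since $\BDynMH$ is decreasing (so $\BDynMH'\leq 0$) and nonpositive there.

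The third phase handles (viii)--(x), where the only delicate point is the behavior at $z=0$. For (viii), if $\zKink>0$ the statement is trivial because $\Flow_s$ equals the identity on $[0,\zKink]$; if $\zKink=0$, the integral form gives $\Flow_s(z)=z+\int_0^s\BDynMH(\Flow_t(z))\,dt$, and sandwiching $\BDynMH(\Flow_t(z))\in[\BDynMH(z),0]$ (using monotonicity of $\BDynMH$ on $[0,\infty)$ and $\Flow_t(z)\in[0,z]$) yields $1+s\,\BDynMH(z)/z \leq \Flow_s(z)/z\leq 1$, which tends to $1$ because $\BDynMH'(0)=0$. Item (ix) is then a direct consequence of concavity combined with $\Flow_s'(0)=1$: for $0\leq z_1<z_2$, concavity gives $\Flow_s(z_2)-\Flow_s(z_1)\leq \Flow_s'(0)(z_2-z_1) = z_2-z_1$. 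For (x), on $[0,\zKink)$ we use $\Flow_s=\mathrm{id}$; on $(\zKink,\infty)$ the linearized ODE $(\partial_z\phi_z)'=\BDynMH'(\phi_z)\,\partial_z\phi_z$ with $\partial_z\phi_z(0)=1$ gives $\Flow_s'(z)=\exp\int_0^s\BDynMH'(\Flow_t(z))\,dt$, and the identity $\tfrac{d}{dt}\log|\BDynMH(\Flow_t(z))|=\BDynMH'(\Flow_t(z))$ (valid where $\BDynMH<0$) converts this into the stated quotient formula. Under \thref{asp:QNoKink} the formula extends continuously through $\zKink$, yielding differentiability everywhere on $\inter\dom\BDynMH\cap[0,\infty)$.

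The main obstacle I expect is the transition at $\zKink$ in items (vi) and (x). Without \thref{asp:QNoKink} the flow is still well defined but may have a kink where orbits first touch $\zKink$; handling this requires the integral criterion for non-reaching-in-finite-time in (vi) and a careful one-sided analysis in (x), so the distinction between "with" and "without" the assumption must be made explicit in the proof.
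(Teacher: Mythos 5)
Your overall strategy matches the paper's: establish item~(i) by identifying $\Flow_\cdot(z)$ with the ODE solution, read off~(ii)--(vii) from elementary ODE properties, and handle the delicate behaviour at $z=0$ and $z=\zKink$ for~(viii)--(x). The conclusions are all correct, and your sandwich argument for~(viii) is arguably cleaner than the paper's contradiction argument. The main divergence is tactical: where the paper constructs the solution explicitly via $I_z(y)=\int_z^y \frac{\d x}{\BDynMH(x)}$ and its inverse, you appeal to abstract comparison and continuation. Both routes work, but the explicit $I_z$ construction pays off downstream in two places where your arguments implicitly assume more smoothness of $\BDynMH$ than concavity guarantees.

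Specifically, for~(vii) you differentiate $\partial_t\Flow_t(z)=\BDynMH(\Flow_t(z))$ once more and invoke $\BDynMH'$, which need not exist pointwise (a concave function is only locally Lipschitz, hence differentiable a.e.). The paper sidesteps this: $t\mapsto\Flow_t(z)$ is decreasing, $\BDynMH$ is decreasing on $[0,\infty)$, so the composition $t\mapsto\partial_t\Flow_t(z)=\BDynMH(\Flow_t(z))$ is increasing, and an increasing first derivative already gives convexity. For~(x) the issue is more serious: the variational (linearized) equation $(\partial_z\phi_z)'=\BDynMH'(\phi_z)\,\partial_z\phi_z$ requires the vector field to be $C^1$, which a concave $\BDynMH$ is not, and establishing $z$-differentiability of the flow this way is circular without a prior smoothness result. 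The paper's trick is to fix $a>z$ in $S_+$, write $z=\Flow_{I_a(z)}(a)$, use the semigroup property from~(ii) to get $\Flow_s(z)=\phi_a(I_a(z)+s)$, and differentiate in $z$ via the chain rule: $I_a'(z)=1/\BDynMH(z)$ needs only continuity of $1/\BDynMH$, and $\phi_a'(\tau)=\BDynMH(\phi_a(\tau))$ comes from~(i). This reaches the formula $\Flow_s'(z)=\BDynMH(\Flow_s(z))/\BDynMH(z)$ without any derivative of $\BDynMH$. You should replace your linearization step with something of this kind.

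Two smaller remarks. In~(vi), you attribute $\int_{\zKink}^z\frac{\d y}{-\BDynMH(y)}=\infty$ to $\BDynMH'(0)=0$, qualifying it as valid ``when $\zKink=0$.'' In fact concavity plus $\BDynMH(\zKink)=0$ gives $|\BDynMH(y)|\leq\frac{y-\zKink}{z-\zKink}|\BDynMH(z)|$ for $y\in[\zKink,z]$, hence divergence of the integral for \emph{any} $\zKink\geq0$. This is exactly the paper's competitor $\psi(t)=\zKink+(z-\zKink)\exp(\tfrac{\BDynMH(z)}{z-\zKink}t)$ in disguise, and it means your comparison-based argument for~(i) is sound: orbits from $z>\zKink$ never reach $\zKink$ in finite time, so local Lipschitzness on $\inter\dom\BDynMH$ suffices for uniqueness. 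Finally, your concluding remark about the flow ``having a kink where orbits first touch $\zKink$'' is misleading: orbits never touch $\zKink$; the possible kink is in the $z$-dependence of $\Flow_s$ at $z=\zKink$, exactly what~(x) and \thref{asp:QNoKink} are about.
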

\begin{proof}
	\textbf{(\ref{item:FlowIVP})}
	We first show that a solution to \eqref{eqn:FlowIVP} exists when $\Flow_s(z)>-\infty$. We will construct such a solution explicitly.
  Let
  \begin{align*}
    S_0 & = \BDynMH^{-1}(\{0\}), &
    S_+ & = (\dom \BDynMH \cap [0,\infty)) \setminus S_0, &
    S_- & = (\dom \BDynMH \cap (-\infty,0]) \setminus S_0
  \end{align*}
  be a partition of $\dom \BDynMH$ into three connected components.
  If $z\in S_0$ the solution to \eqref{eqn:FlowIVP} is given by $\phi(t)=z$ for all $t\geq0$. If $z\in S_+\cup S_-$,
  let $\psi \in C^1([0,s])$ be a suitable candidate in \eqref{eq:CMassFlow}. The function $\psi$ must be nonincreasing due to $\BDynMH\leq0$.
  Without loss of generality we may assume that $\psi([0,s])$ is contained in either $S_+$ or $S_-$.
  Indeed, when $z\in S_-$, then $\psi(t) \leq \psi(0)=z$ must be contained in $S_-$ for all $t\in[0,s]$ as well.
  On the other hand, when $z \in S_+$, we may pick ${\psi}(t) = \zKink + (z-\zKink) \cdot \exp\big(\tfrac{\BDynMH(z)}{z-\zKink} t\big)$ where $\zKink = \max S_0$. This is feasible in \eqref{eq:CMassFlow} since $\BDynMH(\zKink)=0$ by upper semi-continuity and $\BDynMH$ is concave.
  Now for $y \in \R$ consider the integral
  \begin{align*}
    I_z(y) & = \int_z^y \frac{1}{\BDynMH(x)} \,\d x\,.
  \end{align*}
  This is well-defined and finite when $z,y \in S_+$ or when $z,y \in S_-$.
  The function $I_z$ is strictly decreasing and thus invertible. Denote its inverse by $\phi_z$.
  Due to $I_z(z)=0$ and
  \begin{align*}
    \infty > I_z(\psi(s)) = \int_{\psi(0)}^{\psi(s)} \frac{1}{\BDynMH(x)} \,\d x
    = \int_0^s \frac{\psi'(t)}{\BDynMH(\psi(t))} \,\d t \geq s
  \end{align*}
  the inverse $\phi_z$ is well-defined at least on $[0,s]$.
  Furthermore, $\phi_z(0)=z$ and
  \begin{align*}
    \phi_z'(t) = \frac{1}{\partial_y I_z(\phi_z(t))} = \BDynMH(\phi_z(t))
  \end{align*}
  so that $\phi=\phi_z$ indeed solves the initial value problem \eqref{eqn:FlowIVP}.
  
  Now let $s>0$ and $z\in\R$ with $\Flow_s(z)>-\infty$ and denote the solution to \eqref{eqn:FlowIVP} by $\phi_z$.
  Since $\phi_z$ is feasible in \eqref{eq:CMassFlow}, $\Flow_s(z)\geq \phi_z(s)$.
  To show the reverse inequality, consider a competitor $\psi \in C^1([0,s])$ feasible for \eqref{eq:CMassFlow}.
  If $z\in S_0$, then $\psi(s)\leq z=\phi_z(s)$ since $\psi$ is decreasing.
  If on the other hand $z\in S_+\cup S_-$, then as above we may assume $\psi([0,s])$ to be contained in either $S_+$ or $S_-$.
  Due to $I_z(\phi_z(s))=s$ and $I_z(\psi(s))\geq s$, the monotonicity of $I_z$ implies $\psi(s)\leq I_z^{-1}(s)=\phi_z(s)$.
  Summarizing, $\Flow_s(z)\leq \phi_z(s)$.
	
  \textbf{(\ref{item:FlowBasic})}
  This is a direct consequence of (\ref{item:FlowIVP}).
  
  \textbf{(\ref{item:FlowDiagonal})}
  If $\Flow_s(z)=-\infty$ this holds trivially. Otherwise, $t \mapsto \Flow_t(z)$ is nonincreasing on $t \in [0,s]$ due to (\ref{item:FlowIVP}) and $\BDynMH\leq0$ so that $\Flow_s(z)\leq\Flow_0(z)=z$.
  
  \textbf{(\ref{item:FlowIncreasing})}
  This is a standard property of solutions to scalar ordinary differential equations.
  Indeed, for $z\in S_0$ this follows from $\Flow_s(z)=z$, and for $z_1,z_2\in S_-\cup S_+$ with $z_1<z_2$ assume $\Flow_s(z_1)\geq\Flow_s(z_2)$. Then with the above notation,
  \begin{equation*}
  s=I_{z_1}(\Flow_s(z_1))=\int_{z_1}^{\Flow_s(z_1)}\frac1{\BDynMH(x)}\,\d x<\int_{z_2}^{\Flow_s(z_2)}\frac1{\BDynMH(x)}\,\d x=I_{z_2}(\Flow_s(z_2))=s\,,
  \end{equation*}
  which yields a contradiction.
		
	\textbf{(\ref{item:FlowSpaceConcave})}
	Let $z_1, z_2 \in \dom \Flow_s$ and let $\phi_1$, $\phi_2$ be the corresponding solutions to \eqref{eqn:FlowIVP}. For any $\lambda \in [0,1]$ let $z_\lambda=\lambda \cdot z_1 + (1-\lambda) \cdot z_2$ and $\phi_\lambda = \lambda \cdot \phi_1 + (1-\lambda) \cdot \phi_2$. By convexity of $\BDynM$ one finds that $(\partial_t \phi_\lambda(t),\phi_\lambda(t)) \in \BDynM$ for $t \in [0,s]$. Hence $\phi_\lambda$ is feasible in \eqref{eq:CMassFlow} for $\Flow_s(z_\lambda)$ which implies $\Flow_s(z_\lambda) \geq \phi_\lambda(s) = \lambda \cdot \Flow_s(z_1) + (1-\lambda) \cdot \Flow_s(z_2)$.
			
	\textbf{(\ref{item:FlowPositive})}
	The function ${\psi}(t) = \zKink + (z-\zKink) \cdot \exp\big(\tfrac{\BDynMH(z)}{z-\zKink} t\big)$ from above is feasible in \eqref{eq:CMassFlow} so that $\Flow_s(z) \geq \psi(s)\geq 0$ and $\psi(s)>0$ if $z>0$.%

	By (\ref{item:FlowIVP}) and (\ref{item:FlowPositive}), $t\mapsto\Flow_t(z)$ is decreasing and nonnegative. Thus, since $\BDynMH$ is decreasing on $[0,\infty)$, the derivative $t\mapsto\partial_t\Flow_t(z)=\BDynMH(\Flow_t(z))$ is increasing.%
	
	\textbf{(\ref{item:FlowZeroDerivative})}
	We show that the right derivative of $\Flow_s$ in $0$ is $1$ (the argument for the left derivative is analogous).
  If $\inf S_+>0$ this follows directly from $\Flow_s(z)=z$ on $S_0$.
  If $\inf S_+=0$, since $\Flow_s$ is concave its right derivative in $0$ is given by the monotone limit $\lim_{\veps \searrow 0} \tfrac{\Flow_s(\veps)}{\veps}$, which cannot exceed $1$ due to $\Flow_s(\veps)\leq \veps$.
	For a contradiction we assume that the right derivative is strictly smaller than $1$, that is, there is some $C<1$ with $\Flow_s(\veps)\leq C \cdot \veps$ for all $\veps>0$. Since $\Flow_s(\veps)=\phi_\veps(s)$ this implies
	\begin{align*}
		s = I_\veps(\phi_\veps(s)) \geq I_\veps(C \cdot \veps) = \int_{\veps}^{C \veps}
			\frac{1}{\BDynMH(x)} \,\d x \geq \frac{(C-1) \cdot \veps}{\BDynMH(\veps)}
			= \frac{\veps}{\BDynMH(\veps)} \cdot (C-1)\,.
	\end{align*}
	Since $\BDynMH'(0)=0$ the right-hand side diverges to $\infty$ as $\veps \to 0$ which is a contradiction.
	
  \textbf{(\ref{item:FlowContraction})}
	This follows from the concavity and monotonicity of $\Flow_s$ together with $\Flow_s'(0)=1$.
	
	\textbf{(\ref{item:FlowDerivative})}
	For $z \in \inter S_0$ the statement is trivial.
	For $z \in \inter S_+$ fix some arbitrary $a\in S_+$ with $a>z$.
	Then by (\ref{item:FlowIVP}) we have $z=\phi_a(I_a(z))=\Flow_{I_a(z)}(a)$ and thus by (\ref{item:FlowBasic}) $\Flow_s(z) = \Flow_s(\Flow_{I_a(z)}(a)) = \Flow_{I_a(z)+s}(a) = \phi_a(I_a(z)+s)$.
	Taking the derivative we find
	\begin{align*}
		\Flow_s'(z) = \frac{\phi_a'(I_a(z)+s)}{\BDynMH(z)} = \frac{\BDynMH(\Flow_s(z))}{\BDynMH(z)}\,,
	\end{align*}
	which is locally Lipschitz on $\inter S_+$ since the concave functions $\BDynMH$ and $\Flow_s$ are locally Lipschitz on their domain interiors.
	Finally, at $\zKink=\max S_0$ the left derivative is $1$ due to $\Flow_s(z)=z$ on $S_0$.
	If $\zKink\in\inter\dom\BDynMH$, using \thref{asp:QNoKink} one shows that the right derivative is $1$ as well in the same way as for (\ref{item:FlowZeroDerivative}).
\end{proof}

\begin{lemma}[Inverse flow]\thlabel{thm:FlowInv}
	On $\Flow_s(\dom \Flow_s)$ let $\FlowInv_s$ be the inverse of $\Flow_s$ (which is well-defined by \thref{lem:Flow}\eqref{item:FlowIncreasing}).
	Let $\BDynMHTild : z \mapsto \BDynMH(-z)$ and let $\tilde{\Flow}_s$ be the flow \eqref{eq:CMassFlow} associated with $\BDynMHTild$. Then
	\begin{align}
		\label{eq:FlowInvRelation}
		\FlowInv_s(z)=-\tilde{\Flow}_s(-z)\,.
	\end{align}
\end{lemma}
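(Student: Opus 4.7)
The plan is to identify $\FlowInv_s(z)$ by time-reversing the $\BDynMH$-flow that lands at $z$ and applying a sign flip, and then to recognize the resulting curve as the solution of the initial value problem for $\BDynMHTild$. As a preliminary step I would check that $\BDynMHTild(z)=\BDynMH(-z)$ satisfies the hypotheses of \thref{thm:DynConjugateSet}, so that \thref{lem:Flow} applies to it: concavity, upper semi-continuity, and non-positivity pass through the reflection $z\mapsto -z$; the monotonicities swap correctly, giving increase on $(-\infty,0]$ and decrease on $[0,\infty)$; and $\BDynMHTild(0)=0$, $\BDynMHTild'(0)=-\BDynMH'(0)=0$.

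Now fix $z\in\Flow_s(\dom\Flow_s)$ and set $w:=\FlowInv_s(z)$, so that $\Flow_s(w)=z$. By \thref{lem:Flow}\eqref{item:FlowIVP}--\eqref{item:FlowBasic}, the map $\phi:[0,s]\to\R$ defined by $\phi(t):=\Flow_t(w)$ is $C^1$ and solves
\begin{equation*}
\partial_t\phi(t)=\BDynMH(\phi(t)),\qquad \phi(0)=w,\qquad \phi(s)=z.
\end{equation*}
Define the time-reversed, sign-flipped curve $\xi(t):=-\phi(s-t)$ on $[0,s]$. Then $\xi\in C^1([0,s])$ with $\xi(0)=-z$ and $\xi(s)=-w$, and a direct computation gives
\begin{equation*}
\partial_t\xi(t)=\phi'(s-t)=\BDynMH(\phi(s-t))=\BDynMH(-\xi(t))=\BDynMHTild(\xi(t)).
\end{equation*}
Thus $\xi$ is a $C^1$ solution of the IVP for $\BDynMHTild$ with initial value $-z$.

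Applying \thref{lem:Flow}\eqref{item:FlowIVP} once more, now to $\BDynMHTild$, yields $\tilde{\Flow}_s(-z)=\xi(s)=-w$, which rearranges to $\FlowInv_s(z)=w=-\tilde{\Flow}_s(-z)$, i.e.\ \eqref{eq:FlowInvRelation}. The main subtlety is the first step, namely verifying that the flow machinery of \thref{lem:Flow} transfers to $\BDynMHTild$; once that is in place, the remainder is a direct time-reversal computation, and the uniqueness built into the IVP part of \thref{lem:Flow}\eqref{item:FlowIVP} automatically promotes the candidate $\xi(s)$ to the true value of $\tilde{\Flow}_s(-z)$, so no separate optimality argument is needed.
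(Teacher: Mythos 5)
Your proof is correct and follows essentially the same route as the paper: time-reverse and negate the $\BDynMH$-flow from $\FlowInv_s(z)$ to $z$, observe that the resulting curve solves the initial value problem for $\BDynMHTild$, and invoke \thref{lem:Flow}\eqref{item:FlowIVP} to identify it with $\tilde{\Flow}_\cdot(-z)$. Your preliminary check that $\BDynMHTild$ inherits the structural hypotheses of \thref{thm:DynConjugateSet} (so that \thref{lem:Flow} applies to it) is left implicit in the paper and is a worthwhile addition.
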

\begin{proof}
	By \thref{lem:Flow}\eqref{item:FlowIVP} one finds for $z \in \Flow_s(\dom \Flow_s)$ that $z=\phi(s)$ where $\phi$ solves the initial value problem
	\begin{equation*}
		\partial_t \phi(t) = -\BDynMH(\phi(t)) \tn{ for } t \in [0,s], \qquad
		\phi(0)=\FlowInv_s(z)\,. \\
	\end{equation*}
	Consequently the function $\tilde{\phi}(t)=-\phi(s-t)$ solves the initial value problem
	\begin{equation*}
		\partial_t \tilde{\phi}(t) = \BDynMH(-\tilde{\phi}(t)) = \BDynMHTild(\tilde{\phi}(t)) \tn{ for } t \in [0,s], \qquad
		\tilde{\phi}(0)=-z\,,
	\end{equation*}
	so that $\FlowInv_s(z)=\phi(0)=-\tilde\phi(s)=-\tilde\Flow_s(-z)$.
\end{proof}
Relation \eqref{eq:FlowInvRelation} allows to translate results of \thref{lem:Flow} to $\FlowInv_s$. Throughout this section we will use that arguments involving $\Flow_s$ can be applied to $\FlowInv_s$ in an analogous way.

The flow $\Flow_s$ will be crucial in constructing feasible candidates for the dynamic dual problem \eqref{eq:DynamicDual}. For this we need differentiability of $F_s$ on $\dom \BDynMH \cap [0,\infty)$ which requires the additional \thref{asp:QNoKink} (see \thref{lem:Flow}\eqref{item:FlowDerivative}).
Since this assumption may not always hold we will have to approximate $\BDynMH$ by a suitable sequence of functions.

\begin{lemma}[Approximation of $\BDynMH$]
	\thlabel{lem:QKinkSmoothing}
	Assume that $\BDynMH$ does not satisfy \thref{asp:QNoKink}, that is, $0<\zKink\in\inter(\dom\BDynMH)$ and the right derivative $\BDynMH^r(\zKink)$ of $\BDynMH$ at $\zKink$ is finite but strictly negative, $\BDynMH^r(\zKink)=-C<0$ for some $C>0$.
	Then for $\veps \in (0,\zKink/2]$ the function
	\begin{align*}
		\BDynMHeps(z) = \begin{cases}
			\BDynMH(z) & \tn{if } z \leq \zKink-\veps \\
			-\frac{C}{2\,\veps} ( z-\zKink+\veps)^2 & \tn{if } z \in [\zKink-\veps,\zKink] \\
			\BDynMH(z) - \frac{C\,\veps}{2} & \tn{if } z \geq \zKink
			\end{cases}
	\end{align*}
	possesses the same properties from \thref{thm:DynConjugateSet} as $\BDynMH$ and satisfies \thref{asp:QNoKink}.
	In addition, the set
	\begin{align*}
		\BDynMeps = \left\{ (\psi,\phi) \in \R^2 \ \middle|\ \psi \leq \BDynMHeps(\phi) \right\}
	\end{align*}
	satisfies $(1-\veps/\zKink) \cdot \BDynM \subset \BDynMeps \subset \BDynM$.
\end{lemma}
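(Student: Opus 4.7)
The plan is to verify the three assertions of the lemma in sequence: the properties of $\BDynMHeps$ from \thref{thm:DynConjugateSet}, the validity of \thref{asp:QNoKink} for it, and the two set inclusions. For the properties I would argue directly from the piecewise formula: continuity at the joins $\zKink-\veps$ and $\zKink$ is immediate (both outer pieces match the quadratic middle piece at values $0$ and $-C\veps/2$ respectively), and the one-sided derivatives also agree ($0$ on both sides of $\zKink-\veps$ since $\BDynMH$ is constantly zero on $[\zKinkN,\zKink]$, and $-C$ on both sides of $\zKink$ by hypothesis). Thus $\BDynMHeps$ is $C^1$ with a non-increasing derivative, hence concave. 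Nonpositivity and the required monotonicity pattern are inherited piece by piece, and since $0<\zKink-\veps$ (using $\veps\leq\zKink/2$), the values $\BDynMHeps(0)=0$ and $\BDynMHeps'(0)=0$ coincide with those of $\BDynMH$. The effective domain is unchanged, so the closure condition from \thref{thm:DynConjugateSet} persists. The new kink is $\zKink_\veps=\zKink-\veps$, and the derivative vanishes there from both sides, so \thref{asp:QNoKink} holds.

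The inclusion $\BDynMeps\subset\BDynM$ reduces to the pointwise bound $\BDynMHeps\leq\BDynMH$: equality on $(-\infty,\zKink-\veps]$, $\BDynMHeps\leq0=\BDynMH$ on $[\zKink-\veps,\zKink]$, and $\BDynMHeps=\BDynMH-C\veps/2\leq\BDynMH$ on $[\zKink,\infty)$.

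The more delicate inclusion $(1-\veps/\zKink)\BDynM\subset\BDynMeps$ reduces, with $\lambda=1-\veps/\zKink=(\zKink-\veps)/\zKink$, to proving $\lambda\,\BDynMH(\phi)\leq\BDynMHeps(\lambda\phi)$ for every $\phi\in\dom\BDynMH$. I plan to split into cases on $\phi$. For $\phi\leq\zKink$ one has $\lambda\phi\leq\zKink-\veps$, so $\BDynMHeps(\lambda\phi)=\BDynMH(\lambda\phi)$, and the inequality follows either trivially (both sides zero, when $\phi\in[\zKinkN,\zKink]$) or from concavity of $\BDynMH$ through $(0,0)$ (when $\phi<\zKinkN$). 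For $\phi\in[\zKink,\zKink/\lambda]$ so that $\lambda\phi\in[\zKink-\veps,\zKink]$, I would combine the explicit quadratic formula for $\BDynMHeps$ with the tangent bound $\BDynMH(\zKink+t)\leq-Ct$ to reduce the required inequality to $t\leq 2\veps/\lambda$, which holds throughout this range since $t\leq\veps/\lambda$ there.

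The main obstacle, which I expect to be the hardest part, is the remaining case $\phi>\zKink/\lambda$, where both $\phi$ and $\lambda\phi$ exceed $\zKink$ and we have $\BDynMHeps(\lambda\phi)=\BDynMH(\lambda\phi)-C\veps/2$, so the task becomes $\BDynMH(\lambda\phi)-\lambda\,\BDynMH(\phi)\geq C\veps/2$. I would establish the stronger bound $\BDynMH(\lambda\phi)-\lambda\,\BDynMH(\phi)\geq C\veps$ by applying concavity of $\BDynMH$ to the triple $\zKink<\lambda\phi<\phi$, which yields
\begin{equation*}
\BDynMH(\lambda\phi)\;\geq\;\tfrac{\lambda\phi-\zKink}{\phi-\zKink}\,\BDynMH(\phi)\,,
\end{equation*}
hence $\BDynMH(\lambda\phi)-\lambda\,\BDynMH(\phi)\geq-\BDynMH(\phi)\cdot\tfrac{(1-\lambda)\zKink}{\phi-\zKink}$, and then invoking the tangent upper bound $-\BDynMH(\phi)\geq C(\phi-\zKink)$ to cancel the denominator, leaving the exact margin $C\cdot(1-\lambda)\zKink=C\veps$, which is twice what is needed.
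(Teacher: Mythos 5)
Your proof is correct and follows essentially the same route as the paper's: after a direct check of the $C^1$-compatibility of the three pieces, the inclusion $(1-\veps/\zKink)\BDynM\subset\BDynMeps$ is reduced to the same pointwise inequality (the paper writes it as $\BDynMH(z/\lambda)\leq\BDynMHeps(z)/\lambda$, which is your inequality after setting $z=\lambda\phi$), and the same three-way split into $\phi\leq\zKink$, $\phi\in[\zKink,\zKink/\lambda]$, and $\phi>\zKink/\lambda$ is used with the same estimates in the first two cases. In the third case you obtain the bound $\BDynMH(\lambda\phi)-\lambda\BDynMH(\phi)\geq C\veps$ by a slightly more elementary argument (three-point concavity plus the tangent at $\zKink$), whereas the paper uses a tangent at $z$ together with the monotonicity of $z\mapsto\BDynMH^r(z)z-\BDynMH(z)$; both establish the same margin $C\veps$, double what the stated lemma requires.
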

\begin{proof}
	The properties of $\BDynMH^\veps$ are straightforward to check.
	Moreover, since $\BDynMH^\veps(z) \leq \BDynMH(z)$ one finds $\BDynM^\veps \subset \BDynM$.
	It remains to show $\lambda \cdot \BDynM \subset \BDynM^\veps$ for $\lambda=(1-\veps/\zKink) \in [1/2,1)$ or equivalently $\BDynMH(z/\lambda) \leq \BDynMH^\veps(z)/\lambda$ for $z \in \R$.
	This holds true for $z\in(-\infty,\zKink-\veps]$ since $\BDynMH^\veps(z)=\BDynMH(z)$ is nondecreasing and concave on this interval.
	For $z\in[\zKink-\veps,\zKink]=[\lambda\zKink,\zKink]$ one finds
	\begin{align*}
		\BDynMH(z/\lambda) & \leq
			-C \cdot (z/\lambda-\zKink) \leq -C\frac{\lambda\cdot (z/\lambda-\zKink)^2}{2 \cdot \veps}
			= \BDynMH^\veps(z)/\lambda\,.
	\end{align*}
	Finally, for $z \in \dom \BDynMH = \dom \BDynMH^\veps$, $z \geq \zKink$ one has
	\begin{multline*}
		\BDynMH(z/\lambda) \leq \BDynMH(z) + \BDynMH^r(z) \cdot (1/\lambda-1) \cdot z 
			= \frac{\BDynMH(z)}{\lambda} + (1/\lambda-1) \cdot \left(\BDynMH^r(z) \cdot z - \BDynMH(z) \right) \\
		\leq \frac{1}{\lambda} \cdot \left[
			\BDynMH(z) + (1-\lambda) \cdot \left(\BDynMH^r(\zKink) \cdot \zKink - \BDynMH(\zKink) \right)
				\right]
			= \frac{1}{\lambda} \cdot \left[ \BDynMH(z) - C \cdot \veps
				\right]\,,
	\end{multline*}
	where we have used that $z \mapsto \BDynMH^r(z) \cdot z - \BDynMH(z)$ is decreasing for $z \geq 0$.
	For $z \neq \dom \BDynMH$ the statement is trivial.
\end{proof}

\begin{remark}[Approximation of $\BDynMH$]\thlabel{rem:QKinkSmoothing}
If $\BDynMH(-\cdot)$ does not satisfy \thref{asp:QNoKink},
then with the same argument we can find some $\BDynMHeps$ possessing the same properties from \thref{thm:DynConjugateSet} as $\BDynMH$
such that $\BDynMHeps(-\cdot)$ satisfies \thref{asp:QNoKink} and $(1+\veps/\zKinkN) \cdot \BDynM \subset \BDynMeps \subset \BDynM$ for $\zKinkN=\min\BDynMH^{-1}(\{0\})$.
Likewise, if neither $\BDynMH(-\cdot)$ nor $\BDynMH$ satisfy \thref{asp:QNoKink}, then we can construct $\BDynMHeps$ possessing the same properties from \thref{thm:DynConjugateSet} as $\BDynMH$
as well as \thref{asp:QNoKink} for $\BDynMHeps$ and $\BDynMHeps(-\cdot)$ and $(1-C\veps) \cdot \BDynM \subset \BDynMeps \subset \BDynM$ for some fixed $C>0$.
\end{remark}

We can now use the flows $\Flow_1$ and $\FlowInv_1$ to reconstruct for suitable $(\alpha,\beta) \in \BStatPre$ a corresponding candidate function $\phi$ in \eqref{eq:BStatPre} with $\phi(0)=-\alpha$, $\phi(1)=\beta$ (\thref{lem:LocalInterpolant}). This allows a more explicit description of the set $\BStatPre$ via $\Flow_1$ (\thref{cor:BStatPreForm}), which we will later need to analyse the transition from $\BStatPre$ to $\BStat=\ol{\BStatPre} + (-\infty,0]^2$ in \thref{prop:W1DynamicStaticEquivalence}.
Then, in \thref{lem:DynamicDualConstruction} we establish that for $(\alpha,\beta) \in C^1(\Omega) \cap \Lip(\Omega)$ with $(\alpha(x),\beta(x)) \in \BStatPre$ for all $x \in \Omega$, applying for each $x \in \Omega$ the interpolation of \thref{lem:LocalInterpolant} yields a dynamic dual feasible candidate $\phi$ for \eqref{eq:DynamicDual} with $\phi(0,\cdot)=-\alpha$, $\phi(1,\cdot)=\beta$.
The main ingredient of this interpolation are the two auxiliary functions examined in the next \thnameref{lem:LocInterp}, which are careful combinations of the flow functions.
With these preliminaries in place we can then provide the proof of \thref{prop:W1DynamicStaticEquivalence}.

\newcommand{\LocInterp}{g}
\newcommand{\LocInterpInv}{g^{\tn{inv}}}
\begin{lemma}
	\thlabel{lem:LocInterp}
	We introduce the two auxiliary functions
	\begin{align*}
		\LocInterp & : [0,1] \times \dom \BDynMH  \ni (t,z) \mapsto \begin{cases}
			\Flow_t(z)-t \cdot \Flow_1(z) & \tn{if } z>0, \\
			0 & \tn{if } z=0, \\
			(1-t) \cdot z & \tn{if } z<0,
			\end{cases} \\
		\LocInterpInv & : [0,1] \times \dom \BDynMH \ni (t,z) \mapsto \begin{cases}
			t \cdot z & \tn{if } z>0, \\
			0 & \tn{if } z=0, \\
			\FlowInv_{1-t}(z)-(1-t) \cdot \FlowInv_1(z) & \tn{if } z<0.
			\end{cases}
	\end{align*}
	These have the following properties:
	\begin{enumerate}[(i)]
		\item $\LocInterp$ and $\LocInterpInv$ are continuous and differentiable in $t$.
			\label{item:LocalInterpDomain}
		\item $\LocInterp(0,z)=z$, $\LocInterp(1,z)=0$, $\LocInterpInv(0,z)=0$, $\LocInterpInv(1,z)=z$.
			\label{item:LocalInterpBoundary}
		\item Under \thref{asp:QNoKink} for $\BDynMH$ and $\BDynMH(-\cdot)$ (see \thref{rem:QKinkSmoothing}) $\LocInterp$ and $\LocInterpInv$ are differentiable in $z$.
			\label{item:LocalInterpZDerivative}
		\item Under the same assumptions, $\partial_z \LocInterp(t,z) \in [0,1-t]$ and $\partial_z \LocInterpInv(t,z) \in [0,t]$.
			\label{item:LocalInterpConvexity}
	\end{enumerate}
\end{lemma}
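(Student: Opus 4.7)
My plan is to treat the four claims in order; (i), (ii), and (iii) are routine, while the substantive work is the derivative bound in (iv). The boundary values in (ii) come from plugging $t\in\{0,1\}$ into each case and using $\Flow_0=\FlowInv_0=\mathrm{id}$. For (i), on each of $\{z>0\}$, $\{z<0\}$, and $\{z=0\}$ the defining formula is $C^1$ in $t$: the $z>0$ branch via the ODE \eqref{eqn:FlowIVP} (\thref{lem:Flow}\eqref{item:FlowIVP}), the $z<0$ branch is affine, and on $z=0$ the value is identically zero. Joint continuity at $z=0$ reduces to showing $\LocInterp(t,z)\to 0$ uniformly in $t$, which follows from $0\leq\Flow_t(z)\leq z$ for $z\geq 0$ (\thref{lem:Flow}\eqref{item:FlowDiagonal},\eqref{item:FlowPositive}) and from the affine formula when $z<0$. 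For (iii), \thref{lem:Flow}\eqref{item:FlowDerivative} under \thref{asp:QNoKink} yields $z$-differentiability of the $z>0$ branch (applied to $\BDynMH$) and of the $z<0$ branch of $\LocInterpInv$ (applied via \thref{thm:FlowInv} and \thref{rem:QKinkSmoothing} to $\BDynMH(-\cdot)$); at $z=0$, both one-sided difference quotients equal $1-t$ for $\LocInterp$ (and $t$ for $\LocInterpInv$), from one side trivially and from the other via $\Flow_s'(0)=1$ in \thref{lem:Flow}\eqref{item:FlowZeroDerivative}.

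For (iv), the case $z\leq 0$ is immediate. For $z>0$ one has $\partial_z\LocInterp(t,z)=\Flow_t'(z)-t\Flow_1'(z)$. The lower bound uses the semigroup $\Flow_1=\Flow_{1-t}\circ\Flow_t$ (\thref{lem:Flow}\eqref{item:FlowBasic}) and the chain rule to rewrite
\begin{align*}
\partial_z\LocInterp(t,z)=\Flow_t'(z)\bigl[1-t\,\Flow_{1-t}'(\Flow_t(z))\bigr]\geq 0,
\end{align*}
since $\Flow_t'\geq 0$ (\thref{lem:Flow}\eqref{item:FlowIncreasing}) and $\Flow_{1-t}'\leq 1$ by non-expansiveness (\thref{lem:Flow}\eqref{item:FlowContraction}), so the bracket is $\geq 1-t\geq 0$.

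The upper bound is the main obstacle. When $z\leq\zKink$, $\Flow_t(z)=z$ for all $t$, so $\Flow_t'(z)=\Flow_1'(z)=1$ and the bound reads $1-t$. When $z>\zKink$, I combine three properties: the explicit formula $\Flow_t'(z)=\BDynMH(\Flow_t(z))/\BDynMH(z)$ (\thref{lem:Flow}\eqref{item:FlowDerivative}), the time-convexity of $t\mapsto\Flow_t(z)$ (\thref{lem:Flow}\eqref{item:FlowTimeConvex}), and the concavity plus monotonicity of $\BDynMH$ on $[0,\infty)$ (\thref{thm:DynConjugateSet}). These yield
\begin{align*}
\BDynMH(\Flow_t(z))\geq\BDynMH\bigl((1-t)z+t\Flow_1(z)\bigr)\geq(1-t)\BDynMH(z)+t\BDynMH(\Flow_1(z)),
\end{align*}
and dividing by the negative number $\BDynMH(z)$ flips the inequality, giving $\Flow_t'(z)\leq 1-t+t\Flow_1'(z)$, i.e.\ $\partial_z\LocInterp(t,z)\leq 1-t$. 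The corresponding results for $\LocInterpInv$ follow by applying the same arguments to the flow $\tilde\Flow$ of $\BDynMHTild(\cdot)=\BDynMH(-\cdot)$ with parameter $1-t$ in place of $t$, combined with the reflection $\FlowInv_s(z)=-\tilde\Flow_s(-z)$ from \thref{thm:FlowInv}.
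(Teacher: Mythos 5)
Your proof is correct and follows essentially the same strategy as the paper for all four parts: parts (i)--(iii) are handled identically, and the upper bound in (iv) is the same argument (the paper packages it as convexity of the composition $t\mapsto \BDynMH(\Flow_t(z))/\BDynMH(z)$, which you unfold into time-convexity of the flow plus monotonicity and concavity of $\BDynMH$, then divide by the negative $\BDynMH(z)$). The only place you diverge is the lower bound of (iv) for $z>\zKink$: you route through the semigroup identity $\Flow_1=\Flow_{1-t}\circ\Flow_t$, the chain rule, and non-expansiveness of the flow, whereas the paper observes more directly that $y\mapsto\BDynMH(y)/\BDynMH(z)$ is nonnegative and increasing and that $\Flow_t(z)\geq\Flow_1(z)$; both arguments are valid and use the same ingredients from \thref{lem:Flow}.
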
%

\begin{proof}
	We prove the statements for $\LocInterp$. All arguments for $\LocInterpInv$ work in full analogy by using \thref{thm:FlowInv}.
	
	\textbf{(\ref{item:LocalInterpDomain})} By virtue of \thref{lem:Flow}\eqref{item:FlowDiagonal} and \eqref{item:FlowPositive} we have $\dom \Flow_t \supset \dom \BDynMH \cap [0,\infty)$. Therefore, $\dom \LocInterp = [0,1] \times \dom \BDynMH$.
	Since $\Flow_t(0)=0$ and $\Flow_t$ is continuous on its domain, $\LocInterp$ is continuous in $z=0$ and on all of $\dom \BDynMH$ for all $t \in [0,1]$.
	Finally, differentiability of $\LocInterp$ in $t$ for $z\leq 0$ is trivial, for $z>0$ it is provided by \thref{lem:Flow} \eqref{item:FlowIVP}.
	
	\textbf{(\ref{item:LocalInterpBoundary})} This follows immediately from the definition, using the convention $\Flow_0(z)=z$ on $\dom \BDynMH$ (\thref{lem:Flow}).
	
	\newcommand{\LocInterpAux}{k}
	\textbf{(\ref{item:LocalInterpZDerivative})}
	With \thref{asp:QNoKink}, due to \thref{lem:Flow}\eqref{item:FlowZeroDerivative} and \eqref{item:FlowDerivative} $\LocInterp$ is differentiable in $z$ for $z\neq 0$, and the one-sided derivatives coincide in $z=0$.
	
	\textbf{(\ref{item:LocalInterpConvexity})} For $z \leq 0$ the result is clear.
	The derivative $\partial_z \Flow_s(z)$ for $z\geq 0$ is given by \thref{lem:Flow}\eqref{item:FlowDerivative}.
	For $z \in (0,\zKink]$ (see \thref{asp:QNoKink}) we have $\partial_z \Flow_s(z)=1$ so that the result is also established.
	Now fix some $z>\zKink$ (thus $\BDynMH(z)<0$). Then $\partial_z \LocInterp(t,z)=\LocInterpAux(\Flow_t(z))-t \cdot \LocInterpAux(\Flow_1(z))$ with $\LocInterpAux(y) = \tfrac{\BDynMH(y)}{\BDynMH(z)}$.
	For $y \geq 0$, since $\BDynMH$ is negative, concave, and decreasing for positive arguments, $\LocInterpAux$ is positive, convex, and increasing. The map $t \mapsto \Flow_t(z)$ is convex and decreasing by \thref{lem:Flow}\eqref{item:FlowIVP} and \eqref{item:FlowTimeConvex}. Therefore, $\LocInterpAux(\Flow_t(z)) \geq \LocInterpAux(\Flow_1(z)) \geq t \cdot \LocInterpAux(\Flow_1(z))$ and thus $\partial_z \LocInterp(t,z) \geq 0$.
	In addition, the composition $t \mapsto \LocInterpAux(\Flow_t(z))$ is convex and thus $\LocInterpAux(\Flow_t(z)) \leq (1-t) \cdot \LocInterpAux(\Flow_0(z)) + t \cdot \LocInterpAux(\Flow_1(z))$ so that $\partial_z \LocInterp(t,z) \leq (1-t) \cdot \LocInterpAux(z)=(1-t)$.
\end{proof}

\begin{lemma}
	\thlabel{lem:LocalInterpolant}
	For $(\alpha,\beta) \in \R^2$ with $(-\alpha,\beta) \in (\dom \BDynMH)^2$ and $\beta \leq \Flow_1(-\alpha)$ the function%
	\begin{align}
		\label{eq:LocalInterpolantNew}
		\phi_{\alpha,\beta} : [0,1] \to \R, \qquad t \mapsto \LocInterp(t,-\alpha) + \LocInterpInv(t,\beta)
	\end{align}%
	satisfies $\alpha=-\phi_{\alpha,\beta}(0)$, $\beta=\phi_{\alpha,\beta}(1)$, $\phi_{\alpha,\beta} \in C^1([0,1])$, and $(\partial_t \phi_{\alpha,\beta}(t),\phi_{\alpha,\beta}(t)) \in \BDynM$ for $t \in [0,1]$.
\end{lemma}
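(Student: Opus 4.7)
The boundary conditions $\phi_{\alpha,\beta}(0)=-\alpha$, $\phi_{\alpha,\beta}(1)=\beta$ and the regularity $\phi_{\alpha,\beta}\in C^1([0,1])$ are immediate from \thref{lem:LocInterp}\eqref{item:LocalInterpBoundary} and \eqref{item:LocalInterpDomain}, so the substantive step is verifying the pointwise inclusion $(\partial_t \phi_{\alpha,\beta}(t),\phi_{\alpha,\beta}(t))\in\BDynM$, i.e.\ $\partial_t\phi_{\alpha,\beta}(t)\le\BDynMH(\phi_{\alpha,\beta}(t))$, with the implicit requirement $\phi_{\alpha,\beta}(t)\in\dom\BDynMH$. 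I plan a case split by the signs of $-\alpha$ and $\beta$; since $\Flow_1(-\alpha)\le-\alpha$ by \thref{lem:Flow}\eqref{item:FlowDiagonal}, the hypothesis $\beta\le\Flow_1(-\alpha)$ rules out $-\alpha<0<\beta$, leaving three cases: (A) $-\alpha,\beta\ge 0$; (B) $-\alpha,\beta\le 0$; (C) $-\alpha\ge 0\ge\beta$.

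In case (A) the definitions collapse to $\phi_{\alpha,\beta}(t)=\Flow_t(-\alpha)-tc$ with $c:=\Flow_1(-\alpha)-\beta\ge 0$. From \thref{lem:Flow}\eqref{item:FlowDiagonal},\eqref{item:FlowPositive} and monotonicity of $t\mapsto\Flow_t(-\alpha)$ one has $0\le\Flow_1(-\alpha)\le\Flow_t(-\alpha)\le-\alpha$, so $\phi_{\alpha,\beta}(t)\in[0,\Flow_t(-\alpha)]\subseteq\dom\BDynMH$. Since $\BDynMH$ is decreasing on $[0,\infty)$ (\thref{thm:DynConjugateSet}),
\begin{equation*}
\partial_t\phi_{\alpha,\beta}(t)=\BDynMH(\Flow_t(-\alpha))-c\le\BDynMH(\Flow_t(-\alpha))\le\BDynMH(\phi_{\alpha,\beta}(t)).
\end{equation*}
Case (B) is fully symmetric: using \thref{thm:FlowInv} one rewrites $\phi_{\alpha,\beta}(t)=\FlowInv_{1-t}(\beta)+(1-t)d$ with $d:=-\alpha-\FlowInv_1(\beta)\ge 0$ and repeats the estimate on $(-\infty,0]$, where $\BDynMH$ is increasing.

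Case (C) uses an additive decomposition: write $\phi_{\alpha,\beta}=u+v$ with $u(t):=\LocInterp(t,-\alpha)\in[0,-\alpha]$ and $v(t):=\LocInterpInv(t,\beta)\in[\beta,0]$, so $\phi_{\alpha,\beta}(t)\in[\beta,-\alpha]\subseteq\dom\BDynMH$. Applying case (A) with boundary data $(\alpha,0)$ and case (B) with $(0,\beta)$ gives $(u',u),(v',v)\in\BDynM$, and the explicit formulas together with $\BDynMH\le 0$ yield $u'(t),v'(t)\le 0$. Now split on the sign of $\phi_{\alpha,\beta}(t)$: if $\phi_{\alpha,\beta}(t)\ge 0$, then $0\le\phi_{\alpha,\beta}(t)\le u(t)$ and monotonicity of $\BDynMH$ on $[0,\infty)$ gives $\BDynMH(\phi_{\alpha,\beta}(t))\ge\BDynMH(u(t))\ge u'(t)\ge u'(t)+v'(t)$; if $\phi_{\alpha,\beta}(t)<0$, the symmetric chain $v(t)\le\phi_{\alpha,\beta}(t)\le 0$ together with monotonicity on $(-\infty,0]$ yields the conclusion through $v$.

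The main obstacle I anticipate is that the clean decomposition argument of case (C) does not transfer to the same-sign cases (A) and (B): $\BDynM$ is convex but not a cone, and the concavity of $\BDynMH$ with $\BDynMH(0)=0$ makes it superadditive on each half-line, which is the wrong direction for summing two points of $\BDynM$. One is therefore forced back to the explicit formula and must exploit the precise cancellation between the nonlinear flow contribution $\Flow_t(-\alpha)$ and the linear drift $-t\Flow_1(-\alpha)$ that the definitions of $\LocInterp$ and $\LocInterpInv$ are engineered to produce.
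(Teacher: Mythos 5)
Your proposal is correct and follows essentially the same route as the paper: boundary/regularity facts come for free from \thref{lem:LocInterp}, the hypothesis $\beta\le\Flow_1(-\alpha)\le-\alpha$ rules out $\alpha,\beta>0$, and cases (A), (B) are handled by the same explicit computation and the symmetry via \thref{thm:FlowInv}. In case (C) you use the same decomposition $\phi_{\alpha,\beta}=\phi_\alpha+\phi_\beta$ with $\phi_\beta\le 0\le\phi_\alpha$ but then close the argument by splitting on the sign of $\phi_{\alpha,\beta}(t)$ and invoking the monotonicity of $\BDynMH$ on each half-line together with $\phi_\alpha',\phi_\beta'\le 0$, whereas the paper adds $\BDynMH(\phi_\alpha)+\BDynMH(\phi_\beta)\le\min\{\BDynMH(\phi_\alpha),\BDynMH(\phi_\beta)\}$ (nonpositivity) and then observes that by concavity the minimum of $\BDynMH$ over $[\phi_\beta,\phi_\alpha]$ is attained at an endpoint; both give exactly the bound $\partial_t\phi_{\alpha,\beta}(t)\le\BDynMH(\phi_{\alpha,\beta}(t))$, and the difference is purely cosmetic. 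Your closing remark is also apt: the superadditivity of the concave $\BDynMH$ with $\BDynMH(0)=0$ on each half-line is precisely why the same-sign cases cannot be done by a pointwise $\BDynM$-membership decomposition and must instead exploit the exact cancellation baked into $\LocInterp$ and $\LocInterpInv$.
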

\begin{proof}
		The first three properties follow directly from \thref{lem:LocInterp}.
		It remains to show $\partial_t \phi_{\alpha,\beta}(t)\leq\BDynMH(\phi_{\alpha,\beta}(t))$ for all $t \in [0,1]$. If $\alpha\leq0\leq\beta$, then by \thref{lem:Flow}\eqref{item:FlowIVP}
		\begin{equation*}
		\partial_t \phi_{\alpha,\beta}(t)
		\!=\BDynMH(\Flow_t(-\alpha))+\beta-\Flow_1(-\alpha)
		\!\leq\BDynMH(\Flow_t(-\alpha))
		\!\leq\BDynMH(\Flow_t(-\alpha)+t(\beta-\Flow_1(-\alpha)))
		\!=\BDynMH(\phi_{\alpha,\beta}(t)),
		\end{equation*}
		where we used that $\BDynMH$ is monotonously decreasing for positive arguments.
		The result follows analogously for $\beta\leq0\leq\alpha$ using \thref{thm:FlowInv}.
		Due to $\Flow_t(-\alpha)\leq -\alpha$ (\thref{lem:Flow}\eqref{item:FlowDiagonal}) one cannot have $\alpha,\beta>0$ so that it remains to consider the case $\alpha,\beta<0$.
		Note first that $\phi_\alpha : t \mapsto \LocInterp(t,-\alpha) = \Flow_t(-\alpha)-t \, \Flow_1(-\alpha)$ and $\phi_\beta : t \mapsto \LocInterpInv(t,\beta)=\FlowInv_{1-t}(\beta)-(1-t)\,\FlowInv_{1}(\beta)$
		both satisfy $\partial_t \phi\leq\BDynMH(\phi)$ by the same argument as above.
		Furthermore one has $\phi_\beta(t) \leq 0 \leq \phi_\alpha(t)$ by \thref{lem:LocInterp}\eqref{item:LocalInterpConvexity} and $\LocInterp(t,0)=\LocInterpInv(t,0)=0$ and thus $\phi_{\alpha,\beta}(t) \in [\phi_\beta(t),\phi_\alpha(t)]$.
		Now due to its concavity, $\BDynMH$ attains its minimum on the interval $[\phi_\beta(t),\phi_\alpha(t)]$ at the interval boundary so that with the negativity of $\BDynMH$ we obtain
		\begin{multline*}
			\partial_t \phi_{\alpha,\beta}(t) \leq \BDynMH(\phi_\beta(t)) + \BDynMH(\phi_\alpha(t)) \leq \min\{ \BDynMH(\phi_\beta(t)) , \BDynMH(\phi_\alpha(t)) \}\\
			= \min\{\BDynMH(z)\,|\,z\in[\phi_\beta(t),\phi_\alpha(t)]\}
				\leq \BDynMH(\phi_{\alpha,\beta}(t))\,.\qedhere
		\end{multline*}
\end{proof}

The previous result allows a more explicit description of the set $\BStatPre$ from \eqref{eq:BStatPre}.
\begin{proposition}
	\thlabel{cor:BStatPreForm}
	The set $\BStatPre$ can be characterized by
	\begin{align}
		\label{eq:BStatPreFlowForm}
		\BStatPre  & = \left\{ (\alpha,\beta) \in \R^2 \,\middle|\, 
			(-\alpha,\beta) \in (\dom \BDynMH)^2,\,
			\beta \leq \Flow_1(-\alpha) \right\}\,, \\
		\label{eq:BStatPreClosedFlowForm}
		\ol{\BStatPre}  & = \left\{ (\alpha,\beta) \in \R^2 \,\middle|\, 
			(-\alpha,\beta) \in (\ol{\dom \BDynMH})^2,\,
			\beta \leq \ol{\Flow_1}(-\alpha) \right\}\,, \\
		\intertext{where $\ol{\Flow_1}$ is the concave upper semi-continuous hull of $\Flow_1$ which is given by}
		\ol{\Flow_1}(z) & = \begin{cases}
			\Flow_1(z) & \tn{if } z \in \inter \dom \Flow_1, \\
			\sup \Flow_1(\dom \Flow_1) & \tn{if } z = \sup \dom \Flow_1, \\
			\inf \Flow_1(\dom \Flow_1) & \tn{if } z = \inf \dom \Flow_1.
			\end{cases}
	\end{align}
	Note that $\inf\Flow_1(\dom \Flow_1) = \inf \dom \BDynMH$.
\end{proposition}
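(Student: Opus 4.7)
The plan is to prove \eqref{eq:BStatPreFlowForm} by two inclusions, then to derive \eqref{eq:BStatPreClosedFlowForm} by a closure argument using the stated explicit form of $\overline{\Flow_1}$.

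For the inclusion $\subseteq$ in \eqref{eq:BStatPreFlowForm}: if $(\alpha,\beta)=(-\phi(0),\phi(1))$ for some $\phi$ feasible in \eqref{eq:BStatPre}, then the pointwise constraint $(\partial_t\phi(t),\phi(t))\in\BDynM$ together with \thref{thm:DynConjugateSet} forces $\phi(t)\in\dom\BDynMH$ for every $t$, so in particular $-\alpha=\phi(0)$ and $\beta=\phi(1)$ lie in $\dom\BDynMH$; moreover $\phi$ itself is a feasible candidate in the supremum \eqref{eq:CMassFlow} defining $\Flow_1(-\alpha)$, giving $\beta\leq \Flow_1(-\alpha)$. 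For the reverse inclusion, given $(\alpha,\beta)$ satisfying the right-hand conditions, I apply \thref{lem:LocalInterpolant} to obtain a feasible $\phi_{\alpha,\beta}$ with the prescribed endpoints whenever \thref{asp:QNoKink} holds for both $\BDynMH$ and $\BDynMH(-\cdot)$. In the general case, I use the smoothing from \thref{lem:QKinkSmoothing} and \thref{rem:QKinkSmoothing} to produce $\BDynMHeps\leq\BDynMH$ satisfying the regularity assumption with $\BDynMeps\subseteq\BDynM$; the flows $\Flow_1^\veps$ converge monotonically upward to $\Flow_1$, so the constraints $(-\alpha,\beta)\in(\dom\BDynMHeps)^2$ and $\beta\leq\Flow_1^\veps(-\alpha)$ can be arranged after a small perturbation of $\beta$. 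The resulting $\phi_{\alpha,\beta}^\veps$ are feasible for $\BDynMeps\subseteq\BDynM$, and a compactness argument ($C^1$ bounds follow from the constraint set being bounded in a neighbourhood of the trajectory) yields a $C^1$ limit feasible for $\BDynM$ with the correct endpoints.

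For \eqref{eq:BStatPreClosedFlowForm} I first verify the formula for $\overline{\Flow_1}$. Since $\Flow_1$ is concave by \thref{lem:Flow}\eqref{item:FlowSpaceConcave} and monotone by \thref{lem:Flow}\eqref{item:FlowIncreasing}, it is continuous on $\inter\dom\Flow_1$, so its concave upper semi-continuous hull agrees with $\Flow_1$ there; at each of the two boundary points of $\dom\Flow_1$ the usc extension is necessarily the monotone limit from inside, and by monotonicity of $\Flow_1$ these limits equal $\sup\Flow_1(\dom\Flow_1)$ and $\inf\Flow_1(\dom\Flow_1)$ respectively. The auxiliary identity $\inf\Flow_1(\dom\Flow_1)=\inf\dom\BDynMH$ follows from the integral representation in the proof of \thref{lem:Flow}\eqref{item:FlowIVP}: for $z\in S_-$ the relation $I_z(\Flow_1(z))=1$ combined with the divergence of $\int\frac{1}{\BDynMH}$ as one approaches $\inf\dom\BDynMH$ forces $\Flow_1(z)\to\inf\dom\BDynMH$ as $z$ descends to $\inf\dom\Flow_1$.

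Denote by $T$ the right-hand side of \eqref{eq:BStatPreClosedFlowForm}. Then $T$ is closed, being the product of closed sets intersected with the hypograph of the usc function $\overline{\Flow_1}$, and $\BStatPre\subseteq T$ by \eqref{eq:BStatPreFlowForm} together with $\Flow_1\leq\overline{\Flow_1}$ (points with $-\alpha\in\dom\BDynMH\setminus\dom\Flow_1$ are automatically absent from $\BStatPre$ since the constraint $\beta\leq\Flow_1(-\alpha)=-\infty$ is vacuous, and they are also absent from $T$ under the convention $\overline{\Flow_1}\equiv-\infty$ off $\overline{\dom\Flow_1}$). Hence $\overline{\BStatPre}\subseteq T$. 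For the converse, given $(\alpha,\beta)\in T$, I approximate: when $-\alpha\in\inter\dom\Flow_1$ and $\beta\in\inter\dom\BDynMH$, a small joint perturbation already places the pair in $\BStatPre$ by continuity of $\Flow_1$ on its interior. The main obstacle—and the reason the explicit formula for $\overline{\Flow_1}$ is needed—is the boundary case, where I choose $-\alpha_n\to -\alpha$ from within $\dom\Flow_1$ and $\beta_n\to\beta$ from within $\dom\BDynMH$ so that $\beta_n\leq\Flow_1(-\alpha_n)$; the matching monotone limit $\Flow_1(-\alpha_n)\to\overline{\Flow_1}(-\alpha)\geq\beta$ (justified by the identity $\inf\Flow_1(\dom\Flow_1)=\inf\dom\BDynMH$ at the lower endpoint) guarantees that such sequences exist and that $(\alpha_n,\beta_n)\in\BStatPre$ with $(\alpha_n,\beta_n)\to(\alpha,\beta)$, completing $T\subseteq\overline{\BStatPre}$.
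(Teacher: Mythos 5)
Your overall plan is correct, and the forward inclusion as well as the closure argument for \eqref{eq:BStatPreClosedFlowForm} are in line with the paper. However, the reverse inclusion for \eqref{eq:BStatPreFlowForm} contains an unnecessary and incompletely justified detour. You introduce the smoothing from \thref{lem:QKinkSmoothing} and \thref{rem:QKinkSmoothing} because you read \thref{lem:LocalInterpolant} as requiring \thref{asp:QNoKink}. It does not. That assumption is needed only for the \emph{spatial} differentiability of $\LocInterp(t,z)$ and $\LocInterpInv(t,z)$ in $z$ (\thref{lem:LocInterp}(\ref{item:LocalInterpZDerivative})--(\ref{item:LocalInterpConvexity})), which is used in \thref{lem:DynamicDualConstruction} to bound $\nabla\phi$. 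Here $(\alpha,\beta)$ is a fixed pair of scalars and $\phi_{\alpha,\beta}$ is a function of $t$ alone; only the time regularity from \thref{lem:LocInterp}(\ref{item:LocalInterpDomain})--(\ref{item:LocalInterpBoundary}) is needed, and that holds unconditionally. (The one place the proof of \thref{lem:LocalInterpolant} cites \thref{lem:LocInterp}(\ref{item:LocalInterpConvexity}) is to obtain $\phi_\beta\leq0\leq\phi_\alpha$, but this is immediate without the assumption: for $z\geq0$, $\Flow_t(z)\geq\Flow_1(z)\geq0$ gives $\LocInterp(t,z)=\Flow_t(z)-t\Flow_1(z)\geq(1-t)\Flow_1(z)\geq0$.) The paper therefore invokes \thref{lem:LocalInterpolant} directly with no smoothing, and so should you.

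Beyond the unnecessary detour, your compactness step is not a proof: a uniform $C^1$ bound on $\phi^\veps_{\alpha,\beta}$ does not produce a $C^1$-convergent subsequence. You would need equicontinuity of the derivatives $\partial_t\phi^\veps$, which is not established (and is not obvious, since near $\zKink$ or the boundary of $\dom\BDynMH$ the smoothed right-hand sides $\BDynMHeps$ need not be uniformly Lipschitz). As written, the limit $\phi$ is only continuous; feasibility in \eqref{eq:BStatPre} requires $\phi\in C^1([0,1])$.

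One smaller point: your justification of $\inf\Flow_1(\dom\Flow_1)=\inf\dom\BDynMH$ attributes the relevant divergence of $\int \d x/\BDynMH(x)$ to the behaviour near $\inf\dom\BDynMH$. That is not where the divergence sits; near $\inf\dom\BDynMH$ the integrand is bounded (or tends to zero). The divergence occurs at the other end, as the upper limit of integration approaches $\zKinkN$, where $\BDynMH\to0$ and concavity with an interior fixed point forces a (log-)divergence. That divergence is what guarantees that for every starting $z$ in the open negative branch the flow survives to time $1$, so that $\Flow_1(z)$ sweeps continuously down to $\inf\dom\BDynMH$ as $z$ decreases to $\inf\dom\Flow_1$. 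Your conclusion is correct; the stated reason is not.
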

\begin{proof}
	\textbf{(\ref{eq:BStatPre}) $\subset$ (\ref{eq:BStatPreFlowForm}):}
	For $(\alpha,\beta) \in \text{\eqref{eq:BStatPre}}$ by definition $(-\alpha,\beta) \in (\dom \BDynMH)^2$ and there is a feasible candidate $\phi$ in \eqref{eq:CMassFlow} for $\Flow_1(-\alpha)$ with $\phi(0)=-\alpha$ and $\phi(1)=\beta$, thus $\Flow_1(-\alpha)\geq \beta$.
	
	\textbf{(\ref{eq:BStatPreFlowForm}) $\subset$ (\ref{eq:BStatPre}):}	
	For $(\alpha,\beta) \in \text{\eqref{eq:BStatPreFlowForm}}$ the function $\phi_{\alpha,\beta}$ constructed in \thref{lem:LocalInterpolant} establishes that $(\alpha,\beta) \in \text{\eqref{eq:BStatPre}}$.
	
	Since $\Flow_1$ is concave it is continuous on its domain. Hence, $\ol{\Flow_1}$ only differs at the extreme points of the domain, and the values at those points follow from the monotonicity of $\Flow_1$.
\end{proof}

For $(\alpha,\beta) \in C^1(\Omega) \cap \Lip(\Omega)$ with $(\alpha(x),\beta(x)) \in \BStatPre$ for all $x \in \Omega$, the interpolation of \thref{lem:LocalInterpolant} at each point $x \in \Omega$ yields a function $\phi \in C^1([0,1] \times \Omega)$ such that $(\partial_t \phi(t,x),\phi(t,x)) \in \BDynM$ for all $(t,x)$. In the following Lemma we show that this pointwise interpolation also satisfies $\|\nabla\phi(t,x)\|\leq 1$ so that $\phi$ is feasible for the dynamic dual problem. This is a crucial part of proving $W_S(\rho_0,\rho_1) \leq W_D(\rho_0,\rho_1)$ in \thref{prop:W1DynamicStaticEquivalence}.

\begin{lemma}[Construction of feasible dynamic dual candidates]
	\thlabel{lem:DynamicDualConstruction}
	Assume $\BDynMH$ and $\BDynMHTild=\BDynMH(-\cdot)$ satisfy \thref{asp:QNoKink}.
	Let $\alpha, \beta \in C^1(\Omega) \cap \Lip(\Omega)$ with $(\alpha(x),\beta(x)) \in \BStatPre$ for all $x \in \Omega$.
	For given $(a,b) \in \BStatPre$ let $\phi_{a,b}$ be the corresponding function \eqref{eq:LocalInterpolantNew}.
	Then $\phi : [0,1] \times \Omega \to \R$,
	\begin{align*}
		\phi(t,x) & = \phi_{\alpha(x),\beta(x)}(t)
		= \LocInterp(t,-\alpha(x)) + \LocInterpInv(t,\beta(x))\,,
	\end{align*}
	satisfies $\phi \in C^1([0,1] \times \Omega)$, $\phi(0,\cdot) = -\alpha$ and $\phi(1,\cdot) = \beta$, $\|\nabla \phi(x,t)\| \leq 1$ and $(\partial_t \phi(t,x), \phi(t,x) ) \in \BDynM$ for all $(t,x) \in [0,1] \times \Omega$.
\end{lemma}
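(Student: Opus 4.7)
The plan is to verify each of the four stated properties of $\phi$ in turn, relying entirely on the pointwise analysis of the auxiliary functions $\LocInterp$ and $\LocInterpInv$ carried out in \thref{lem:LocInterp} and on the single-point interpolation result \thref{lem:LocalInterpolant}, combined with the $C^1$ and Lipschitz regularity of $\alpha$ and $\beta$. The boundary conditions and the membership $(\partial_t\phi,\phi)\in\BDynM$ are essentially direct corollaries; the only genuinely new content is the gradient bound $\|\nabla\phi\|\leq 1$.

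First I would handle the boundary conditions and the $C^1$ regularity. By \thref{lem:LocInterp}\eqref{item:LocalInterpBoundary} we have $\LocInterp(0,z)=z$, $\LocInterp(1,z)=0$, $\LocInterpInv(0,z)=0$, $\LocInterpInv(1,z)=z$, so $\phi(0,\cdot)=-\alpha$ and $\phi(1,\cdot)=\beta$. For the regularity, \thref{lem:LocInterp}\eqref{item:LocalInterpDomain} provides continuity and differentiability of $\LocInterp$ and $\LocInterpInv$ in $t$, and \thref{lem:LocInterp}\eqref{item:LocalInterpZDerivative} provides differentiability in $z$ — this is exactly where \thref{asp:QNoKink} for both $\BDynMH$ and $\BDynMHTild$ is used. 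Composing with $\alpha,\beta\in C^1(\Omega)$ then yields $\phi\in C^1([0,1]\times\Omega)$.

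Next, the constraint $(\partial_t\phi(t,x),\phi(t,x))\in\BDynM$ is verified pointwise in $x$: for each fixed $x\in\Omega$, the map $t\mapsto\phi(t,x)$ is, by construction, $\phi_{\alpha(x),\beta(x)}$, and since $(\alpha(x),\beta(x))\in\BStatPre$ by assumption, \thref{lem:LocalInterpolant} immediately gives $(\partial_t\phi_{\alpha(x),\beta(x)}(t),\phi_{\alpha(x),\beta(x)}(t))\in\BDynM$ for every $t\in[0,1]$.

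The core of the proof is the Lipschitz bound. Differentiating in $x$ gives
\begin{equation*}
\nabla\phi(t,x) = -\partial_z\LocInterp(t,-\alpha(x))\,\nabla\alpha(x) + \partial_z\LocInterpInv(t,\beta(x))\,\nabla\beta(x).
\end{equation*}
By \thref{lem:LocInterp}\eqref{item:LocalInterpConvexity}, $\partial_z\LocInterp(t,-\alpha(x))\in[0,1-t]$ and $\partial_z\LocInterpInv(t,\beta(x))\in[0,t]$. Since $\alpha,\beta\in\Lip(\Omega)$ with Lipschitz constant at most $1$ with respect to the geodesic distance $d$ on $\Omega$, and $d$ coincides locally with the Euclidean norm in the interior, their Euclidean gradients satisfy $\|\nabla\alpha\|,\|\nabla\beta\|\leq 1$ pointwise. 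The triangle inequality then yields
\begin{equation*}
\|\nabla\phi(t,x)\| \leq (1-t)\,\|\nabla\alpha(x)\| + t\,\|\nabla\beta(x)\| \leq (1-t)+t = 1.
\end{equation*}

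The main obstacle is conceptually the gradient bound, but the real work has already been done in \thref{lem:LocInterp}\eqref{item:LocalInterpConvexity}, whose careful derivative estimates on the interpolants (in particular the convexity argument for $\LocInterpAux\circ\Flow_t$) are precisely what guarantees the convex combination of weights $(1-t)$ and $t$; once those bounds are available, the present lemma is essentially a bookkeeping exercise. The one point requiring attention is that the regularity statement in \thref{lem:LocInterp}\eqref{item:LocalInterpZDerivative}, and hence the $C^1$ conclusion for $\phi$, genuinely needs \thref{asp:QNoKink} on both $\BDynMH$ and $\BDynMHTild$, which is exactly why this hypothesis is placed in the statement.
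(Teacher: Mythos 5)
Your proof is correct and follows essentially the same route as the paper's: both reduce everything to the pointwise statements of \thref{lem:LocalInterpolant} (for the $\BDynM$-constraint) and \thref{lem:LocInterp}\eqref{item:LocalInterpDomain}--\eqref{item:LocalInterpConvexity} (for $C^1$ regularity and the gradient bound), with the chain-rule computation combining $\partial_z\LocInterp\in[0,1-t]$, $\partial_z\LocInterpInv\in[0,t]$, and $\|\nabla\alpha\|,\|\nabla\beta\|\le 1$. The paper states this in two sentences; you spell out the convex-combination estimate, which is exactly the implicit argument.
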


\begin{proof}
	By virtue of \eqref{eq:BStatPreFlowForm} $(\alpha(x),\beta(x))$ satisfy the requirements of \thref{lem:LocalInterpolant} for all $x \in \Omega$ so  $\phi$ and $\partial_t \phi$ are well-defined with $(\partial_t \phi(t,x),\phi(t,x)) \in \BDynM$ at each point $(t,x)$.
	With \thref{asp:QNoKink} for $\BDynMH$ and $\BDynMH(-\cdot)$ the function $\phi$ is differentiable in space with $\|\nabla\phi(t,x)\| \leq 1$ (\thref{lem:LocInterp} \eqref{item:LocalInterpZDerivative} and \eqref{item:LocalInterpConvexity}).
\end{proof}

\begin{proof}[Proof of Proposition \ref{prop:W1DynamicStaticEquivalence}]
	$\boldsymbol{W_D(\rho_0,\rho_1) \leq W_S(\rho_0,\rho_1)}$:
	Let $\phi \in C^1([0,1] \times \Omega)$ with $D_D(\phi) > -\infty$ (if no such $\phi$ exists, the inequality is trivial). Set $\alpha = -\phi(0,\cdot)$, $\beta = \phi(1,\cdot)$. Clearly, $\alpha$, $\beta \in \Lip(\Omega)$ and $(\alpha(x),\beta(x)) \in \BStat$ for all $x \in \Omega$. Hence, $D_S(\alpha,\beta) = D_D(\phi)$.
	
	$\boldsymbol{W_D(\rho_0,\rho_1) \geq W_S(\rho_0,\rho_1)}$:
	First we show that $\BStat$ from \eqref{eq:BStat} can be replaced by $\ol{\BStatPre}$ and subsequently $\BStatPre$ in \eqref{eq:StaticDualObjective} without changing the value of \eqref{eq:StaticDual}.
	
	\newcommand{\alphaMin}{\alpha_{\tn{min}}}
	\newcommand{\betaMin}{\beta_{\tn{min}}}
	Let $\alphaMin=-\sup \dom \Flow_1=-\sup \dom \BDynMH$ and $\betaMin=\inf \dom \FlowInv_1=\inf \dom \BDynMH$.
	Further, let $(\alpha,\beta)$ be static dual feasible with $D_D(\alpha,\beta)>-\infty$ for the set \eqref{eq:BStat}. Using the characterization \eqref{eq:BStatPreClosedFlowForm} of $\ol{\BStatPre}$, we deduce that for every $x \in \Omega$ the condition $(\alpha(x),\beta(x)) \in \BStat$ (as defined in \eqref{eq:BStat}) implies that there are some $(u,v) \in [0,\infty)^2$ such that $(-\alpha(x)-u,\beta(x)+v) \in \ol{\dom{\BDynMH}}^2$ and
	\begin{align*}
		\beta(x) \leq \beta(x) + v \leq \ol{\Flow_1}(-\alpha(x)-u)\,.
	\end{align*}
	Since $-\alpha(x) - u \leq \min\{-\alpha(x),-\alphaMin\}$ and $\ol{\Flow_1}$ is monotonically increasing on $\ol{\dom \BDynMH}$ this implies $\beta(x) \leq \ol{\Flow_1}(-\alpha(x))$ if $-\alpha(x) \in \ol{\dom \BDynMH}$ and $\beta(x) \leq \ol{\Flow_1}(-\alphaMin)$ otherwise. Now consider
	\begin{align*}
		\hat{\alpha}(x) & = \max\{ \alphaMin, \alpha(x)\}, &
		\hat{\beta}(x) & = \max\{ \betaMin, \beta(x)\}\,.		
	\end{align*}
	Then $\hat{\alpha}$ and $\hat{\beta}$ still lie in $\Lip(\Omega)$, they majorize $\alpha$ and $\beta$, and $(\hat{\alpha}(x),\hat{\beta}(x))\in \ol{\BStatPre}$ for all $x \in \Omega$. Thus replacing $\BStat$ by $\ol{\BStatPre}$ in \eqref{eq:StaticDualObjective} does not change the value of \eqref{eq:StaticDual}.
	Further, note that $\lambda \cdot \ol{\BStatPre} \subset \BStatPre$ for all $\lambda\in(0,1)$ (since $\Flow_1$ is concave and $\Flow_1(0)=0$, see \thref{lem:Flow}). Let $(\lambda_n)_{n=0}^\infty$ be a sequence in $(0,1)$ with $\lim_{n \to \infty} \lambda_n =1$. Then for any $(\hat{\alpha},\hat{\beta}) \in \Lip(\Omega)^2$ with $(\hat{\alpha}(x),\hat{\beta}(x)) \in \ol{\BStatPre}$ for all $x \in \Omega$, the sequence $(\lambda_n \cdot (\hat{\alpha},\hat{\beta}))_{n=0}^\infty$ in $\Lip(\Omega)^2$ lies pointwise in $\BStatPre$. Since $\Omega$ is compact the sequence converges uniformly to $(\hat{\alpha},\hat{\beta})$ and so the integral of $\lambda_n \cdot (\hat{\alpha},\hat{\beta})$ with respect to $\rho_0$ and $\rho_1$ converges to the one of $(\hat{\alpha},\hat{\beta})$. Hence, using $\BStatPre$ instead of $\BStat$ yields the same supremum.
	
	Next we show that in \eqref{eq:StaticDual} we may even restrict to $(\alpha,\beta)\in C^1(\Omega)^2$.
	Indeed, given $\alpha,\beta\in\Lip(\Omega)$ with $(\alpha(x),\beta(x))\in\BStatPre$ for all $x\in\Omega$, let
	\begin{equation*}
	\alpha_\delta=(1-\delta)\, \alpha -\delta\, M\,,\qquad
	\beta_\delta=(1-\delta) \, \beta -\delta\, M
	\end{equation*}
	for $M=\min(|\alphaMin|,|\betaMin|)/2$, $\delta \in (0,1)$,
	then also $\alpha_\delta,\beta_\delta\in\Lip(\Omega)$. By convexity of $\BStatPre$ and since $[-2 M,0]^2 \subset \BStatPre$ one finds $(\alpha_\delta(x),\beta_\delta(x))+[-\delta M,\delta M]^2=(1-\delta)\cdot(\alpha(x),\beta(x))+\delta\cdot[-2M,0]^2\subset\BStatPre$ for all $x\in\Omega$.
	By the density of $C^1(\Omega)\cap\Lip(\Omega)$ in $\Lip(\Omega)$ with respect to uniform convergence due to \thref{thm:StoneWeierstrass}
	there exist $\hat\alpha_\delta,\hat\beta_\delta\in C^1(\Omega)\cap\Lip(\Omega)$ with $|\hat\alpha_\delta-\alpha_\delta|,|\hat\beta_\delta-\beta_\delta|\leq\delta M$
	and thus $(\hat\alpha_\delta(x),\hat\beta_\delta(x))\in\BStatPre$ for all $x\in\Omega$.
	Furthermore, as $\delta\to0$ we have $(\hat\alpha_\delta,\hat\beta_\delta)\to(\alpha,\beta)$ uniformly so that $D_S(\hat\alpha_\delta,\hat\beta_\delta)\to D_S(\alpha,\beta)$.
	Hence the value of \eqref{eq:StaticDual} does not change if we restrict additionally to $(\alpha,\beta)\in C^1(\Omega)$.
	
	Now let $(\alpha,\beta) \in [C^1(\Omega) \cap \Lip(\Omega)]^2$ with $(\alpha(x),\beta(x)) \in \BStatPre$ for all $x \in \Omega$.
	If $\BDynMH$ and $\BDynMH(-\cdot)$ satisfy \thref{asp:QNoKink} then we can directly use \thref{lem:DynamicDualConstruction} to construct a $\phi \in C^1([0,1] \times \Omega)$ with $D_D(\phi)=D_S(\alpha,\beta)$ and thus establish equivalence.
	Otherwise we will now construct a sequence $(\phi_j)_{j=1}^\infty$ with $\phi_j \in C^1([0,1] \times \Omega)$ such that $\lim_{j \to \infty} D_D(\phi_j) = D_S(\alpha,\beta)$.
	Let $\veps_j>0$ be a sequence converging to zero as $j\to\infty$ and set $\lambda_j=1-C\veps_j$ for $C$ from \thref{rem:QKinkSmoothing}.
	Let $\Flow_{s,j}$ (and $\FlowInv_{s,j}$) be the flow (and its inverse) of the smoothed set $\BDynMepsN$ from \thref{rem:QKinkSmoothing}
	and let $\BStatPreEpsN$ be the corresponding set induced via \eqref{eq:BStatPre} or \eqref{eq:BStatPreFlowForm}.
	Set $(\alpha_j,\beta_j)=\lambda_j \cdot (\alpha,\beta)\in[C^1(\Omega)\cap\Lip(\Omega)]^2$, then $(\alpha_j(x),\beta_j(x))\in\BStat$ for all $x\in\Omega$.
	Hence, since $\lambda_j \to 1$ and $\Omega$ is compact, $(\alpha_j,\beta_j) \to (\alpha,\beta)$ uniformly and thus $D_S(\alpha_j,\beta_j) \to D_S(\alpha,\beta)$.
	As $\lambda_j \cdot \BDynM \subset \BDynMepsN$ one has $\lambda_j \cdot \BStatPre \subset \BStatPreEpsN$ and so $(\alpha_j(x),\beta_j(x)) \in \BStatPreEpsN$ for all $x \in \Omega$.
	Therefore, the function $\phi_j$, constructed from $\Flow_{s,j}$ and $\FlowInv_{s,j}$ via \thref{lem:DynamicDualConstruction} for $\BStatPreEpsN$, satisfies $D_D(\phi_j)=D_S(\alpha_j,\beta_j)$.
\end{proof}

\begin{proposition}
	\thlabel{cor:HStatFromDyn}
	The function $\HStat$ corresponding to $\BStat$ as given by \eqref{eq:BStat} is given by
	\begin{align*}
		\HStat(z) & = \begin{cases}
		\Flow_1(z) & \tn{if } z \in \inter \dom \Flow_1, \\
		\sup \Flow_1(\dom \Flow_1) & \tn{if } z \geq \sup \dom \Flow_1, \\
		\inf \Flow_1(\dom \Flow_1) & \tn{if } z = \inf \dom \Flow_1,\\
		-\infty & \tn{if } z < \inf \dom \Flow_1,
		\end{cases}
	\end{align*}
	where $F_1$ is the flow of $\BDynM$ at time $t=1$, as defined in \thref{lem:Flow}.
\end{proposition}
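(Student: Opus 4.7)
The plan is to compute $\HStat(z)$ directly from its defining property in \thref{prop:StaticEquivalence}, namely $\BStat = \{(\alpha,\beta)\in\R^2 \,:\, \beta \leq \HStat(-\alpha)\}$. This gives
\begin{align*}
\HStat(z) = \sup\{\beta\in\R \,:\, (-z,\beta) \in \BStat\},
\end{align*}
and I would evaluate this supremum explicitly in the four regimes of $z$ appearing in the claim.

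First I unfold the definition $\BStat = \ol{\BStatPre} + (-\infty,0]^2$ from \eqref{eq:BStat}: the point $(-z,\beta)$ lies in $\BStat$ if and only if there exist $z' \leq z$ and $\tilde\beta \geq \beta$ with $(-z',\tilde\beta) \in \ol{\BStatPre}$. Plugging in the explicit description \eqref{eq:BStatPreClosedFlowForm} from \thref{cor:BStatPreForm}, and using the natural convention $\ol{\Flow_1} \equiv -\infty$ outside $\ol{\dom\Flow_1}$ (so that the bound $\tilde\beta \leq \ol{\Flow_1}(z')$ forces $z' \in \ol{\dom\Flow_1}$, while $\tilde\beta$ may be saturated at $\ol{\Flow_1}(z')$ since that value automatically lies in $\ol{\dom\BDynMH}$ by \thref{lem:Flow}\eqref{item:FlowDiagonal} and \eqref{item:FlowPositive}), one obtains
\begin{align*}
\HStat(z) = \sup\{\ol{\Flow_1}(z') \,:\, z' \leq z,\ z' \in \ol{\dom\Flow_1}\}
\end{align*}
with the convention $\sup\emptyset = -\infty$.

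Next, $\ol{\Flow_1}$ is monotonically increasing on $\ol{\dom\Flow_1}$ by \thref{lem:Flow}\eqref{item:FlowIncreasing} together with the boundary prescription of \thref{cor:BStatPreForm}, so the supremum is attained at the largest admissible $z'$. The four cases of the claim then follow by inspection: for $z \in \inter\dom\Flow_1$ take $z' = z$ to obtain $\Flow_1(z)$; for $z \geq \sup\dom\Flow_1$ take $z' = \sup\dom\Flow_1$ to obtain $\sup\Flow_1(\dom\Flow_1)$; for $z = \inf\dom\Flow_1$ only $z' = z$ is feasible, giving $\inf\Flow_1(\dom\Flow_1)$; and for $z < \inf\dom\Flow_1$ the feasible set is empty, yielding $-\infty$.

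The main point requiring care is the interplay between $\ol{\dom\Flow_1}$ and $\ol{\dom\BDynMH}$ in \eqref{eq:BStatPreClosedFlowForm}: although the latter appears as the ambient set constraint, $\ol{\Flow_1}$ is only finite on $\ol{\dom\Flow_1} \subset \ol{\dom\BDynMH}$, and this inclusion can be strict on the negative side since a trajectory may exit $\dom\BDynMH$ before time $1$. Handling this gap correctly is precisely what produces the $-\infty$ branch for $z < \inf\dom\Flow_1$; the remainder of the argument is a straightforward optimization.
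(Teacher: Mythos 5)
Your derivation of the formula is correct and follows essentially the same route as the paper: combine \eqref{eq:BStat} with \thref{cor:BStatPreForm} and optimize over the $(-\infty,0]^2$ translate, using monotonicity of $\ol{\Flow_1}$ to read off the four cases. The paper states this derivation only as ``this follows from \eqref{eq:BStat} and \thref{cor:BStatPreForm}'' while you supply the details, including the subtlety that $\ol{\dom\Flow_1}$ may be strictly smaller than $\ol{\dom\BDynMH}$ on the negative side.

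Two small points. First, for $z'<0$ in $\dom\Flow_1$, the inclusion $\Flow_1(z')\in\dom\BDynMH$ does not follow from \thref{lem:Flow}\eqref{item:FlowDiagonal} and \eqref{item:FlowPositive} alone (those cover $z'\geq 0$); you should instead invoke \thref{lem:Flow}\eqref{item:FlowBasic}, which says the entire trajectory $t\mapsto\Flow_t(z')$ lies in the constraint set and hence $\Flow_1(z')\in\dom\BDynMH$. Second, the paper's proof also verifies that the resulting $\HStat$ actually satisfies the structural properties required in \thref{prop:StaticEquivalence} (concavity, upper semi-continuity, monotonicity, $\HStat\leq\mathrm{id}$, $\HStat(0)=0$, $\HStat'(0)=1$), citing \thref{lem:Flow}\eqref{item:FlowIncreasing}, \eqref{item:FlowSpaceConcave}, \eqref{item:FlowDiagonal}, \eqref{item:FlowZeroDerivative}. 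Since \thref{prop:StaticEquivalence} only guarantees the representation $\BStat=\{(\alpha,\beta):\beta\leq\HStat(-\alpha)\}$ for sets of the prescribed form, this check is needed to conclude that your $\HStat$ is indeed ``the function corresponding to $\BStat$''; you should add it.
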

\begin{proof}
This follows from \eqref{eq:BStat} and \thref{cor:BStatPreForm}. The properties required for $\HStat$ in \thref{prop:StaticEquivalence} follow from \thref{lem:Flow}: $\HStat$ is increasing and concave by virtue of (\ref{item:FlowIncreasing},\ref{item:FlowSpaceConcave}), $\HStat$ is continuous on $\inter \dom \Flow_1$ by concavity of $\Flow_1$ and upper semi-continuous on $\R$ by construction due to monotonicity of $\Flow_1$, and $\HStat(z) \leq z$ follows from (\ref{item:FlowDiagonal}) and $\HStat(0)=0$, $\HStat'(0)=1$ from (\ref{item:FlowZeroDerivative}).
\end{proof}

\subsection{Dynamic primal optimizers from static primal optimizers}
\label{sec:primalRelation}
The equivalence of the dynamic and static models has been established in \thref{prop:W1DynamicStaticEquivalence} via equivalence of their dual formulations. In this section we clarify the primal interpretation.
\thref{prop:W1DynamicStaticEquivalencePrimal} provides the relation between the dynamic and the static mass change penalties $\CDynM$ and $\SimLocC$ from the primal formulations \thref{def:Dynamic} and \thref{def:StaticPrimal}.
Based on this, \thref{prop:DynamicPrimalOptimizers} shows how to construct optimzers of the primal dynamic problem \eqref{eq:DynamicProblem} from optimizers of the primal static problem \eqref{eq:StaticPrimalProblem}.

\begin{proposition}[Relation between dynamic and static mass change penalties]
	\thlabel{prop:W1DynamicStaticEquivalencePrimal}
	For a dynamic unbalanced transport problem \eqref{eq:DynamicProblem} with mass change penalty $\CDynM$, the mass change penalty $\SimLocC$ of the equivalent static problem \eqref{eq:StaticPrimalProblem} is given by
	\begin{equation}
		\label{eq:InducedSimLocC}
		\SimLocC(m_0,m_1) = \min \left\{ \int_0^1 \CDynM(\RadNik{\rho}{\mu},\RadNik{\zeta}{\mu})\,\d\mu
			\ \middle|\
			(\rho,\zeta) \in \mc{CES}(m_0,m_1) \right\}\,,
  \end{equation}
	where $\mu$ is any measure in $\measp([0,1])$ with $\rho$, $\zeta \ll \mu$ and
	\begin{multline*}
		\mc{CES}(m_0,m_1) = \left\{\vphantom{\int_0^1}
			(\rho,\zeta) \in \meas([0,1])^2 \ \middle|\right.\\
			\left.\int_0^1 (\partial_t \phi)\,\d\rho + \int_0^1 \phi\,\d\zeta = m_1 \cdot \phi(1) - m_0 \cdot \phi(0) \text{ for all }\phi\in C^1([0,1]) \right\}\,.
	\end{multline*}
	If \eqref{eq:InducedSimLocC} is finite, it admits a minimizer.
\end{proposition}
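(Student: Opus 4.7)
My plan is to apply Fenchel-Rockafellar duality to the minimization problem on the right-hand side in a manner exactly parallel to the proof of \thref{prop:DynamicDual}, but in a one-dimensional setting without any spatial variable. Let $J(m_0,m_1)$ denote this right-hand side; the claim then amounts to $J = \SimLocC$ on $[0,\infty)^2$ (with both sides equal to $+\infty$ outside this orthant).

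First, I would set up the duality with
\begin{align*}
F : C([0,1];\R^2) \to \RCupInf,\quad & (\psi,\gamma) \mapsto \int_0^1 \iota_{\BDynM}(\psi(t),\gamma(t))\,\d t,\\
G : C^1([0,1]) \to \RCupInf,\quad & \phi \mapsto m_1\phi(1) - m_0\phi(0),\\
A : C^1([0,1]) \to C([0,1];\R^2),\quad & \phi \mapsto (\partial_t\phi,\phi).
\end{align*}
The Fenchel-Rockafellar hypotheses (convexity and lower semi-continuity of $F$ and $G$, boundedness of $A$, and the existence of some $\phi$ with $G(-\phi)<\infty$ at which $F$ is continuous at $A\phi$) can be verified verbatim as in \thref{prop:DynamicDual}, using the same interior-point construction $\phi(t)=c\exp(-at/c)$ for suitable $(-a,c)\in\inter\BDynM$. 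The duality then simultaneously delivers existence of a primal minimizer when the value is finite and the identity
\begin{equation*}
J(m_0,m_1) \;=\; \sup\{m_1\phi(1) - m_0\phi(0)\,:\,\phi\in C^1([0,1]),\;(\partial_t\phi(t),\phi(t))\in\BDynM\text{ for all }t\in[0,1]\}\,,
\end{equation*}
since $A^\ast$ produces the indicator of $\mc{CES}(m_0,m_1)$ and $F^\ast$ produces the integral cost via \cite[Thm.\,5]{Rockafellar-IntegralConvexFunctionals71}, just as in the proof of \thref{prop:DynamicDual}.

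Second, under the substitution $\alpha=-\phi(0)$, $\beta=\phi(1)$, this supremum is precisely the support function $\iota_{\BStatPre}^\ast(m_0,m_1)$ by the definition of $\BStatPre$ in \eqref{eq:BStatPre}. For $(m_0,m_1)\in[0,\infty)^2$, passing from $\BStatPre$ to $\BStat=\ol{\BStatPre}+(-\infty,0]^2$ does not alter the support function: closure is automatically neutral for support functions, and adding elements of $(-\infty,0]^2$ can only decrease $\alpha m_0+\beta m_1$ since $m_0,m_1\geq 0$. Invoking \thref{prop:StaticEquivalence} we have $\iota_{\BStat}=\SimLocC^\ast$, so the Fenchel-Moreau theorem applied to the convex, proper, lower semi-continuous local discrepancy $\SimLocC$ yields $\iota_{\BStat}^\ast=\SimLocC$. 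Combining these identifications, $J=\SimLocC$ on $[0,\infty)^2$; outside this orthant both functions equal $+\infty$ by the conditions on local discrepancies and by direct inspection of the support function.

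The most delicate step should be the verification of the Fenchel-Rockafellar interior-point condition in the 1D setting (simpler than in \thref{prop:DynamicDual} due to the absence of the spatial Lipschitz constraint, but still requiring a $\phi\in C^1([0,1])$ whose graph $t\mapsto(\partial_t\phi(t),\phi(t))$ lies in $\inter\BDynM$ throughout $[0,1]$), together with the careful bookkeeping of the sign convention $\alpha=-\phi(0)$, $\beta=\phi(1)$ and the passage from $\BStatPre$ to $\BStat$ via closure and addition of $(-\infty,0]^2$.
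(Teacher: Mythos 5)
Your proof is correct on the relevant domain and follows essentially the same route as the paper's: Fenchel--Rockafellar duality applied in the one-dimensional setting, reducing the problem to the support-function identity $\iota_{\BStatPre}^\ast=\iota_{\BStat}^\ast$ on $[0,\infty)^2$ and then to $\SimLocC^{\ast\ast}=\SimLocC$. The direction is reversed (you go from the $\min$ to $\SimLocC$ rather than from $\SimLocC$ to the $\min$) and you are somewhat more explicit about why closing $\BStatPre$ and adding $(-\infty,0]^2$ does not alter the support function for nonnegative $(m_0,m_1)$, which the paper treats in a single line; but these are the same duality and the same identifications.

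One small overreach worth flagging: your closing remark that \emph{both} functions equal $+\infty$ outside $[0,\infty)^2$ ``by direct inspection of the support function'' is not quite right. What the duality yields is $J=\iota_{\BStatPre}^\ast$, and this can be finite for $m_0<0\leq m_1$ when $\CDynM(0,1)<\infty$: the pair $\rho=0$, $\zeta=|m_0|\delta_0+m_1\delta_1$ lies in $\mc{CES}(m_0,m_1)$ and has cost $(|m_0|+m_1)\,\CDynM(0,1)$, so $J(m_0,m_1)<\infty$ while $\SimLocC(m_0,m_1)=+\infty$ by definition of a local discrepancy. This does not affect the substance, since $\SimLocC$ only ever gets evaluated on $[0,\infty)^2$ in the static model and the paper's own proof implicitly restricts to this orthant at the step where $\sup$ over $\BStat$ is replaced by $\sup$ over $\BStatPre$; but the clean statement is simply that the identity holds on $[0,\infty)^2$, rather than claiming that the right-hand side is $+\infty$ elsewhere.
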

The set $\mc{CES}(m_0,m_1)$ encodes a weak `single point continuity equation' with no transport and only growth, in analogy to \thref{def:ContinuityEquation}.
Thus $\SimLocC$ represents the minimum cost of a time trajectory $(\rho,\zeta) \in \mc{CES}(m_0,m_1)$ along which mass change is penalized by $\CDynM$.
\begin{proof}
By \thref{prop:StaticEquivalence} we have $\iota_{\BStat}=\SimLocC^\ast$ and thus
  \begin{align*}
    \SimLocC(m_0,m_1) & = \sup \left\{ m_0\cdot \alpha + m_1 \cdot \beta - \iota_{\BStat}(\alpha,\beta) \right\} \\
      & = \sup \left\{
        m_1 \cdot \phi(1) - m_0 \cdot \phi(0) \ \middle|\
          \phi \in C^1([0,1]),\,
          (\partial_t \phi(t),\phi(t)) \in \BDynM \tn{ for all } t \in [0,1] \right\}\,,
  \end{align*}
where we have used the relation \eqref{eq:BStat} between the sets $\BStat$ and $\BDynM$ from \thref{prop:W1DynamicStaticEquivalence}.
The right-hand side is essentially a variant of the dynamic unbalanced transport problem \eqref{eq:DynamicDual} without transport, so what follows is a variant of the proof of \thref{prop:DynamicDual}. Abbreviating
  \begin{align*}
    G(\phi) & = m_1 \cdot \phi(1) - m_0 \cdot \phi(0), &
    F(\partial_t \phi, \phi) & = \int_0^1 \iota_{\BDynM}(\partial_t \phi(t),\phi(t))\,\d t, &
    A\phi & = (\partial_t \phi, \phi)\,,
  \end{align*}
  we just repeat the arguments from the proof of \thref{prop:DynamicDual} to obtain
  \begin{align*}
  \SimLocC(m_0,m_1) & = \sup \left\{ -F(A\phi)-G(-\phi)\ \middle|\ \phi \in C^1([0,1]) \right\}\\
  &=\min \left\{ F^\ast(\rho,\zeta) + G^\ast(A^\ast(\rho,\zeta)) \ \middle|\
      (\rho,\zeta) \in \meas([0,1])^2 \right\}\,,
  \end{align*}
  where $G^\ast(A^\ast(\rho,\zeta))=\iota_{\mc{CES}(m_0,m_1)}(\rho,\zeta)$ and $F^\ast(\rho,\zeta)=\int_0^1 \CDynM(\RadNik{\rho}{\mu},\RadNik{\zeta}{\mu})\,\d\mu$ due to $\iota_{\BDynM}=\CDynM^\ast$.
\end{proof}

In order to construct an optimal triple $(\rho,\omega,\zeta)$ for the dynamic problem \eqref{eq:DynamicProblem} from an optimal pair $(\pi_0,\pi_1)$ for the static problem \eqref{eq:StaticPrimalProblem},
we first introduce two basic building blocks, a pure mass flow between two points and a pure mass change at a single point.

\begin{definition}[Singular mass flow and change]
\thlabel{def:SingularMassFlowChange}
For given $x,y\in\Omega$ denote a shortest path between them by $p_{x,y}:[0,1]\to\Omega$ with $p_{x,y}(0)=x$ and $p_{x,y}(1)=y$.
By the \emph{singular mass flow from $x$ to $y$} we denote the measure $\omega_{x,y}\in\meas(\Omega)^n$ defined via
\begin{equation*}
\int_{\Omega}\psi\cdot\d\omega_{x,y}
=\int_{[0,1]}\psi(p_{x,y}(t))\cdot\dot p_{x,y}(t)\,\d t
\end{equation*}
for all $\psi\in C(\Omega;\R^n)$.
By the \emph{singular mass change from $m_0\geq 0$ to $m_1\geq 0$} we denote a pair $(\rho[m_0,m_1],\zeta[m_0,m_1])\in\meas([0,1])^2$ which minimizes \eqref{eq:InducedSimLocC}.
\end{definition}

\begin{proposition}[Construction of dynamic optimizers from static optimizers]\thlabel{prop:DynamicPrimalOptimizers}
Let $(\pi_0,\pi_1)$ be a pair of optimal couplings in \eqref{eq:StaticPrimalProblem}, abbreviate $\rho_0'=P_Y\,\pi_0$, $\rho_1'=P_X\,\pi_1$, and pick $\gamma\in\measp(\Omega)$ such that $\rho_0',\rho_1'\ll\gamma$.
For $x\in\Omega$ let $\big(\rho\big[\RadNik{\rho_0'}\gamma(x),\allowbreak\RadNik{\rho_1'}\gamma(x)\big],\allowbreak\zeta\big[\RadNik{\rho_0'}\gamma(x),\allowbreak\RadNik{\rho_1'}\gamma(x)\big]\big)\allowbreak\in\allowbreak\meas([0,1])^2$ be the singular mass change from \thref{def:SingularMassFlowChange} between $m_0=\RadNik{\rho_0'}\gamma(x)$ and $m_1=\RadNik{\rho_1'}\gamma(x)$.
An optimal triple $(\rho,\omega,\zeta)$ in \eqref{eq:DynamicProblem} is obtained by
\begin{align*}
\int_{[0,1]\times\Omega}\phi\,\d\rho
&=\int_\Omega\int_0^1\phi(t,x)\,\d\rho\big[\RadNik{\rho_0'}\gamma(x),\RadNik{\rho_1'}\gamma(x)\big](t)\d\gamma(x)\,,\\
\int_{[0,1]\times\Omega}\psi\cdot\d\omega
&=\int_\Omega\psi(0,\cdot)\cdot\d\omega_0+\int_\Omega\psi(1,\cdot)\cdot\d\omega_1\\
&\text{with }\int_\Omega\psi(i,\cdot)\cdot\d\omega_i=\int_{\Omega\times\Omega}\int_\Omega\psi(i,\cdot)\cdot\d\omega_{x,y}\d\pi_i(x,y)\,,\;i=0,1,\\
\int_{[0,1]\times\Omega}\phi\,\d\zeta
&=\int_\Omega\int_0^1\phi(t,x)\,\d\zeta\big[\RadNik{\rho_0'}\gamma(x),\RadNik{\rho_1'}\gamma(x)\big](t)\d\gamma(x)
\end{align*}
for all $\phi\in C([0,1]\times\Omega)$ and $\psi\in C([0,1]\times\Omega;\R^n)$.
\end{proposition}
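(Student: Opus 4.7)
The plan is to verify that the constructed triple $(\rho,\omega,\zeta)$ is admissible in \eqref{eq:DynamicProblem} and that its dynamic cost $P_D(\rho,\omega,\zeta)$ is bounded above by the static cost $P_S(\pi_0,\pi_1)$. Since \thref{prop:W1DynamicStaticEquivalence} gives $W_D(\rho_0,\rho_1)=W_S(\rho_0,\rho_1)=P_S(\pi_0,\pi_1)$, admissibility automatically supplies the matching lower bound $P_D(\rho,\omega,\zeta)\geq W_D(\rho_0,\rho_1)$, so the two bounds combine to establish that $(\rho,\omega,\zeta)$ is a dynamic primal optimizer.

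To check admissibility, I fix an arbitrary test function $\phi\in C^1([0,1]\times\Omega)$ and compute the two sides of \eqref{eqn:ContinuityEquation} separately. For the $\omega$-contribution, the fundamental theorem of calculus along each shortest path $p_{x,y}$ gives $\int_\Omega\nabla\phi(i,\cdot)\cdot\d\omega_{x,y}=\phi(i,y)-\phi(i,x)$ for $i\in\{0,1\}$; integrating against $\pi_i$ and using the marginal identities $P_X\pi_0=\rho_0$, $P_Y\pi_0=\rho_0'$, $P_X\pi_1=\rho_1'$, $P_Y\pi_1=\rho_1$ produces the telescoping expression $\int\phi(0,\cdot)\,\d\rho_0'-\int\phi(0,\cdot)\,\d\rho_0+\int\phi(1,\cdot)\,\d\rho_1-\int\phi(1,\cdot)\,\d\rho_1'$. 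For the $(\rho,\zeta)$-contribution, with $m_i(x):=\RadNik{\rho_i'}{\gamma}(x)$ the pair $(\rho_x,\zeta_x):=(\rho[m_0(x),m_1(x)],\zeta[m_0(x),m_1(x)])$ lies in $\mc{CES}(m_0(x),m_1(x))$ by \thref{def:SingularMassFlowChange}, so testing against $t\mapsto\phi(t,x)$ yields $m_1(x)\phi(1,x)-m_0(x)\phi(0,x)$, and integrating in $x$ against $\gamma$ gives $\int\phi(1,\cdot)\,\d\rho_1'-\int\phi(0,\cdot)\,\d\rho_0'$. Adding the two contributions, the $\rho_0'$ and $\rho_1'$ terms cancel and one recovers exactly the right-hand side of \eqref{eqn:ContinuityEquation}.

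For the cost bound, I use that the integrand of \eqref{eq:DynamicEnergy} decouples additively, with one summand depending only on $\omega$ and the other only on $(\rho,\zeta)$. Since $p_{x,y}$ is a shortest path, $\omega_{x,y}$ has total variation exactly $d(x,y)$; hence for the representation $\omega|_{\{i\}\times\Omega}=\delta_i\otimes\int\omega_{x,y}\,\d\pi_i(x,y)$ the triangle inequality for the vector total variation bounds $\int\|\d\omega/\d\mu\|\,\d\mu$ by $\int d(x,y)\,\d\pi_0(x,y)+\int d(x,y)\,\d\pi_1(x,y)$. For the mass-change part, the defining property of the singular mass change (\thref{def:SingularMassFlowChange}) ensures $\int_0^1\CDynM(\d\rho_x/\d\mu_x,\d\zeta_x/\d\mu_x)\,\d\mu_x=\SimLocC(m_0(x),m_1(x))$ fiberwise; choosing $\mu$ so that it disintegrates as $\d\gamma(x)\otimes\d\mu_x(t)$ on the $(\rho,\zeta)$-component and applying Fubini gives $\int\CDynM(\d\rho/\d\mu,\d\zeta/\d\mu)\,\d\mu=\int_\Omega\SimLocC(m_0(x),m_1(x))\,\d\gamma(x)=\SimLoc(\rho_0',\rho_1')$. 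Summing the two bounds yields $P_D(\rho,\omega,\zeta)\leq P_S(\pi_0,\pi_1)$ as required.

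The main technical obstacle I expect is measurability: the assignment $x\mapsto(\rho_x,\zeta_x)$ must be made $\gamma$-measurable so that $\rho$ and $\zeta$ are genuine Radon measures on $[0,1]\times\Omega$ and so Fubini applies to the cost integral. This requires a measurable selection argument (e.g.\ Kuratowski--Ryll-Nardzewski) for the multifunction $x\mapsto\arg\min$ in \eqref{eq:InducedSimLocC}, leveraging that its sublevel sets are weak-$\ast$ compact in $\meas([0,1])^2$ and that the parameter dependence through $(m_0(x),m_1(x))$ is $\gamma$-measurable. A secondary bookkeeping issue is that $\omega$ and $(\rho,\zeta)$ may share mass on $\{0,1\}\times\Omega$; this is handled by taking the common reference $\mu=|\rho|+\|\omega\|+|\zeta|$ and noting that the two summands of the integrand are computed from disjoint parts of the Radon--Nikodym derivatives, so the two bounds above simply add.
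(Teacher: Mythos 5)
Your proposal follows essentially the same route as the paper's proof: verify the continuity equation by computing the $\omega$- and $(\rho,\zeta)$-contributions separately via $\int_\Omega\nabla\phi(i,\cdot)\cdot\d\omega_{x,y}=\phi(i,y)-\phi(i,x)$ and the $\mc{CES}$-membership of the fiberwise mass-change pairs, then bound $P_D(\rho,\omega,\zeta)\le P_S(\pi_0,\pi_1)$ termwise (transport via triangle inequality, growth via fiberwise optimality in \eqref{eq:InducedSimLocC}), and conclude optimality from $W_D=W_S$ in \thref{prop:W1DynamicStaticEquivalence}. The only substantive addition is your explicit flagging of the measurability of $x\mapsto(\rho[m_0(x),m_1(x)],\zeta[m_0(x),m_1(x)])$ needed for $\rho,\zeta$ to be Radon measures and for Fubini; this is a genuine technical point the paper leaves implicit, and a Kuratowski--Ryll-Nardzewski selection argument (or a direct construction of a jointly measurable minimizer using upper semi-continuity of the argmin correspondence and coercivity of the fiberwise cost) is the standard way to fill it.
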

\begin{remark}[Non-uniqueness of minimizers] \label{rem:NonUniqueDynamicOptimizers}
	Note that the dynamic minimizers $(\rho,\omega,\zeta)$ constructed above are not necessarily unique (the static minimizers $(\pi_0,\pi_1)$ need not be unique either).
	For instance, if no mass changes occur, the transport can also be performed over an extended interval in time. Vice versa, mass changes may sometimes also occur instantaneously. In the particular case $\SimLocC(m_0,m_1) = |m_0-m_1|$ for $\min\{m_0,m_1\}\geq 0$, which corresponds to $\CDynM(\rho,\zeta) = |\zeta|$ for $\rho \geq 0$, mass changes and transport may both happen during arbitrary subintervals or even time points of $[0,1]$, leading to a large degenerate set of minimizers.
	In Section~\ref{sec:dynamicFromStaticTimeConc} we give sufficient conditions for $\SimLocC$ such that all dynamic optimizers are of the above form.
\end{remark}
\begin{proof}
We first show $(\rho,\omega,\zeta)\in\mc{CE}(\rho_0,\rho_1)$.
Indeed, first note that for $\varphi\in C^1(\Omega)$ we have $\int_\Omega\nabla\varphi\cdot\d\omega_{x,y}=\int_{[0,1]}\nabla\varphi(p_{x,y}(t))\cdot\dot p_{x,y}(t)\,\d t=\varphi(y)-\varphi(x)$.
Therefore, for $\phi\in C^1([0,1]\times\Omega)$ we have
\begin{align*}
\int_{[0,1]\times\Omega} \nabla \phi \cdot \d \omega
&=\int_{\Omega\times\Omega}\int_\Omega\nabla\phi(0,\cdot)\cdot\d\omega_{x,y}\d\pi_0(x,y)+\int_{\Omega\times\Omega}\int_\Omega\nabla\phi(1,\cdot)\cdot\d\omega_{x,y}\d\pi_1(x,y)\\
&= \int_{\Omega\times\Omega} \phi(0,y)-\phi(0,x)\, \d\pi_0(x,y) + \int_{\Omega\times\Omega} \phi(1,y)-\phi(1,x)\, \d\pi_1(x,y)\\
&=\int_{\Omega} \phi(0,\cdot)\,\d(\rho_0'-\rho_0) - \int_{\Omega} \phi(1,\cdot)\,\d(\rho_1'-\rho_1)\,.
\end{align*}
In addition, due to $(\rho[m_0,m_1],\zeta[m_0,m_1]) \in \mc{CES}(m_0,m_1)$ for $(m_0,m_1) \in [0,\infty)^2$ we obtain
\begin{multline*}
  \int_{[0,1] \times \Omega} (\partial_t \phi)\,\d\rho +
  \int_{[0,1] \times \Omega} \phi\,\d\zeta\\
  = \int_\Omega \left[
      \int_0^1 \partial_t \phi(t,x) \,\d\rho\big[\RadNik{\rho_0'}\gamma(x),\RadNik{\rho_1'}\gamma(x)\big](t) +
      \int_0^1 \phi(t,x) \, \d\zeta\big[\RadNik{\rho_0'}\gamma(x),\RadNik{\rho_1'}\gamma(x)\big](t)
      \right] \d \gamma(x) \\
  = \int_\Omega \left[ \phi(1,x) \cdot \RadNik{\rho_1'}\gamma(x)- \phi(0,x) \cdot \RadNik{\rho_0'}\gamma(x) \right] \d \gamma(x)
  = \int_\Omega \phi(1,\cdot)\,\d\rho_1' - \int_\Omega \phi(0,\cdot)\,\d\rho_0'
\end{multline*}
which implies $(\rho,\omega,\zeta) \in \mc{CE}(\rho_0,\rho_1)$.

Next we calculate the cost $P_D(\rho,\omega,\zeta)$ from \eqref{eq:DynamicEnergy}.
Let us abbreviate
$$\overline C=\left\{\psi\in C([0,1]\times\Omega;\R^n)\ \middle|\ \|\psi(t,x)\|\leq1\text{ for all }(t,x)\in[0,1]\times\Omega\right\}\,,$$
and let us denote the total variation norm of $\omega$ by $\|\omega\|_{\meas([0,1] \times \Omega)^n}=\sup_{\psi\in\overline C}\int_{[0,1] \times \Omega}\psi\cdot\d\omega$,
then we have
\begin{multline*}
\int_{[0,1] \times \Omega} \|\RadNik{\omega}{\mu}\|\,\d\mu
=\|\omega\|_{\meas([0,1] \times \Omega)^n}
=\sup_{\psi\in\overline C}\sum_{i=0}^1\int_{\Omega\times\Omega}\int_0^1\psi(i,p_{x,y}(t))\cdot\dot p_{x,y}(t)\,\d t\d\pi_i(x,y)\\
\leq\sum_{i=0}^1\int_{\Omega\times\Omega}\int_0^1\|\dot p_{x,y}(t)\|\,\d t\d\pi_i(x,y)
=\int_{\Omega\times\Omega}d(x,y)\,\d\pi_0(x,y)+\int_{\Omega\times\Omega}d(x,y)\,\d\pi_1(x,y)\,.
\end{multline*}
For the growth term, by optimality of $\big(\rho\big[\RadNik{\rho_0'}\gamma(x),\RadNik{\rho_1'}\gamma(x)\big],\zeta\big[\RadNik{\rho_0'}\gamma(x),\RadNik{\rho_1'}\gamma(x)\big]\big)$ in \eqref{eq:InducedSimLocC}, we get
\begin{align*}
	\int_{[0,1] \times \Omega} \CDynM\left(
			\RadNik{\rho}{\mu},\RadNik{\zeta}{\mu} \right)\,\d\mu & =
	\int_\Omega \left[ \int_0^1 \CDynM\left(
			\frac{\d\rho\big[\RadNik{\rho_0'}\gamma(x),\RadNik{\rho_1'}\gamma(x)\big]}{\d\mu_x},\frac{\d\zeta\big[\RadNik{\rho_0'}\gamma(x),\RadNik{\rho_1'}\gamma(x)\big]}{\d\mu_x} \right)\,\d\mu_x \right] \d \gamma(x)
	\\
	& = \int_\Omega \SimLocC\big(\RadNik{\rho_0'}\gamma(x),\RadNik{\rho_1'}\gamma(x)\big)\,\d\gamma(x) = \SimLoc(\rho_0',\rho_1')\,.
\end{align*}
Here, $\{\mu_x\}_{x \in \Omega}$ with $\mu_x \in \measp([0,1])$ is a family of measures such that $\rho\big[\RadNik{\rho_0'}\gamma(x),\RadNik{\rho_1'}\gamma(x)\big],\zeta\big[\RadNik{\rho_0'}\gamma(x),\allowbreak\RadNik{\rho_1'}\gamma(x)\big] \ll \mu_x$,
and $\mu$ is chosen as $\int_{[0,1]\times\Omega}\phi\,\d\mu=\int_\Omega\int_0^1\phi(t,x)\,\d\mu_x(t)\d\gamma(x)$ for all $\phi\in C([0,1]\times\Omega)$.
Therefore, $P_D(\rho,\omega,\zeta)\leq P_S(\pi_0,\pi_1)=W_S(\rho_0,\rho_1)$, but $W_S(\rho_0,\rho_1)=W_D(\rho_0,\rho_1)\leq P_D(\rho,\omega,\zeta)$ by \thref{prop:W1DynamicStaticEquivalence}. Hence the triple $(\rho,\omega,\zeta)$ minimizes \eqref{eq:DynamicProblem}.
\end{proof}

\begin{remark}[Beckmann's problem]
	\label{rem:Beckmann}
	The $W_1$ distance on convex subsets $\Omega\subset\R^n$ can be rewritten as a minimization problem of a cost for mass flows, also known as Beckmann's problem. The measures $\omega_i$ constructed in \thref{prop:DynamicPrimalOptimizers} are the canonical cost minimizing flows corresponding to the optimal transport plans $\pi_i$, and one has $W_1(\rho_i,\rho_i') = \|\omega_i\|_{\meas(\Omega)^n}$ (for more details see \cite[Thm.\,4.6]{Santambrogio15}).
\end{remark}

\begin{remark}[Interpretation of dynamic optimizers]
	\label{rem:InterpretationDynamicOptimizers}
	The interpretation of the dynamic minimizer $(\rho,\omega,\zeta)$ constructed in \thref{prop:DynamicPrimalOptimizers} is as follows. At time $t=0$ mass is instantaneously transported from $\rho_0$ to $\rho_0'$, as encoded in $\pi_0$ or rather $\omega_0$. Then, throughout the interval $[0,1]$ the mass distribution $\rho_0'$ is gradually transformed into $\rho_1'$ via simultaneous mass change throughout $\Omega$. Finally, at $t=1$ mass is instantaneously transported from $\rho_1'$ to $\rho_1$, as encoded in $\pi_1$ and $\omega_1$.
	The temporal concentration of the transport into instantaneous shifts is possible since the transport part of the  cost, $(\rho,\omega,\zeta) \mapsto \|\omega\|_{\meas([0,1] \times \Omega)^n}$, is 1-homogeneous in $\omega$. This is different, for instance, from the Benamou--Brenier formula \cite{BenamouBrenier2000} which is a dynamic formulation of the $W_2$ distance, or from the Wasserstein--Fisher--Rao distance \cite{ChizatOTFR2015}.
\end{remark}

\subsection{A necessary condition for dynamic primal optimizers}\label{sec:dynamicFromStaticTimeConc}
In the previous section we have constructed primal dynamic optimizers from static optimizers. In this section we show that under suitable assumptions on the dynamic mass change cost $\CDynM$ any dynamic optimizer is of this form.
\begin{proposition}[Instantaneous mass transport]
	\thlabel{prop:DynamicNecessary}
	Assume $\BDynMH(z)<0$ for $z \neq 0$ ($\BDynMH$ determines the set $\BDynM$ via \thref{thm:DynConjugateSet}, and $\BDynM$ characterizes $\CDynM$ via \thref{prop:DynamicDual}).
	If $(\rho,\omega,\zeta) \in \mc{CE}(\rho_0,\rho_1)$ is a dynamic primal optimizer of $W_D(\rho_0,\rho_1)$ in \eqref{eq:DynamicProblem}, then $|\omega|((0,1) \times \Omega) = 0$,
	that is, mass transport only happens instantaneously at time $0$ and $1$.
\end{proposition}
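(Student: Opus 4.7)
The strategy is to exploit complementary slackness between the primal optimizer $(\rho,\omega,\zeta)$ and a dual-feasible function built from the static formulation. I take a static dual optimizer $(\alpha,\beta)\in\Lip(\Omega)^2$ (which exists by \thref{prop:StaticEquivalence} together with the equivalence of \thref{prop:W1DynamicStaticEquivalence}), smooth to $(\alpha_n,\beta_n)\in (C^1(\Omega)\cap\Lip(\Omega))^2$ via the density argument from the proof of \thref{prop:W1DynamicStaticEquivalence}, and apply \thref{lem:DynamicDualConstruction}. Its hypothesis \thref{asp:QNoKink} is automatic for both $\BDynMH$ and $\BDynMH(-\cdot)$ since the assumption $\BDynMH(z)<0$ for $z\neq 0$ forces $\zKink=0$; the construction yields $\phi_n(t,x)=\LocInterp(t,-\alpha_n(x))+\LocInterpInv(t,\beta_n(x))\in C^1([0,1]\times\Omega)$ dual-feasible with $D_D(\phi_n)\to W_D(\rho_0,\rho_1)=P_D(\rho,\omega,\zeta)$.

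The pointwise Fenchel--Young inequalities decompose the vanishing duality gap into two nonnegative integrals,
\begin{equation*}
0\leq\int\bigl(\|\RadNik{\omega}{\mu}\|-\nabla\phi_n\cdot\RadNik{\omega}{\mu}\bigr)\,\d\mu+\int\bigl(\CDynM(\RadNik{\rho}{\mu},\RadNik{\zeta}{\mu})-(\partial_t\phi_n)\RadNik{\rho}{\mu}-\phi_n\RadNik{\zeta}{\mu}\bigr)\,\d\mu\to 0,
\end{equation*}
so both tend to zero in $L^1(\mu)$. After passing to a subsequence, this forces $\nabla\phi_n$ to asymptotically align with $\RadNik{\omega}{\mu}/\|\RadNik{\omega}{\mu}\|$ and $\|\nabla\phi_n\|\to 1$ on the support of $\omega$.

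The heart of the argument is to show that $\|\nabla\phi_n(t,x)\|<1$ strictly for all $(t,x)\in(0,1)\times\Omega$ outside a degenerate locus. From the chain rule together with \thref{lem:LocInterp}(iv), $\|\nabla\phi_n\|\leq(1-t)\|\nabla\alpha_n\|+t\|\nabla\beta_n\|\leq 1$. I would upgrade this to strict inequality by revisiting the proof of \thref{lem:LocInterp}(iv): the bound $\partial_z\LocInterp(t,z)\leq 1-t$ was derived from convexity of $t\mapsto k(\Flow_t(z))$ with $k=\BDynMH/\BDynMH(z)$; under the hypothesis, $\BDynMH$ is strictly decreasing on the relevant flow orbit $[\Flow_1(z),z]$ (a flat subinterval of $\BDynMH$ near $0$ would contradict $\BDynMH(z)<0$ for $z\neq 0$ via concavity), whence $t\mapsto\Flow_t(z)$ is strictly convex (from $\partial_t\Flow_t=\BDynMH(\Flow_t)$ and that $\BDynMH(\Flow_t(z))$ is then strictly increasing in $t$) and $k$ is strictly convex and strictly increasing on $[0,z]$, so $k\circ\Flow_\cdot(z)$ is strictly convex on $[0,1]$, giving $\partial_z\LocInterp(t,z)<1-t$ for $z>0,\,t\in(0,1)$; symmetrically $\partial_z\LocInterpInv(t,z)<t$ for $z<0$, $t\in(0,1)$. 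Triangle-inequality saturation at interior time then forces $\alpha_n(x)\geq 0$ and $\beta_n(x)\geq 0$ simultaneously; the temporal constraint $\partial_t\phi_n(t,x)=\alpha_n(x)+\beta_n(x)\leq\BDynMH(\phi_n(t,x))\leq 0$ collapses this to $\alpha_n(x)=\beta_n(x)=0$.

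The final step is to rule out $\omega$-mass on the degenerate locus $(0,1)\times\{\alpha=\beta=0\}$ in the limit $n\to\infty$, for instance via a small Lipschitz perturbation of $(\alpha,\beta)$ that shrinks that set while preserving near-optimality, or by exploiting the second Fenchel--Young integral (also vanishing) which pins down $\phi_n\equiv 0$ and $\partial_t\phi_n\equiv 0$ along such $x$ and then combines with the continuity equation to exclude transport. I expect the main obstacle to be precisely this measure-theoretic closure: converting the pointwise strict-inequality and squeezing analysis into the integral conclusion $|\omega|((0,1)\times\Omega)=0$, in particular handling the possible flat pieces of $\BDynMH$ away from the origin where the strict-convexity argument weakens.
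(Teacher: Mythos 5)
Your proposal follows essentially the same overall strategy as the paper — restrict dual candidates to the interpolation form $\phi(t,x)=\LocInterp(t,-\alpha(x))+\LocInterpInv(t,\beta(x))$, split the vanishing Fenchel--Young gap into a transport part and a mass-change part, derive that $\|\nabla\phi\|<1$ strictly at interior times where $(\alpha,\beta)\neq(0,0)$, and then control the remaining set $\{\alpha=\beta=0\}$ via the continuity equation. The logical skeleton is right, and your observation that the $\BStatPre$ constraint $\alpha+\beta\leq 0$ combined with nonnegativity of both gradient factors pins down $\alpha(x)=\beta(x)=0$ is exactly the relevant squeeze. Your worry about ``flat pieces of $\BDynMH$'' is actually unfounded: concavity plus $\BDynMH(0)=0$ plus $\BDynMH(z)<0$ for $z\neq 0$ forces strict monotonicity of $\BDynMH$ on each half-line (a flat segment on $(a,b)\subset(0,\infty)$ would, via the concavity inequality at $a$ between $0$ and $b$, force $\BDynMH(a)\geq 0$, a contradiction).

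The genuine gap is quantitative, not qualitative. Since there need not be a $C^1$ dual optimizer, the argument must run over a \emph{maximizing sequence} $\phi_k$, and your pointwise strict inequality $\|\nabla\phi_k(t,x)\|<1$ for fixed $k$ does not survive the limit: the Fenchel--Young control is $\int(1-\nabla\phi_k\cdot v)\,\d|\omega|\to 0$, and a merely strict pointwise bound whose margin may degrade with $k$ gives no lower bound on the integrand uniform along the sequence. What makes the paper's proof close is the \emph{uniform quantitative} Lipschitz bound of \thref{prop:DynamicDualGradBound}, $\|\nabla\phi(t,x)\|\leq 1 - t(1-t)\bigl[\StrongConvBoundFlow(-\alpha(x))+\StrongConvBoundFlow(-\beta(x))\bigr]$ with $\StrongConvBoundFlow$ depending only on $\BDynMH$; this is what turns the $L^1(|\omega|)$ decay of the gap into a bound on $|\omega|(S_1)$ for the set $S_1$ where $|\alpha|\geq\veps$ or $|\beta|\geq\veps$. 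You will need to prove such a bound — this is exactly the role of \thref{lem:KStrongConvexity}, \thref{lem:FlowStrongConvexity}, and \thref{lem:ConvexityDerivativeBound} — before the squeezing works. Similarly, the degenerate locus requires a concrete test-function argument: the ``pins down $\phi_k\equiv 0$ and $\partial_t\phi_k\equiv 0$'' idea fails because Fenchel--Young gives only approximate, $L^1$, alignment; the paper instead tests the continuity equation against an explicit cutoff $\psi_\veps(t,x)=h_\veps(t)\,g_\veps(-\alpha(x))$, exploits that on $\hat S_3$ one has $\nabla\psi_\veps\cdot v\geq 1-\veps$, and balances the resulting estimate against $|\omega|(S_1)$, $|\omega|(S_2)$, and $|\rho|,|\zeta|$-terms before sending $\delta,\veps\to 0$ jointly. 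Without these two pieces (the uniform $K$-bound and the explicit test function) the proof as proposed does not close.
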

\begin{remark}[Interpretation of assumption]
	Assumption $\BDynMH(z)<0$ for $z \neq 0$ is equivalent to $\zeta \to \CDynM(1,\zeta)$ being differentiable with derivative $0$ at $\zeta=0$.
	A prototypical counterexample is $\CDynM(\rho,\zeta)=|\zeta|$ for $\rho \geq 0$, see Remark \ref{rem:NonUniqueDynamicOptimizers}.
\end{remark}
\begin{proof}
With \thref{prop:DynamicDual} and \thref{thm:DynConjugateSet} we obtain $$\CDynM(1,\zeta)=\iota_{\BDynM}^\ast(1,\zeta)=\sup\{\BDynMH(\phi)+\zeta \cdot \phi\,|\,\phi \in \dom(\BDynMH)\} = (-\BDynMH)^\ast(\zeta)\,.$$
If $\BDynMH(z)<0$ for all $z \neq 0$, then by concavity and upper semicontinuity of $\BDynMH$ we have
\begin{equation*}
z=0 \quad \Leftrightarrow \quad 0 \in \partial(-\BDynMH)(z) \quad \Leftrightarrow \quad z \in \partial[\CDynM(1,\cdot)](0)
\end{equation*}
so that $\partial[\CDynM(1,\cdot)]=\{0\}$. A similar argument works for the reverse implication.
\end{proof}
\begin{corollary}[Structure of dynamic primal optimizers]\thlabel{cor:dynamicFromStatic}
	\thlabel{cor:AllDynamicOptimizersStructure}
	Under the above assumption on $\BDynMH$, any dynamic primal optimizer can be constructed from a static primal optimizer as in \thref{prop:DynamicPrimalOptimizers}.
\end{corollary}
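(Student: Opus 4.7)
The plan proceeds in four steps.

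First, I would invoke Proposition \ref{prop:DynamicNecessary} to conclude that any dynamic optimizer $(\rho,\omega,\zeta)$ has $\omega$ supported on $(\{0\}\cup\{1\})\times\Omega$. I would decompose $\omega=\omega_0+\omega_1$ with $\omega_i\in\meas(\Omega)^n$ identified with the restriction $\omega\restr(\{i\}\times\Omega)$. Testing the continuity equation \eqref{eqn:ContinuityEquation} against $\phi(t,x)=\eta(t)\psi(x)$ for bump functions $\eta$ concentrating at $t=0$ or $t=1$ allows the identification of intermediate measures $\rho_0',\rho_1'\in\measp(\Omega)$ describing the states "just after $t=0$" and "just before $t=1$", which satisfy $\div\omega_0=\rho_0-\rho_0'$ and $\div\omega_1=\rho_1'-\rho_1$ in the distributional sense on $\Omega$. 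For test functions $\eta$ compactly supported in $(0,1)$ one reads off that the pair $(\rho,\zeta)$ restricted to $(0,1)\times\Omega$ solves the transport-free continuity equation $\partial_t\rho=\zeta$ connecting $\rho_0'$ to $\rho_1'$.

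Second, I would decompose the dynamic cost as
\begin{equation*}
P_D(\rho,\omega,\zeta)
=\|\omega_0\|_{\meas(\Omega)^n}+\|\omega_1\|_{\meas(\Omega)^n}
+\int_{(0,1)\times\Omega}\CDynM\bigl(\RadNik{\rho}{\mu},\RadNik{\zeta}{\mu}\bigr)\,\d\mu
\end{equation*}
and bound each term from below. By the Beckmann--Kantorovich equivalence (Remark \ref{rem:Beckmann}) the first two terms are at least $W_1(\rho_0,\rho_0')$ and $W_1(\rho_1',\rho_1)$, and by disintegration in $x$ against some $\gamma\in\measp(\Omega)$ with $\rho_0',\rho_1'\ll\gamma$ together with \thref{prop:W1DynamicStaticEquivalencePrimal}, the third term is at least $\SimLoc(\rho_0',\rho_1')$. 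Summing yields $P_D(\rho,\omega,\zeta)\geq W_1(\rho_0,\rho_0')+\SimLoc(\rho_0',\rho_1')+W_1(\rho_1',\rho_1)\geq W_S(\rho_0,\rho_1)=W_D(\rho_0,\rho_1)$, using \thref{prop:W1DynamicStaticEquivalence} for the final equality. Optimality of $(\rho,\omega,\zeta)$ forces equality throughout.

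Third, I would use these equalities to reconstruct the data of \thref{prop:DynamicPrimalOptimizers}. Equality in Beckmann means that $\omega_i$ is an optimal $W_1$-flow between the corresponding marginals, which by the standard superposition theory for Wasserstein-$1$ optimal flows can be written as $\omega_i=\int_{\Omega\times\Omega}\omega_{x,y}\,\d\pi_i(x,y)$ for an optimal transport plan $\pi_i$ and a measurable selection of shortest paths $p_{x,y}$, matching \thref{def:SingularMassFlowChange}. Equality in the growth bound gives that $(\pi_0,\pi_1)$ is a static primal optimizer, and after spatial disintegration it forces the time-traces of $\rho$ and $\zeta$ at $\gamma$-a.e.\ $x\in\Omega$ to realize the minimum in \eqref{eq:InducedSimLocC} for the pair $\bigl(\RadNik{\rho_0'}{\gamma}(x),\RadNik{\rho_1'}{\gamma}(x)\bigr)$, i.e.\ to coincide with the singular mass change of \thref{def:SingularMassFlowChange}. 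Together these match the formulas of \thref{prop:DynamicPrimalOptimizers}.

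The hardest step is the Beckmann-to-plan lift in the third part: turning an optimal Beckmann flow $\omega_i$ into an optimal transport plan $\pi_i$ together with a measurable family of shortest paths reproducing it via singular flows requires a Smirnov-type decomposition of the divergence-constrained vector measure, or equivalently the path-measure characterization of optimal $W_1$-flows. The spatial disintegration of $\rho$ and $\zeta$ in the fourth step is a secondary technicality but is standard for Radon measures on a product of Polish spaces.
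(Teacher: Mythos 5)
Your four-step plan follows the same route as the paper's sketch: invoke \thref{prop:DynamicNecessary} to localize $\omega$ to $t\in\{0,1\}$, read off intermediate marginals $\rho_0',\rho_1'$ from the continuity equation, decompose the cost, and use the inequality chain $P_D\geq W_1(\rho_0,\rho_0')+\SimLoc(\rho_0',\rho_1')+W_1(\rho_1',\rho_1)\geq W_S=W_D$ to force equality and identify each piece. The Smirnov/Beckmann superposition lift, which you correctly flag as the hardest technical step, is also glossed over in the paper's own sketch (it simply says ``corresponding to optimal couplings $\pi_i$''), so on that point you are on equal footing.

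There is, however, one genuine gap. In Step~1 you assert $\rho_0',\rho_1'\in\measp(\Omega)$, but a priori $\rho_0'=\rho_0-\div\omega_0$ and $\rho_1'=\rho_1+\div\omega_1$ are only signed measures; nonnegativity is not automatic and the paper devotes a separate argument to it (``if $\rho_0'$ were negative at some point, $\zeta$ must have a compensating singular positive component at $t=0$ which implies that mass is first created and then transported by $\omega_0$, which cannot be optimal''). This is not a cosmetic omission, because your Step~2 chain silently depends on it: the lower bound $\int_{[0,1]\times\Omega}\CDynM\,\d\mu\geq\SimLoc(\rho_0',\rho_1')$ rests on the identity \eqref{eq:InducedSimLocC} between $\SimLocC$ and the CES cost, and that identity is only established for nonnegative arguments. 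For a negative marginal the CES problem can have \emph{finite} cost while $\SimLocC$ equals $+\infty$: e.g.\ for $\CDynM(\rho,\zeta)=|\zeta|$ (so $\CDynM(0,1)=1<\infty$), placing a positive atom $(-m_0)\delta_0$ in $\zeta$ connects a boundary trace $m_0<0$ to $m_1>0$ in $\mc{CES}(m_0,m_1)$ at cost $m_1-m_0<\infty$, whereas $\SimLocC(m_0,m_1)=\infty$. Hence for a signed $\rho_0'$ the growth-term lower bound you invoke is simply false (it would claim a finite quantity is $\geq\infty$), and the inequality chain does not close. One cannot dodge this by observing that $\SimLoc$ of a signed measure is $+\infty$ and therefore ``gives a contradiction''; the contradiction would only follow if the bound held, which is exactly what fails. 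You either need to prove nonnegativity separately by the paper's modification argument, or replace $\SimLoc(\rho_0',\rho_1')$ in Step~2 by the pointwise CES cost and then argue independently that the overall infimum over signed intermediate marginals is still $W_S$.
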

\begin{proof}[Sketch of proof]
	By \thref{prop:DynamicNecessary} a dynamic primal optimal $\omega$ must have the form $\delta_0 \otimes \omega_0 + \delta_1 \otimes \omega_1$ for some $\omega_0, \omega_1 \in \meas(\Omega)^n$. Using the continuity equation \eqref{eqn:ContinuityEquation} one can show that these have weak divergences $\sigma_0, \sigma_1\in\meas(\Omega)$, that is, $\int_\Omega \nabla \psi \,\d \omega_i = -\int_\Omega \psi \,\d \sigma_i$ for $i=0,1$ and all $\psi \in C^1(\Omega)$,
	and that $(\rho,0,\zeta) \in \mc{CE}(\rho_0'=\rho_0-\sigma_0,\rho_1'=\rho_1+\sigma_1)$.
	Using arguments similar to those of the following section it is easy to show that $\rho_0'$ and $\rho_1'$ are nonnegative (if $\rho_0'$ were negative at some point, $\zeta$ must have a compensating singular positive component at $t=0$ which implies that mass is first created and then transported by $\omega_0$, which cannot be optimal).
	By optimality, $\omega_0$ and $\omega_1$ must then be optimal flows for $W_1(\rho_i,\rho_i')$ in Beckmann's formulation, corresponding to optimal couplings $\pi_i$ (cf.~Remark \ref{rem:Beckmann}). Similarly, $(\rho,\zeta)$ must be pointwise optimizers for \eqref{eq:InducedSimLocC} in \thref{prop:W1DynamicStaticEquivalencePrimal}.
\end{proof}

\thref{prop:DynamicNecessary} is proven via a primal-dual argument. On a formal level the primal-dual optimality conditions for the dynamic problems require that $\|\nabla \phi(t,x)\|=1$ on the support of $\omega$ and that the gradient of $\phi$ is aligned with the orientation of $\omega$. Essentially, we show that $\|\nabla \phi(t,x)\|<1$ for $t \in (0,1)$ and sufficiently `many' $x \in \Omega$ for `optimal' $\phi$. The remaining $x \in \Omega$ are shown to be harmless via the continuity equation. However, in general no dynamic dual optimizers exist in $C^1([0,1]\times \Omega)$, and thus the argument has to be made via a maximizing sequence, for which a more explicit bound on the gradient is required. This bound is established via some auxiliary lemmas in \thref{prop:DynamicDualGradBound}. The duality argument is then made rigorous at the end of the section.
\begin{proposition}[Lipschitz bound for constructed dynamic dual candidates]
	\thlabel{prop:DynamicDualGradBound}
	The feasible dynamic dual candidates constructed in \thref{lem:DynamicDualConstruction} satisfy
	\begin{align*}
		\|\nabla \phi(t,x)\| \leq 1- t(1-t) \cdot [\StrongConvBoundFlow(-\alpha(x)) + \StrongConvBoundFlow(-\beta(x))]
	\end{align*}
	for some function $\StrongConvBoundFlow$ with $\StrongConvBoundFlow(z)=0$ for $z\leq 0$ and $\StrongConvBoundFlow$ strictly positive and increasing for $z >0$.
\end{proposition}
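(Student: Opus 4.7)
Differentiating $\phi(t,x) = \LocInterp(t,-\alpha(x)) + \LocInterpInv(t,\beta(x))$ in $x$ and using $\|\nabla\alpha\|,\|\nabla\beta\|\leq 1$ gives
\[
\|\nabla_x\phi(t,x)\|\leq \partial_z \LocInterp(t,-\alpha(x)) + \partial_z \LocInterpInv(t,\beta(x)).
\]
It thus suffices to produce $\StrongConvBoundFlow$ with the stated properties satisfying the two pointwise bounds
\[
\partial_z \LocInterp(t,z) \leq (1-t) - t(1-t)\,\StrongConvBoundFlow(z), \qquad \partial_z \LocInterpInv(t,z) \leq t - t(1-t)\,\StrongConvBoundFlow(-z)
\]
for all $z \in \dom\BDynMH$, $t \in [0,1]$. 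The second bound follows from the first applied to the mirrored setup, i.e.\ with $\BDynMHTild(\cdot)=\BDynMH(-\cdot)$ in place of $\BDynMH$ and invoking \thref{thm:FlowInv}, so I focus on the first.

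For $z\leq 0$, \thref{lem:LocInterp} \eqref{item:LocalInterpConvexity} gives $\partial_z \LocInterp(t,z)=1-t$, so $\StrongConvBoundFlow(z)=0$ works. For $z>0$, where $\BDynMH(z)<0$ (the setting relevant for \thref{prop:DynamicNecessary}), \thref{lem:Flow}\eqref{item:FlowDerivative} yields
\[
\partial_z \LocInterp(t,z) = \frac{\BDynMH(\Flow_t(z)) - t\,\BDynMH(\Flow_1(z))}{\BDynMH(z)}.
\]
The heart of the argument is that $q(t):=\BDynMH(\Flow_t(z))$ is concave on $[0,1]$ with quantifiable curvature. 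Using $\partial_t \Flow_t(z) = \BDynMH(\Flow_t(z))$, direct differentiation gives
\[
q''(t) = \BDynMH''(\Flow_t(z))\,\BDynMH(\Flow_t(z))^2 + \BDynMH'(\Flow_t(z))^2\,\BDynMH(\Flow_t(z)) \leq 0,
\]
both summands being nonpositive since $\BDynMH \leq 0$ and $\BDynMH''\leq 0$. Setting $c(z):=-\sup_{t\in[0,1]}q''(t)\geq 0$, the standard strong-concavity enhancement (the auxiliary $r(t):=q(t)-(1-t)q(0)-tq(1)-\tfrac{c(z)}{2}t(1-t)$ is concave with $r(0)=r(1)=0$, hence $r\geq 0$) gives $q(t) - (1-t) q(0) - t q(1) \geq \tfrac{c(z)}{2}t(1-t)$. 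Dividing by $\BDynMH(z)<0$ and rearranging produces $\partial_z \LocInterp(t,z) \leq (1-t) - t(1-t)\,c(z)/(2|\BDynMH(z)|)$.

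To conclude, set $\StrongConvBoundFlow_0(z):=c(z)/(2|\BDynMH(z)|)$ for $z>0$ and $\StrongConvBoundFlow_0(z):=0$ otherwise, then replace it with the non-decreasing minorant $\StrongConvBoundFlow(z):=\inf_{0<z'\leq z}\StrongConvBoundFlow_0(z')$; after taking the minimum with the function obtained from the mirrored construction, this is the desired $\StrongConvBoundFlow$. Strict positivity of $c(z)$ for $z>0$ follows because vanishing of the bracket $\BDynMH''(\Flow_t(z))\BDynMH(\Flow_t(z)) + \BDynMH'(\Flow_t(z))^2$ on $[0,1]$ would force $\BDynMH'$ and $\BDynMH''$ to vanish on $[\Flow_1(z),z]\subset(0,z]$, which combined with $\BDynMH'(0)=0$ and concavity of $\BDynMH$ would force $\BDynMH\equiv 0$ on $[0,z]$, contradicting $\BDynMH(z)<0$. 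The main technical hurdle is ensuring the non-decreasing envelope remains strictly positive on all of $(0,\infty)$; this is handled by the local Lipschitz regularity of $\BDynMH$ on $\inter\dom\BDynMH$ from \thref{thm:DynConjugateSet}, which yields a uniform positive lower bound for $\StrongConvBoundFlow_0$ on every compact subinterval of $(0,\infty) \cap \inter\dom\BDynMH$.
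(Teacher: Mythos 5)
Your strategy is close in spirit to the paper's: both proofs reduce the claim to the pointwise bound $\partial_z \LocInterp(t,z) \leq (1-t) - t(1-t)\StrongConvBoundFlow(z)$ and both exploit that $t \mapsto \BDynMH(\Flow_t(z))$ is concave (in the paper this appears as the concavity of $f'$ in \thref{lem:KStrongConvexity}, applied with $f(t) = \Flow_t(z)$). However, your implementation has three genuine gaps, two of which are not fixable by cosmetic edits.

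First, you differentiate twice: $q''(t) = \BDynMH''(\Flow_t(z))\BDynMH(\Flow_t(z))^2 + \BDynMH'(\Flow_t(z))^2\BDynMH(\Flow_t(z))$. But $\BDynMH$ is only known to be concave and upper semi-continuous (\thref{thm:DynConjugateSet}); it need not have a classical second derivative, and even a.e.\ existence is not enough for the argument you sketch (you need $q''$ continuous to conclude from pointwise strict negativity that $\sup_t q''(t) < 0$ on the compact $[0,1]$). The paper circumvents this deliberately: \thref{lem:KStrongConvexity} quantifies strong convexity of $f$ through $\hat K = f'(1) - f(1) + f(0)$, which only involves first derivatives and function values, and the concavity of $f'=q$ is obtained as a composition (concave decreasing $\BDynMH$ composed with convex $t\mapsto \Flow_t(z)$), never by taking a second derivative. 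Your positivity argument is also logically off: you argue that the bracket cannot vanish \emph{on all of} $[0,1]$, but for $c(z) = -\sup_t q''(t) > 0$ you would need it to be \emph{bounded away from zero on all of} $[0,1]$, which is a stronger (compactness-dependent) claim.

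Second, your monotone envelope $\StrongConvBoundFlow(z) := \inf_{0<z'\leq z}\StrongConvBoundFlow_0(z')$ is non\emph{increasing} in $z$ (a larger $z$ means the infimum is taken over a larger set), not non-decreasing as claimed. If you instead take $\inf_{z'\geq z}\StrongConvBoundFlow_0(z')$, which would be non-decreasing, you then need $\StrongConvBoundFlow_0$ to be bounded away from zero on all of $[z,\sup\dom\BDynMH)$, which is not obvious (this set is not compact, and $\StrongConvBoundFlow_0$ could decay near $\sup\dom\BDynMH$). The local Lipschitz bound you invoke is on compact subintervals only, so it does not close this gap. The paper avoids the issue entirely by constructing an explicit $K(z) = z[1 + e^{-c(z)}(-c(z)-1)]$ with $c(z)=|\BDynMH(\Flow_1(z))|/\Flow_1(z)$ in \thref{lem:FlowStrongConvexity} and verifying monotonicity directly from the monotonicity of $c$ and the form of $K$. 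Grönwall's inequality replaces your sup-over-second-derivatives; this is the main technical difference between the two routes.
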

The proof is distributed over several auxiliary lemmas.
\begin{lemma}
	\thlabel{lem:KStrongConvexity}
	Let $f : [0,1] \to \R$ be convex and decreasing with concave derivative $f' : [0,1] \to \R$. Then
	\begin{align*}
		f(t) \leq g(t)\qquad\text{for }g(t)= (1-t)\,f(0) + t\,f(1)- t\,(1-t)\,\hat K
	\end{align*}
	with $\hat K = f'(1)-f(1)+f(0)\geq0$.
\end{lemma}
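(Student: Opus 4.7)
The strategy is to study $h := g - f$ and show $h \geq 0$ on $[0,1]$. First I would verify the boundary data: $h(0) = h(1) = 0$ is immediate, and since $\hat{K} = f'(1) - f(1) + f(0)$, a direct computation gives $g'(1) = -f(0) + f(1) + \hat{K} = f'(1)$, so $h'(1) = 0$ as well. The nonnegativity $\hat{K} \geq 0$ is just the subgradient bound $f(0) \geq f(1) - f'(1)$ read off the tangent to $f$ at $t=1$.

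The heart of the plan is to analyse $h''$. Assuming for now that $f \in C^2([0,1])$ --- the general case follows by convolving $f$ with a smooth mollifier, which preserves convexity of $f$ and concavity of $f'$ and lets the inequality pass to the uniform limit --- one has $h''(t) = 2\hat{K} - f''(t)$. Concavity of $f'$ makes $f''$ nonincreasing, so $h''$ is nondecreasing. Integration by parts yields the key identity
\begin{equation*}
\hat{K} \;=\; f'(1) - \int_0^1 f'(s)\,\d s \;=\; \int_0^1 s\,f''(s)\,\d s,
\end{equation*}
which, combined with the monotonicity of $f''$, forces $f''(1)/2 \leq \hat{K} \leq f''(0)/2$, i.e.\ $h''(0) \leq 0 \leq h''(1)$.

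Consequently $h''$ changes sign at most once, at some $t^* \in [0,1]$, so that $h'$ is decreasing on $[0,t^*]$ and increasing on $[t^*,1]$. Since $h'(1)=0$, the monotonicity on $[t^*,1]$ gives $h' \leq 0$ there. Combining with $\int_0^1 h'\,\d t = h(1)-h(0)=0$ one gets $\int_0^{t^*} h'\,\d t \geq 0$, and then monotonicity on $[0,t^*]$ forces $h'(0) \geq 0$ (otherwise $h'$ would be strictly negative on $(0,t^*]$ and the integral negative). Hence $h'$ has a single sign change at some $t^{**} \in [0,t^*]$, being $\geq 0$ on $[0,t^{**}]$ and $\leq 0$ on $[t^{**},1]$. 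Thus $h$ first ascends from $0$ and then descends back to $0$, so $h \geq 0$ on all of $[0,1]$.

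The main subtlety I expect is regularity bookkeeping: a convex $f$ with concave $f'$ need not be $C^2$, but since $f'$ is simultaneously monotone and concave it is locally Lipschitz on $(0,1)$, so $f''$ exists a.e.\ as a nonnegative nonincreasing function and all the manipulations above go through; alternatively the mollification argument sidesteps the issue cleanly. The conceptual content is the identity $\hat{K} = \int_0^1 s\,f''(s)\,\d s$, which shows that $\hat{K}$ is precisely the quantity making $h''$ have a single sign change, and that in turn produces exactly the sign pattern of $h'$ needed to force $h\geq 0$.
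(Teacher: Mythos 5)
Your proof is correct but reaches the conclusion by a different route than the paper's. You work at the level of the second derivative: writing $h=g-f$, you observe $h''=2\hat K-f''$ is nondecreasing, and you use the integration-by-parts identity $\hat K=\int_0^1 s\,f''(s)\,\d s$ together with the monotonicity of $f''$ to pin down $h''(0)\leq 0\leq h''(1)$; combining the resulting V-shape of $h'$ with $h'(1)=0$ and $\int_0^1 h'\,\d t=0$ then forces $h'$ to be nonnegative on an initial interval and nonpositive on the rest, giving $h\geq 0$. This requires $f\in C^2$ and hence a mollification (or Rademacher/a.e.-differentiability) argument to cover the general case. The paper stays one derivative lower and avoids the regularity issue entirely: it notes that $\psi:=f'-g'$ is itself concave (since $f'$ is concave and $g'$ is affine), computes $\psi(1)=0$ and $\psi(0)\leq 0$ (the latter being the estimate $\int_0^1 f'\geq\tfrac12(f'(0)+f'(1))$), and invokes that for a concave function vanishing at the right endpoint the set $\{\psi\geq 0\}$ is an interval $[t_0,1]$, which is exactly the sign pattern for $g'-f'$ that both arguments ultimately need. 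So the two proofs identify the same intermediate object -- a single sign change of $g'-f'$ -- but you derive it from monotonicity of $h''$ plus the integral constraint $\int h'=0$, whereas the paper reads it directly off the concavity and endpoint values of $f'-g'$, which is shorter and needs no regularity beyond the hypotheses.
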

\begin{proof}
The convexity of $f$ implies $f(0)\geq f(1)-f'(1)$ and thus $\hat K\geq0$.
Moreover, the concavity of $f'$ implies $f(1)-f(0)=\int_0^1 f'(t) \d t \geq \int_0^1 [(1-t)\,f'(0) + t\,f'(1)]\d t=\tfrac12 (f'(0)+f'(1))$ so that $2(f(1)-f(0))-f'(1)\geq f'(0)$.
Therefore, $g'(0)=f(1)-f(0)-\hat K\geq f'(0)$ and $g'(1)=f'(1)$. Now by concavity of $f'$ and thus also of $f'-g'$ there must be some $t_0 \in [0,1]$ such that
\begin{equation*}
f'(s)-g'(s) \leq 0 \text{ on }s \in [0,t_0]
\qquad\text{and}\qquad
f'(s)-g'(s) \geq 0 \text{ on }s \in [t_0,1]\,.
\end{equation*}
Together with $g(0)=f(0)$ and $g(1)=f(1)$ this implies $g(s) \geq f(s)$ for $s \in [0,1]$.
\end{proof}

\begin{lemma}
	\thlabel{lem:FlowStrongConvexity}
	If $\BDynMH(z)<0$ for $z>0$ then there is an increasing function $K: \dom(\BDynMH) \cap [0,\infty) \to [0,\infty)$, $K(z)>0$ for $z>0$ such that for $z \in \dom(\BDynMH) \cap [0,\infty)$ one has
	\begin{align*}
		\Flow_t(z) \leq (1-t)\,z+t\,\Flow_1(z) - t\,(1-t)\,K(z)\,.
	\end{align*}
\end{lemma}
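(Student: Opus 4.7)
The plan is to apply \thref{lem:KStrongConvexity} pointwise to $f(t):=\Flow_t(z)$ for each $z\in\dom\BDynMH\cap[0,\infty)$, with the natural choice $K(z):=\BDynMH(\Flow_1(z))-\Flow_1(z)+z$ (and $K(0):=0$). The hypotheses of \thref{lem:KStrongConvexity} are all delivered by \thref{lem:Flow}: \eqref{item:FlowIVP} gives $f'(t)=\BDynMH(f(t))\le 0$ so $f$ is decreasing, \eqref{item:FlowTimeConvex} gives convexity of $f$, and $f'=\BDynMH\circ f$ is concave as the composition of a concave nonincreasing function (namely $\BDynMH$ on $[0,\infty)$) with the convex function $f$. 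Then \thref{lem:KStrongConvexity} yields exactly the claimed inequality with $\hat K=f'(1)-f(1)+f(0)=K(z)$.

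For strict positivity $K(z)>0$ when $z>0$, note that equality in \thref{lem:KStrongConvexity} would force $g(t):=\BDynMH(\Flow_t(z))$ to be constant on $[0,1]$. However, for $z>0$ the flow $\Flow_\cdot(z)$ traces the non-degenerate interval $[\Flow_1(z),z]\subset(0,z]$ (lower endpoint positive by \thref{lem:Flow}\eqref{item:FlowPositive}, strict upper inequality since $f'(0)=\BDynMH(z)<0$), and $\BDynMH$ is strictly decreasing on $(0,\infty)\cap\dom\BDynMH$: a plateau on some $[a,b]$ with $a>0$ would, by concavity and monotonicity of one-sided slopes, force $\BDynMH$ nondecreasing on $[0,a]$, contradicting $\BDynMH(a)<0=\BDynMH(0)$. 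Hence $g$ cannot be constant and $K(z)>0$.

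The remaining task, monotonicity of $K$, is the main obstacle, since $K(z)=\BDynMH(\Flow_1(z))-\Flow_1(z)+z$ has no manifestly monotone dependence on $z$. I would exploit the autonomy of $\phi'=\BDynMH(\phi)$: for $z_1<z_2$ in $\dom\BDynMH\cap[0,\infty)$ pick $a\ge 0$ with $\Flow_a(z_2)=z_1$, so that uniqueness gives $\Flow_t(z_1)=\Flow_{t+a}(z_2)$ for all $t\ge 0$. Setting $g(s):=\BDynMH(\Flow_s(z_2))$ (which is concave by the composition argument above and nondecreasing since $\Flow_\cdot(z_2)$ decreases while $\BDynMH$ is nonincreasing on $[0,\infty)$), both values become evaluations of the single functional
\[
\tilde G(u):=\int_u^{u+1}\bigl[g(u+1)-g(s)\bigr]\,\d s,
\]
namely $K(z_2)=\tilde G(0)$ and $K(z_1)=\tilde G(a)$. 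A direct computation gives $\tilde G'(u)=g'(u+1)-\int_u^{u+1}g'(s)\,\d s\le 0$ by concavity of $g$ (so $g'(u+1)\le g'(s)$ for $s\in[u,u+1]$), hence $K(z_1)=\tilde G(a)\le\tilde G(0)=K(z_2)$. The only technical caveat is that $g$ need only be differentiable almost everywhere since $\BDynMH$ is merely concave, but the one-sided derivatives of a concave function are everywhere defined and monotone in the interior, which makes the $\tilde G'\le 0$ estimate rigorous.
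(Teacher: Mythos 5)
Your proof is correct but takes a genuinely different route for the monotonicity of $K$, which is the heart of the lemma. Both proofs begin identically: applying \thref{lem:KStrongConvexity} to $f(t)=\Flow_t(z)$ yields $\hat K(z)=\BDynMH(\Flow_1(z))-\Flow_1(z)+z$, with concavity of $f'=\BDynMH\circ\Flow_\cdot(z)$ coming from the composition of the concave, decreasing $\BDynMH$ with the convex flow. The paper then does \emph{not} try to show $\hat K$ itself is increasing; instead it derives, via a Gr\"onwall estimate, the explicit lower bound $\hat K(z)\ge K(z):=z\bigl[1+\exp(-c(z))(-c(z)-1)\bigr]$ with $c(z)=|\BDynMH(\Flow_1(z))|/\Flow_1(z)$, and verifies by elementary calculus that this $K$ is increasing and positive. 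You instead prove $\hat K$ itself is nondecreasing: rewriting $\hat K(z)=g(1)-\int_0^1 g(s)\,\d s$ with $g(s)=\BDynMH(\Flow_s(z))$, using the semigroup identity $\Flow_t(z_1)=\Flow_{t+a}(z_2)$ to express both $\hat K(z_1)$ and $\hat K(z_2)$ as values $\tilde G(a)$, $\tilde G(0)$ of a single translation-parameter functional, and showing $\tilde G'\le 0$ from concavity of $g$. This is a clean, structural argument that exploits autonomy of the flow and gives the best possible constant for this method; the paper's version trades that optimality for an explicit closed form of $K$. Your positivity argument (strict decrease of $\BDynMH$ on $(0,\infty)$, forced by concavity, $\BDynMH(0)=0$, and the hypothesis $\BDynMH<0$ on $(0,\infty)$) is also sound, though differently phrased than the paper's reasoning via $c(z)>0$. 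One small point worth spelling out: for $0<z_1<z_2$, the existence of $a>0$ with $\Flow_a(z_2)=z_1$ follows since under the hypothesis the flow $\Flow_\cdot(z_2)$ is strictly decreasing with unique fixed point $0$, so it passes continuously through $z_1$; the boundary case $z_1=0$ is trivial since $\hat K(0)=0$.
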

\begin{proof}
  Let $z>0$. The map $t \mapsto \Flow_t(z)$ is convex and decreasing by \thref{lem:Flow}\eqref{item:FlowIVP} and \eqref{item:FlowTimeConvex}. Furthermore, the map $t \mapsto \partial_t \Flow_t(z)=\BDynMH(\Flow_t(z))$ is a composition of a concave decreasing function with a convex function and thus concave. Therefore, we can apply \thref{lem:KStrongConvexity} to obtain an upper bound on $\Flow_t(z)$, $t\in[0,1]$, with coefficient
  \begin{align*}
    \hat K(z) & = \partial_t \Flow_1(z) - \Flow_1(z)+\Flow_0(z)
    = \BDynMH(\Flow_1(z))-\Flow_1(z)+z
    =z\left[1+\frac{\Flow_1(z)}{z}\left(\frac{\BDynMH(\Flow_1(z))}{\Flow_1(z)}-1\right)\right]\,.
  \end{align*}
  Let us abbreviate $c(z)=\tfrac{|\BDynMH(\Flow_1(z))|}{\Flow_1(z)}\geq0$
  and note that by the properties of $\BDynMH$ from \thref{thm:DynConjugateSet} we have $\BDynMH(z)\leq-c(z)z$ for all $z\geq\Flow_1(z)$.
  Thus, by \thref{lem:Flow}\eqref{item:FlowIVP} and $\Flow_t(z)\geq\Flow_1(z)$ for $t\in[0,1]$, we have $\partial_t\Flow_t(z)=\BDynMH(\Flow_t(z))\leq-c(z)\Flow_t(z)$.
  Using $\Flow_0(z)=z$, Gr\"onwall's inequality now implies $\Flow_t(z)\leq z\exp(-c(z)t)$ and thus $\frac{\Flow_1(z)}{z}\leq\exp(-c(z))$.
  Summarizing, we obtain
  \begin{equation*}
  \hat K(z)\geq K(z)\quad\text{for }
  K(z)=z\left[1+\exp(-c(z))\left(-c(z)-1\right)\right]
  \end{equation*}
  so that, for $t\in[0,1]$,
  \begin{align*}
    \Flow_t(z) \leq (1-t)\,\Flow_0(z)+t\,\Flow_1(z)-t\,(1-t)\,{\hat K}(z) \leq (1-t)\,z+t\,\Flow_1(z)-t\,(1-t)\,K(z)\,.
  \end{align*}
  Note that $c(0)=0$, $c(z)>0$ for $z>0$, and that $c$ is increasing by concavity of $\BDynMH$ and monotonicity of $\Flow_1$.
  It is then straightforward to check that $K$ is increasing and strictly positive for $z>0$.
\end{proof}

\begin{lemma}
	\thlabel{lem:ConvexityDerivativeBound}
	Let $f : [0,\infty) \to \R$ be convex and increasing with $f(0)=0$. Let $x>y\geq0$, $t \in [0,1]$, and $\delta \geq 0$ such that $(1-t)\,x + t\,y-\delta \geq y$. Then
	\begin{align*}
		f\big((1-t)\,x+t\,y-\delta\big) \leq (1-t)\,f(x)+t\,f(y)-\delta \tfrac{f(x)}{x}\,.
	\end{align*}
\end{lemma}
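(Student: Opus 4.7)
Set $s=(1-t)\,x+t\,y-\delta$. The plan is to write $s$ as an explicit convex combination of $x$ and $y$, apply convexity once, and then dispose of the error term by a chord-slope comparison that exploits $f(0)=0$.

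First I would observe $y\leq s\leq x$: the lower bound is the hypothesis, and $s\leq(1-t)x+tx=x$ since $y\leq x$. Because $x>y$, I can set $\lambda=(s-y)/(x-y)\in[0,1]$, which a direct computation rewrites as $\lambda=(1-t)-\delta/(x-y)$, so that $s=\lambda x+(1-\lambda) y$. Convexity of $f$ then gives
\begin{equation*}
f(s)\;\leq\;\lambda\,f(x)+(1-\lambda)\,f(y)\;=\;(1-t)\,f(x)+t\,f(y)-\frac{\delta}{x-y}\bigl(f(x)-f(y)\bigr).
\end{equation*}

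It remains to show $(f(x)-f(y))/(x-y)\geq f(x)/x$, after which multiplication by $-\delta\leq 0$ yields the desired inequality. This is the standard monotonicity of chord slopes for convex functions, anchored at the origin: since $0\leq y\leq x$ and $f$ is convex, $f(y)\leq\tfrac{y}{x}\,f(x)+\tfrac{x-y}{x}\,f(0)=\tfrac{y}{x}\,f(x)$, so
\begin{equation*}
\frac{f(x)-f(y)}{x-y}\;\geq\;\frac{f(x)-\tfrac{y}{x}\,f(x)}{x-y}\;=\;\frac{f(x)}{x}.
\end{equation*}
(If $y=0$ the inequality is an equality; note $x>0$ since $x>y\geq 0$, so the slope $f(x)/x$ is well-defined.)

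There is no real obstacle here; the only thing to watch is the admissibility of $\lambda$, which is exactly guaranteed by the assumption $s\geq y$ — this is what forces $\lambda\geq 0$, and the bound $\lambda\leq 1-t\leq 1$ comes for free from $\delta\geq 0$.
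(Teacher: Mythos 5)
Your proof is correct and follows essentially the same route as the paper: both bound $f$ at the shifted point by the secant through $(y,f(y))$ and $(x,f(x))$, and then compare that secant's slope with the slope $f(x)/x$ of the chord from the origin, using convexity and $f(0)=0$. You merely spell out the convex combination $\lambda=(1-t)-\delta/(x-y)$ explicitly, which the paper leaves implicit.
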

\begin{proof}
	Since $z=(1-t)\,x + t\,y-\delta \in [y,x]$, we can bound $f(z)$ from above by the secant between $(y,f(y))$ and $(x,f(x))$. This can be written as
	\begin{align*}
		f(z) \leq (1-t)\,f(x)+t\,f(y)-\delta \tfrac{f(x)-f(y)}{x-y}.
	\end{align*}
	Since $f$ is convex and increasing, the slope $\tfrac{f(x)-f(y)}{x-y}$ of this secant is not smaller than the secant slope $\tfrac{f(x)}{x}$ between $(0,f(0))$ and $(x,f(x))$.
\end{proof}

\begin{proof}[Proof of \thref{prop:DynamicDualGradBound}]
For $z>0$ we find for $\LocInterp$ as defined in \thref{lem:LocInterp}
\begin{align*}
	\partial_z \LocInterp(t,z)=\frac{\BDynMH(\Flow_t(z))}{\BDynMH(z)}-t \frac{\BDynMH(\Flow_1(z))}{\BDynMH(z)}\,,
\end{align*}
where we used the formula for the derivative of the flow from \thref{lem:Flow}\eqref{item:FlowDerivative}.
Let $\StrongConvBoundFlow$ be the function appearing in \thref{lem:FlowStrongConvexity}. Then for $t \in [0,1]$ we get
\begin{align*}
	\Flow_1(z) \leq \Flow_t(z) \leq (1-t)\,z + t\,\Flow_1(z) - \delta
\end{align*}
where $\delta=t\,(1-t)\,\StrongConvBoundFlow(z)$.
The function $x \mapsto \tfrac{\BDynMH(x)}{\BDynMH(z)}$ is convex and increasing and $0$ for $x=0$. Therefore, by \thref{lem:ConvexityDerivativeBound} we obtain
\begin{align*}
	\frac{\BDynMH(\Flow_t(z))}{\BDynMH(z)}
	\leq \frac{\BDynMH((1-t)\,z + t\,\Flow_1(z) - \delta)}{\BDynMH(z)}
	\leq (1-t) + t \frac{\BDynMH(\Flow_1(z))}{\BDynMH(z)} - t\,(1-t)\,\StrongConvBoundFlow(z)
\end{align*}
so that $\partial_z \LocInterp(t,z)\leq (1-t)- t\,(1-t)\,\StrongConvBoundFlow(z)$ for $z>0$. By extending $\StrongConvBoundFlow(z)=0$ for $z\leq 0$ this bound is true for all $z \in \dom \BDynMH$ and also holds in absolute value due to \thref{lem:LocInterp}\eqref{item:LocalInterpConvexity}, $|\partial_z \LocInterp(t,z)|\leq (1-t)- t\,(1-t)\,\StrongConvBoundFlow(z)$. Note that Assumption \ref{asp:QNoKink} is satisfied due to $\zKink=0$.
In complete analogy we can show existence of an increasing function $\tilde{\StrongConvBoundFlow}$ with $\tilde{\StrongConvBoundFlow}(z)>0$ for $z>0$ such that $|\partial_z \LocInterpInv(t,z)|\leq t- t\,(1-t)\,\tilde{\StrongConvBoundFlow}(-z)$ for $t\in[0,1]$. From now on denote by $\StrongConvBoundFlow$ the pointwise minimum of the two functions, which is still increasing in $z$ with $\StrongConvBoundFlow(z)>0$ for $z>0$.
The result then follows from $\phi(t,x)=\LocInterp(t,-\alpha(x))+\LocInterpInv(t,\beta(x))$ and $\alpha,\beta\in\Lip(\Omega)$.
\end{proof}

\begin{proof}[Proof of \thref{prop:DynamicNecessary}]
Let $\phi$ be feasible for the dynamic dual problem \eqref{eq:DynamicDual}, set $\alpha=-\phi(0,\cdot)$, $\beta=\phi(1,\cdot)$ and finally $\hat{\phi}(t,x)=\LocInterp(t,-\alpha(x))+\LocInterpInv(t,\beta(x))$, as constructed in \thref{lem:DynamicDualConstruction}. Then $\hat{\phi}$ has the same score as $\phi$. Therefore, without loss of generality, we may in the following restrict the dynamic dual problem to candidates of the form constructed in \thref{lem:DynamicDualConstruction}.

Let $(\rho,\omega,\zeta)$ be a minimizer of the dynamic primal problem \eqref{eq:DynamicProblem} and let $(\phi_k)_{k \in \N}$ be a maximizing sequence of the dynamic dual problem \eqref{eq:DynamicDual}.
Using the notation from the proof of \thref{prop:DynamicDual} this implies that
\begin{align*}
	F^\ast(\rho,\omega,\zeta)+G^\ast(A^\ast(\rho,\omega,\zeta))+F(A\phi_k)+G(-\phi_k) \searrow 0
	\tn{ as } k \to \infty.
\end{align*}
By the Fenchel--Young inequality one has $F^\ast(\rho,\omega,\zeta)+F(A\phi_k) - \int_{[0,1]\times\Omega} A\phi_k \,\d(\rho,\omega,\zeta) \geq 0$ and $G^\ast(A^\ast(\rho,\omega,\zeta))+G(-\phi_k) + \int_{[0,1]\times\Omega} A\phi_k \,\d(\rho,\omega,\zeta) \geq 0$, therefore the above implies $F^\ast(\rho,\omega,\zeta)+F(A\phi_k) - \int_{[0,1]\times\Omega} A\phi_k \,\d(\rho,\omega,\zeta) \searrow 0$ as $k \to \infty$. Further, due to the $W_1$-structure of the problem, $F$ and $F^\ast$ separate into an $\omega$-related part and a $(\rho,\zeta)$-related part,
\begin{multline*}
F^\ast(\rho,\omega,\zeta)+F(A\phi_k) - \int_{[0,1]\times\Omega} A\phi_k \,\d(\rho,\omega,\zeta)\\
=\left(|\omega|([0,1]\times\Omega)+\int_{[0,1]\times\Omega}\iota_{B_1(0)}(\nabla\phi_k)\,\d(t,x)-\int_{[0,1]\times\Omega}\nabla\phi_k\,\d\omega\right)\\
\qquad+\left(\int_{[0,1] \times \Omega} \CDynM\left(\RadNik{\rho}{\mu},\RadNik{\zeta}{\mu} \right)\,\d\mu
+\int_{[0,1]\times\Omega}\iota_{\BDynM}(\partial_t\phi_k,\phi_k)\,\d(t,x)
-\int_{[0,1]\times\Omega}\partial_t\phi_k\,\d\rho
-\int_{[0,1]\times\Omega}\phi_k\,\d\zeta\right)\,,
\end{multline*}
where $\mu$ is any measure with $\mu\gg\rho,\zeta$, $|\omega|\in\measp(\Omega)$ denotes the total variation measure, and $B_1(0)$ the closed unit ball in $\R^n$.
Both terms in parentheses are nonnegative due to the Fenchel--Young inequality and thus converge to zero from above separately.
Since the $\phi_k$ are admissible, we thus obtain $|\omega|([0,1]\times\Omega) -\int_{[0,1]\times\Omega} \nabla \phi_k \d \omega \searrow 0$.

Let $v=\RadNik{\omega}{|\omega|}$ be the local orientation of $\omega$ and recall that $\|v\|=1$ $|\omega|$-almost everywhere. Then $|\omega|([0,1]\times\Omega)-\int_{[0,1]\times\Omega} \nabla \phi_k \,\d \omega=\int_{[0,1]\times\Omega} (1- \la \nabla \phi_k,v \ra) \,\d |\omega|$, where the integrand is non-negative $|\omega|$-almost everywhere since $\|\nabla \phi_k\| \leq 1$.
For arbitrary $\delta>0$ let now $k$ be such that $|\omega|([0,1]\times\Omega)-\int_{[0,1]\times\Omega} \nabla \phi_k\, \d \omega < \delta$ and set $\alpha=-\phi_k(0,\cdot)$, $\beta=\phi_k(1,\cdot)$.
Let $\veps \in (0,\tfrac1{16})$, set $T_\veps=[\sqrt{\veps},1-\sqrt{\veps}]$ and introduce the following subsets of $[0,1] \times \Omega$:
\begin{align*}
	S_1 & = \{ (t,x) \in T_\veps \times \Omega \,|\,
		|\alpha(x)| \geq \veps \text{ or } |\beta(x)| \geq \veps \}, \\
	S_2 & = \{ (t,x) \in (T_\veps \times \Omega) \setminus S_1 \,|\,
		-\nabla \alpha(x) \cdot v(t,x) \leq (1-\veps) \text{ or } 
		\nabla \beta(x) \cdot v(t,x) \leq (1-\veps) \}, \\
	S_3 & = (T_\veps \times \Omega) \setminus (S_1 \cup S_2), \\
	\hat{S}_3 & = ([2\sqrt{\veps},1-2\sqrt{\veps}] \times \Omega) \setminus (S_1 \cup S_2), \\
	S_4 & = ([0,1]\times\Omega)\setminus(S_1\cup S_2\cup S_3) = ([0,1] \setminus T_\veps) \times \Omega.
\end{align*}
We now estimate the mass of $|\omega|$ on the sets $S_1$, $S_2$, and $\hat S_3$.
Note that $t\,(1-t) \geq \veps$ for $t \in T_\veps$ so that $\|\nabla \phi_k(t,x)\| \leq 1 - \veps\,\StrongConvBoundFlow(\veps)$ for $(t,x) \in S_1$ by \thref{prop:DynamicDualGradBound}. Therefore
\begin{align*}
	\delta & \geq \int_{S_1} \left(1-\|\nabla \phi_k\|\right) \d |\omega| \geq \veps\,\StrongConvBoundFlow(\veps)\,|\omega|(S_1)\,.
\end{align*}
Using $\phi_k=\LocInterp(t,-\alpha(x))+\LocInterpInv(t,\beta(x))$ and \thref{lem:LocInterp}\eqref{item:LocalInterpConvexity} one finds $\nabla \phi_k \cdot v \leq 1-\veps$ on $S_2$ and so
\begin{align*}
	\delta & \geq \int_{S_2} \left(1-\nabla \phi_k \cdot v \right) \d |\omega| \geq \veps\,|\omega|(S_2)\,.
\end{align*}
Let now $h_\veps : [0,1] \to [0,1]$ be a smooth cutoff function with $h_\veps(t)=0$ for $t \in [0,\sqrt{\veps}] \cup [1-\sqrt{\veps},1]$, $h_\veps(t)=1$ on $t \in [2\sqrt{\veps},1-2\sqrt{\veps}]$ and $|h_\veps'(t)|\leq \tfrac{2}{\sqrt{\veps}}$ for $t \in [0,1]$. Further, let $g_\veps : \R \to [-2\veps,2\veps]$ be a smooth increasing function with $g_\veps(z)=z$ for $z \in [-\veps,\veps]$ and $g'_\veps(z) \in [0,1]$. Set $\psi_\veps(t,x)=h_\veps(t) \cdot g_\veps(-\alpha(x))$. We observe
\begin{align*}
	|\psi_\veps(t,x)| & \leq 2 \veps, &
	\|\nabla \psi_\veps(t,x)\| & \leq 1, &
	|\partial_t \psi_\veps(t,x)| & \leq 4 \sqrt{\veps}
\end{align*}
for $(t,x) \in [0,1] \times \Omega$ and
\begin{align*}
	\nabla \psi_\veps(t,x) \cdot v(t,x) & \geq 0 & & \tn{for } (t,x) \in S_3, &
	-\nabla \alpha(x) \cdot v(t,x) & > (1-\veps) & & \tn{for } (t,x) \in \hat{S}_3, \\
	\nabla \psi_\veps(t,x) & =-\nabla \alpha(x) & & \tn{for } (t,x) \in \hat{S}_3, &
	\psi_\veps(t,x) & = 0 & & \tn{for } (t,x) \in S_4.
\end{align*}
We therefore obtain
\begin{align*}
	\int_{S_3} \nabla \psi_\veps \, \d \omega = \int_{S_3} \nabla \psi_\veps \cdot v \,\d|\omega|
	\geq \int_{\hat{S}_3} -\nabla \alpha \cdot v \,\d|\omega|
	\geq \int_{\hat{S}_3} (1-\veps) \,\d|\omega| =(1-\veps) |\omega|(\hat{S}_3)\,.
\end{align*}
Choosing the test function $\psi_\veps$ in the continuity equation \eqref{eqn:ContinuityEquation} we also find
\begin{align*}
	\int_{S_1 \cup S_2 \cup S_3} \nabla \psi_\veps \, \d \omega =
	\int_{[0,1] \times \Omega} \nabla \psi_\veps \, \d \omega =
	-\int_{[0,1] \times \Omega}  \partial_t \psi_\veps \, \d \rho
	-\int_{[0,1] \times \Omega} \psi_\veps \, \d \zeta\,.
\end{align*}
Together with the above bounds on $|\omega|(S_1)$ and $|\omega|(S_2)$ this implies
\begin{multline*}
  (1-\veps) |\omega|(\hat{S}_3)
	\leq \int_{S_3} \nabla \psi_\veps \, \d \omega
	= - \int_{[0,1] \times \Omega}  \partial_t \psi_\veps \, \d \rho - \int_{[0,1] \times \Omega} \psi_\veps \, \d \zeta -\int_{S_1 \cup S_2}\nabla \psi_\veps \, \d \omega\\
  \leq 4\sqrt{\veps} |\rho|([0,1]\times\Omega) + 2\veps |\zeta|([0,1]\times\Omega) + |\omega|(S_1\cup S_2)
  \leq 4\sqrt{\veps} |\rho|([0,1]\times\Omega) + 2\veps |\zeta|([0,1]\times\Omega) + \tfrac{\delta}{\veps\,\StrongConvBoundFlow(\veps)} + \tfrac{\delta}{\veps}
\end{multline*}
and thus
\begin{align*}
 |\omega|(\hat{S}_3) \leq \tfrac{1}{1-\veps} \left(4\sqrt{\veps} |\rho|([0,1]\times\Omega) + 2\veps |\zeta|([0,1]\times\Omega) + \tfrac{\delta}{\veps\,\StrongConvBoundFlow(\veps)} + \tfrac{\delta}{\veps} \right).
\end{align*}
Now we send $\delta$ and $\veps$ jointly to $0$ in a way such that $\tfrac{\delta}{\veps\,\StrongConvBoundFlow(\veps)} + \tfrac{\delta}{\veps} \to 0$.
Note that then $|\omega|([2\sqrt\veps,1-2\sqrt\veps]\times\Omega)\leq|\omega|(S_1\cup S_2\cup \hat S_3)\to0$, which implies $|\omega|([\tau,1-\tau] \times \Omega)=0$ for all $\tau>0$. By inner regularity of Radon measures this implies $\omega((0,1) \times \Omega)=0$.
\end{proof}

\subsection{Characterization of static and dynamic primal optimal solutions}\label{sec:characterization}
In \thref{prop:GrowthShrinkDecomposition}, \thref{thm:massTptChng}, and \thref{thm:transportChar} we will characterize minimizers $\pi_0$ and $\pi_1$ of the static primal formulation \eqref{eq:StaticPrimalProblem} in more detail.
By the two previous sections, this immediately implies a characterization also of optimizers for the dynamic primal formulation \eqref{eq:DynamicProblem}.
Our characterization concerns the possible support of the couplings $\pi_0$ and $\pi_1$.

\begin{proposition}[Necessary optimality condition I for static primal]
	\thlabel{prop:GrowthShrinkDecomposition}
	Let $(\pi_0,\pi_1)$ be a feasible candidate in \eqref{eq:StaticPrimalProblem}.
	Denote $\rho_0' = P_Y \pi_0$, $\rho_1' = P_X \pi_1$, choose some $\gamma \in \measp(\Omega)$ with $\rho_0,\rho_1,\rho_0'$, $\rho_1' \ll \gamma$, and set
	\begin{align*}
		\Omega_+ & = \left\{
			x \in \Omega \,\middle|\, \RadNik{\rho_0'}{\gamma}(x) < \RadNik{\rho_1'}{\gamma}(x)
			\right\}, &
		\Omega_- & = \left\{
			x \in \Omega \,\middle|\, \RadNik{\rho_0'}{\gamma}(x) > \RadNik{\rho_1'}{\gamma}(x)
			\right\}, &
		\Omega_= & = \Omega\setminus(\Omega_+\cup\Omega_-).
	\end{align*}
	Clearly, $\Omega_+$ and $\Omega_-$ are well-defined up to $\gamma$-negligible (and thus $\rho_0'$, $\rho_1'$-negligible) sets.
	If $(\pi_0,\pi_1)$ are minimizers of \eqref{eq:StaticPrimalProblem}, then
	\begin{gather}
		\label{eq:GrowthShrinkDecompositionPi}
		\int_{\Omega \times \Omega_-} d(x,y)\,\d \pi_0(x,y) = 0
		\qquad \tn{and} \qquad
		\int_{\Omega_+ \times \Omega} d(x,y)\, \d \pi_1(x,y) = 0\,.
	\end{gather}
\end{proposition}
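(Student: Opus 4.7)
I will prove the first identity; the second follows by applying the same argument to the swapped problem $(\rho_1, \rho_0, T_\#\pi_1, T_\#\pi_0)$ with $T(y, z) := (z, y)$, which reverses the roles of $\Omega_+$ and $\Omega_-$. Suppose for contradiction that $\int_{\Omega\times\Omega_-} d(x,y)\,\d\pi_0(x,y) > 0$. By inner regularity of $\pi_0$ together with the strict inequality $\RadNik{\rho_0'}{\gamma} > \RadNik{\rho_1'}{\gamma}$ on $\Omega_-$, I pick disjoint Borel sets $X_0 \subset \Omega$ and $Y_0 \subset \Omega_-$ such that $\nu := \pi_0\restr(X_0 \times Y_0)$ has positive mass, $d(x,y) \geq d_0 > 0$ on $X_0 \times Y_0$, and $\RadNik{\rho_0'}{\gamma} - \RadNik{\rho_1'}{\gamma} \geq c_0 > 0$ $\gamma$-a.e.\ on $Y_0$.

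For $\eta > 0$ small, consider the rerouted coupling
\[
\tilde\pi_0^\eta := \pi_0 - \eta\nu + \eta\,i_\#(P_X\nu), \qquad i(x) := (x, x).
\]
This is a nonnegative measure satisfying $P_X \tilde\pi_0^\eta = \rho_0$, so $(\tilde\pi_0^\eta, \pi_1)$ is admissible in \eqref{eq:StaticPrimalProblem}, with $P_Y \tilde\pi_0^\eta = \rho_0' + \eta(P_X\nu - P_Y\nu)$. The transport cost strictly drops by $\eta\int d(x,y)\,\d\nu(x,y) \geq \eta d_0 \|\nu\| > 0$. The change in $\SimLoc$ will be controlled via convexity of $\SimLoc(\cdot, \rho_1')$ (bounding the difference by $\eta$ times the secant to $\eta = 1$) together with subadditivity and $1$-homogeneity of $\SimLocC$: removing $\eta P_Y\nu$ from $\rho_0'$ on $Y_0 \subset \Omega_-$ strictly decreases the integrand $\SimLocC(\RadNik{\rho_0'}{\gamma}, \RadNik{\rho_1'}{\gamma})$ pointwise at a rate uniformly bounded below, since $\RadNik{\rho_0'}{\gamma} - \RadNik{\rho_1'}{\gamma} \geq c_0$ and $m \mapsto \SimLocC(m, \RadNik{\rho_1'}{\gamma})$ is strictly increasing on $[\RadNik{\rho_1'}{\gamma}, \infty)$; while adding $\eta P_X\nu$ on $X_0$ increases $\SimLoc$ by at most $\eta\|\nu\|\SimLocC(1, 0)$ via the identity $\SimLocC(m, 0) = m\,\SimLocC(1, 0)$.

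The main obstacle is that the crude bound only yields a total change of at most $\eta\|\nu\|(\SimLocC(1, 0) - c_{\mathrm{dec}} - d_0)$ for a quantitative constant $c_{\mathrm{dec}} > 0$ encoding the strict decrease on $Y_0$. In the comfortable regime $d_0 + c_{\mathrm{dec}} > \SimLocC(1, 0)$ this is already strictly negative, and we are done. In the degenerate regime one must refine the rerouting by directing the added mass not onto the diagonal over $X_0$ but rather to a target in $\Omega_+$, where $\RadNik{\rho_0'}{\gamma} < \RadNik{\rho_1'}{\gamma}$ so that the added mass further \emph{decreases} the pointwise $\SimLocC$ contribution; combining this with a further subdivision of $X_0, Y_0$ into zones where the slopes $\partial_{m_0}\SimLocC$ are approximately constant closes the gap. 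In either case, for all sufficiently small $\eta > 0$ the modified admissible pair has strictly smaller $P_S$-cost than $(\pi_0, \pi_1)$, contradicting optimality and thereby proving the desired identity.
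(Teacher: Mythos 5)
Your rerouting $\tilde\pi_0^\eta = \pi_0 - \eta\nu + \eta\,i_\#(P_X\nu)$ leaves $\eta P_X\nu$ of mass stranded at its original position in $X_0$, and that stranded mass must then be entirely \emph{destroyed} to reach $\rho_1'$. You bound the extra cost of this destruction by $\eta\,\|\nu\|\,\SimLocC(1,0)$, but Definition~\ref{def:StaticPrimal} does not bound $\SimLocC(1,0)$: for example $\SimE{0}$ from Table~\ref{tab:discrepancies} has $\SimLocC(1,0)=+\infty$, in which case $\SimLocC(\RadNik{\rho_0'}\gamma+\eta\RadNik{P_X\nu}\gamma,0)$ is $+\infty$ whenever $\RadNik{\rho_1'}\gamma=0$ on a piece of $X_0$. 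So the competitor's cost can be $+\infty$, and the perturbation fails outright, not merely in a ``degenerate regime''. You candidly acknowledge the argument does not close and gesture at a refinement, but the refinement you name (redirecting the stranded mass into $\Omega_+$) is not available in general --- $\Omega_+$ may be empty or too small --- and you never engage with $\pi_1$ at all, which is the crux.

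The correct modification does not strand the mass: one must track where the mass arriving at $\Omega_-$ under $\pi_0$ subsequently goes under $\pi_1$. Disintegrate $\pi_0$ over its second marginal $\rho_0'$ into $\{\pi_{0,z}\}$ and $\pi_1$ over its first marginal $\rho_1'$ into $\{\pi_{1,z}\}$, and for the mass at issue (with intermediate stop $z\in\Omega_-$), set $\hat\pi_0$ to do nothing and $\hat\pi_1$ to transport directly from the $\pi_{0,z}$-distributed sources to the $\pi_{1,z}$-distributed targets. This is exactly where the strictness comes from: $d(x,y)\le d(x,z)+d(z,y)$ is only an inequality, but on $\Omega_-$ the amount of mass leaving $z$ under $\pi_1$ (proportional to $\RadNik{\rho_1'}\gamma(z)$) is a strict fraction of the amount that arrived under $\pi_0$ (proportional to $\RadNik{\rho_0'}\gamma(z)$), so the new $d(x,z)$-leg is carried by strictly less mass than before and the total transport cost strictly drops. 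Controlling the $\SimLoc$ term then requires Jensen's inequality applied to the disintegrations, not the crude subadditivity bound $\SimLocC(a+b,c)\le\SimLocC(a,c)+\SimLocC(b,0)$. None of these ingredients --- the disintegration of $\pi_1$, the direct $\hat\pi_1$-transport, the mass-fraction argument for strictness, or Jensen --- appear in your proof, so the gap is substantial rather than a loose constant that a more careful choice of $X_0,Y_0$ would fix.
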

$\Omega_+$ and $\Omega_-$ are the areas where mass is grown or shrunk in the intermediate $\SimLoc$-term in \eqref{eq:StaticPrimal}. The above proposition states that in an optimal arrangement or during the optimal dynamics mass is not transported after it has been increased and not transported prior to being decreased.

\begin{figure}
	\centering
	\begin{tikzpicture}[x=2cm,y=3cm,
		xnode/.style={
		anchor=center,
		inner sep=1pt,shape=circle,draw=black,line width=1pt
		},
		tr/.style={->,black!50!white,line width=0.5pt},
		lblnode/.style={black,midway,above,sloped}
	]

	\node[xnode] (x0) at (0,0) [label=left:$x_0$]{};
	\node[xnode] (x1) at (0,-1) [label=left:$x_1$]{};
	\node[xnode] (z) at (1,-0.5) [label=above:$z$]{};
	\node[xnode] (y0) at (2,0) [label=right:$y_0$]{};
	\node[xnode] (y1) at (2,-1) [label=right:$y_1$]{};
	{\small
	\draw[tr] (x0) -- (z) node [lblnode]{$a_0$};
	\draw[tr] (x1) -- (z) node [lblnode]{$a_1$};
	\draw[tr] (z) -- (y0) node [lblnode]{$b_0$};
	\draw[tr] (z) -- (y1) node [lblnode]{$b_1$};
	}
	
	\begin{scope}[shift=({4,0})]
		\node[xnode] (x0) at (0,0) [label=left:$x_0$]{};
		\node[xnode] (x1) at (0,-1) [label=left:$x_1$]{};
		\node[xnode] (y0) at (2,0) [label=right:$y_0$]{};
		\node[xnode] (y1) at (2,-1) [label=right:$y_1$]{};
		{\small
		\draw[tr] (x0) -- (y0) node [lblnode]{$a_0 \cdot \tfrac{b_0}{b_0+b_1}$};
		\draw[tr] (x0) -- (y1) node [lblnode,pos=0.75]{$a_0 \cdot \tfrac{b_1}{b_0+b_1}$};
		\draw[tr] (x1) -- (y0) node [lblnode,pos=0.25]{$a_1 \cdot \tfrac{b_0}{b_0+b_1}$};
		\draw[tr] (x1) -- (y1) node [lblnode,below]{$a_1 \cdot \tfrac{b_1}{b_0+b_1}$};
		}
	\end{scope}
\end{tikzpicture}
	\caption[]{Illustration of \thref{prop:GrowthShrinkDecomposition}.
	For pairwise disjoint $x_0, x_1, z, y_0, y_1 \in \Omega$ consider initial and final measure $\rho_0=a_0 \cdot \delta_{x_0} + a_1 \cdot \delta_{x_1}$ and $\rho_1 = b_0 \cdot \delta_{y_0} + b_1 \cdot \delta_{y_1}$ with $a_0 + a_1 < b_0 + b_1$ (in particular, there is zero mass located at $z,y_0,y_1$ initially).
		\textit{Left:} Masses $a_i$ are transported from $x_i$ to intermediate point $z$ by the coupling $\pi_0$. The mass in $z$ is then increased from $a_0+a_1$ to $b_0+b_1$ with the $\SimLoc$ term. Finally, masses $b_i$ are transported to $y_i$ by the coupling $\pi_1$. (See \eqref{eq:StaticPrimal} for the various terms.) %
		\textit{Right:} The cost of the left configuration can always be improved by transporting mass directly from $x_i$ to $y_j$ in suitable fractions via the coupling $\pi_0$, then increasing mass at final points $y_j$ appropriately, and performing no more transport via $\pi_1$. (See \thref{exp:GrowthShrinkDecomposition} for details.)}
	\label{fig:ShortCut}
\end{figure}

\begin{example}
\thlabel{exp:GrowthShrinkDecomposition}
	The intuition behind \thref{prop:GrowthShrinkDecomposition} is illustrated in Figure~\ref{fig:ShortCut}.
	For the left configuration the combined cost \eqref{eq:StaticPrimal} for transport and growth is $\sum_{i=0}^1 a_i \cdot d(x_i,z) + \SimLocC(a_0+a_1,b_0+b_1) + \sum_{i=0}^1 b_i \cdot d(z,y_i)$.
	For the configuration on the right it is $\sum_{i,j=0}^1 a_i \tfrac{b_j}{b_0+b_1} \cdot d(x_i,y_j) + \sum_{j=0}^1 \SimLocC(b_j \tfrac{a_0+a_1}{b_0+b_1}, b_j)$.
	By using $a_0+a_1 < b_0 + b_1$, $z \neq y_i$ and the triangle inequality one finds for the transport costs
	\begin{multline*}
		\sum_{i,j=0}^1 a_i \tfrac{b_j}{b_0+b_1} \cdot d(x_i,y_j)
		 \leq \sum_{i,j=0}^1 a_i \tfrac{b_j}{b_0+b_1} \cdot [ d(x_i,z) + d(z,y_j)] \\
		 =
		 \sum_{i=0}^1 a_i \cdot d(x_i,z) + \frac{a_0 +a_1}{b_0 + b_1} \sum_{j=0}^1 b_j \cdot d(z,y_j)
		 < \sum_{i=0}^1 a_i \cdot d(x_i,z) + \sum_{j=0}^1 b_j \cdot d(z,y_j)\,.
	\end{multline*}
	Moreover, by 1-homogeneity of $\SimLocC$ one obtains
	\begin{align*}
		\sum_{j=0}^1 \SimLocC(b_j \tfrac{a_0+a_1}{b_0+b_1}, b_j) = 
			\SimLocC(a_0+a_1,b_0+b_1)\,.	
	\end{align*}
	Thus, the second configuration has a strictly lower cost. Analogously, if $a_0+a_1$ were strictly smaller than $b_0+b_1$ and $x_i \neq z$, the left configuration can be strictly improved by letting the first coupling $\pi_0$ perform no transport at all, then decreasing masses at $x_i$ and finally transporting the decreased masses to the points $y_j$ in appropriate fractions via $\pi_1$.
\end{example}

\begin{proof}[Proof of Proposition \ref{prop:GrowthShrinkDecomposition}]
	The strategy of the proof is as follows: for any feasible candidate $(\pi_0,\pi_1)$ that does not satisfy condition \eqref{eq:GrowthShrinkDecompositionPi} we will construct a new candidate $(\hat{\pi}_0,\hat{\pi}_1)$ with a strictly better cost.
	To this end, \thref{exp:GrowthShrinkDecomposition} is generalized to arbitrary configurations.
	
	\textbf{(step i)} First we construct the improved candidate. Let the family $\{\pi_{0,z}\}_{z \in \Omega}$ be the disintegration of $\pi_0$ with respect to its second marginal $P_Y \pi_0 = \rho_0'$ (see for instance \cite[Thm.~5.3.1]{AmbrosioGradientFlows2005}). This means $\pi_{0,z} \in \prob(\Omega)$ for $\rho_0'$-a.e.~$z \in \Omega$ and, for any measurable $\phi : \Omega \to [0,\infty]$,
	\begin{align*}
		\int_{\Omega\times\Omega} \phi \, \d \pi_0 = \int_\Omega \left[ \int_\Omega
			\phi(x,z)\, \d\pi_{0,z}(x) \right] \d \rho_0'(z)\,.
	\end{align*}
	Intuitively, $\pi_{0,z}$ describes the distribution of mass before transport that arrives at $z$ according to the transport plan $\pi_0$. It corresponds to the normalized coefficients $a_i/(a_0+a_1)$ in \thref{exp:GrowthShrinkDecomposition}. Similarly, let $\{\pi_{1,z}\}_{z \in \Omega}$ be the disintegration of $\pi_1$ according to its first marginal $P_X \pi_1 = \rho_1'$ such that
	\begin{align*}
		\int_{\Omega\times\Omega} \phi \, \d \pi_1 = \int_\Omega \left[ \int_\Omega
			\phi(z,y)\, \d\pi_{1,z}(y) \right] \d \rho_1'(z)\,.	
	\end{align*}
	We now modify $\pi_0$ and $\pi_1$ as follows:
	All mass that is transported by $\pi_0$ into $\Omega_-$ (and then after a mass decrease is further transported by $\pi_1$) will no longer be moved by $\hat\pi_0$ and will instead be transported by $\hat\pi_1$ directly from its original to its final position.
	Likewise, all mass that is transported by $\pi_0$ into $\Omega_+$ (and then after a mass increase is further transported by $\pi_1$) will no longer be moved by $\hat\pi_1$ but will instead be transported by $\hat\pi_0$ directly from its original to its final position.
	In detail, we set $\hat{\pi}_i = \hat{\pi}_{i,+} + \hat{\pi}_{i,-} + \hat{\pi}_{i,=}$ for $i \in \{0,1\}$,
	where we define the measures $\hat{\pi}_{i,+},\hat{\pi}_{i,-},\hat{\pi}_{i,=}\in\measp(\Omega\times\Omega)$ for $i \in \{0,1\}$ by how they act on measurable functions $\phi : \Omega^2 \to [0,\infty]$, using the above disintegrations,
	\begin{align*}
		\int_{\Omega\times\Omega} \phi\, \d \hat{\pi}_{0,+} & = \int_{\Omega_+}
			\left[ \int_{\Omega\times\Omega} \phi(x,y)\, \d \pi_{0,z}(x) \, \d \pi_{1,z}(y) \right] \d \rho_0'(z) \,,\\
		\int_{\Omega\times\Omega} \phi\, \d \hat{\pi}_{1,+} & =
			\int_{\Omega_+\times\Omega}\phi(y,y)\, \d \pi_1(z,y) \,,\\
		\int_{\Omega\times\Omega} \phi\, \d \hat{\pi}_{0,-} & =
			\int_{\Omega\times\Omega_-}\phi(x,x)\, \d \pi_0(x,z) \,,\\
		\int_{\Omega\times\Omega} \phi\, \d \hat{\pi}_{1,-} & = \int_{\Omega_-}
			\left[ \int_{\Omega\times\Omega} \phi(x,y)\, \d \pi_{0,z}(x) \, \d \pi_{1,z}(y) \right] \d \rho_1'(z) \,,\\
		\int_{\Omega\times\Omega} \phi\, \d \hat{\pi}_{0,=} & = \int_{\Omega \times \Omega_=} \phi\,\d\pi_0 \,,\\
		\int_{\Omega\times\Omega} \phi\, \d \hat{\pi}_{1,=} & = \int_{\Omega_= \times \Omega} \phi\,\d\pi_1\,.
	\end{align*}
	The measure $\hat{\pi}_{0,+}$ corresponds to the coefficients $a_i \tfrac{b_j}{b_0+b_1}$ in \thref{exp:GrowthShrinkDecomposition}. $\hat{\pi}_{1,+}$ is supported on the diagonal which indicates that these particles do not move in the second transport term. $\hat{\pi}_{i,-}$ are the appropriate counterparts when mass is decreased and $\hat{\pi}_{i,=}$ are the corresponding parts of the original couplings in areas where the amount of mass is not changed.
	One can readily verify $P_X \hat{\pi}_0 = P_X \pi_0 = \rho_0$ and $P_Y \hat{\pi}_1 = P_Y \pi_1 = \rho_1$.
	
	\textbf{(step ii)} Now we assume that $(\pi_0,\pi_1)$ do not satisfy \eqref{eq:GrowthShrinkDecompositionPi} and then show that the transport costs implied by $(\hat{\pi}_0,\hat{\pi}_1)$ are strictly smaller than those of $(\pi_0,\pi_1)$.
	By definition we find
	\begin{align*}
		\int_{\Omega\times\Omega} d(x,y)\,\d \hat{\pi}_{0,=}(x,y) &= \int_{\Omega \times \Omega_=} d(x,y)\,\d \pi_0(x,y),&
                \int_{\Omega\times\Omega} d(x,y)\,\d \hat{\pi}_{0,-}(x,y) &= 0, \\
		\int_{\Omega\times\Omega} d(x,y)\,\d \hat{\pi}_{1,=}(x,y) &= \int_{\Omega_= \times \Omega} d(x,y)\,\d \pi_1(x,y),&
		\int_{\Omega\times\Omega} d(x,y)\, \d \hat{\pi}_{1,+}(x,y) &=0.
	\end{align*}
	Now assume $\int_{\Omega_+ \times \Omega} d(x,y)\, \d \pi_1(x,y) > 0$. Then
	\begin{align*}
		\int_{\Omega\times\Omega} d\,\d \hat{\pi}_{0,+} & = \int_{\Omega_+}
			\left[ \int_{\Omega\times\Omega} d(x,y)\,\d\pi_{0,z}(x)\,\d\pi_{1,z}(y) \right]
			\d \rho_0'(z) \\
		& \leq \int_{\Omega_+} \left[
			\int_{\Omega} d(x,z)\,\d\pi_{0,z}(x) + \int_\Omega d(z,y)\, \d \pi_{1,z}(y) \right]
			\d \rho_0'(z) \\
		& < \int_{\Omega \times \Omega_+} d\,\d\pi_0 + \int_{\Omega_+ \times \Omega} d \,\d \pi_1\,.
	\end{align*}
	The first inequality is obtained using $d(x,y) \leq d(x,z) + d(z,y)$ for any $z \in \Omega$. 
	The strictness of the second inequality follows from $\RadNik{\rho_0'}{\gamma}(z) < \RadNik{\rho_1'}{\gamma}(z)$ for $z \in \Omega_+$ and the assumption $\int_{\Omega_+ \times \Omega} d(x,y)\, \d \pi_1(x,y) > 0$.
	Analogously one shows that $\int_{\Omega \times \Omega_-} d(x,y)\, \d \pi_0(x,y) > 0$ implies $\int_{\Omega\times\Omega} d\,\d \hat{\pi}_{1,-} < \int_{\Omega \times \Omega_-} d\,\d\pi_0 + \int_{\Omega_- \times \Omega} d\,\d\pi_1$.
	Consequently, by assumption
	\begin{align*}
		\int_{\Omega\times\Omega} d\,\d\hat{\pi}_0 + \int_{\Omega\times\Omega} d\,\d\hat{\pi}_1 <
		\int_{\Omega\times\Omega} d\,\d\pi_0 + \int_{\Omega\times\Omega} d\,\d\pi_1\,.
	\end{align*}
	
	\textbf{(step iii)}
	Let $\hat{\rho}_0' = P_Y \hat{\pi}_0$, $\hat{\rho}_1' = P_X \hat{\pi}_1$. We now show $\SimLoc(\hat{\rho}_0',\hat{\rho}_1') \leq \SimLoc(\rho_0',\rho_1')$, which together with the previous step implies that $(\hat\pi_0,\hat\pi_1)$ has a strictly smaller cost. Let
	\begin{gather*}
		\hat{\rho}'_{0,\chi} = P_Y \hat{\pi}_{0,\chi} \qquad \tn{and} \qquad
		\hat{\rho}'_{1,\chi} = P_X \hat{\pi}_{1,\chi} \qquad \tn{for} \qquad
		\chi \in \{+,-,=\}\,, \\
		\intertext{such that}
		\hat{\rho}'_i = \sum_{\chi \in \{+,-,=\}} \hat{\rho}'_{i,\chi} \qquad \tn{for} \qquad
		i \in \{0,1\}\,.
	\end{gather*}
	Since $\{\Omega_+,\Omega_-,\Omega_=\}$ is a partition of $\Omega$, one has
	\begin{gather*}
		\SimLoc(\rho_0',\rho_1') = \SimLoc(\rho_0' \restr_{\Omega_+},\rho_1' \restr_{\Omega_+})
		+ \SimLoc(\rho_0' \restr_{\Omega_-},\rho_1' \restr_{\Omega_-})
		+ \SimLoc(\rho_0' \restr_{\Omega_=},\rho_1' \restr_{\Omega_=})\,,
		\intertext{and by joint subadditivity of $\SimLoc$ in its two arguments}
		\SimLoc(\hat{\rho}_0',\hat{\rho}_1') \leq
			\sum_{\chi \in \{+,-,=\}}
				\SimLoc(\hat{\rho}_{0,\chi}',\hat{\rho}_{1,\chi}'). 
	\end{gather*}
	One finds $\hat{\rho}_{i,=}' = \rho_i' \restr_{\Omega_=}$, $i \in \{0,1\}$ and thus $\SimLoc(\rho_0' \restr_{\Omega_=},\rho_1' \restr_{\Omega_=}) = \SimLoc(\hat{\rho}_{0,=}',\hat{\rho}_{1,=}')$.
	In the following let $\pi_\gamma \in \measp(\Omega\times\Omega)$ be defined by
	\begin{align*}
		\int_{\Omega\times\Omega} \phi \, \d \pi_{\gamma} = \int_{\Omega} \left[
			\int_\Omega \phi(z,y) \, \d \pi_{1,z}(y) \right] \d \gamma(z)\,,
	\end{align*}
	where we combine the disintegration $\{\pi_{1,z}\}_{z \in \Omega}$ with a different marginal $\gamma$ (as chosen in the statement of the result). Let $\hat{\gamma} = P_Y \pi_\gamma$ (note that by construction $\gamma = P_X \pi_\gamma$) and let $\{ \pi_{\gamma,y} \}_{y \in \Omega}$ be the disintegration of $\pi_\gamma$ with respect to the second marginal, that is
	\begin{align*}
		\int_{\Omega\times\Omega} \phi \, \d \pi_{\gamma} = \int_{\Omega} \left[
			\int_\Omega \phi(z,y) \, \d \pi_{\gamma,y}(z) \right] \d \hat{\gamma}(y)\,.
	\end{align*}
	Now consider
	\begin{align*}
		\SimLoc(\rho_0' \restr_{\Omega_+},\rho_1' \restr_{\Omega_+}) & =
			\int_{\Omega_+}
				\SimLocC\left(
					\RadNik{\rho_0'}{\gamma}(z), \RadNik{\rho_1'}{\gamma}(z) \right) \, \d \gamma(z) 
		= \int_{\Omega_+} \left[
			\int_\Omega \SimLocC(\ldots)\,\d \pi_{1,z}(y) \right] \d \gamma(z)\\
			&= \int_{\Omega_+ \times \Omega} \SimLocC(\ldots)\, \d \pi_\gamma(z,y)
		= \int_{\Omega} \left[
			\int_{\Omega_+} c(\ldots) \, \d\pi_{\gamma,y}(z) \right] \d \hat{\gamma}(y) \\
		& \geq \int_{\Omega} \SimLocC \left(
			\int_{\Omega_+} \RadNik{\rho_0'}{\gamma}(z)\, \d \pi_{\gamma,y}(z),
			\int_{\Omega_+} \RadNik{\rho_1'}{\gamma}(z)\, \d \pi_{\gamma,y}(z)
			\right) \d \hat{\gamma}(y) \\
		& = \SimLoc(\sigma_0,\sigma_1)\,,
	\end{align*}
	where the inequality is due to Jensen's inequality and
	$\sigma_0$ and $\sigma_1 \in \measp(\Omega)$ are defined for $i \in \{0,1\}$ by
	\begin{align*}
		\int_{\Omega} \phi\,\d\sigma_i & = \int_{\Omega}
			\left[ \int_{\Omega_+} \RadNik{\rho_i'}{\gamma}(z) \, \d \pi_{\gamma,y}(z) \right]
			 \phi(y)\, \d \hat{\gamma}(y)\,. \\
		\intertext{One finds}
		\int_{\Omega} \phi\,\d\sigma_0 & =
			\int_{\Omega_+} \left[ \int_{\Omega}
				\phi(y)\,\RadNik{\rho_0'}{\gamma}(z)\,\d \pi_{1,z}(y) \right] \d \gamma(z) =
		\int_{\Omega_+} \left[ \int_{\Omega} \phi(y)\,\d\pi_{1,z}(y) \right] \d \rho_0'(z) \\
		& = \int_{\Omega} \phi(y)\,\d(P_Y \hat{\pi}_{0,+})(y)
	\end{align*}
	and therefore $\sigma_0 = \hat{\rho}_{0,+}'$. Similarly one finds $\sigma_1 = \hat{\rho}_{1,+}'$ and thus $\SimLoc(\rho_0' \restr_{\Omega_+},\rho_1' \restr_{\Omega_+}) \geq \SimLoc(\hat{\rho}_{0,+}',\allowbreak\hat{\rho}_{1,+}')$. With analogous arguments one shows $\SimLoc(\rho_0' \restr_{\Omega_-},\rho_1' \restr_{\Omega_-}) \geq \SimLoc(\hat{\rho}_{0,-}',\hat{\rho}_{1,-}')$ and thus $\SimLoc(\rho_0',\rho_1') \geq \SimLoc(\hat{\rho}_{0}',\hat{\rho}_{1}')$.
\end{proof}

As a further consequence we have that, in an optimal arrangement or during the optimal dynamics,
not only is mass never transported after it has been increased or before it is decreased,
but also no material is transported out of a region of mass increase before this increase happens,
and no material is transported into a region of mass decrease after this decrease happened.

\begin{corollary}[Necessary optimality condition II for static primal]\thlabel{thm:massTptChng}
If $(\pi_0,\pi_1)$ are minimizers of \eqref{eq:StaticPrimalProblem}, then
\begin{equation*}
\pi_0\restr(\Omega_+\times(\Omega\setminus\Omega_+))=0\,,\qquad
\pi_1\restr((\Omega\setminus\Omega_-)\times\Omega_-)=0\,.
\end{equation*}
\end{corollary}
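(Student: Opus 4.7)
My plan is to argue by contradiction using a primal rerouting modification in the spirit of \thref{prop:GrowthShrinkDecomposition}. First I would invoke that very proposition to see that $\pi_0\restr(\Omega\times\Omega_-)$ is concentrated on the diagonal of $\Omega_-$, so disjointness of $\Omega_+$ and $\Omega_-$ already forces $\pi_0(\Omega_+\times\Omega_-)=0$. The first identity therefore reduces to showing $\pi_0(\Omega_+\times\Omega_=)=0$, and the second identity will follow from the fully symmetric time-reversed argument interchanging $(\pi_0,\Omega_+)$ and $(\pi_1,\Omega_-)$.

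Suppose for contradiction that $\eta:=\pi_0\restr(\Omega_+\times\Omega_=)$ carries positive mass. I would use the disintegration $\pi_1=\int \pi_{1,y}\,\d\rho_1'(y)$ of $\pi_1$ with respect to its first marginal. The idea is that any $\pi_0$-path from $x\in\Omega_+$ to $y\in\Omega_=$ is subsequently redistributed by $\pi_{1,y}$ to destinations $y'$; I would reroute this two-step itinerary by keeping the mass at $x$ in $\pi_0$ (which reduces the $\SimLoc$-growth planned at $x$ by the same amount) and letting $\pi_1$ launch the same conditional distribution $\pi_{1,y}$ of destinations directly from $x$ rather than from $y$. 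Concretely, one sets $\hat\pi_0=\pi_0-\eta+(\mathrm{id},\mathrm{id})_\#(P_X\eta)$ and modifies $\pi_1$ by transferring, for each $(x,y)$ in the support of $\eta$, the appropriate $\pi_{1,y}$-share of mass from base point $y$ to base point $x$. Admissibility is preserved, and the two intermediate marginals shift by the same signed measure, $\hat\rho_0'-\rho_0'=\hat\rho_1'-\rho_1'=P_X\eta-P_Y\eta$.

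The cost comparison then splits into three pieces that I expect to control cleanly: the $\pi_0$-transport strictly decreases by $\int d\,\d\eta>0$ (strict because $\Omega_+\cap\Omega_=$ is empty, forcing $x\neq y$ on the support of $\eta$); the $\pi_1$-transport increases by at most $\int d\,\d\eta$ via the triangle inequality applied to $d(x,y')\le d(x,y)+d(y,y')$; and the $\SimLoc$-contribution does not increase, because on $\Omega_=$ the matched shift preserves the pointwise equality $\hat\rho_0'=\hat\rho_1'$ (keeping the cost at zero) while on $\Omega_+$ sublinearity together with $\SimLocC(c,c)=0$ yields $\SimLocC(m_0+c,m_1+c)\le\SimLocC(m_0,m_1)$.

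The main obstacle is upgrading this ``no worse'' into a strict improvement. Generically some $y'$ in the support of $\pi_{1,y}$ lies off a geodesic from $x$ through $y$, the triangle inequality above is strict, and the $\pi_1$-transport strictly decreases. In the degenerate case where every such $y'$ lies on a geodesic through $y$ I would instead extract strictness from the $\SimLoc$-term, using that $\SimLocC(m_0+c,m_1+c)<\SimLocC(m_0,m_1)$ whenever $m_1>m_0$ and the corresponding $\HStat$ is strictly concave away from its affine part at zero; any residual fully translation-invariant cases such as $\SimLocC(m_0,m_1)=|m_1-m_0|$ would be handled by a short perturbation-and-limit argument on $\SimLocC$. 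Either way the modified pair has strictly smaller cost than $(\pi_0,\pi_1)$, contradicting optimality and establishing $\pi_0(\Omega_+\times\Omega_=)=0$.
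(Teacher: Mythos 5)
Your rerouting construction and the three cost estimates (strict decrease of $\int d\,\d\hat\pi_0$, controlled increase of $\int d\,\d\hat\pi_1$ via the triangle inequality, and non-increase of $\SimLoc$ via sublinearity and the matched shift of both intermediate marginals) are the same as in the paper, and you set them up correctly. The reduction to $\pi_0\restr(\Omega_+\times\Omega_=)=0$ using \thref{prop:GrowthShrinkDecomposition} is also right.

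The gap is in the final step, and you have in fact put your finger on exactly where it sits. You acknowledge the degenerate case in which every $y'$ in $\spt\pi_{1,y}$ lies on a geodesic from $x$ through $y$ (so the $\pi_1$-transport increase exactly cancels the $\pi_0$-transport decrease) \emph{and} $\SimLocC$ is translation-invariant in the sense $\SimLocC(m_0+c,m_1+c)=\SimLocC(m_0,m_1)$, e.g.\ $\SimLocC(m_0,m_1)=|m_1-m_0|$. In this case your modification is exactly cost-neutral and no strict improvement can be extracted from the comparison you set up. The proposed ``perturbation-and-limit argument on $\SimLocC$'' would need to perturb the model, track how the entire minimizing pair $(\pi_0,\pi_1)$ (and hence $\Omega_\pm$) changes, and pass to the limit; as stated it is a hope rather than an argument, and I do not see how to make it work without substantial additional machinery.

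The paper closes the gap by a different, indirect device that avoids proving strict improvement head-on. Since the modification never increases the cost, $(\hat\pi_0,\hat\pi_1)$ is \emph{also} an optimizer, and (noting $\hat\rho_1'-\hat\rho_0'=\rho_1'-\rho_0'$, so $\Omega_\pm$ are unchanged) \thref{prop:GrowthShrinkDecomposition} applies to $(\hat\pi_0,\hat\pi_1)$ as well, forcing
\begin{equation*}
0=\int_{\Omega_+\times\Omega}d\,\d\hat\pi_1
=\int_{\Omega_+\times\Omega_=}\int_\Omega d(x,y')\,\d\pi_{1,y}(y')\,\d\eta(x,y)\,.
\end{equation*}
Feeding this back into the chain of transport-cost estimates shows that $\int d\,\d(\hat\pi_0,\hat\pi_1)$ is in fact strictly less than $\int d\,\d(\pi_0,\pi_1)$ by at least $\int_{\Omega_+\times\Omega_=}d(x,y)\,\d\eta(x,y)>0$ (positive because $\Omega_+\cap\Omega_==\emptyset$ and $\eta\neq0$), contradicting optimality of $(\pi_0,\pi_1)$. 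This bootstrap handles the cost-neutral degenerate case uniformly, with no case distinction on $\SimLocC$ or on geodesic alignment. I would replace the ``generic strictness plus perturbation'' paragraph of your proof by this argument.
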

\begin{proof}
We show $\pi_0\restr(\Omega_+\times(\Omega\setminus\Omega_+))=0$, the other statement follows analogously.
We need to show $\pi_0\restr(\Omega_+\times\Omega_=)=0$ since $\pi_0\restr(\Omega_+\times\Omega_-)=0$ by the previous proposition.
Assume $\pi_0\restr(\Omega_+\times\Omega_=)\neq0$, then one can modify $\pi_0$ and $\pi_1$ to $\hat\pi_0$ and $\hat\pi_1$ such that their cost does not increase.
Indeed, consider all the mass being transported by $\pi_0$ from $\Omega_+$ to $\Omega_=$ and then further by $\pi_1$ from $\Omega_=$ to $\Omega$.
The modification of the couplings is to transport this mass not at all during the transport associated with $\hat\pi_0$
and to transport it instead via $\hat\pi_1$ directly from $\Omega_+$ to the final position in $\Omega$ without the intermediate deposition in $\Omega_=$.
In detail, using the disintegration notation from the previous proof, we choose $\hat\pi_0$ and $\hat\pi_1$ according to
\begin{align*}
\int_{\Omega\times\Omega}\phi\,\d\hat\pi_0
&=\int_{(\Omega\times\Omega)\setminus(\Omega_+\times\Omega_=)}\phi\,\d\pi_0
+\int_{\Omega_+\times\Omega_=}\phi(x,x)\,\d\pi_0(x,y)\,,\\
\int_{\Omega\times\Omega}\phi\,\d\hat\pi_1
&=\int_{\Omega\times\Omega}\phi\,\d\pi_1
-\int_{\Omega_=}\int_{\Omega}\phi(z,y)\,\d\pi_{1,z}(y)\d P_Y(\pi_0\restr(\Omega_+\times\Omega_=))(z)\\
&\hspace*{10em}+\int_{\Omega_+\times\Omega_=}\int_\Omega\phi(x,y)\,\d\pi_{1,z}(y)\d\pi_0(x,z)
\end{align*}
for all measurable $\phi:\Omega^2\to[0,\infty]$.
It is readily checked that $\hat\pi_0,\hat\pi_1\in\measp(\Omega\times\Omega)$ as well as $P_X\hat\pi_0=P_X\pi_0=\rho_0$ and $P_Y\hat\pi_1=P_Y\pi_1=\rho_1$ so that $\hat\pi_0$ and $\hat\pi_1$ are admissible.
Likewise it is straightforward to check $\Delta\rho \eqdef \hat\rho_1'-\hat\rho_0'=P_X\hat\pi_1-P_Y\hat\pi_0=P_X\pi_1-P_Y\pi_0=\rho_1'-\rho_0'$ so that the same mass change happens as before.
In particular, the domains $\Omega_+,\Omega_-,\Omega_=$ did not change, and $\hat\rho_0'\restr\Omega_+\geq\rho_0'\restr\Omega_+$ as well as $\hat\rho_0'\restr\Omega_-=\rho_0'\restr\Omega_-$.
Therefore,
\begin{multline*}
\SimLoc(\hat\rho_0',\hat\rho_1')
=\SimLoc(\hat\rho_0'\restr(\Omega\setminus\Omega_=),\hat\rho_0'\restr(\Omega\setminus\Omega_=)+\Delta\rho)
\\
\leq\SimLoc(\rho_0'\restr(\Omega\setminus\Omega_=),\rho_0'\restr(\Omega\setminus\Omega_=)+\Delta\rho)
=\SimLoc(\rho_0',\rho_1')
\end{multline*}
since $m \mapsto \SimLocC(m,m+\Delta m)$ is nonincreasing.
(Indeed, using the 1-homogeneity and convexity of $\SimLocC$ one has $\SimLocC(m+\delta,m+\delta+\Delta m) = \tfrac{m+\delta}{m} \SimLocC(m,m+\tfrac{m}{m+\delta} \Delta m) \leq \tfrac{m+\delta}{m} \big( (1-\tfrac{m}{m+\delta}) \cdot \SimLocC(m,m) + \tfrac{m}{m+\delta} \SimLocC(m,m+\Delta m) \big)=\SimLocC(m,m+\Delta m)$. In words, the mass increase in $\Omega_+$ is identical to before, but may have a cheaper cost since it starts from an already higher mass.)
Finally, the transported mass and the transport distance per particle (and thus the total transport cost) is no larger than before,
\begin{align}
 & \int_{\Omega\times\Omega}d\,\d\hat\pi_0+\int_{\Omega\times\Omega}d\,\d\hat\pi_1 \nonumber \\
=&\int_{(\Omega\times\Omega)\setminus(\Omega_+\times\Omega_=)}d\,\d\pi_0
+\int_{\Omega\times\Omega}d\,\d\pi_1
+\int_{\Omega_+\times\Omega_=}\int_\Omega d(x,y)\,\d\pi_{1,z}(y)\d\pi_0(x,z)\nonumber\\
&-\int_{\Omega_=}\int_{\Omega}d(z,y)\,\d\pi_{1,z}(y)\d P_Y(\pi_0\restr(\Omega_+\times\Omega_=))(z)\nonumber\\
\leq&\int_{(\Omega\times\Omega)\setminus(\Omega_+\times\Omega_=)}d\,\d\pi_0
+\int_{\Omega\times\Omega}d\,\d\pi_1
+\int_{\Omega_+\times\Omega_=}\int_\Omega d(x,z)+d(z,y)\,\d\pi_{1,z}(y)\d\pi_0(x,z)\nonumber\\
&-\int_{\Omega_=}\int_{\Omega}d(z,y)\,\d\pi_{1,z}(y)\d P_Y(\pi_0\restr(\Omega_+\times\Omega_=))(z)\nonumber\\
=&\int_{\Omega\times\Omega}d\,\d\pi_0+\int_{\Omega\times\Omega}d\,\d\pi_1\,.\label{eqn:noLargerTransportCost}
\end{align}
Summarizing, the modification does not increase the cost so that $(\hat\pi_0,\hat\pi_1)$ are optimal as well.
However, using the optimality of $(\pi_0,\pi_1)$ and $(\hat\pi_0,\hat\pi_1)$ and the previous proposition,
\begin{equation*}
0=\int_{\Omega_+\times\Omega}d\,\d\hat\pi_1
=\int_{\Omega_+\times\Omega_=}\int_\Omega d(x,y)\,\d\pi_{1,z}(y)\d\pi_0(x,z)
\end{equation*}
so that the inequality in \eqref{eqn:noLargerTransportCost} is actually strict and thus $(\pi_0,\pi_1)$ cannot be optimal.
\end{proof}

The characterization from \thref{prop:GrowthShrinkDecomposition} and \thref{thm:massTptChng} can be refined even further, in particular for special cases.

\begin{figure}
\centering
\begin{tabular}{r|c|c|c|c}
&general case&$\SimLocC(1,\cdot)$ smooth at $1$&$\rho_0\perp\rho_1$&$\SimLocC(1,\cdot)=|1-\cdot|$\\\hline
\setlength{\unitlength}{4.5ex}
\raisebox{1.5\unitlength}{$\pi_0$}&
\setlength{\unitlength}{4.5ex}
\begin{picture}(4,3.5)(-1,0)
\multiput(0,0)(0,1){4}{\line(1,0){3}}
\multiput(0,0)(1,0){4}{\line(0,1){3}}
\put(0,3){\makebox(1,.5){\small$\Omega_+$}}
\put(1,3){\makebox(1,.5){\small$\Omega_=$}}
\put(2,3){\makebox(1,.5){\small$\Omega_-$}}
\put(-.9,2){\makebox(1,1){\small$\Omega_+$}}
\put(-.9,1){\makebox(1,1){\small$\Omega_=$}}
\put(-.9,0){\makebox(1,1){\small$\Omega_-$}}
\put(0.1,0.1){\color{gray}\rule{.8\unitlength}{.8\unitlength}}
\put(0.1,1.1){\color{gray}\rule{.8\unitlength}{.8\unitlength}}
\put(0.1,2.1){\color{gray}\rule{.8\unitlength}{.8\unitlength}}
\put(1.1,1.1){\color{gray}\rule{.8\unitlength}{.8\unitlength}}
\thicklines
\put(2.1,.9){\line(1,-1){.8}}
\end{picture}
&
\setlength{\unitlength}{4.5ex}
\begin{picture}(4,3.5)(-1,0)
\multiput(0,0)(0,1){4}{\line(1,0){3}}
\multiput(0,0)(1,0){4}{\line(0,1){3}}
\put(0,3){\makebox(1,.5){\small$\Omega_+$}}
\put(1,3){\makebox(1,.5){\small$\Omega_=$}}
\put(2,3){\makebox(1,.5){\small$\Omega_-$}}
\put(-.9,2){\makebox(1,1){\small$\Omega_+$}}
\put(-.9,1){\makebox(1,1){\small$\Omega_=$}}
\put(-.9,0){\makebox(1,1){\small$\Omega_-$}}
\put(0.1,0.1){\color{gray}\rule{.8\unitlength}{.8\unitlength}}
\put(0.1,1.1){\color{gray}\rule{.8\unitlength}{.8\unitlength}}
\put(0.1,2.1){\color{gray}\rule{.8\unitlength}{.8\unitlength}}
\thicklines
\put(1.1,1.9){\line(1,-1){.8}}
\put(2.1,.9){\line(1,-1){.8}}
\end{picture}
&
\setlength{\unitlength}{4.5ex}
\begin{picture}(4,3.5)(-1,0)
\multiput(0,0)(0,1){4}{\line(1,0){3}}
\multiput(0,0)(1,0){4}{\line(0,1){3}}
\put(0,3){\makebox(1,.5){\small$\Omega_+$}}
\put(1,3){\makebox(1,.5){\small$\Omega_=$}}
\put(2,3){\makebox(1,.5){\small$\Omega_-$}}
\put(-.9,2){\makebox(1,1){\small$\Omega_+$}}
\put(-.9,1){\makebox(1,1){\small$\Omega_=$}}
\put(-.9,0){\makebox(1,1){\small$\Omega_-$}}
\put(0.1,0.1){\color{gray}\rule{.8\unitlength}{.8\unitlength}}
\put(0.1,1.1){\color{gray}\rule{.8\unitlength}{.8\unitlength}}
\put(1.1,1.1){\color{gray}\rule{.8\unitlength}{.8\unitlength}}
\thicklines
\put(2.1,.9){\line(1,-1){.8}}
\end{picture}
&
\setlength{\unitlength}{4.5ex}
\begin{picture}(4,3.5)(-1,0)
\multiput(0,0)(0,1){4}{\line(1,0){3}}
\multiput(0,0)(1,0){4}{\line(0,1){3}}
\put(0,3){\makebox(1,.5){\small$\Omega_+$}}
\put(1,3){\makebox(1,.5){\small$\Omega_=$}}
\put(2,3){\makebox(1,.5){\small$\Omega_-$}}
\put(-.9,2){\makebox(1,1){\small$\Omega_+$}}
\put(-.9,1){\makebox(1,1){\small$\Omega_=$}}
\put(-.9,0){\makebox(1,1){\small$\Omega_-$}}
\put(0.1,0.1){\color{gray}\rule{.8\unitlength}{.8\unitlength}}
\put(0.1,1.1){\color{gray}\rule{.8\unitlength}{.8\unitlength}}
\put(1.1,1.1){\color{gray}\rule{.8\unitlength}{.8\unitlength}}
\thicklines
\put(0.1,2.9){\line(1,-1){.8}}
\put(2.1,.9){\line(1,-1){.8}}
\end{picture}
\\
\setlength{\unitlength}{4.5ex}
\raisebox{1.5\unitlength}{$\pi_1$}&
\setlength{\unitlength}{4.5ex}
\begin{picture}(4,3.5)(-1,0)
\multiput(0,0)(0,1){4}{\line(1,0){3}}
\multiput(0,0)(1,0){4}{\line(0,1){3}}
\put(0,3){\makebox(1,.5){\small$\Omega_+$}}
\put(1,3){\makebox(1,.5){\small$\Omega_=$}}
\put(2,3){\makebox(1,.5){\small$\Omega_-$}}
\put(-.9,2){\makebox(1,1){\small$\Omega_+$}}
\put(-.9,1){\makebox(1,1){\small$\Omega_=$}}
\put(-.9,0){\makebox(1,1){\small$\Omega_-$}}
\put(0.1,0.1){\color{gray}\rule{.8\unitlength}{.8\unitlength}}
\put(1.1,0.1){\color{gray}\rule{.8\unitlength}{.8\unitlength}}
\put(2.1,0.1){\color{gray}\rule{.8\unitlength}{.8\unitlength}}
\thicklines
\put(.1,2.9){\line(1,-1){.8}}
\put(1.1,1.9){\line(1,-1){.8}}
\end{picture}
&
\setlength{\unitlength}{4.5ex}
\begin{picture}(4,3.5)(-1,0)
\multiput(0,0)(0,1){4}{\line(1,0){3}}
\multiput(0,0)(1,0){4}{\line(0,1){3}}
\put(0,3){\makebox(1,.5){\small$\Omega_+$}}
\put(1,3){\makebox(1,.5){\small$\Omega_=$}}
\put(2,3){\makebox(1,.5){\small$\Omega_-$}}
\put(-.9,2){\makebox(1,1){\small$\Omega_+$}}
\put(-.9,1){\makebox(1,1){\small$\Omega_=$}}
\put(-.9,0){\makebox(1,1){\small$\Omega_-$}}
\put(0.1,0.1){\color{gray}\rule{.8\unitlength}{.8\unitlength}}
\put(1.1,0.1){\color{gray}\rule{.8\unitlength}{.8\unitlength}}
\put(2.1,0.1){\color{gray}\rule{.8\unitlength}{.8\unitlength}}
\thicklines
\put(.1,2.9){\line(1,-1){.8}}
\put(1.1,1.9){\line(1,-1){.8}}
\end{picture}
&
\setlength{\unitlength}{4.5ex}
\begin{picture}(4,3.5)(-1,0)
\multiput(0,0)(0,1){4}{\line(1,0){3}}
\multiput(0,0)(1,0){4}{\line(0,1){3}}
\put(0,3){\makebox(1,.5){\small$\Omega_+$}}
\put(1,3){\makebox(1,.5){\small$\Omega_=$}}
\put(2,3){\makebox(1,.5){\small$\Omega_-$}}
\put(-.9,2){\makebox(1,1){\small$\Omega_+$}}
\put(-.9,1){\makebox(1,1){\small$\Omega_=$}}
\put(-.9,0){\makebox(1,1){\small$\Omega_-$}}
\put(0.1,0.1){\color{gray}\rule{.8\unitlength}{.8\unitlength}}
\put(1.1,0.1){\color{gray}\rule{.8\unitlength}{.8\unitlength}}
\thicklines
\put(.1,2.9){\line(1,-1){.8}}
\put(1.1,1.9){\line(1,-1){.8}}
\end{picture}
&
\setlength{\unitlength}{4.5ex}
\begin{picture}(4,3.5)(-1,0)
\multiput(0,0)(0,1){4}{\line(1,0){3}}
\multiput(0,0)(1,0){4}{\line(0,1){3}}
\put(0,3){\makebox(1,.5){\small$\Omega_+$}}
\put(1,3){\makebox(1,.5){\small$\Omega_=$}}
\put(2,3){\makebox(1,.5){\small$\Omega_-$}}
\put(-.9,2){\makebox(1,1){\small$\Omega_+$}}
\put(-.9,1){\makebox(1,1){\small$\Omega_=$}}
\put(-.9,0){\makebox(1,1){\small$\Omega_-$}}
\put(0.1,0.1){\color{gray}\rule{.8\unitlength}{.8\unitlength}}
\put(1.1,0.1){\color{gray}\rule{.8\unitlength}{.8\unitlength}}
\thicklines
\put(.1,2.9){\line(1,-1){.8}}
\put(1.1,1.9){\line(1,-1){.8}}
\put(2.1,0.9){\line(1,-1){.8}}
\end{picture}
\end{tabular}
\caption[]{Illustration of \thref{thm:transportChar}: There are optimal couplings $\pi_0,\pi_1$ that take the shown form for the different particular cases.
Each square field corresponds to the direct product of the regions indicated by the row and column.
An empty field corresponds to $\pi$ being zero, a grey field indicates that $\pi$ may be nonzero, and a field with a diagonal line corresponds to diagonal support of $\pi$ (that is, mass is not moved in that region).
If $\SimLocC(1,\cdot)$ is differentiable at $1$ and at the same time $\rho_0\perp\rho_1$, then in addition $\pi_0\restr(\Omega_=\times\Omega_=)=\pi_1\restr(\Omega_=\times\Omega_=)=0$.}
\label{fig:transportStructure}
\end{figure}

\begin{proposition}[Characterization of optimal unbalanced mass transport]\thlabel{thm:transportChar}
For all $(\rho_0,\rho_1)$ and among all possible optimizers of \eqref{eq:StaticPrimalProblem}, there is always a particular one $(\pi_0,\pi_1)$ such that its support is as shown in Figure~\ref{fig:transportStructure} left, that is, in addition to \thref{prop:GrowthShrinkDecomposition} and \thref{thm:massTptChng} we have
\begin{equation*}
\pi_0(\Omega_-\times\Omega_=)
=\pi_1(\Omega_=\times\Omega_+)
=0
\qquad\text{and}\qquad
\int_{\Omega\setminus\Omega_-\times\Omega}d(x,y)\,\d\pi_1(x,y)=0\,.
\end{equation*}
Furthermore, $\rho_0'+\rho_1'\ll\rho_0+\rho_1$.
If in addition
\begin{enumerate}
\item $\SimLocC(1,\cdot)$ is differentiable at $1$ or\label{enm:diff}
\item $\rho_0\perp\rho_1$ or\label{enm:sing}
\item $\SimLocC(1,\cdot)=|1-\cdot|$,
\end{enumerate}
then the supports of the optimal $(\pi_0,\pi_1)$ can be chosen as shown in Figure~\ref{fig:transportStructure} in the second, third, and fourth column, respectively.
If case \eqref{enm:diff} and \eqref{enm:sing} hold simultaneously, then in addition $\pi_0\restr(\Omega_=\times\Omega_=)=\pi_1\restr(\Omega_=\times\Omega_=)=0$.
\end{proposition}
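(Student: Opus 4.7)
The plan is to prove each claim by constructing, from any given optimizer $(\pi_0,\pi_1)$ of \eqref{eq:StaticPrimalProblem}, a sequence of explicit modifications $(\hat\pi_0,\hat\pi_1)$ via disintegration that preserve admissibility and either preserve the cost (producing another optimizer with the desired support property) or strictly decrease it (contradicting optimality and hence forbidding the offending support), following the pattern of \thref{prop:GrowthShrinkDecomposition} and \thref{thm:massTptChng}. The guiding design principle is that each modification will alter $\rho_0'$ and $\rho_1'$ by the same increment at every affected point, so the partition $\Omega_+,\Omega_-,\Omega_=$ is preserved and different modifications can be concatenated. Cost estimates will use the triangle inequality for the transport part and the monotonicity $\SimLocC(m+\delta,m+\delta+\Delta m)\leq\SimLocC(m,m+\Delta m)$ (from 1-homogeneity and convexity) for the mass-change part.

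For the three general-case support properties the reroutings will run as follows. To enforce $\pi_0(\Omega_-\times\Omega_=)=0$, I would disintegrate $\pi_1$ with respect to its first coordinate into $\{\pi_{1,z}\}$ and replace every chain $x\to z\to y$ with $x\in\Omega_-$, $z\in\Omega_=$ by a direct transport $x\to y$ in $\hat\pi_0$ together with a diagonal contribution at $y$ in $\hat\pi_1$. The symmetric rerouting via $\{\pi_{0,z}\}$ yields $\pi_1(\Omega_=\times\Omega_+)=0$. For the diagonal support of $\pi_1$ on $(\Omega\setminus\Omega_-)\times\Omega$, \thref{prop:GrowthShrinkDecomposition} and \thref{thm:massTptChng} already reduce the claim to eliminating off-diagonal mass of $\pi_1$ on $\Omega_=\times\Omega_=$; this I would achieve by transferring the offending transport from $\pi_1$ into $\pi_0$, using $\{\pi_{0,z_1}\}$ to redirect the $\pi_0$-mass arriving at $z_1$ so that it arrives at $z_2$ instead, and replacing the offending piece of $\pi_1$ by a diagonal contribution at $z_2$. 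Once these three properties hold, $\rho_0'+\rho_1'\ll\rho_0+\rho_1$ is immediate: on $\Omega_-$ the diagonal of $\pi_0\restr(\Omega\times\Omega_-)$ gives $\rho_1'\restr\Omega_-\leq\rho_0'\restr\Omega_-\leq\rho_0\restr\Omega_-$, and symmetric or analogous arguments cover $\Omega_+$ and $\Omega_=$.

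For the special cases I would argue by strict cost-decrease. Case (2) requires no further modification: if $\rho_0\perp\rho_1$ and $\rho_0(\Omega_+)>0$, the diagonal of $\pi_1\restr(\Omega_+\times\Omega)$ would force $\rho_1\restr\Omega_+\geq\rho_1'\restr\Omega_+$ as measures with $\rho_1'>\rho_0'$ on $\Omega_+$ giving $\rho_1(\Omega_+)>0$, contradicting singularity; hence $\pi_0(\Omega_+\times\Omega)=0$ and symmetrically $\pi_1(\Omega\times\Omega_-)=0$. Case (3) will use that for $\SimLocC(m_0,m_1)=|m_0-m_1|$ the functional $\SimLoc(\rho_0',\rho_1')$ depends only on $\rho_1'-\rho_0'$, so splitting an off-diagonal transport $x\to y$ in $\pi_0\restr(\Omega_+\times\Omega_+)$ into diagonal contributions at $x$ and $y$ leaves $\SimLoc$ exactly invariant but strictly saves the transport cost $\delta\,d(x,y)$ (and symmetrically for $\pi_1\restr(\Omega_-\times\Omega_-)$). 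In case (1), $\SimLocC(1,\cdot)\geq0=\SimLocC(1,1)$ together with differentiability at $1$ forces the derivative there to vanish, and by 1-homogeneity the same holds for $\SimLocC(\cdot,1)$ at $1$; then an off-diagonal transport $x\to y$ within $\Omega_=\times\Omega_=$ replaced by diagonal contributions at $x$ and $y$ produces only an $o(\delta)$ increase in $\SimLoc$ versus a strict transport-saving $\delta\,d(x,y)$, contradicting optimality. Under the combined hypotheses (1) and (2) the same perturbation rules out any mass of $\pi_0,\pi_1$ on $\Omega_=\times\Omega_=$ at all.

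The main technical obstacle will be globalizing these pointwise modifications when $\pi_0,\pi_1$ have no atoms; as in the earlier proofs this is handled by working with small product neighborhoods $B_\veps(x)\times B_\veps(y)$ and performing the reroutings via disintegration against appropriate marginals, and the strict-inequality arguments of cases (1) and (3) additionally require checking that the $o(\delta)$ or exact-cancellation cost estimates pass to the limit uniformly in such neighborhoods.
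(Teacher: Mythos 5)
Your broad strategy (rerouting via disintegration, triangle inequality for the transport part, $\SimLocC(m+\delta,m+\delta+\Delta m)\leq\SimLocC(m,m+\Delta m)$ for the mass-change part) is the paper's, and your case (1) perturbation is essentially right. But already in the general case the rerouting direction for $\pi_0(\Omega_-\times\Omega_=)=0$ is backwards: if $\pi_1$ leaves part of the mass arriving at $z\in\Omega_=$ in place (or sends it to another $z'\in\Omega_=$), your replacement of the chain $x\to z\to y$ by ``direct transport $x\to y$ in $\hat\pi_0$'' keeps $\hat\pi_0$-mass in $\Omega_-\times\Omega_=$, so the offending block is not cleared. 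The correct direction is the one you only apply to the symmetric statement: keep that mass at $x\in\Omega_-$ until after the shrink (diagonal in $\hat\pi_0$) and transport it in $\hat\pi_1$; this raises both $\hat\rho_0'$ and $\hat\rho_1'$ at $x$ by the same increment, which by your monotonicity inequality cannot increase the cost and is in fact where that inequality is used. Also, for case (2), ``giving $\rho_1(\Omega_+)>0$, contradicting singularity'' is not a contradiction — both $\rho_0$ and $\rho_1$ can charge $\Omega_+$ and still be mutually singular. The argument must pass to a set $E\subset\Omega_+$ with $\rho_0(E)>0$ and $\rho_1(E)=0$, show $\rho_0'\restr E,\rho_1'\restr E=0$, deduce that $E$ is $\gamma$-negligible, and contradict $\rho_0\ll\gamma$.

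The gaps in case (3) and the combined case (1)\&(2) are more serious. For case (3), ``splitting an off-diagonal transport $x\to y$ in $\pi_0\restr(\Omega_+\times\Omega_+)$ into diagonal contributions at $x$ and $y$'' does not preserve the marginal constraint $P_X\hat\pi_0=\rho_0$ (it creates extra $\hat\pi_0$-mass starting at $y$); and if you repair it by redirecting a matching portion of $\pi_1$ from $y$ to start from $x$ (to keep $\rho_1'-\rho_0'$ and hence $\SimLoc$ unchanged), the savings $\delta\,d(x,y)$ in $\hat\pi_0$ can be exactly cancelled by additional transport in $\hat\pi_1$, so there is no strict decrease and no convergence of the iteration. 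The paper's route is a clean reduction: in case (3), $W_S$ depends only on $\rho_0-\rho_1$, so one subtracts the common part $\rho=\min(\rho_0,\rho_1)$, applies case (2) to the singular residuals $\tilde\rho_0\perp\tilde\rho_1$, and re-adds the diagonal coupling of $\rho$. For the combined case, ``the same perturbation rules out any mass of $\pi_0,\pi_1$ on $\Omega_=\times\Omega_=$ at all'' cannot work: after case (1) these couplings are already diagonal there, contribute zero transport cost, and admit no cost-decreasing perturbation. The needed argument is measure-theoretic: diagonal structure on $\Omega_=\times\Omega_=$ gives $\rho_0'\restr\Omega_=\ll\rho_0$ and $\rho_1'\restr\Omega_=\ll\rho_1$, hence $\rho_0'\restr\Omega_=\perp\rho_1'\restr\Omega_=$; but $\rho_0'\restr\Omega_==\rho_1'\restr\Omega_=$ by definition of $\Omega_=$, forcing both to vanish.
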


\begin{remark}\thlabel{rem:CombineOptimalityCriteria}
Of course, the cases from Figure~\ref{fig:transportStructure} can be combined.
For instance, if $\SimLocC(1,\cdot)$ is differentiable at $1$ and $\rho_0\perp\rho_1$, then the transport can be interpreted as follows (see Figure~\ref{fig:dynamicSteps}):
From points contributing to $\rho_0$ there is first mass transported to $\rho_1$ (blue), then the mass is shrunk in the $\rho_0$-region (orange) and grown in the $\rho_1$-region, and finally the rest is transported to $\rho_1$ (yellow).
Those steps are essentially a manifestation of the nonconvexity of the so-called path cost and its convexification being a combination of transport with subsequent growth and shrinkage with subsequent transport, see Section~\ref{sec:FormulationRelation}.
\end{remark}

\begin{figure}
\centering
\setlength\unitlength{.074\linewidth}
\begin{picture}(13.5,1.8)
\put(0,0){\includegraphics[width=13.5\unitlength]{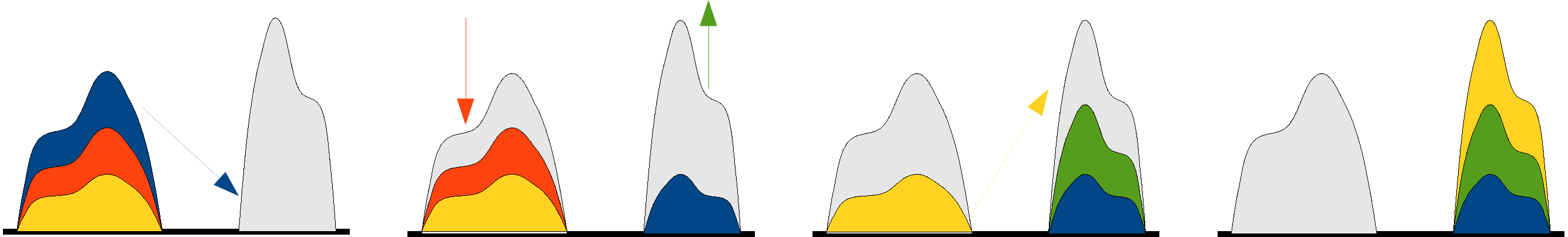}}
\put(.7,-.3){\small$\rho_0$}
\put(12.9,-.3){\small$\rho_1$}
\put(5.1,-.3){\small$\rho_0'$}
\put(8.6,-.3){\small$\rho_1'$}
\end{picture}
\caption[]{Illustration of the optimal dynamic mass transport as characterized by \thref{thm:transportChar}, consisting of an initial transport (blue), subsequent mass shrinkage (orange) and growth (green), and a final transport (yellow). See \thref{rem:CombineOptimalityCriteria} for more details.}
\label{fig:dynamicSteps}
\end{figure}

\begin{proof}[Proof of \thref{thm:transportChar}]
Note first that by \thref{prop:StaticEquivalence}, if the cost is finite then minimizing pairs $(\pi_0,\pi_1)$ exist.

\emph{General case.}
By \thref{prop:GrowthShrinkDecomposition} and \thref{thm:massTptChng} it only remains to show that optimal $(\pi_0,\pi_1)$ can be found such that $\pi_1$ is diagonal on $\Omega_=\times\Omega_=$ (that is, $\int_{\Omega_=\times\Omega_=}\phi(x,y)\,\d\pi_1(x,y)=\int_{\Omega_=\times\Omega_=}\phi(x,x)\,\d\pi_1(x,y)$ for all test functions $\phi$) and that $\pi_0\restr(\Omega_-\times\Omega_=)=0$ as well as $\pi_1\restr(\Omega_=\times\Omega_+)=0$
(of which we shall only show the first, the second follows analogously).
To this end we just modify any optimal couplings $(\pi_0,\pi_1)$ without increasing their cost.
In particular, consider the mass transported from $\Omega$ to $\Omega_=$ by $\pi_0$ and then transported further within $\Omega_=$ by $\pi_1$.
We modify $\pi_0$ and $\pi_1$ such that the transport of this mass from its initial distribution inside $\rho_0$ to its final distribution inside $\rho_1$ happens already in $\pi_0$ and thus $\pi_1$ no longer transports mass within $\Omega_=$
(corresponding formulas for the modified $\pi_0$ and $\pi_1$ are straightforward to obtain as in the two previous proofs).
Obviously, the mass change cost is identical after the modification, and the transport cost has not increased so that we may assume $\pi_1$ to be diagonal on $\Omega_=\times\Omega_=$.
Next assume $\pi_0\restr(\Omega_-\times\Omega_=)\neq0$ and modify $\pi_0$ and $\pi_1$ as follows.
Consider the mass transported from $\Omega_-$ to $\Omega_=$ by $\pi_0$ and then transported further by $\pi_1$.
We modify $\pi_0$ and $\pi_1$ such that $\pi_0$ no longer transports mass from $\Omega_-$ to $\Omega_=$, but instead the transport of this mass happens in $\pi_1$
(again, corresponding formulas for the modified $\pi_0$ and $\pi_1$ are straightforward to obtain as in the previous proofs).
Obviously, the transport cost has not increased by the modification.
Furthermore, the amount of mass decrease on $\Omega_-$ is identical to before, but may have a cheaper cost since it starts from a higher initial mass and $\SimLocC(m,m-\Delta m)$ is nonincreasing in $m$.
Finally, the amount and cost of mass increase on $\Omega_+$ stays unchanged.

The property $\rho_0'+\rho_1'\ll\rho_0+\rho_1$ can now be inferred as follows:
By definition of $\Omega_+$ we have $\rho_0'\restr\Omega_+ \leq \rho_1'\restr\Omega_+$ and therefore $\rho_0'\restr\Omega_+\ll\rho_1'\restr\Omega_+$, which is absolutely continuous with respect to $\rho_1\restr\Omega_+$ since $\pi_1$ is diagonal on $\Omega_+ \times \Omega_+$.
Likewise, $\rho_1'\restr\Omega_-\ll\rho_0'\restr\Omega_-\ll\rho_0\restr\Omega_-$ (mass decreases on $\Omega_-$ and $\pi_0$ is diagonal on $\Omega_- \times \Omega_-$).
Finally, $\rho_1'\restr\Omega_=\ll\rho_1\restr\Omega_=$ by the structure of $\pi_1$.
If we now use Lebesgue's decomposition theorem to decompose $\rho_0'\restr\Omega_==\nu_0+\nu_1$
with $\nu_0\ll\rho_0+\rho_1$ and $\nu_1(\Omega\setminus\Omega')=0$ for some $\Omega'\subset\Omega_=$ with $(\rho_0+\rho_1)(\Omega')=0$,
then $\nu_1\ll\rho_0'\restr\Omega'=\rho_1'\restr\Omega'\ll\rho_1\restr\Omega'=0$
(note that $\rho_0'\restr{\Omega_=}=\rho_1'\restr{\Omega_=}$ was not obvious a priori).
Thus, $\rho_0',\rho_1'\ll\rho_0+\rho_1$.

\emph{Case $\SimLocC(1,\cdot)$ differentiable at $1$.}
For a contradiction assume $\pi_0\restr(\Omega_=\times\Omega_=)$ is not diagonal.
Define $\rho_0^==P_X(\pi_0\restr(\Omega_=\times\Omega_=))$ as well as the diagonal coupling $\pi_0^d(A\times B)=\rho_0^=(A\cap B)$ for all Borel sets $A,B\subset\R^n$.
Next define the coupling $\pi_0^\varepsilon$ by setting $\pi_0^\varepsilon=\pi_0$ on $(\Omega\times\Omega)\setminus(\Omega_=\times\Omega_=)$
and $\pi_0^\varepsilon=(1-\varepsilon)\pi_0+\varepsilon\pi_0^d$ on $\Omega_=\times\Omega_=$ for some $\varepsilon\in[0,1]$.
Note that the pair $(\pi_0^\varepsilon,\pi_1)$ is an admissible candidate in \eqref{eq:StaticPrimalProblem}.
Denote its cost by $W^\varepsilon$ and note $W_S(\rho_0,\rho_1)=W^0$ due to the assumption of $(\pi_0,\pi_1)$ being optimal.
Then at $\varepsilon=0$ we have
\begin{multline*}
\frac{\d W^\varepsilon}{\d\varepsilon}
=-\varepsilon\int_{\Omega_=\times\Omega_=}d(x,y)\,\d\pi_0(x,y)+\frac\d{\d\varepsilon}\SimLoc(P_Y \pi_0^\varepsilon, P_X \pi_1)\\
=-\varepsilon\int_{\Omega_=\times\Omega_=}d(x,y)\,\d\pi_0(x,y)+\int_{\Omega_=}\frac\d{\d\varepsilon}\SimLocC\left(\RadNik{P_Y\pi_0^\varepsilon}{\rho_1'},1\right)\,\d\rho_1'
=-\varepsilon\int_{\Omega_=\times\Omega_=}d(x,y)\,\d\pi_0(x,y)
<0
\end{multline*}
since $\frac\d{\d\varepsilon}\SimLocC\big(\RadNik{P_Y\pi_0^\varepsilon}{\rho_1'},1\big)|_{\varepsilon=0}=0$.
This contradicts the optimality of $(\pi_0,\pi_1)$.

\emph{Case $\rho_0\perp\rho_1$.}
Using the general structure of $\pi_0$ and $\pi_1$, for $E\subset\Omega_+$ measurable we have
\begin{equation*}
\rho_1\restr E
=P_Y(\pi_1\restr(\Omega\times E))
\gg P_Y(\pi_1\restr(E\times E))
= P_X(\pi_1\restr(E\times E))
=\rho_1'\restr E
\gg\rho_0'\restr E\,.
\end{equation*}
Thus, $\rho_1\restr E=0$ would imply $\rho_1'\restr E=\rho_0'\restr E=0$ so that, due to $E\subset\Omega_+$, $E$ can only be a $\gamma$-nullset (where $\gamma$ is the reference measure used in \thref{prop:GrowthShrinkDecomposition} to define $\Omega_+$, $\Omega_-$ and $\Omega_=$).
Now assume $\rho_0\restr\Omega_+\neq0$, then due to the singularity with $\rho_1$ there is a measurable $E\subset\Omega_+$ with $\rho_0\restr E\neq0$ and $\rho_1\restr E=0$.
By the above, $E$ must be a $\gamma$-nullset, which contradicts $\rho_0\restr E\neq0$.
Therefore, $\rho_0\restr\Omega_+=0$ and thus also $\pi_0\restr(\Omega_+\times\Omega)=0$.

Likewise one shows that $\rho_0\restr E=0$ with $E\subset\Omega_-$ can only happen for $\gamma$-nullsets $E$ and that $\pi_1\restr(\Omega\times\Omega_-)=0$.

If in addition $\SimLocC(1,\cdot)$ is differentiable at $1$, then from the previous case we know
\begin{align*}
\rho_0\restr\Omega_=
&\gg P_X(\pi_0\restr(\Omega_=\times\Omega_=))
=P_Y(\pi_0\restr(\Omega_=\times\Omega_=))
=\rho_0'\restr\Omega_=\,,\\
\rho_1\restr\Omega_=
&\gg P_Y(\pi_1\restr(\Omega_=\times\Omega_=))
=P_X(\pi_1\restr(\Omega_=\times\Omega_=))
=\rho_1'\restr\Omega_=\,.
\end{align*}
Hence, since $\rho_0$ and $\rho_1$ are singular with respect to each other, so are $\rho_0'\restr\Omega_=$ and $\rho_1'\restr\Omega_=$.
However, due to $\rho_0'\restr\Omega_==\rho_1'\restr\Omega_=$ this is only possible if both are zero, which in turn implies $\pi_0\restr(\Omega\times\Omega_=)=\pi_1\restr(\Omega_=\times\Omega)=0$.

\emph{Case $\SimLocC(1,\cdot)=|1-\cdot|$.}
Note that by \thref{prop:StaticEquivalence} the corresponding function $\HStat$ is given by $\HStat(z)=\min(z,1)-\iota_{[-1,\infty)}(z)$ so that the dual formulation \eqref{eq:StaticDual} can be transformed into
\begin{equation*}
\textstyle W_S(\rho_0,\rho_1)=\sup\{\int_\Omega\alpha\,\d(\rho_0-\rho_1)\ |\ \alpha\in\Lip(\Omega),|\alpha|\leq1\}\,.
\end{equation*}
This implies that the cost actually only depends on $\rho_0-\rho_1$.
Therefore, defining
\begin{equation*}
\rho=\min\left(\frac{\d\rho_0}{\d(\rho_0+\rho_1)},\frac{\d\rho_1}{\d(\rho_0+\rho_1)}\right)(\rho_0+\rho_1)\,,\qquad
\tilde\rho_0=\rho_0-\rho\in\measp(\Omega)\,,\quad
\tilde\rho_1=\rho_1-\rho\in\measp(\Omega)\,,
\end{equation*}
we see $W_S(\tilde\rho_0,\tilde\rho_1)=W_S(\rho_0,\rho_1)$.
Now given optimal couplings $(\tilde\pi_0,\tilde\pi_1)$ for $W_S(\tilde\rho_0,\tilde\rho_1)$ we simply set $\pi_0=\tilde\pi_0+\pi$, $\pi_1=\tilde\pi_1+\pi$,
where $\pi$ is the diagonal coupling defined via $\int_{\Omega\times\Omega}\phi\,\d\pi=\int_\Omega\phi(x,x)\,\d\rho(x)$ for all measurable $\phi:\Omega^2\to[0,\infty]$.
It is straightforward to check that $(\pi_0,\pi_1)$ is a feasible candidate in \eqref{eq:StaticPrimalProblem} and that it has the same cost as $(\tilde\pi_0,\tilde\pi_1)$.
Therefore, $(\pi_0,\pi_1)$ must be optimal.
Note also that $\Omega_\pm$ are the same for $(\pi_0,\pi_1)$ as for $(\tilde\pi_0,\tilde\pi_1)$.
Now by the previous case, $\tilde\pi_0\restr(\Omega_+\times\Omega_+)=\tilde\pi_1\restr(\Omega_-\times\Omega_-)=0$
so that $\pi_0\restr(\Omega_+\times\Omega_+)=\pi\restr(\Omega_+\times\Omega_+)$ and $\pi_1\restr(\Omega_-\times\Omega_-)=\pi\restr(\Omega_-\times\Omega_-)$ are diagonal.
\end{proof}

The final result of this section shows that the maximal transport distance may be bounded depending on the model.
It implies for instance that, if the supports of $\rho_0$ and $\rho_1$ have distance larger than some threshold (which for some models is infinite), then no transport happens at all.
Since the transport component of the total cost depends on the transport distance while the mass change cost is independent of it and only depends on how much mass is created or removed, such a threshold effect is not unexpected.

\begin{proposition}[Maximal transport distance]\thlabel{thm:maxTransportDistance}%
\thlabel{prop:MaximalTransportDistance}
Any optimizer $(\pi_0,\pi_1)$ of \eqref{eq:StaticPrimalProblem} satisfies
\begin{equation*}
\pi_0\restr\{(x,y)\in\Omega\times\Omega\,|\,d(x,y)>L_0\}
=\pi_1\restr\{(x,y)\in\Omega\times\Omega\,|\,d(x,y)>L_1\}
=0
\end{equation*}
for $\displaystyle L_0=\lim_{a\to\infty}\partial_1\SimLocC(a,1)-\lim_{a\to0}\partial_1\SimLocC(a,1)\in(0,\infty]$,
$\displaystyle L_1=\lim_{a\to\infty}\partial_2\SimLocC(1,a)-\lim_{a\to0}\partial_2\SimLocC(1,a)\in(0,\infty]$,
where $\partial_i$ denotes the $i$th (left or right) partial derivative.
\end{proposition}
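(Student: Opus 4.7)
The two assertions, for $\pi_0$ and for $\pi_1$, are symmetric, so I will argue the bound for $\pi_0$ in detail; the bound for $\pi_1$ requires only cosmetic changes. The case $L_0 = \infty$ is vacuous, so assume $L_0 < \infty$, set $f(a) := \SimLocC(a,1)$, and abbreviate $f'_\infty := \lim_{a\to\infty}\partial_1\SimLocC(a,1)$, $f'_0 := \lim_{a\to 0}\partial_1\SimLocC(a,1)$, both of which are then finite. The plan is to argue by contradiction in the spirit of \thref{prop:GrowthShrinkDecomposition} and \thref{thm:massTptChng}: starting from an optimizer $(\pi_0,\pi_1)$ of \eqref{eq:StaticPrimalProblem} with $\pi_0(\{d > L_0\}) > 0$, I will construct a small perturbation of $\pi_0$ that strictly lowers the cost \eqref{eq:StaticPrimal}.

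By inner regularity of $\pi_0$ and continuity of $d$, I fix $\delta>0$ and a compact set $K \subset \{(x,y)\in\Omega\times\Omega : d(x,y) \geq L_0+\delta\}$ with $M := \pi_0(K) > 0$. For $\varepsilon\in(0,1]$ I define $\pi_0^\varepsilon = \pi_0 - \varepsilon\sigma + \varepsilon T_\#\sigma$, where $\sigma := \pi_0\restr K$ and $T(x,y) := (x,x)$. Then $\pi_0^\varepsilon \in \measp(\Omega\times\Omega)$, $P_X\pi_0^\varepsilon = \rho_0$ (so $(\pi_0^\varepsilon,\pi_1)$ remains admissible), and $P_Y\pi_0^\varepsilon = \rho_0' + \varepsilon(\nu^+ - \nu^-)$, where $\rho_0' := P_Y\pi_0$, $\nu^+ := P_X\sigma$, and $\nu^- := P_Y\sigma \leq \rho_0'$ both have total mass $M$. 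In words, this rewiring replaces long-range transport $x\to y$ by staying at $x$, thereby adding mass at $x$ and removing mass at $y$ in the intermediate distribution.

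The transport-cost change is exactly $-\varepsilon\int_K d(x,y)\,\d\pi_0 \leq -\varepsilon M(L_0+\delta)$. For the discrepancy part I choose a common reference $\gamma\in\measp(\Omega)$ with $\rho_0',\rho_1',\nu^\pm\ll\gamma$, write $u,v,p^\pm$ for the respective densities, and note that $p^+p^- = 0$ $\gamma$-a.e.~by the Hahn decomposition. The key observation is that $1$-homogeneity of $\SimLocC$ collapses its one-sided first partial derivatives to those of the univariate convex $f$ and therefore produces the uniform bounds
\begin{equation*}
f'_0 \leq \partial_1^-\SimLocC(m_0,m_1) \leq \partial_1^+\SimLocC(m_0,m_1) \leq f'_\infty \qquad \text{on } [0,\infty)^2,
\end{equation*}
where the boundary case $m_1=0$ uses $\SimLocC(m_0,0) = m_0\SimLocC(1,0) = m_0 f'_\infty$. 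Applying the standard secant estimate for convex univariate functions pointwise and integrating against $\gamma$ over the disjoint sets $\{p^+>0\}$ and $\{p^->0\}$ gives
\begin{equation*}
\SimLoc(\rho_0' + \varepsilon(\nu^+ - \nu^-),\rho_1') - \SimLoc(\rho_0',\rho_1') \leq \varepsilon\bigl(f'_\infty\,\nu^+(\Omega) - f'_0\,\nu^-(\Omega)\bigr) = \varepsilon M L_0.
\end{equation*}
Combined with the transport estimate, the total cost drops by at least $\varepsilon M\delta > 0$, contradicting optimality of $(\pi_0,\pi_1)$.

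For $\pi_1$ the mirror construction uses $T(x,y) := (y,y)$, which preserves $P_Y\pi_1 = \rho_1$ and perturbs only $\rho_1' = P_X\pi_1$; the analogous one-sided bounds on $\partial_2\SimLocC$, derived via $g(b) := \SimLocC(1,b)$, produce the threshold $L_1$. The main technical delicacy throughout is that $\nu^\pm$ need not be absolutely continuous with respect to $\rho_0'$ or $\rho_1'$, so some components of the perturbation may act where one of the background densities vanishes; this is precisely where the $1$-homogeneity of $\SimLocC$ is indispensable, as it propagates the univariate derivative bounds to the entire positive cone, including its boundary.
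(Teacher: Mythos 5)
Your proof is correct and takes essentially the same approach as the paper's: rewire the long-range transport to the diagonal, bound the resulting discrepancy increase by the extreme one-sided slopes of the univariate section of $\SimLocC$, and observe that this increase is strictly outweighed by the transport savings. The only cosmetic differences are that the paper removes the entire far part of $\pi_1$ at once (obtaining strictness from $d>L_1$ on its support) and argues for $\pi_1$ rather than $\pi_0$, whereas you restrict to a compact $K\subset\{d\geq L_0+\delta\}$ and perturb by an $\varepsilon$-fraction. One minor imprecision worth fixing: the assertion that $p^+p^-=0$ ``by the Hahn decomposition'' is both unnecessary and, as written, incorrect --- $\nu^+=P_X\sigma$ and $\nu^-=P_Y\sigma$ need not be mutually singular. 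The displayed bound $\SimLoc(\rho_0'+\varepsilon(\nu^+-\nu^-),\rho_1')-\SimLoc(\rho_0',\rho_1')\leq\varepsilon\bigl(f'_\infty\,\nu^+(\Omega)-f'_0\,\nu^-(\Omega)\bigr)$ already holds pointwise in the $\gamma$-densities, regardless of any overlap, because $f'_0\leq\partial_1\SimLocC\leq f'_\infty$ uniformly on the positive cone (one simply notes that $f'_\infty(p^+-p^-)\leq f'_\infty p^+-f'_0 p^-$ when $p^+\geq p^-$ and $f'_0(p^+-p^-)\leq f'_\infty p^+-f'_0 p^-$ otherwise).
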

\begin{proof}
We only prove $\pi_1\restr\{(x,y)\in\Omega\times\Omega\,|\,d(x,y)>L_1\}=0$, the other statement follows analogously.
Let $\Delta\geq0$ arbitrary and abbreviate
\begin{equation*}
L^r=\lim_{a\to\infty}\partial_2\SimLocC(1,a)\,,\qquad
L^l=-\lim_{a\to0}\partial_2\SimLocC(1,a)\,.
\end{equation*}
By convexity of $\SimLocC(1,\cdot)$ we have for any $b\geq0$
\begin{equation*}
\SimLocC(1,b+\Delta)\leq\SimLocC(1,b)+\partial_2\SimLocC(1,b+\Delta)\Delta\leq\SimLocC(1,b)+L^r\Delta
\end{equation*}
and thus by the one-homogeneity $\SimLocC(a,b+\Delta)\leq\SimLocC(a,b)+L^r\Delta$ for all $a\geq0$.
Analogously one obtains $\SimLocC(a,b-\Delta)\leq\SimLocC(a,b)+L^l\Delta$ for $a\geq 0$, $b\geq \Delta$.
As a consequence, for any nonnegative measures $\rho',\tilde\rho',\hat\rho'$ with $\rho'-\tilde\rho'+\hat\rho'$ nonnegative we have
\begin{equation*}
\SimLoc(\rho_0',\rho'-\tilde\rho'+\hat\rho')
\leq\SimLoc(\rho_0',\rho')+L^l\tilde\rho'(\Omega)+L^r\hat\rho'\,.
\end{equation*}
Now assume $(\pi_0,\pi_1)$ optimizes \eqref{eq:StaticPrimalProblem} with $\pi_1^{\text{far}}\equiv\pi_1\restr\{(x,y)\in\Omega\times\Omega\,|\,d(x,y)>L_1\}\neq0$.
We will construct a competitor with strictly lower cost.
Define $\pi_1^{\text{cls}}$ and $\pi_1^{\text{dgl}}$ via
\begin{equation*}
\pi_1^{\text{cls}}=\pi_1-\pi_1^{\text{far}}
\qquad\text{and}\qquad
\int_{\Omega\times\Omega}\phi\,\d\pi_1^{\text{dgl}}=\int_{\Omega\times\Omega}\phi(y,y)\,\d\pi_1^{\text{far}}(x,y)
\end{equation*}
for all continuous $\phi:\Omega\times\Omega\to\R$,
that is, $\pi_1^{\text{far}}$ describes the transport of mass that moves farther than $L_1$ and $\pi_1^{\text{dgl}}$ is the coupling corresponding to not moving that mass at all.
Our competitor is $\tilde\pi_1=\pi_1^{\text{cls}}+\pi_1^{\text{dgl}}=\pi_1-\pi_1^{\text{far}}+\pi_1^{\text{dgl}}$, and we estimate
\begin{multline*}
P_S(\pi_0,\tilde\pi_1)
=\int_{\Omega\times\Omega} d\,\d \pi_0 + \SimLoc(P_Y \pi_0, P_X \pi_1-P_X\pi_1^{\text{far}}+P_X\pi_1^{\text{dgl}}) + \int_{\Omega\times\Omega} d\,\d (\pi_1-\pi_1^{\text{far}}+\pi_1^{\text{dgl}})\\
<\int_{\Omega\times\Omega} d\,\d \pi_0 + \SimLoc(P_Y \pi_0, P_X \pi_1) +L^l\pi_1^{\text{far}}(\Omega\times\Omega)+L^r\pi_1^{\text{dgl}}(\Omega\times\Omega) + \int_{\Omega\times\Omega} d\,\d\pi_1-L_1\pi_1^{\text{far}}(\Omega\times\Omega)\\
=P_S(\pi_0,\pi_1)+(L^l+L^r-L_1)\pi_1^{\text{far}}(\Omega\times\Omega)
=P_S(\pi_0,\pi_1)\,,
\end{multline*}
where we used $(P_X\pi_1^{\text{far}})(\Omega)=\pi_1^{\text{far}}(\Omega\times\Omega)=(P_X\pi_1^{\text{dgl}})(\Omega)=\pi_1^{\text{dgl}}(\Omega\times\Omega)$.
This contradicts the optimality of $(\pi_0,\pi_1)$ and thus concludes the proof.
\end{proof}

\section{Overview of and relation between static and dynamic formulations}

In this section we continue and extend the discussion from the introduction on different model formulations and their relations.
In particular, we provide a semi-coupling formulation of the static $W_1$-type problem, and we identify, which static $W_1$-type problems have an equivalent dynamic formulation.

\subsection{Semi-coupling formulation of unbalanced transport}
\label{sec:SemiCouplingFormulations}
In this section we establish the relation between the formulations for transport problems from Section \ref{sec:Reminder} with the semi-coupling formulation introduced in \cite{ChizatDynamicStatic2015}.
A main result of \cite{ChizatDynamicStatic2015}, Theorem 4.3, is the relation between dynamic problems and the semi-coupling formulation.
Using the results of Section \ref{sec:equivalence} we give an alternative proof for this statement in the special case of the $W_1$-type setting (\thref{prop:SCW1TEquivalence}).

\begin{definition}[Primal Semi-Coupling Formulation \protect{\cite[Definition 3.3]{ChizatDynamicStatic2015}}]
	\label{def:SemiCouplingPrimal}
	For $\rho_0$, $\rho_1 \in \measp(\Omega)$ the corresponding set of \emph{semi-couplings} is given by
	\begin{align}
		\Gamma(\rho_0,\rho_1) & = \left\{
			(\gamma_0,\gamma_1) \in (\measp(\Omega\times\Omega))^2
				\,\middle|\,
				P_X \gamma_0 = \rho_0,\,
				P_Y \gamma_1 = \rho_1
				\right\}\,.
	\end{align}
	Let further $\SCc : (\Omega \times \R)^2 \to \RCupInf$ be a lower semi-continuous function such that for all fixed $(x_0,x_1) \in \Omega\times\Omega$ the function
	$(m_0,m_1) \mapsto \SCc(x_0,m_0,x_1,m_1)$ is convex and 1-homogeneous with $\SCc(x_0,m_0,x_1,m_1)= \infty$ if $m_0<0$ or $m_1<0$.
	The \emph{primal semi-coupling problem} is given by
	\begin{equation}
		\label{eq:SCPrimal}
		W_{SC}(\rho_0,\rho_1) = \inf \left\{ \int_{\Omega^2}
			\SCc\left(x_0,\RadNik{\gamma_0}{\gamma}(x_0,x_1),x_1,\RadNik{\gamma_1}{\gamma}(x_0,x_1)\right)\,\d \gamma(x_0,x_1)
			\middle|
			(\gamma_0,\gamma_1) \in \Gamma(\rho_0,\rho_1)
			\right\}\,,
	\end{equation}
	where $\gamma \in \measp(\Omega\times\Omega)$ is such that $\gamma_0$, $\gamma_1 \ll \gamma$.
\end{definition}
The semi-coupling cost $\SCc(x_0,m_0,x_1,m_1)$ may be interpreted as the cost for starting out at $x_0$ with mass $m_0$ destined for $x_1$, but only arriving at $x_1$ with changed mass $m_1$.
Again, there is a corresponding dual problem.
\begin{proposition}[Dual Semi-Coupling Formulation \protect{\cite[Proposition 3.11]{ChizatDynamicStatic2015}}]
	\thlabel{prop:SemiCouplingDual}
	For a function $\SCc$ as in Definition \ref{def:SemiCouplingPrimal}, let
	\begin{align*}
		\SCB(x_0,x_1) & = \left\{(\alpha,\beta) \in \R^2
			\,\middle|\,
			[\SCc(x_0,\cdot,x_1,\cdot)]^\ast(\alpha,\beta) < \infty \right\}.
	\end{align*}
	Then
	\begin{multline}
		W_{SC}(\rho_0,\rho_1) = \sup \left\{
			\int_\Omega \alpha\,\d\rho_0 + \int_\Omega \beta\, \d \rho_1 \right|
			(\alpha,\beta) \in C(\Omega)^2 , \\
			\left. \vphantom{\int_\Omega}
			(\alpha(x_0),\beta(x_1)) \in \SCB(x_0,x_1) \tn{ for all } (x_0,x_1) \in \Omega\times\Omega \right\}\,.
			\label{eq:SCDual}
	\end{multline}
\end{proposition}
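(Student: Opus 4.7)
The plan is to apply the Fenchel--Rockafellar duality theorem in direct analogy with the proofs of \thref{prop:DynamicDual} and \thref{prop:StaticEquivalence}. I would rewrite \eqref{eq:SCDual} as $\sup_{(\alpha,\beta)\in C(\Omega)^2}\bigl(-F(A(\alpha,\beta)) - G(-(\alpha,\beta))\bigr)$ with
\begin{align*}
G & : C(\Omega)^2 \to \R, & G(\alpha,\beta) & = \textstyle\int_\Omega \alpha\,\d\rho_0 + \int_\Omega \beta\,\d\rho_1, \\
A & : C(\Omega)^2 \to C(\Omega\times\Omega)^2, & A(\alpha,\beta) & = \bigl((x_0,x_1)\mapsto\alpha(x_0),\,(x_0,x_1)\mapsto\beta(x_1)\bigr), \\
F & : C(\Omega\times\Omega)^2 \to \RCupInf, & F(a,b) & = \textstyle\int_{\Omega\times\Omega}\iota_{\SCB(x_0,x_1)}(a(x_0,x_1),b(x_0,x_1))\,\d(x_0,x_1).
\end{align*}
$G$ is continuous linear, $A$ is linear and bounded, and $F$ is convex and lower semi-continuous. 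Since $\iota_{\SCB(x_0,x_1)}$ only takes the values $0$ and $+\infty$, $F(a,b)$ vanishes precisely when $(a(x_0,x_1),b(x_0,x_1))\in\SCB(x_0,x_1)$ for every $(x_0,x_1)$, using continuity of $a,b$ together with closedness of $\SCB(x_0,x_1)$ and the joint lsc of $\SCc$. Hence the above expression coincides with the right-hand side of \eqref{eq:SCDual}.

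Provided the constraint qualification holds, the Fenchel--Rockafellar theorem \cite[Thm.\,3]{Rockafellar-Duality1967} yields
\[
\sup_{\phi\in C(\Omega)^2}\bigl(-F(A\phi)-G(-\phi)\bigr) = \min_{(\gamma_0,\gamma_1)\in\meas(\Omega\times\Omega)^2}\bigl(F^\ast(\gamma_0,\gamma_1)+G^\ast(A^\ast(\gamma_0,\gamma_1))\bigr).
\]
A direct computation gives $A^\ast(\gamma_0,\gamma_1) = (P_X\gamma_0,P_Y\gamma_1)$, so $G^\ast(A^\ast(\gamma_0,\gamma_1))$ is the indicator of $\{P_X\gamma_0=\rho_0,\,P_Y\gamma_1=\rho_1\}$, which coincides with $\iota_{\Gamma(\rho_0,\rho_1)}$ once nonnegativity is enforced. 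By \cite[Thm.\,5]{Rockafellar-IntegralConvexFunctionals71} applied to the integrand $\iota_{\SCB(x_0,x_1)}$, together with the biconjugation identity $\iota_{\SCB(x_0,x_1)}^\ast = \SCc(x_0,\cdot,x_1,\cdot)$ (valid since the latter is convex, lsc, and $1$-homogeneous in the mass variables by assumption), one obtains
\[
F^\ast(\gamma_0,\gamma_1) = \int_{\Omega\times\Omega}\SCc\!\left(x_0,\RadNik{\gamma_0}{\gamma}(x_0,x_1),x_1,\RadNik{\gamma_1}{\gamma}(x_0,x_1)\right)\d\gamma(x_0,x_1),
\]
independent of the dominating $\gamma$ by $1$-homogeneity, and finite only for $(\gamma_0,\gamma_1)\in\measp(\Omega\times\Omega)^2$ because $\SCc=+\infty$ for negative mass. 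Together the two terms give precisely $W_{SC}(\rho_0,\rho_1)$ as defined in \eqref{eq:SCPrimal}.

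It remains to verify the constraint qualification: some $\phi_0\in C(\Omega)^2$ with $G(-\phi_0)$ finite (automatic) at which $F$ is uniformly continuous. The natural candidate is the pair of constants $\phi_0=(-c,-c)$ for sufficiently large $c>0$. Since $\SCc$ is $1$-homogeneous, convex, and equal to $+\infty$ outside the nonnegative quadrant, $\SCc$ must be nonnegative on the nonnegative quadrant (otherwise $1$-homogeneity would force $\SCc\to-\infty$ along a ray, contradicting properness). Consequently $(-\infty,0]^2\subseteq\SCB(x_0,x_1)$ for every $(x_0,x_1)$, and a fixed open box around $(-c,-c)$ lies in $\SCB(x_0,x_1)$. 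The main technical obstacle I anticipate is turning this pointwise interior statement into genuine continuity of $F$ at $A\phi_0$ with respect to the uniform topology on $C(\Omega\times\Omega)^2$, \emph{uniformly} in $(x_0,x_1)$; this is where the joint lsc of $\SCc$ together with the compactness of $\Omega\times\Omega$ enter, ensuring that a single $\delta>0$ can be chosen so that $[-c-\delta,-c+\delta]^2\subseteq\SCB(x_0,x_1)$ for all $(x_0,x_1)$. Once established, the Fenchel--Rockafellar theorem yields equality of primal and dual values and, as a byproduct, existence of a primal semi-coupling minimizer whenever the value is finite.
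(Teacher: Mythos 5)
The paper does not prove this proposition itself; it is stated with a citation to \cite[Prop.~3.11]{ChizatDynamicStatic2015}. Your Fenchel--Rockafellar architecture mirrors the proof of \thref{prop:DynamicDual}, and the pieces $A$, $G$, $A^\ast$, $G^\ast$, and the constraint qualification at the constant pair $(-c,-c)$ are all sound; the ``uniformity'' worry you flag at the end is actually unfounded, since $\SCc\ge0$ on the nonnegative quadrant already gives $(-\infty,0]^2\subset\SCB(x_0,x_1)$ for \emph{every} $(x_0,x_1)$, so any fixed $\delta<c$ works. The genuine gap is in the choice and conjugation of $F$. Your $F(a,b)=\int_{\Omega^2}\iota_{\SCB(x_0,x_1)}(a(x_0,x_1),b(x_0,x_1))\,\d(x_0,x_1)$ enforces the constraint only Lebesgue--almost everywhere. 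The claim that continuity of $(a,b)$, closedness of $\SCB(x_0,x_1)$, and joint lower semicontinuity of $\SCc$ upgrade ``a.e.'' to ``everywhere'' is false: lower semicontinuity of $\SCc$ allows $\SCB(x_0,x_1)$ to be strictly \emph{smaller} on a Lebesgue-null set than at all nearby points, whereas what you need is the opposite (outer semicontinuity of the set map, which would follow from \emph{upper}, not lower, semicontinuity of $\SCc$). Consequently the supremum you dualize is the a.e.-constrained one, which can strictly exceed the right-hand side of \eqref{eq:SCDual}; correspondingly the identity $F^\ast(\gamma_0,\gamma_1)=\int\SCc(\ldots)\,\d\gamma$ fails for measures charging the exceptional null set. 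The cited \cite[Thm.~5]{Rockafellar-IntegralConvexFunctionals71} needs regularity of the integrand in the base variable --- a hypothesis that is vacuous in \thref{prop:DynamicDual} (spatially constant integrand) but bites here.

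Concretely, take $\Omega=[0,1]$ and $\SCc(x_0,m_0,x_1,m_1)=m_0+m_1$ for $x_0\ne0$, $\SCc(0,m_0,x_1,m_1)=0$, both on $\{m_0,m_1\ge0\}$ and $+\infty$ elsewhere; this is jointly lsc, convex and $1$-homogeneous in $(m_0,m_1)$, admissible under Definition~\ref{def:SemiCouplingPrimal}, yet $\SCB(0,x_1)=(-\infty,0]^2\subsetneq(-\infty,1]^2=\SCB(x_0,x_1)$ for $x_0\ne0$. With $\rho_0=\rho_1=\delta_{1/2}$, the constants $\alpha\equiv\beta\equiv1$ are a.e.-feasible (so $F(A(\alpha,\beta))=0$) with objective $2$, while the everywhere constraint forces $\alpha(0)\le0$ and $\beta\le0$ on all of $\Omega$, so the right-hand side of \eqref{eq:SCDual} equals $1$; one checks $W_{SC}(\rho_0,\rho_1)=1$ (attained by $\gamma_0=\delta_{(1/2,0)}$, $\gamma_1=\delta_{(0,1/2)}$), so the proposition holds but your duality chain would yield $2$. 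At this $(\gamma_0,\gamma_1)$ the claimed formula gives $F^\ast=\SCc(1/2,1,0,0)+\SCc(0,0,1/2,1)=1$, whereas in truth $F^\ast(\gamma_0,\gamma_1)=\sup\{a(1/2,0)+b(0,1/2)\,:\,a\le1,\ b\le1\}=2$. The repair is to take $F=\iota_K$ with $K=\{(a,b)\in C(\Omega\times\Omega)^2\,:\,(a(x_0,x_1),b(x_0,x_1))\in\SCB(x_0,x_1)\ \forall\,(x_0,x_1)\}$, a closed convex set containing $(-c,-c)$ in its interior, so the constraint qualification and the conjugates of $A$, $G$ carry over; but computing $F^\ast$ (the support function of $K$) then requires an honest approximation argument in the spirit of Kantorovich duality for lsc costs, or else an additional hypothesis that $\SCc$ be jointly \emph{continuous} --- which does hold for the $\SCc$ constructed in \thref{prop:SCW1TEquivalence}, but is not implied by Definition~\ref{def:SemiCouplingPrimal} alone.
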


A main result of \cite{ChizatDynamicStatic2015} was to show that for suitable dynamic problems, including those of the form of Definition \ref{def:Dynamic}, there is an equivalent semi-coupling formulation.
\begin{proposition}[Equivalent semi-coupling formulation for dynamic problem \protect{\cite[Theorem 4.3]{ChizatDynamicStatic2015}}]\thlabel{thm:semiCoupling}
	For a dynamic primal problem as in Definition \ref{def:Dynamic}, the associated path cost $\CPath : (\Omega \times \R)^2 \mapsto [0,\infty]$ is given by
	\begin{multline*}
		\CPath(x_0,m_0,x_1,m_1) = \inf\left\{
			\int_0^1 m(t)\cdot \|\partial_t x(t)\| + \CDynM(m(t),\partial_t m(t)) \,\d t
			\right| \\
			(x,m) \in C^1([0,1],\Omega \times \R) ,
			\left. \vphantom{\int_0^1} (x(i),m(i)) = (x_i,m_i) \tn{ for } i \in \{0,1\}
			\right\}\,.
	\end{multline*}
	Assume that there exists some $C < \infty$ such that $\CDynM(\rho,2\zeta) \leq C \cdot \CDynM(\rho,\zeta)$ for all $(\rho,\zeta) \in \R^2$,
	and let $\SCc$ be the convex envelope of $\CPath$ with respect to the arguments $(m_0,m_1)$. Then $W_{D} = W_{SC}$.
\end{proposition}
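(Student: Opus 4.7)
The plan is to chain the equivalence $W_D = W_S$ from Proposition \ref{thm:equivalence} with a direct equivalence $W_S = W_{SC}$, which corresponds to the forthcoming Proposition \ref{prop:SCW1TEquivalence}. The core task is to identify the convex envelope $\SCc$ with the ``canonical'' semi-coupling cost arising from this correspondence.

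First, I would compute $\CPath(x_0, m_0, x_1, m_1)$ explicitly. Because $m(t)\|\partial_t x(t)\|$ is $1$-homogeneous in $m$ and $\int_0^1 \|\partial_t x(t)\|\,\d t \geq d(x_0, x_1)$ for any admissible curve, the optimal spatial strategy is to concentrate the entire motion into a single instant $t^\star$, incurring transport cost $a \cdot d(x_0, x_1)$ where $a = m(t^\star)$ is the mass at that instant. The growth bound $\CDynM(\rho, 2\zeta) \leq C\, \CDynM(\rho, \zeta)$ justifies this concentration in the limit, ensuring that the reshaping of the mass profile does not blow up the $\CDynM$-cost. Invoking Proposition \ref{prop:W1DynamicStaticEquivalencePrimal} to identify the optimal pure mass-change cost with $\SimLocC$, I expect to obtain
\begin{equation*}
\CPath(x_0, m_0, x_1, m_1) = \inf_{a \geq 0}\bigl\{\SimLocC(m_0, a) + a\, d(x_0, x_1) + \SimLocC(a, m_1)\bigr\}.
\end{equation*}

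Next, I would show that $\SCc$, the convex envelope of $\CPath$ in $(m_0, m_1)$, is precisely the cost arising from the semi-coupling reformulation of the static $W_1$-type problem. Given $(\pi_0, \pi_1)$ admissible in \eqref{eq:StaticPrimalProblem} with intermediate marginals $\rho_0' = P_Y \pi_0$ and $\rho_1' = P_X \pi_1$, I would disintegrate $\pi_0$ against $\rho_0'$ and $\pi_1$ against $\rho_1'$ and glue through the intermediate index to produce a pair $(\gamma_0, \gamma_1)$ of semi-couplings; conversely, any semi-coupling together with the convex decomposition witnessing its envelope cost reconstructs compatible static plans. Under this correspondence, $\int \SCc\,\d\gamma$ matches $P_S(\pi_0, \pi_1)$ precisely because taking the envelope in $(m_0, m_1)$ corresponds to optimizing fibrewise over the amount of mass routed through each intermediate height for each source--destination pair.

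The main obstacle will be the measure-theoretic bookkeeping in the second step: one must check that the disintegration and gluing are consistent, and that the envelope-versus-infimum distinction is handled correctly when the infimum in $\CPath$ is not attained. A secondary technical point is the rigorous approximation argument in Step 1, which is exactly where the growth condition on $\CDynM$ enters: without it, the concentration procedure could inflate the mass-change cost in the limit.
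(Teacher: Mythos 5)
A preliminary note: the paper does not prove this proposition; it is recalled verbatim from \cite[Theorem~4.3]{ChizatDynamicStatic2015}, whose original proof works on the primal side by integrating the flow of a smoothed momentum field. What the paper \emph{does} prove is the companion Proposition~\ref{prop:SCW1TEquivalence}, which deliberately \emph{avoids} computing $\CPath$ and instead defines the semi-coupling cost via $\cPre(x_0,m_0,x_1,m_1)=\SimLocC(m_0,m_1)+\min\{m_0,m_1\}\,d(x_0,x_1)$. Your plan to chain $W_D=W_S$ (Prop.~\ref{thm:equivalence}) with $W_S=W_{SC}$ (Prop.~\ref{prop:SCW1TEquivalence}) is exactly the alternative route the paper advertises, so in spirit the strategy is sound.

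However, the explicit formula you propose for $\CPath$ in Step~1 is wrong. The quantity $\SimLocC(m_0,a)+\SimLocC(a,m_1)$ is \emph{not} the minimal mass-change cost of a trajectory on $[0,1]$ passing from $m_0$ through $a$ to $m_1$: by Prop.~\ref{prop:W1DynamicStaticEquivalencePrimal}, $\SimLocC$ is the cost of a mass change over a \emph{full} unit time interval, whereas your two legs would have to share a single unit of time. Since $\CDynM$ is not linear in $\zeta$, compressing either leg into a subinterval strictly increases its cost (by $1$-homogeneity and convexity, $\CDynM(1,z/\tau)\ge\tau^{-1}\CDynM(1,z)$ for $\tau\le1$), so your expression is a strict lower bound in general. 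Concretely, for the Hellinger model $\CDynM(\rho,\zeta)=\zeta^2/(4\rho)$ one has $\SimLocC(m_0,m_1)=(\sqrt{m_0}-\sqrt{m_1})^2$, and with $x_0=x_1$ the true path cost is $\CPath=(\sqrt{m_0}-\sqrt{m_1})^2$, while your formula gives $\inf_a\{(\sqrt{m_0}-\sqrt a)^2+(\sqrt a-\sqrt{m_1})^2\}=\tfrac12(\sqrt{m_0}-\sqrt{m_1})^2$. This factor-of-two discrepancy survives the convex envelope, so your $\SCc$ would produce $W_{SC}<W_D$ — a genuine error, not a cosmetic one. The correct optimal structure (Prop.~\ref{thm:transportChar}, Remark~\ref{rem:CombineOptimalityCriteria}, Figure~\ref{fig:dynamicSteps}) is not ``change--transport--change'' sequentially, but a mass \emph{split}: a portion $a_0$ is transported at $t=0$ and then changed to $a_1$ over all of $[0,1]$ concurrently with the remaining $b_0$ being changed to $b_1$ and then transported at $t=1$, giving the inf-convolution $\inf\{\SimLocC(a_0,a_1)+a_0\,d+\SimLocC(b_0,b_1)+b_1\,d\ |\ a_0+b_0=m_0,\ a_1+b_1=m_1\}=\cPre^{\ast\ast}$. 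If you want to salvage Step~1 you would need to replace your expression by this one and then separately argue that the envelope of $\CPath$ yields the same $W_{SC}$; the paper purposely keeps the two statements apart.
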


A similar result can be established by finding an equivalent semi-coupling formulation for static $W_1$-type problems and then using the equivalence between dynamic and static $W_1$-type problems (\thref{prop:W1DynamicStaticEquivalence}, see also \thref{prop:W1DynamicStaticEquivalencePrimal}).
This strategy has the advantage that one avoids computing the path cost $\CPath$ and obtains a more explicit form of the semi-coupling cost function $\SCc$, involving only the static mass change cost $\SimLocC$ from \eqref{eq:InducedSimLocC}.
\newcommand{\cPre}{c_{\tn{pre}}}
\begin{proposition}
	\thlabel{prop:SCW1TEquivalence}
	For a given static $W_1$-type problem according to Definition \ref{def:StaticPrimal} and \thref{prop:StaticEquivalence}, characterized by a function $\SimLocC$ or equivalently a set $\BStat$, let
	\begin{equation*}
		\cPre(x_0,m_0,x_1,m_1) = \SimLocC(m_0,m_1) + \min\{m_0,m_1\} \cdot d(x_0,x_1)
  \end{equation*}
	and denote by $\cPre^{\ast \ast}(x_0,m_0,x_1,m_1)$ its convex envelope with respect to the arguments $(m_0,m_1)$. Then for the choice $\SCc = \cPre^{\ast \ast}$ or equivalently
	\begin{equation*}
		\SCB(x_0,x_1) = [ \BStat + \{(0,d(x_0,x_1))\} ] \cap
			[\BStat + \{(d(x_0,x_1),0)\}]
	\end{equation*}
	one has $W_S = W_{SC}$.
\end{proposition}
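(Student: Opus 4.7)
The plan is to work entirely in the dual formulations \eqref{eq:StaticDual} and \eqref{eq:SCDual} and to show that their suprema coincide. Before anything else I would verify that the two characterizations in the statement are equivalent: writing $-\min\{m_0,m_1\}=\sup_{\lambda\in[0,1]}[-\lambda m_0-(1-\lambda) m_1]$ and using $\iota_{\BStat}=\SimLocC^\ast$ (\thref{prop:StaticEquivalence}), a direct computation gives
\begin{equation*}
[\cPre(x_0,\cdot,x_1,\cdot)]^\ast(\alpha,\beta)=\sup_{\lambda\in[0,1]}\iota_{\BStat}(\alpha-\lambda d(x_0,x_1),\beta-(1-\lambda) d(x_0,x_1))\,,
\end{equation*}
which vanishes exactly when the whole segment $\lambda\mapsto(\alpha-\lambda d,\beta-(1-\lambda)d)$ lies in the convex set $\BStat$; by convexity of $\BStat$, it suffices that the two endpoints $\lambda\in\{0,1\}$ do, giving the intersection description of $\SCB$ claimed in the statement.

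For the direction $W_S\leq W_{SC}$ I exploit that $\BStat=\{(\alpha,\beta):\beta\leq\HStat(-\alpha)\}$ is downward closed in each coordinate because $\HStat$ is monotonically increasing. Given $(\alpha,\beta)\in\Lip(\Omega)^2$ with $(\alpha(x),\beta(x))\in\BStat$ everywhere, the $1$-Lipschitz bounds $\alpha(x_0)-d(x_0,x_1)\leq\alpha(x_1)$ and $\beta(x_1)-d(x_0,x_1)\leq\beta(x_0)$ combined with this downward closedness immediately yield $(\alpha(x_0)-d(x_0,x_1),\beta(x_1))\in\BStat$ and $(\alpha(x_0),\beta(x_1)-d(x_0,x_1))\in\BStat$, i.e.\ $(\alpha(x_0),\beta(x_1))\in\SCB(x_0,x_1)$. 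Hence $(\alpha,\beta)$ is $W_{SC}$-dual feasible with the same objective, and passing to the supremum delivers the inequality.

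For the reverse direction I would take $(\alpha,\beta)\in C(\Omega)^2$ feasible for \eqref{eq:SCDual} and define the $1$-Lipschitz majorants
\begin{equation*}
\tilde\alpha(x)=\sup_{y\in\Omega}[\alpha(y)-d(x,y)]\,,\qquad\tilde\beta(x)=\sup_{y\in\Omega}[\beta(y)-d(x,y)]\,,
\end{equation*}
which pointwise dominate $\alpha$ and $\beta$, so the objective can only grow. Since $\Omega$ is compact and $y\mapsto d(x,y)-\alpha(y)$ is continuous, the infimum giving $-\tilde\alpha(x)$ is attained at some $y^{\ast}\in\Omega$; hence the membership $(\tilde\alpha(x),\tilde\beta(x))\in\BStat$ will follow from the pointwise inequality
\begin{equation*}
\beta(z)-d(x,z)\leq\HStat(d(x,y)-\alpha(y))\qquad\text{for all }x,y,z\in\Omega
\end{equation*}
simply by evaluating at $y=y^{\ast}$. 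The side condition $-\tilde\alpha(x)\in\dom\HStat$ is automatic, since $(\alpha(y^{\ast}),\beta(y^{\ast}))\in\SCB(y^{\ast},y^{\ast})=\BStat$ forces $-\alpha(y^{\ast})\in\dom\HStat$, and $d(x,y^{\ast})\geq 0$ together with monotonicity of $\HStat$ handles the rest.

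I expect this pointwise inequality to be the main technical obstacle. My plan is to split on the sign of $a:=d(x,y)-\alpha(y)$ and to use the two defining constraints of $\SCB(y,z)$ in complementary fashion: $(\alpha(y)-d(y,z),\beta(z))\in\BStat$ reads $\beta(z)\leq\HStat(d(y,z)-\alpha(y))$, while $(\alpha(y),\beta(z)-d(y,z))\in\BStat$ reads $\beta(z)-d(y,z)\leq\HStat(-\alpha(y))$. When $a\geq 0$ I use the first together with the reverse triangle bound $|d(y,z)-d(x,y)|\leq d(x,z)$ and the slope estimate $\HStat(s+t)-\HStat(s)\leq t$ valid for $s\geq 0,\,t\geq 0$, which is a consequence of concavity and $\HStat'(0)=1$ (so $\HStat'\leq 1$ on $[0,\infty)$). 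When $a\leq 0$ I use the second together with $d(y,z)\leq d(x,y)+d(x,z)$ and the opposite slope estimate $\HStat(s+t)-\HStat(s)\geq t$, valid whenever $s+t\leq 0$ because then $\HStat'\geq 1$ on the integration interval. The two slope estimates and the two pieces of the $\SCB$-constraint exactly cover the sign split, and this interlocking is what makes the Lipschitz extension feasible for the static dual.
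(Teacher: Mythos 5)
Your proof is correct and takes a genuinely different route from the paper's on both halves. For the correspondence between $\SCc$ and $\SCB$, you write $-\min\{m_0,m_1\}=\sup_{\lambda\in[0,1]}[-\lambda m_0-(1-\lambda)m_1]$, exchange suprema to identify $[\cPre(x_0,\cdot,x_1,\cdot)]^\ast$ with $\sup_\lambda\iota_{\BStat}((\alpha,\beta)-\lambda(d,0)-(1-\lambda)(0,d))$, and then invoke convexity of $\BStat$ to reduce the whole segment to its two endpoints; the paper instead computes $\cPre^{**}$ explicitly via Carath\'eodory's theorem, massages it into an infimal convolution $\iota_{\BStat(\Delta x,0)}^\ast\,\square\,\iota_{\BStat(0,\Delta x)}^\ast$, and conjugates. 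Your version is shorter and cleaner. For the inequality $W_{SC}\leq W_S$, you take the canonical smallest $1$-Lipschitz majorants $\tilde\alpha(x)=\sup_y[\alpha(y)-d(x,y)]$, $\tilde\beta(x)=\sup_z[\beta(z)-d(x,z)]$ and reduce feasibility to the three-point inequality $\beta(z)-d(x,z)\leq\HStat(d(x,y)-\alpha(y))$, which you prove by splitting on $\mathrm{sgn}(d(x,y)-\alpha(y))$ and using the two slope estimates $\HStat'\leq 1$ on $[0,\infty)$, $\HStat'\geq 1$ on $(-\infty,0]$ (both from concavity and $\HStat'(0)=1$) in complementary fashion with the two halves of the $\SCB$-constraint; the paper instead constructs bespoke majorants $\beta_1(x_1)=\inf_{x_0}[\HStat(-\alpha(x_0))+\Delta x]$ and $\beta_2(x_1)=\inf_{x_0}\HStatAux(-\alpha(x_0)+\Delta x)$ for an auxiliary Lipschitz extension $\HStatAux$ of $\HStat$, shows they remain $W_{SC}$-dual feasible, and then passes to $\SCB(x,x)=\BStat$. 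Both constructions implement the same idea --- replace dual variables by their largest feasible $1$-Lipschitz majorants --- and both work; yours has the advantage of isolating the key mechanism in one pointwise inequality, while the paper's keeps the Lipschitz structure fully explicit via $\HStatAux$. Your case analysis is sound once the cases $d(y,z)\lessgtr d(x,y)$ and boundary-of-domain behaviour of $\HStat$ (which a short concavity argument handles) are spelled out. One small omission: to conclude $W_{SC}=W_S$ at the level of optimal values you still need $\SCc=\cPre^{**}$ to be an admissible cost in the sense of Definition~\ref{def:SemiCouplingPrimal} (in particular jointly lower semi-continuous in all four arguments) so that \thref{prop:SemiCouplingDual} applies; this is a routine check that the paper carries out in its second lemma and that you should add.
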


The proof is split into the two following lemmas. First, equivalence of the dual formulations is established.
\begin{lemma}
	For $\SCB(x_0,x_1)$ as given in \thref{prop:SCW1TEquivalence}, the dual static $W_1$-type problem from \thref{prop:StaticEquivalence} is equivalent to the dual semi-coupling problem from \thref{prop:SemiCouplingDual}.
\end{lemma}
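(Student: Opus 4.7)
The plan is to show that the constraint sets defining the two dual suprema coincide (with continuity in the semi-coupling setting upgrading automatically to $1$-Lipschitz), so that the suprema of the identical linear objective $\int_\Omega \alpha\,\d\rho_0 + \int_\Omega \beta\,\d\rho_1$ must agree.

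First I would unpack $\SCB(x_0,x_1)$. Using \thref{prop:StaticEquivalence}, $\BStat=\{(\alpha,\beta):\beta\le\HStat(-\alpha)\}=\{(\alpha,\beta):\alpha\le\HStatInv(-\beta)\}$, so the condition $(\alpha(x_0),\beta(x_1))\in[\BStat+\{(0,d)\}]\cap[\BStat+\{(d,0)\}]$ with $d=d(x_0,x_1)$ is equivalent to the pair of inequalities
\begin{equation*}
\beta(x_1)\le\HStat(-\alpha(x_0))+d(x_0,x_1)\quad\text{and}\quad\alpha(x_0)\le\HStatInv(-\beta(x_1))+d(x_0,x_1),
\end{equation*}
required for all $(x_0,x_1)\in\Omega^2$.

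Second, I would prove the two inclusions. For the direction SC $\Rightarrow$ static, I specialize $x_0=x_1=x$ to recover $(\alpha(x),\beta(x))\in\BStat$. Then Lipschitz continuity of $\beta$ follows by combining the first SC inequality with the $\BStat$-membership at $x_0$: since $\beta(x_0)\le\HStat(-\alpha(x_0))$, I obtain $\beta(x_1)-\beta(x_0)\le d(x_0,x_1)$, and the symmetric estimate yields $|\beta(x_1)-\beta(x_0)|\le d(x_0,x_1)$. The analogous manipulation of the second SC inequality gives $\alpha\in\Lip(\Omega)$. For the reverse direction static $\Rightarrow$ SC, I chain the Lipschitz bound for $\beta$ with the $\BStat$-membership at $x_0$:
\begin{equation*}
\beta(x_1)\le\beta(x_0)+d(x_0,x_1)\le\HStat(-\alpha(x_0))+d(x_0,x_1),
\end{equation*}
and symmetrically $\alpha(x_0)\le\alpha(x_1)+d(x_0,x_1)\le\HStatInv(-\beta(x_1))+d(x_0,x_1)$, which together with convexity of $\BStat$ yield $(\alpha(x_0),\beta(x_1))\in\SCB(x_0,x_1)$.

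Since both constraint sets thus describe exactly the pairs $(\alpha,\beta)\in\Lip(\Omega)^2$ with $(\alpha(x),\beta(x))\in\BStat$ for every $x$, the two dual problems have the same feasible set and the same objective, so their values coincide. The subtlety to watch for is that one must \emph{not} attempt to propagate the pointwise $\BStat$-constraint across points via a Lipschitz estimate on $\HStat$ itself; the function $\HStat$ can have slope strictly greater than $1$ on the negative half-line (consistent with concavity and $\HStat'(0)=1$), so the bound $\HStat(z+d)\le\HStat(z)+d$ need not hold. The point of the argument is instead to route the bridge through the $1$-Lipschitz continuity of $\beta$ (respectively $\alpha$), which is precisely what the Lipschitz hypothesis in the static dual supplies and what the SC constraint is strong enough to reconstruct.
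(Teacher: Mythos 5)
The easy inclusion — a static-dual feasible pair is automatically SC-dual feasible — is correct, and your route through $1$-Lipschitz continuity of $\alpha$ and $\beta$ is a clean variant of the paper's argument (which instead passes the first inequality through monotonicity of $\HStat$ together with $\alpha\in\Lip(\Omega)$). The converse direction, however, contains a genuine logical error, and more fundamentally the strategy of showing the two feasible sets coincide cannot work, because they do not coincide.

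The problematic step is the claim to derive $\beta(x_1)-\beta(x_0)\le d(x_0,x_1)$ from the two inequalities $\beta(x_1)\le\HStat(-\alpha(x_0))+d(x_0,x_1)$ and $\beta(x_0)\le\HStat(-\alpha(x_0))$. Both are \emph{upper} bounds on values of $\beta$ by the same number $\HStat(-\alpha(x_0))$; subtracting them in the direction you want would require a matching \emph{lower} bound $\beta(x_0)\ge\HStat(-\alpha(x_0))$, i.e.\ tightness of the $\BStat$-constraint at $x_0$, which is not given. Indeed the SC-dual feasible set is strictly larger than the static-dual one: take $\alpha$ constant with $-\alpha$ in the interior of $\dom\HStat$ and let $\beta$ be any continuous non-Lipschitz function bounded above by $\HStat(-\alpha)$. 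The ``shifted'' constraints in $\SCB(x_0,x_1)$ are all implied by the $d=0$ one because $\HStat$ is nondecreasing, so this pair is SC-dual feasible, yet $\beta\notin\Lip(\Omega)$. (Your closing paragraph correctly flags that $\HStat$ need not be $1$-Lipschitz on the negative axis; ironically it is this same phenomenon that makes your Lipschitz reconstruction of $\beta$ fail.)

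The paper resolves the asymmetry by a projection argument rather than a set identity: given an SC-dual feasible $(\alpha,\beta)$, it builds a modified pair $(\tilde\alpha,\tilde\beta)\in\Lip(\Omega)^2$ with $\tilde\alpha\ge\alpha$ and $\tilde\beta\ge\beta$ (so the objective cannot decrease, using $\rho_0,\rho_1\ge0$) that is static-dual feasible. Concretely, $\tilde\beta(x_1)$ is taken to be the pointwise infimum over $x_0$ of the $\SCB$-upper bounds, after first replacing $\HStat$ by its Lipschitz extension $\HStatAux$ so that this infimum is $1$-Lipschitz in $x_1$; the same modification is then applied to $\alpha$. That projection step is what is missing from your proposal and cannot be bypassed by any identification of the two constraint sets.
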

\begin{proof}
	Throughout this proof we write $\Delta x = d(x_0,x_1)$.
	Note that the set $\SCB(x_0,x_1)$ can be written as
	\begin{align}
		\label{eq:SCB}
		\SCB(x_0,x_1) = \{(a,b) \in \R^2 \,|\, [b \leq \HStat(-a+\Delta x)] \text{ and }
			[b - \Delta x \leq \HStat(-a) ] \}\,.
	\end{align}
	We show first that every feasible candidate of \eqref{eq:StaticDual} is also feasible for \eqref{eq:SCDual}.
	Let $(\alpha,\beta) \in \Lip(\Omega)^2$, $(\alpha(x),\beta(x)) \in \BStat$ for all $x \in \Omega$.
	Then for all $(x_0,x_1) \in \Omega\times\Omega$ we have
	\begin{align*}
		\beta(x_1) & \leq \HStat(-\alpha(x_1)) \leq \HStat(-\alpha(x_0)+\Delta x), &
		\beta(x_1)-\Delta x & \leq \beta(x_0) \leq \HStat(-\alpha(x_0))\,.
	\end{align*}
	Hence, $(\alpha(x_0),\beta(x_1)) \in \SCB(x_0,x_1)$ and thus $W_{SC} \geq W_{S}$.
	
	Conversely, for every feasible candidate $(\alpha,\beta)$ of \eqref{eq:SCDual} we construct a feasible candidate with potentially better score for \eqref{eq:StaticDual}.
	Since $\alpha$ and $\beta$ might not be in $\Lip(\Omega)$ we may have to alter them. We first modify $\beta$ for fixed $\alpha$ and then vice versa.
	Recall that $\HStat$ is concave and $\HStat'(0)=1$. Hence, for some $a \geq \Delta x$ any super-gradient $\zeta$ of $\HStat$ at $-a+\Delta x$ satisfies $\zeta \geq 1$ and consequently
	\begin{align*}
		\HStat(-a+\Delta x) \geq \HStat(-a)+\Delta x \cdot \zeta
			\geq \HStat(-a)+\Delta x\,.
	\end{align*}
	We introduce the auxiliary function
	\begin{align*}
		\HStatAux(z) = \begin{cases}
			\HStat(z) & \tn{if } z\geq 0, \\
			z & \tn{if } z \leq 0.
			\end{cases}
	\end{align*}
	Note that $\HStatAux$ is Lipschitz and $\HStatAux(z) \geq \HStat(z)$ so that
	\begin{align*}
    \SCB(x_0,x_1) &= \{(a,b) \in \R^2 \,|\, b \leq \min\{\HStat(-a+\Delta x),\HStat(-a)+\Delta x\} \}\\
    &= \{(a,b) \in \R^2 \,|\, b \leq \min\{\HStatAux(-a+\Delta x),\HStat(-a)+\Delta x\} \}\,.
	\end{align*}
	Now set
	\begin{align*}
		\beta_1(x_1) & = \inf\{ \HStat(-\alpha(x_0)) + \Delta x \,|\, x_0 \in \Omega \}, &
		\beta_2(x_1) & = \inf\{ \HStatAux(-\alpha(x_0)+\Delta x) \,|\, x_0 \in \Omega \}\,.
	\end{align*}
	Using that $\HStatAux$ is Lipschitz, it is easy to show that $\beta_1$ and $\beta_2$ are Lipschitz, and consequently so is $\tilde{\beta} : x \mapsto \min\{\beta_1(x),\beta_2(x) \}$. Moreover, $\tilde{\beta} \geq \beta$, so $(\alpha,\tilde{\beta})$ is still feasible for \eqref{eq:SCDual} with a score not lower than the one for $(\alpha,\beta)$. Now, one performs the analogous modification for $\alpha$, yielding a pair $(\tilde{\alpha},\tilde{\beta})$ which does not decrease the score. Then $(\tilde{\alpha},\tilde{\beta}) \in \Lip(\Omega)^2$, and from the constraint $(\tilde{\alpha}(x),\tilde{\beta}(x)) \in \SCB(x,x)$ for $x \in \Omega$ we find that $(\tilde{\alpha},\tilde{\beta})$ is feasible for \eqref{eq:StaticDual}. Hence, $W_{SC} \leq W_{S}$ and finally $W_{SC}=W_S$.
\end{proof}

The next lemma establishes the corresponding primal semi-coupling formulation.
\begin{lemma}
	For $\SCB$ and $\SCc$ as given in \thref{prop:SCW1TEquivalence} we have $\iota_{\SCB(x_0,x_1)}=\allowbreak \SCc(x_0,\cdot,\allowbreak x_1,\cdot)^\ast$, and $\SCc$ is a valid cost function in the sense of Definition \ref{def:SemiCouplingPrimal}.
\end{lemma}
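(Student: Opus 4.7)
The plan is to establish the conjugate identity by a direct computation and then read off all the structural properties of $\SCc$ from its resulting representation as the support function of $\SCB(x_0,x_1)$.

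For the conjugate, I abbreviate $\Delta=d(x_0,x_1)$ and observe that on the effective domain $\{m_0,m_1\geq 0\}$ of $\SimLocC$ one has $\min\{m_0,m_1\}\,\Delta=\min\{m_0\Delta,\,m_1\Delta\}$. Setting $f_1(m_0,m_1)=\SimLocC(m_0,m_1)+m_0\Delta$ and $f_2(m_0,m_1)=\SimLocC(m_0,m_1)+m_1\Delta$, this yields $\cPre(x_0,\cdot,x_1,\cdot)=\min\{f_1,f_2\}$ (the identity extends to all of $\R^2$ since $\SimLocC=+\infty$ off the non-negative orthant). Combining the elementary identity $(\min\{f_1,f_2\})^\ast=\max\{f_1^\ast,f_2^\ast\}$ with the translation rule $(\SimLocC+\langle v,\cdot\rangle)^\ast=\SimLocC^\ast(\cdot-v)=\iota_{\BStat+v}$ then gives
\begin{equation*}
\cPre(x_0,\cdot,x_1,\cdot)^\ast
=\max\{\iota_{\BStat+\{(\Delta,0)\}},\,\iota_{\BStat+\{(0,\Delta)\}}\}
=\iota_{\SCB(x_0,x_1)}\,,
\end{equation*}
and the general identity $g^{\ast\ast\ast}=g^\ast$ applied to $g=\cPre(x_0,\cdot,x_1,\cdot)$ upgrades this to $\SCc(x_0,\cdot,x_1,\cdot)^\ast=\iota_{\SCB(x_0,x_1)}$, which is the first claim.

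It follows that $\SCc(x_0,\cdot,x_1,\cdot)$ coincides with the support function of the closed convex set $\SCB(x_0,x_1)$, hence is automatically convex, lower semi-continuous in $(m_0,m_1)$, and positively 1-homogeneous. For the condition $\SCc=\infty$ on the negative orthant I observe that $\SCB(x_0,x_1)\supset(-\infty,0]^2$: for $(\alpha,\beta)=(-N,-M)$ with $N,M\geq 0$ the two defining inequalities $-M\leq\HStat(N)+\Delta$ and $-M\leq\HStat(N+\Delta)$ both hold, since $\HStat(0)=0$ together with the monotonicity of $\HStat$ forces $[0,\infty)\subset\dom\HStat$ and $\HStat\geq 0$ there. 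Consequently $\SCc(x_0,\cdot,x_1,\cdot)$ dominates the support function of $(-\infty,0]^2$, which equals $+\infty$ on $\R^2\setminus[0,\infty)^2$.

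The main obstacle is joint lower semi-continuity on $(\Omega\times\R)^2$, which the support function representation only gives in the variables $(m_0,m_1)$. The plan is to argue directly: along any sequence $(x_0^n,m_0^n,x_1^n,m_1^n)\to(x_0,m_0,x_1,m_1)$, I fix an arbitrary $(\alpha^\ast,\beta^\ast)\in\SCB(x_0,x_1)$ and show that for each $\veps>0$ the diagonal perturbation $(\alpha^\ast-\veps,\beta^\ast-\veps)$ lies in $\SCB(x_0^n,x_1^n)$ for all sufficiently large $n$. Here monotonicity of $\HStat$ together with $(\alpha^\ast,\beta^\ast)\in\SCB(x_0,x_1)$ guarantees $-\alpha^\ast+\veps$ and $-\alpha^\ast+\veps+\Delta_n$ lie in the interior of $\dom\HStat$ (which is right-unbounded by monotonicity), while continuity of $d$ and continuity of $\HStat$ on the interior of its domain (a standard consequence of concavity combined with upper semi-continuity) turn the two inequalities defining membership in $\SCB(x_0,x_1)$ into strict inequalities that persist under $\Delta\to\Delta_n$ and $(\alpha^\ast,\beta^\ast)\to(\alpha^\ast-\veps,\beta^\ast-\veps)$. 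Therefore $(\alpha^\ast-\veps)m_0^n+(\beta^\ast-\veps)m_1^n\leq\SCc(x_0^n,m_0^n,x_1^n,m_1^n)$ for large $n$; sending $n\to\infty$, then $\veps\to 0$, and finally taking the supremum over $(\alpha^\ast,\beta^\ast)\in\SCB(x_0,x_1)$ yields $\SCc(x_0,m_0,x_1,m_1)\leq\liminf_{n\to\infty}\SCc(x_0^n,m_0^n,x_1^n,m_1^n)$.
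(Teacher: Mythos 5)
Your proof is correct, and it takes a genuinely different route from the paper's on both halves of the argument. For the conjugate identity, the paper invokes Carath\'eodory's theorem to write the convex envelope $\SCc=\cPre^{\ast\ast}$ as a two-point decomposition, rewrites it by a case analysis as the infimal convolution of $\SimLocC+\Delta x\, m_0$ and $\SimLocC+\Delta x\, m_1$, and then conjugates the infimal convolution; you instead observe $\cPre=\min\{f_1,f_2\}$ directly, use $(\min\{f_1,f_2\})^\ast=\max\{f_1^\ast,f_2^\ast\}=\iota_{\BStat(\Delta,0)}+\iota_{\BStat(0,\Delta)}$, and upgrade via $g^{\ast\ast\ast}=g^\ast$ — which is shorter and entirely elementary. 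The tradeoff is that the paper's detour yields the explicit infimal-convolution formula \eqref{eq:CPreInfConv} for $\SCc$, which the paper reuses afterwards in the Dirac cost interpretation; your route establishes the lemma without it (though the formula could be recovered in one more step from $\SCc=(f_1^\ast+f_2^\ast)^\ast$). For the joint lower semi-continuity, the paper appeals to uniform Lipschitz continuity of $(x_0,x_1)\mapsto\SCc(x_0,m_{0,k},x_1,m_{1,k})$ for bounded nonnegative mass sequences, whereas you give a direct pointwise argument by $\varepsilon$-retracting a fixed dual test pair $(\alpha^\ast,\beta^\ast)$ into the interior, using that the retraction lands in $\inter\dom\HStat$ (where $\HStat$ is continuous by concavity) and that $d$ is continuous so $\Delta_n\to\Delta$. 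Both arguments are sound; yours is longer but avoids having to justify the uniform Lipschitz bound, which the paper leaves implicit.
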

\begin{proof}
	Again, we write $\Delta x = d(x_0,x_1)$.
	For fixed $(x_0,x_1) \in \Omega\times\Omega$ the map $\R^2 \ni (m_0,m_1) \mapsto \cPre(x_0,m_0,x_1,m_1)$ is 1-homogeneous and lower semi-continuous. Therefore, by Carath\'eodory's Theorem \cite[Corollary 17.1.6]{Rockafellar1972Convex} its convex envelope can be written as
	\begin{multline}
		\label{eq:CPreConvHull}
		\cPre^{\ast \ast}(x_0,m_0,x_1,m_1) = \min\{\SimLocC(a_0,a_1) + \Delta x \cdot \min\{a_0,a_1\}\\ + \SimLocC(b_0,b_1) + \Delta x \cdot \min\{b_0,b_1\}\,|\,
				a_0,a_1,b_0,b_1 \in \R,
				a_0+b_0=m_0,
				a_1+b_1=m_1
				\}\,.
	\end{multline}
	Let us show that this is equivalent to
	\begin{equation}
		\label{eq:CPreInfConv}
			\min\{\SimLocC(a_0,a_1) + a_0 \Delta x + \SimLocC(b_0,b_1) + b_1 \Delta x\,|\,
				a_0,a_1,b_0,b_1 \in \R,
				a_0+b_0=m_0,
				a_1+b_1=m_1
				\}\,.
	\end{equation}
	It trivially holds \eqref{eq:CPreConvHull} $\leq$ \eqref{eq:CPreInfConv}, so now we consider the opposite inequality.
	For a feasible candidate $(a_0,a_1,b_0,b_1)$ of \eqref{eq:CPreConvHull} we distinguish the following cases.
	\begin{itemize}
		\item $[a_0 \leq a_1] \text{ and } [b_1 \leq b_0]$: The candidate is also feasible for \eqref{eq:CPreInfConv} with the same score.
		\item $[a_0 > a_1] \text{ and } [b_1 > b_0]$: The candidate $(\tilde{a}_0,\tilde{a}_1,\tilde{b}_0,\tilde{b}_1) = (b_0,b_1,a_0,a_1)$ has the same score in \eqref{eq:CPreConvHull} and satisfies $\tilde{a}_0 \leq \tilde{a}_1$, $\tilde{b}_1 \leq \tilde{b}_0$. Thus it also has the same score in \eqref{eq:CPreInfConv}.
		\item $[a_0 > a_1] \text{ and } [b_1 \leq b_0$]: The candidate $(\tilde{a}_0,\tilde{a}_1,\tilde{b}_0,\tilde{b}_1) = (0,0,a_0+b_0,a_1+b_1)$ has at least an equally good score in \eqref{eq:CPreConvHull} (using the subadditivity of $\SimLocC$). Due to $\tilde{a}_0 \leq \tilde{a}_1$, $\tilde{b}_1 \leq \tilde{b}_0$ it also has the same score in \eqref{eq:CPreInfConv}.
		The case $[a_0 \leq a_1] \text{ and } [b_1 > b_0]$ works analogously.
	\end{itemize}
	Thus \eqref{eq:CPreConvHull} $=$ \eqref{eq:CPreInfConv}.	
	Note that \eqref{eq:CPreInfConv} is the infimal convolution of $(m_0,m_1) \mapsto \SimLocC(m_0,m_1) +\Delta x \cdot m_0$ and $(m_0,m_1) \mapsto \SimLocC(m_0,m_1) + \Delta x \cdot m_1$.
	
	Now abbreviate $\BStat(a,b)=\BStat + \{(a,b)\}$ for $(a,b) \in \R^2$. Then one finds (cf.~\thref{prop:StaticEquivalence}) $\iota_{\BStat(a,b)}^\ast(m_0,m_1) = \SimLocC(m_0,m_1) + m_0 \cdot a + m_1 \cdot b$
	so that $\cPre^{\ast \ast}(x_0,\cdot,x_1,\cdot) = \iota_{\BStat(\Delta x,0)}^\ast \,\square\,\iota_{\BStat(0,\Delta x)}^\ast$, where $\square$ denotes the infimal convolution. Therefore,
	\begin{align*}
		\SCc^{\ast}(x_0,\cdot,x_1,\cdot) =
			\cPre^{\ast \ast \ast}(x_0,\cdot,x_1,\cdot) =
			\left(\iota_{\BStat(\Delta x,0)}^\ast \,\square\,\iota_{\BStat(0,\Delta x)}^\ast \right)^\ast 
			= \iota_{\BStat(\Delta x,0)} + \iota_{\BStat(0,\Delta x)} = \iota_{\SCB(x_0,x_1)}\,.
	\end{align*}

	For fixed $(x_0,x_1)$, convexity, lower semi-continuity, and 1-homogeneity of $\SCc$ are properties of the Fenchel--Legendre conjugate, while $\SCc(x,m_0,y,m_1) = \infty$ for $\min\{m_0,m_1\} < 0$ follows from $(-\infty,0]^2 \subset \SCB(x,y)$.
	For bounded, non-negative sequences $\{(m_{0,k},m_{1,k})\}_{k}$, the family of functions $(x_0,x_1) \mapsto \SCc(x_0,\allowbreak m_{0,k},x_1,m_{1,k})$ is uniformly Lipschitz continuous. Therefore, the function $\SCc$ is jointly lower semi-continuous in all four arguments.
\end{proof}

\begin{remark}[Dirac cost interpretation]
\thref{thm:semiCoupling} suggests the interpretation of $\SCc(x_0,m_0,\allowbreak x_1,m_1)$ as the dynamic cost for unbalanced transport from a Dirac mass $m_0$ at $x_0$ to a Dirac mass $m_1$ at $x_1$.
The explicit characterization of $\SCc=\cPre^{\ast \ast}$ via \eqref{eq:CPreInfConv} now reveals the same transportation structure as obtained in \thref{thm:transportChar} and Figure~\ref{fig:dynamicSteps}:
first some mass $a_0$ is transported from $x_0$ to $x_1$, then the remaining mass $b_0$ at $x_0$ is changed to $b_1$ while the mass $a_0$ at $x_1$ changes to $a_1$, and finally the remaining $b_1$ is transported from $x_0$ to $x_1$.
\thref{exm:twoDiracs} will give a more detailed analysis of transport between two Dirac masses.
\end{remark}

\subsection{Relation between different formulations of unbalanced transport}
\label{sec:FormulationRelation}
We now turn to a more detailed discussion of Figure~\ref{fig:Diagram} and the relation between the various dynamic and static formulations of transport problems.
In the following we write (A) $\rightarrow$ (B) for
\begin{align*}
\text{`A problem of class (A) induces a corresponding problem of class (B)'.}
\end{align*}
Throughout this article we consider six different formulations, three pairs of primal and dual problems.
We use the shorthands (Dyn) for dynamic formulations, (SC) for the semi-coupling formulations, and (W1T) for the $W_1$-type formulations; suffixes (P) and (D) indicate the primal or dual form.

In \cite{ChizatDynamicStatic2015} a family of dynamic problems is defined, and corresponding dual formulations are given, (Dyn-P) $\leftrightarrow$ (Dyn-D). The Wasserstein-1-type dynamic problems defined in Section \ref{sec:ReminderDynamic} are a subset of this family.
Moreover, \cite{ChizatDynamicStatic2015} introduces the primal and dual semi-coupling formulations and establishes their equivalence (SC-P) $\leftrightarrow$ (SC-D) (cf.~Section~\ref{sec:SemiCouplingFormulations}).
One of the main results in \cite{ChizatDynamicStatic2015} then is the equivalence (Dyn) $\rightarrow$ (SC), which was proven using the primal formulations, involving the integration of the flow of the (smoothed) momentum field $\omega$.

In \cite{SchmitzerWirthUnbalancedW1-2017} families of primal and dual static Wasserstein-1-type problems are proposed, and their equivalence is shown, (W1T-P) $\leftrightarrow$ (W1T-D). The static Wasserstein-1-type problems defined in Section~\ref{sec:ReminderStatic} are a subset of these families.

The primal and dual Kantorovich formulation of standard optimal transport with fixed marginals can be interpreted as special cases of (SC-P) and (SC-D).
Similarly, the celebrated Benamou--Brenier formulation of the Wasserstein-2 distance \cite{BenamouBrenier2000} is a special instance of (Dyn).
For this setting (Dyn) $\rightarrow$ (SC) is shown in \cite{BrenierExtendedMoKa2003} by establishing the cyclic inequality (SC-P) $\geq$ (Dyn-P) $=$ (Dyn-D) $\geq$ (SC-D) $=$ (SC-P).
For the crucial step (Dyn-D) $\geq$ (SC-D) the Hopf--Lax solution for the Hamilton--Jacobi equation corresponding to (Dyn-D) is used, hence the link in Figure~\ref{fig:Diagram} is marked as (Dyn-D) $\rightarrow$ (SC-D).

Analogously, for the Wasserstein--Fisher--Rao/Hellinger--Kantorovich distance one can give formulations in terms of (SC) and (Dyn).
For this setting the link (Dyn-D) $\rightarrow$ (SC-D) is established in \cite{LieroMielkeSavare-HellingerKantorovich-2015a} by a generalized Hopf--Lax formula.
Note that the primal formulation paired with (SC-D) in \cite{LieroMielkeSavare-HellingerKantorovich-2015a} is a soft-marginal formulation and differs from the semi-coupling formulation of \cite{ChizatDynamicStatic2015}, but is equivalent \cite[Corollary 5.9]{ChizatDynamicStatic2015}.
In \cite{LieroMielkeSavare-HellingerKantorovich-2015a} the formulation of (Dyn-D) is given by Theorem\,8.13 and (SC-D) by Theorem\,6.3, Equation\,(6.14).
Different forms of the Hopf--Lax solution are given in Equation\,(8.41) and Theorem\,8.12.

Finally, the main result of Section \ref{sec:DynStatEquivalence} is the relation (Dyn-D) $\rightarrow$ (W1T-D), \thref{prop:W1DynamicStaticEquivalence}.
The equivalence proof involved constructing a feasible candidate for the dynamic dual problem from feasible candidates of the static dual problem.
In the next Section we will study when it is possible to derive a dynamic formulation from a static problem, (W1T-D) $\rightarrow$ (Dyn-D).

\subsection{Which static models have an equivalent dynamic formulation?}\label{sec:staticAsDynamic}
We already know that any dynamic model of type \eqref{eq:DynamicProblem} can equivalently be expressed as a static model of type \eqref{eq:StaticPrimalProblem} (\thref{prop:W1DynamicStaticEquivalence}).
In this section we answer the question under what conditions the reverse statement is true, that is, which static models have an equivalent dynamic formulation.
We will see that the class of static models is richer than the dynamic class: there are static models that cannot be expressed in a dynamic form (see \thref{rem:NoDynamicCounterexample} later).
At first glance this is surprising, since unlike the static formulation the dynamic one models the full time evolution of the measures and thus might be expected to have more design freedom.
In detail, the main result of this section is the following.

\begin{proposition}[Characterization of static models admitting an equivalent dynamic one]\thlabel{thm:staticAsDynamic}
Consider the functions $\HStat$ and $\HStatInv=-\HStat^{-1}(-\cdot)$ from \thref{prop:StaticEquivalence}, which by \thref{prop:StaticEquivalence} uniquely specify a static model.
Abbreviate
\begin{align*}
\underline d&=-\min\{z\in\R\ |\ \HStatInv(z)=\sup\HStatInv\}\in[-\infty,0)\,,&
\overline d&=\min\{z\in\R\ |\ \HStat(z)=\sup\HStat\}\in(0,\infty]\,,\\
\zKinkN&=-\max\{z\in\R\ |\ \HStatInv(z)=z\}\in[-\infty,0)\,,&
\zKink&=\max\{z\in\R\ |\ \HStat(z)=z\}\in(0,\infty]\,\\
\underline m&=\lim_{\veps\searrow0}(\HStatInv(\zKinkN+\veps)-\zKinkN)/\veps\in[0,1]\,,&
\overline m&=\lim_{\veps\searrow0}(\HStat(\zKink+\veps)-\zKink)/\veps\in[0,1]\,.
\end{align*}
Denoting the left-sided derivative of a function $g$ by $g'$ and the $j$-fold composition of $g$ with itself by $g^j$, the function
\begin{equation*}
q[\HStat](z)=\begin{cases}
-\infty&\text{ if }z<\underline d\,,\\
\underline c\lim_{j\to\infty}\frac{\HStatInv^j(-z)-\HStatInv^{j-1}(-z)}{(\HStatInv^j)'(-z)}&\text{ if }z\in(\underline d,0)\,,\\
\overline c\lim_{j\to\infty}\frac{\HStat^j(z)-\HStat^{j-1}(z)}{(\HStat^j)'(z)}&\text{ if }z\in[0,\overline d)\,,\\
-\infty&\text{ if }z>\overline d\,,
\end{cases}
\qquad
\begin{array}{c}
\underline c=\begin{cases}
1&\text{if }\underline m=1\,,\\
\frac{\log\underline m}{1-1/\underline m}&\text{else,}
\end{cases}\\
\\
\overline c=\begin{cases}
1&\text{if }\overline m=1\,,\\
\frac{\log\overline m}{1-1/\overline m}&\text{else,}
\end{cases}
\end{array}
\end{equation*}
is well-defined (at $\underline d$ and $\overline d$ we simply extend $q[\HStat]$ upper semi-continuously), and the following holds.
\begin{enumerate}
\item \emph{Existence.}\label{enm:existence}
The static model \eqref{eq:StaticPrimalProblem} has an equivalent dynamic formulation \eqref{eq:DynamicProblem} (that is, $W_S(\rho_0,\rho_1)=W_D(\rho_0,\rho_1)$ for all measures $\rho_0,\rho_1$)
if and only if $q[\HStat]$ is concave.
\item \emph{Necessary condition.}\label{enm:necessary}
A necessary condition for the above is that $\HStat$ is locally Lipschitz differentiable on $(\zKink,\overline d)$ and $\HStatInv$ is so on $(\underline d,\zKinkN)$.
\item \emph{Sufficient condition.}\label{enm:sufficient}
A sufficient condition for the above is that $\frac1{\HStat'}$ and $\frac1{\HStatInv'}$ are convex on $[0,\infty)$.
\item \emph{Uniqueness and reconstruction formula.}\label{enm:uniqueness}
If the static model \eqref{eq:StaticPrimalProblem} has an equivalent dynamic formulation \eqref{eq:DynamicProblem}, then necessarily $\BDynMH=q[\HStat]$.
(Recall that any dynamic model is uniquely specified by the function $\BDynMH$ from \thref{thm:DynConjugateSet}.
In that case, $\underline d=-\CDynM(0,-1)$, $\overline d=\CDynM(0,1)$.)
\end{enumerate}
\end{proposition}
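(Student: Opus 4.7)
The strategy is to reduce everything to the semigroup equation between $\HStat$ and the flow $\Flow$ of $\BDynMH$, then iterate it and take an asymptotic limit. Throughout I focus on the positive branch $z\in(0,\overline d)$; the negative branch $z\in(\underline d,0)$ is handled by the symmetric argument using $\BDynMHTild$ from \thref{thm:FlowInv} and \thref{rem:QKinkSmoothing}.

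\emph{Uniqueness/reconstruction and necessity (parts \eqref{enm:uniqueness} and \eqref{enm:necessary}).} First I would assume that an equivalent dynamic model with some $\BDynMH$ exists. Then by \thref{cor:HStatFromDyn} we have $\HStat=\Flow_1$ (on the interior of its domain). The semigroup property \thref{lem:Flow}\eqref{item:FlowBasic} gives $\Flow_j=\HStat^j$ for every integer $j\geq 1$, and the chain rule together with \thref{lem:Flow}\eqref{item:FlowDerivative} yields the fundamental functional identity
\begin{equation}\label{eq:funcEq}
\BDynMH\bigl(\HStat^j(z)\bigr)=(\HStat^j)'(z)\cdot \BDynMH(z)\qquad\text{for all }j\in\mathbb N,\ z>\zKink.
\end{equation}
Since $\HStat\leq z$ and $\HStat(\zKink)=\zKink$, iterating produces $\HStat^j(z)\downarrow\zKink$; linearizing the ODE $\dot\phi=\BDynMH(\phi)$ at the fixed point $\zKink$ shows $\BDynMH(\zKink+u)=(\log\overline m)\,u+o(u)$ when $\overline m<1$ (this is exactly the relation $\overline m=e^{\BDynMH'(\zKink)}$ obtained by integrating the linearized ODE over one time unit), whereas when $\overline m=1$ a more refined expansion delivers $\overline c=1$. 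Feeding the asymptotics $\HStat^j(z)-\zKink\sim A\,\overline m^{j}$, $(\HStat^j)'(z)\sim C\,\overline m^{j}$, and $\HStat^j(z)-\HStat^{j-1}(z)\sim A(1-1/\overline m)\overline m^{j}$ into \eqref{eq:funcEq} and simplifying gives $\BDynMH(z)=q[\HStat](z)$, i.e.~the reconstruction formula. Concavity of $\BDynMH$ from \thref{thm:DynConjugateSet} forces concavity of $q[\HStat]$, and local Lipschitz differentiability of $\HStat$ on $(\zKink,\overline d)$ (resp.~$(\underline d,\zKinkN)$ for $\HStatInv$) follows from \eqref{eq:funcEq} with $j=1$, since a concave $\BDynMH$ is locally Lipschitz differentiable on the interior of its domain.

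\emph{Sufficiency (part \eqref{enm:existence}, $\Leftarrow$).} Conversely, assuming $q[\HStat]$ is concave, I set $\BDynMH:=q[\HStat]$ (extended upper semi-continuously at $\underline d,\overline d$) and check all properties demanded by \thref{thm:DynConjugateSet}: nonpositivity, $\BDynMH(0)=\BDynMH'(0)=0$, upper semi-continuity, and the identification $\overline{\dom\BDynMH}=[\underline d,\overline d]$ (the last follows directly from the domain of $q[\HStat]$). Next I need to show that the flow at time $1$ of this $\BDynMH$ is exactly $\HStat$. An index shift $j\mapsto j+1$ in the defining limit of $q[\HStat]$ immediately gives the functional equation $\BDynMH(\HStat(z))=\HStat'(z)\,\BDynMH(z)$. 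Using this, I verify
\begin{equation*}
\int_z^{\HStat(z)}\frac{\d y}{\BDynMH(y)}=1\qquad\text{for all }z\in(\zKink,\overline d),
\end{equation*}
which by the explicit solution of the ODE from the proof of \thref{lem:Flow}\eqref{item:FlowIVP} pins down $\HStat=\Flow_1$ on that interval. The constant value $1$ (rather than any other positive constant) is forced by the normalization $\HStat'(0)=1$ combined with the prefactor $\overline c$ in the definition of $q[\HStat]$. Extension to the fixed-point set $[0,\zKink]$ is trivial since both sides equal the identity there, and the analogous argument settles the negative branch.

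\emph{Sufficient condition (part \eqref{enm:sufficient}).} If $1/\HStat'$ is convex on $[0,\infty)$, I use $(\HStat^j)'(z)^{-1}=\prod_{k=0}^{j-1}\HStat'(\HStat^k(z))^{-1}$ together with the telescopic identity $\HStat^j(z)-\HStat^{j-1}(z)=\HStat^{j-1}(\HStat(z))-\HStat^{j-1}(z)=(\HStat(z)-z)\int_0^1(\HStat^{j-1})'(z+s(\HStat(z)-z))\,\d s$ to rewrite each pre-limit term as a product of a convex and a concave factor whose overall structure preserves concavity in $z$; passing to the limit then yields concavity of $q[\HStat]$. The main obstacle throughout is the rigorous passage $j\to\infty$: one must control the uniform linearization of $\BDynMH$ near $\zKink$ simultaneously for all $z$ in a compact subset of $(\zKink,\overline d)$, handle the borderline case $\overline m=1$ (where the convergence of both numerator and denominator is only polynomial and the factor $\overline c=1$ must be recovered via an $\veps$-regularization reminiscent of \thref{rem:QKinkSmoothing}), and rule out that $q[\HStat]$ could be flow-equivalent to $\HStat$ at some time other than $1$.
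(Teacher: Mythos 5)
Your proposal identifies the correct backbone of the argument — the semigroup identity $\HStat^j=\Flow_j$, the flow-derivative formula giving $\BDynMH(\HStat^j(z))=(\HStat^j)'(z)\,\BDynMH(z)$, the asymptotics near the fixed point $\zKink$, and the verification $\int_z^{\HStat(z)}\d y/q[\HStat](y)=1$ to pin down $\HStat$ as a time-one flow — and for parts~\eqref{enm:necessary}, \eqref{enm:uniqueness}, and the two directions of~\eqref{enm:existence} this is essentially the paper's argument (the paper organizes the asymptotics via a separate lemma establishing $\lim_{z\searrow\zKink}\BDynMH(z)/(z-\HStat^{-1}(z))=\overline c$, but the content is the same).

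However, your argument for part~\eqref{enm:sufficient} has a real gap. You propose expanding $(\HStat^j)'(z)^{-1}$ as a product and telescoping the numerator, claiming the result is ``a product of a convex and a concave factor whose overall structure preserves concavity.'' That is not a valid general principle: products and ratios of convex and concave functions have no intrinsic convexity, and the specific structure here is not isolated enough to conclude anything. The paper instead proves concavity of each $q_j$ by \emph{induction}, exploiting the recursion $q_{j+1}=(q_j\circ\HStat)/\HStat'$, differentiating to get
\begin{equation*}
q_{j+1}'(z)=q_j'(\HStat(z))-q_j(\HStat(z))\,\frac{\HStat''(z)}{(\HStat'(z))^2}\,,
\end{equation*}
and arguing that each term separately is nonincreasing: $q_j'\circ\HStat$ is nonincreasing by induction hypothesis and monotonicity of $\HStat$, $|q_j\circ\HStat|$ is nondecreasing, and $-\HStat''/(\HStat')^2=(1/\HStat')'$ is nondecreasing precisely because $1/\HStat'$ is convex. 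That last step is where the hypothesis actually enters, and your argument never makes this connection. Without a concrete mechanism by which convexity of $1/\HStat'$ propagates to concavity of $q_j$ (and thence of the limit), part~\eqref{enm:sufficient} remains unproven in your write-up. A secondary concern: your treatment of the $j\to\infty$ passage (uniform linearization near $\zKink$, the degenerate $\overline m=1$ case) is flagged as ``the main obstacle'' but is not actually resolved, whereas the paper resolves it by establishing the monotone bounds $-\infty<q_1\leq q_{j-1}\leq q_j\leq 0$ (which supplies well-definedness of the limit with no extra regularity input) and by a careful separate lemma handling both the $\overline m<1$ and $\overline m=1$ cases for the boundary asymptotics.
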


\begin{figure}
\centering
\setlength\unitlength\linewidth
\begin{picture}(.7,.28)
\put(0,0){\includegraphics[width=.7\unitlength]{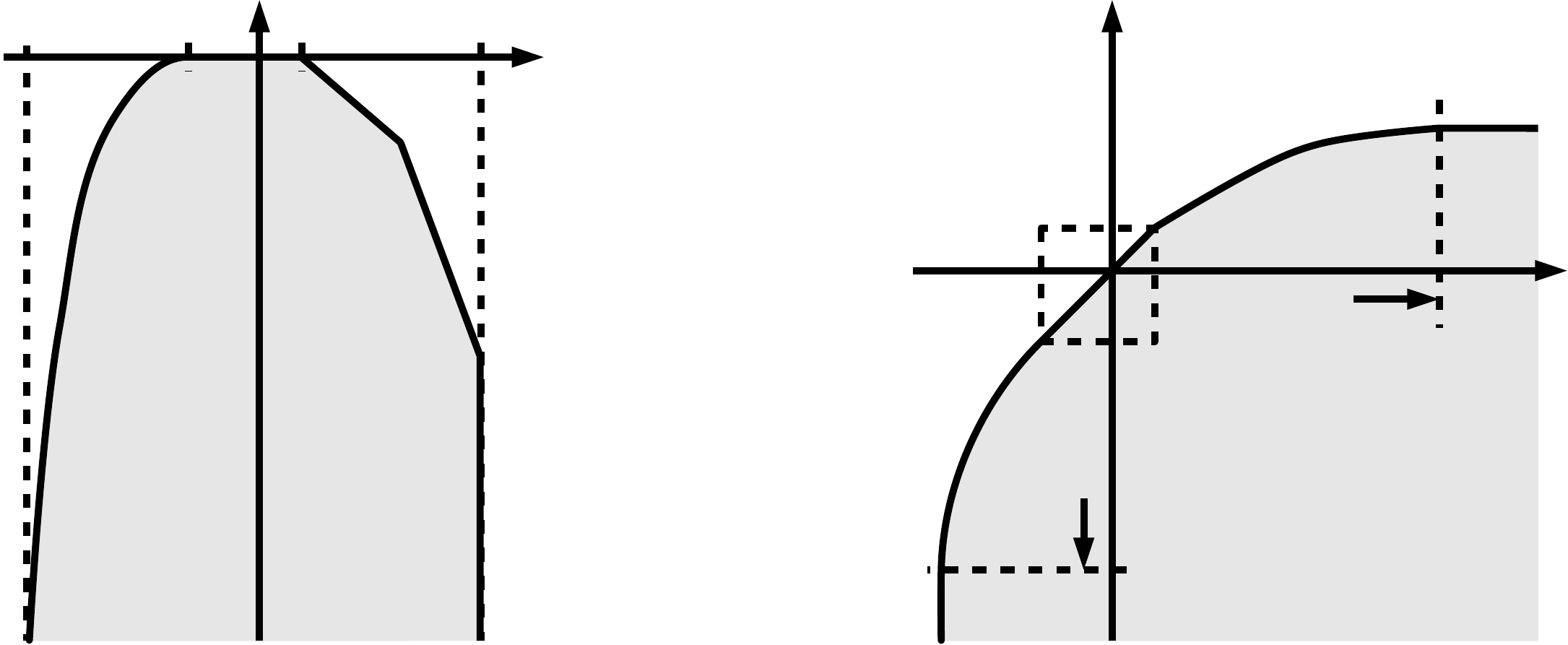}}
\put(.69,.15){\small$z$}
\put(.67,.24){\small$\HStat(z)$}
\put(.23,.245){\small$z$}
\put(.135,.17){\small$\BDynMH(z)$}
\put(-.07,.28){\small$\underline d\!=\!-\!\CDynM\!(0,\!-\!1)$}
\put(.19,.28){\small$\overline d\!=\!\CDynM\!(0,\!1)$}
\put(.51,.03){\small$\underline d\!=\!-\!\CDynM\!(0,\!-\!1)$}
\put(.6,.13){\small$\overline d\!=\!\CDynM(0,1)$}
\put(.08,.24){\small$\zKinkN$}
\put(.13,.24){\small$\zKink$}
\put(.43,.19){\small$[\zKinkN,\zKink]^2$}
\put(.07,.1){\small\color{gray}$\BDynM$}
\put(.55,.09){\small\color{gray}$\BStatFlip$}
\end{picture}
\caption{Sketch of the functions $\BDynMH$ and $\HStat$ from \thref{thm:DynConjugateSet} and \thref{prop:StaticEquivalence}. $\BStatFlip$ denotes the set $\BStat$ flipped along the vertical axis.}
\label{fig:involvedFunctions}
\end{figure}

For a better intuition, in Figure~\ref{fig:involvedFunctions} we illustrate two corresponding functions $\BDynMH$ and $\HStat$, specifying a dynamic model and its equivalent static formulation, respectively.
As an example, \thref{thm:staticAsDynamic}\eqref{enm:necessary} implies that the static model with the piecewise linear $\HStat(z)=\min(z,1+\frac z2,2+\frac z3)$ does not admit a dynamic formulation,
while by \thref{thm:staticAsDynamic}\eqref{enm:sufficient} the static model with $\HStat(z)=\frac z{1+z}$ does.
Note that \thref{thm:staticAsDynamic}\eqref{enm:sufficient} is only sufficient: for instance, it is readily checked that the function $\HStat$ belonging to a piecewise linear $\BDynMH$ typically has multiple linear segments of different slope so that $\frac1{\HStat'}$ or $\frac1{\HStatInv'}$ cannot be convex.

Before proving the proposition, let us provide some motivation for the definition of $q[\HStat]$, which will be made rigorous later.
First observe that $\HStat$ as induced by $\BDynMH$ is given by \thref{cor:HStatFromDyn} where $\Flow_t$ is the solution to \eqref{eqn:FlowIVP} and by \thref{lem:Flow}(\ref{item:FlowTimeConvex}) one has $\sup \dom F_1 = \sup \dom \BDynMH$.
Thus we can read off
\begin{equation}\label{eqn:staticFunctionFlow}
\overline d=\sup\dom\BDynMH=\CDynM(0,1)
\qquad\text{as well as}\quad
\HStat^j(z)=\Flow_j(z)
\text{ for all }z\in\dom\BDynMH\,.
\end{equation}%
If the static model with $\HStat$ indeed comes from a dynamic model with (some as yet unknown) $\BDynMH$,
then for small $z$ we can approximate $\BDynMH(z)$ as follows.
For $\veps \in \R$ of small magnitude, with \eqref{eq:InducedSimLocC} we estimate
\begin{align*}
	\SimLocC(1,1+\veps) = \inf_{(\rho,\zeta) \in \mc{CES}(1,1+\veps)} \int_{0}^1
		\CDynM(\rho,\zeta)\,\d t \approx \CDynM(1,\veps)\,.
\end{align*}
Now by \thref{prop:DynamicDual,prop:StaticEquivalence,thm:DynConjugateSet} we have
\begin{align}
\CDynM(1,\veps) & = \sup\{\alpha + \veps \cdot \beta \ |\ \alpha,\beta \in \R,\, \alpha \leq \BDynMH(\beta)\}\,, \nonumber \\
\SimLocC(1,1+\veps) & = \sup\{\alpha + (1+\veps) \cdot \beta\ |\ \alpha,\beta \in \R,\, \alpha \leq \HStatInv(-\beta)\} \label{eq:CStatDynConjugateFormPre} \\
&= \sup\{\tilde\alpha + \veps \cdot \tilde\beta\ |\ \tilde{\alpha},\tilde{\beta} \in \R,\, \tilde{\alpha} \leq \HStatInv(-\tilde{\beta})+\tilde{\beta}\}\,, \nonumber
\end{align}
where in the last line we substituted $(\tilde{\alpha},\tilde{\beta})=(\alpha+\beta,\beta)$.
Since for small $\veps$ we have $\SimLocC(1,1+\veps)\approx\CDynM(1,\veps)$, for small $\beta$ we expect
\begin{equation}
\label{eq:HDynStatLimit}
\BDynMH(\beta) \approx \HStatInv(-\beta)+\beta = -\HStat^{-1}(\beta)+\beta\,.
\end{equation}
For large $z$ on the other hand (analogously for large negative $z$) we can now obtain an estimate of $\BDynMH(z)$ by exploiting that $\HStat$ is the flow of $\BDynMH$.%
The flow can be differentiated, yielding $(\HStat^j)'(z)=\Flow_j'(z)=\frac{\BDynMH(\HStat^j(z))}{\BDynMH(z)}$, so that finally we arrive at
\begin{equation*}
\BDynMH(z)=\left.\begin{cases}
-\infty&\text{if }z>\overline d\,,\\
\frac{\BDynMH(\HStat^j(z))}{(\HStat^j)'(z)}&\text{if }z\in[0,\overline d)
\end{cases}\right\}
\approx\begin{cases}
-\infty&\text{if }z>\overline d\,,\\
\frac{\HStat^j(z)-\HStat^{j-1}(z)}{(\HStat^j)'(z)}&\text{if }z\in[0,\overline d)\,,
\end{cases}
\end{equation*}
where we have used our approximation of $\BDynMH(z)$ for small $z$ and that $\HStat^j(z)$ is small for $j$ large enough.
This is exactly the expression used in the definition of $q[\HStat]$.

\begin{remark}[Separate treatment on positive and negative real line]\thlabel{rem:negReals}
Analogously to \eqref{eqn:staticFunctionFlow} one obtains
\begin{equation*}
\underline d=-\CDynM(0,-1)=\inf\dom\BDynMH
\qquad\text{as well as}\quad
\HStatInv^j(z)=\FlowInv_j(z)
\text{ for all }z\in\dom\BDynMH\,.
\end{equation*}
For statements about $\BDynMH(z)$ with positive $z$ we will therefore always work with $\HStat$ and $\Flow_j$,
while for negative $z$ we shall use $\HStatInv$ as well as $\FlowInv_j$.
\end{remark}

We shall now begin proving \thref{thm:staticAsDynamic} in multiple steps, not necessarily in the order of its different statements.

\begin{proof}[Proof of \thref{thm:staticAsDynamic}\eqref{enm:necessary}]
Assume that the static model with $\HStat$ has an equivalent dynamic formulation with $\BDynMH$.
By \eqref{eqn:staticFunctionFlow} we have $\HStat(z)=\Flow_1(z)$ which is locally Lipschitz differentiable on $(\zKink,\overline d)$ by \thref{lem:Flow}(\ref{item:FlowDerivative}).
The statement for $\HStatInv$ follows analogously with \thref{rem:negReals}.
\end{proof}
The following lemma rigorously establishes the heuristic relation \eqref{eq:HDynStatLimit} `for small arguments', as required for the proof of \thref{thm:staticAsDynamic}.
\begin{lemma}[Approximation of $\BDynMH$ for small arguments]\thlabel{thm:BDynMHSmall}
Let the static model with $\HStat$ have an equivalent dynamic formulation with $\BDynMH$. Then
\begin{equation*}
\lim_{z\nearrow\zKinkN}\frac{\BDynMH(z)}{-\HStatInv^{-1}(-z)-z}=
\lim_{z\nearrow\zKinkN}\frac{\BDynMH(z)}{\HStat(z)-z}=\underline c\,,\qquad
\lim_{z\searrow\zKink}\frac{\BDynMH(z)}{z-\HStat^{-1}(z)}=\overline c\,.
\end{equation*}
\end{lemma}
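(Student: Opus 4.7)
My plan is to exploit the ODE representation $\HStat = \Flow_1$ afforded by \thref{cor:HStatFromDyn} (valid under the hypothesis that a dynamic formulation exists) to translate the stated limits into asymptotic statements about the time-$1$ flow of $\BDynMH$ near its upper kink $\zKink$. I would first note that the identity $-\HStatInv^{-1}(-z) = \HStat(z)$ from \thref{prop:StaticEquivalence} makes the first two ratios literally identical, and that the negative-side limit $z \nearrow \zKinkN$ reduces to the positive-side one via the reflection $\tilde\BDynMH(w) = \BDynMH(-w)$ of \thref{thm:FlowInv}, with $\underline c$ corresponding to the reflected $\overline c$ in the obvious way. Hence it suffices to prove $\lim_{z \searrow \zKink} \BDynMH(z)/(z - \HStat^{-1}(z)) = \overline c$, and throughout I will assume the nontrivial case $\zKink < \overline d$.

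The core reformulation is a change of variables in the flow ODE. I would set $f(\sigma) = -\BDynMH(\zKink + \sigma)$, which is nonnegative, convex, with $f(0) = 0$ and $f'(0^+) =: A \in [0,\infty)$ finite (since $\zKink \in \inter\dom\BDynMH$ and concave functions have finite one-sided derivatives in the interior of their domain). For $y := \HStat^{-1}(z)$, the trajectory $\phi(t) = \Flow_t(y)$ satisfies $\dot\phi = -f(\phi - \zKink)$ with $\phi(0) = y$ and $\phi(1) = z$; substituting $\sigma = \phi(t) - \zKink$ in $1 = \int_0^1 dt$ yields the master identity
\begin{equation*}
	\int_{z - \zKink}^{y - \zKink} \frac{d\sigma}{f(\sigma)} = 1.
\end{equation*}
In the case $A > 0$, the uniform estimate $f(\sigma) = A\sigma(1+o(1))$ turns this into $A^{-1}\log((y-\zKink)/(z-\zKink)) \to 1$, recovering $\overline m = e^{-A}$ and $y - z \sim (e^A-1)(z-\zKink)$, whence the ratio converges to $A/(e^A-1) = \log\overline m/(1 - 1/\overline m) = \overline c$. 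In the case $A = 0$ (so $\overline m = 1$ and $\overline c = 1$), the integral mean value theorem furnishes $\sigma^* \in [z-\zKink, y-\zKink]$ with $y - z = f(\sigma^*)$; monotonicity of $f$ sandwiches the ratio $f(z-\zKink)/f(\sigma^*)$ within $[f(z-\zKink)/f(y-\zKink),\,1]$, and convexity combined with $y - z \leq f(y-\zKink)$ gives $f(z-\zKink) \geq f(y-\zKink)\bigl(1 - f'((y-\zKink)^-)\bigr)$, squeezing the ratio to $1$ as $f'(0^+)=0$.

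The main obstacle will be the degenerate case $A = 0$, where both numerator and denominator vanish faster than $z - \zKink$ and no first-order expansion suffices; one is forced to rely on convexity estimates at $y - \zKink$ rather than at $0$. A secondary technicality is verifying that $y - \zKink \to 0$ as $z - \zKink \to 0$, which is needed so that $f'((y-\zKink)^-) \to 0$; this is routine because $1/f(\sigma)$ is always non-integrable at $0$ (by $f(\sigma) \leq \sigma f(\sigma_0)/\sigma_0$ for small $\sigma$, from convexity and $f(0)=0$), so the constraint $\int d\sigma/f(\sigma) = 1$ forces $y$ toward $\zKink$ whenever $z$ does.
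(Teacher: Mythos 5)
Your proof is correct, and it takes a genuinely different route from the paper's. The paper also starts from the fact that $\HStat = \Flow_1$ and reduces to the positive side via \thref{rem:negReals}, but then it splits the argument differently: for the upper bound in the $\overline m = 1$ case it invokes the \emph{primal} characterization $\SimLocC(1,1+\veps) \leq \CDynM(1,\veps)$ (obtained from \eqref{eq:InducedSimLocC}) and passes to conjugates to get the pointwise inequality $\BDynMH(z) \geq z - \HStat^{-1}(z)$; for the lower bound it uses the explicit exponential sub-solution $\phi(t) = z\exp(\BDynMH(z)t/z)$ of the flow ODE to deduce $\BDynMH(z) \leq z\log(\HStat(z)/z)$. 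The $\overline m < 1$ case is then handled by a separate first-order expansion around $\tilde m = \BDynMH^r(\zKink)$. Your approach, by contrast, distills everything down to the single integral identity $\int_{z-\zKink}^{y-\zKink}\,\d\sigma/f(\sigma)=1$ with $f=-\BDynMH(\zKink+\cdot)$ and $y=\HStat^{-1}(z)$, and extracts both cases from that one object (asymptotic expansion of $f$ for $A>0$; integral mean value theorem plus one-sided convexity estimate for $A=0$). This is more self-contained than the paper's version, which pulls in the conjugate-duality machinery of \thref{prop:W1DynamicStaticEquivalencePrimal} just for the $\limsup$ bound; the paper's exponential test function, on the other hand, gives the $\liminf$ bound slightly more directly than your MVT sandwich. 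Both are sound; the only place worth double-checking in your write-up is the left-derivative limit $f'((y-\zKink)^-) \to f'(0^+)$, which holds for convex $f$ because one-sided derivatives of a convex function are monotone and the right derivative is right-continuous, so the two one-sided derivatives are squeezed together as $\sigma \to 0^+$ — this is standard but worth a sentence.
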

\begin{proof}
We only prove the second limit, the first one following analogously via \thref{rem:negReals}.
First consider the case that $\HStat$ is differentiable in $\zKink$, that is, $\overline c=\overline m=1$ and $\HStat'(z)\to\overline m=1$ as $z\to\zKink$.
Using \eqref{eq:InducedSimLocC}, for all $\veps>0$ we have
\begin{multline*}
\SimLocC(1,1+\veps)
=\min \left\{ \int_0^1 \CDynM(\RadNik{\rho}{\mu},\RadNik{\xi}{\mu})\,\d\mu\,\middle|\,(\rho,\xi) \in \mc{CES}(1,1+\veps) \right\} \\
\leq\int_0^1\CDynM(1+\veps t,\veps)\,\d t
\leq\int_0^1\CDynM(1,\veps)\,\d t
=\CDynM(1,\veps)\,,
\end{multline*}
where we exploited $1+t\,\veps \geq 1$ and the convexity of $\CDynM$ via
\begin{equation*}
\CDynM(\rho,\veps)
=\rho\CDynM(1,\tfrac\veps\rho)
\leq\rho\left[\tfrac{\veps/\rho}\veps\CDynM(1,\veps)+\tfrac{\veps-\veps/\rho}\veps\CDynM(1,0)\right]
=\CDynM(1,\veps)
\end{equation*}
for $\rho \geq 1$.
Note also that (see \eqref{eq:CStatDynConjugateFormPre})
\begin{align*}
\CDynM(1,\veps)
&=\sup\left\{\alpha+\veps\beta\ \middle|\ \alpha\leq\BDynMH(\beta)\right\}
=(-\BDynMH)^\ast(\veps)\,,\\
\SimLocC(1,1+\veps)
&=\sup\left\{\alpha+(1+\veps)\beta\ \middle|\ \alpha\leq-\HStat^{-1}(\beta)\right\}
=(\HStat^{-1}-\id)^\ast(\veps)\,.
\end{align*}
Therefore, $\SimLocC(1,1+\veps)\leq\CDynM(1,\veps)$ for $\veps>0$ implies $\BDynMH(z)\geq z-\HStat^{-1}(z)$ for $z\geq0$ and thus
\begin{equation*}
\limsup_{z\searrow\zKink}\frac{\BDynMH(z)}{z-\HStat^{-1}(z)}\leq1\,.
\end{equation*}
Furthermore, by \eqref{eqn:staticFunctionFlow}, for $z>0$ sufficiently close to $\zKink$ we have
\begin{multline*}
\HStat(z)
=\Flow_1(z)
=\sup\left\{\phi(1)\ \middle|\ \partial_t \phi(t)\leq\BDynMH(\phi(t))\text{ for all }t\in[0,1],\,\phi(0)=z\right\}\\
\geq\sup\left\{\phi(1)\ \middle|\ \phi(t)=\phi(0)\exp\left(\tfrac{\BDynMH(\phi(0))}{\phi(0)}t\right),\,\phi(0)=z\right\}
=z\exp\tfrac{\BDynMH(z)}z\,,
\end{multline*}
where we used the concavity of $\BDynMH$.
This implies $\BDynMH(z)\leq z\log\frac{\HStat(z)}z$.
Since $\HStat$ is differentiable in $\zKink$ and thus $\HStat'(\HStat^{-1}(z))\to1$ as $z\to\zKink$,
using $\HStat'(\HStat^{-1}(z)) \cdot (\HStat^{-1}(z)-z)\leq z-\HStat(z)$ due to the concavity of $\HStat$ as well as $\lim_{z\to\zKink}\HStat(z)=\zKink$ we thus obtain
\begin{multline*}
\liminf_{z\searrow\zKink}\frac{\BDynMH(z)}{z-\HStat^{-1}(z)}
\geq\liminf_{z\searrow\zKink}\frac{z\log\frac{\HStat(z)}z}{z-\HStat^{-1}(z)}\\
\geq\liminf_{z\searrow\zKink}\frac{z\log\frac{\HStat(z)}z}{\HStat(z)-z}\HStat'(\HStat^{-1}(z))
=\liminf_{z\searrow\zKink}\frac{-\log\frac{\HStat(z)}z}{1-\frac{\HStat(z)}z}\HStat'(\HStat^{-1}(z))
=1\,.
\end{multline*}

The case $\overline m<1$ is treated more explicitly.
Let $\tilde m=\lim_{\veps\searrow0}\BDynMH(\zKink+\veps)/\veps$ denote the right derivative of $\BDynMH$ in $\zKink$ so that
\begin{equation*}
\BDynMH(z)=\tilde m(z-\zKink)-o(z-\zKink)\qquad \text{for }z\geq\zKink\,,
\end{equation*}
where $o(z-\zKink)$ denotes a nonnegative term decreasing to zero faster than $z-\zKink$.
The flow of $\BDynMH$ can readily be approximated:
$\BDynMH(z)\leq\tilde m(z-\zKink)$ implies $\Flow_1(z)\leq ze^{\tilde m}$, and the same calculation as above yields $\Flow_1(z)\geq z\exp\tfrac{\BDynMH(z)}z=ze^{\tilde m-o(1)}$,
where $o(1)$ is a nonnegative term decreasing to zero as $z\to\zKink$.
Now \eqref{eqn:staticFunctionFlow} implies that the right derivative of $\HStat$ in $\zKink$ is given by $\overline m=\Flow_1'(\zKink)=e^{\tilde m}$.
Furthermore, by the inverse function theorem, the right derivative of $\HStat^{-1}$ in $\zKink$ is given by $\frac1{\overline m}$.
Thus,
\begin{equation*}
\lim_{z\searrow\zKink}\frac{\BDynMH(z)}{z-\HStat^{-1}(z)}
=\lim_{z\searrow\zKink}\frac{\tilde m(z-\zKink)-o(z-\zKink)}{z-\zKink-\frac1{\overline m}(z-\zKink)-o(z-\zKink)}
=\lim_{z\searrow\zKink}\frac{\log\overline m(z-\zKink)-o(z-\zKink)}{(1-\frac1{\overline m})(z-\zKink)-o(z-\zKink)}
=\overline c\,.\qedhere
\end{equation*}
\end{proof}

\begin{proof}[Proof of \thref{thm:staticAsDynamic}\eqref{enm:uniqueness}]
We first show that the limits in the definition of $q[\HStat]$ are well-defined and finite.
In fact, defining
\begin{equation*}
q_j(z)
=\frac{\HStat^j(z)-\HStat^{j-1}(z)}{(\HStat^j)'(z)}\,,
\end{equation*}
for all $z\in[0,\overline d)$ we can show the monotonicity property
\begin{equation}
\label{eqn:qSeriesMonotonicity}
-\infty<q_1(z)\leq q_{j-1}(z)\leq q_{j}(z)\leq0\quad\text{for all }j>1\,.
\end{equation}
Indeed, we have $q_1(z)>-\infty$ since $\HStat'(z)>0$ (the strict positivity follows by the concavity of $\HStat$ and the definition of $\overline d$).
Likewise, $q_j(z)\leq0$ follows from $\HStat^j\leq \HStat^{j-1}$ (which is true since $\HStat(z)\leq z$, see \thref{prop:StaticEquivalence}) and $(\HStat^j)'>0$.
Finally, the monotonicity of the sequence $q_j$ is obtained as follows:
The composition of a concave, monotonically increasing function with a concave function is again concave, thus $\HStat^j$ is concave and increasing for all $j$. Therefore,
\begin{multline*}
\HStat(\HStat^{j-2}(z))\leq \HStat(\HStat^{j-1}(z))+\HStat'(\HStat^{j-1})(\HStat^{j-2}(z)-\HStat^{j-1}(z))\\
\text{ so that }
q_j(z)=\frac{\HStat(\HStat^{j-1}(z))-\HStat(\HStat^{j-2}(z))}{\HStat'(\HStat^{j-1}(z))(\HStat^{j-1})'(z)}\geq\frac{\HStat^{j-1}(z)-\HStat^{j-2}(z)}{(\HStat^{j-1})'(z)}=q_{j-1}(z)\,.
\end{multline*}
This monotonicity property then implies the existence of the finite limit $\lim_{j\to\infty}q_j(z)$ (the case of negative $z$ is treated analogously).

Note further that $q_j(0)=q_j'(0)=0$ and $q_j$ is nonincreasing on $(0,\infty)$. Indeed, the latter follows from
\begin{equation*}
q_j'(z)
=\frac{(\HStat^j)'(z)-(\HStat^{j-1})'(z)}{(\HStat^j)'(z)}-q_j(z)\frac{(\HStat^j)''(z)}{(\HStat^j)'(z)}
=1-\frac1{\HStat'(\HStat^{j-1}(z))}-q_j(z)\frac{(\HStat^j)''(z)}{(\HStat^j)'(z)}\leq0\,,
\end{equation*}
for $z \in [0,\ol d)$, where $(\HStat^j)''$ is well-defined as a weak derivative due to \thref{thm:staticAsDynamic}\eqref{enm:necessary}.
Analogously one can show that $q_j$ is nondecreasing on $(-\infty,0)$ (\thref{rem:negReals}).
By the monotone convergence $\overline c \cdot q_j\nearrow q[\HStat]$ the limit $q[\HStat]$ shares the same properties.

Now we prove $\BDynMH(z)=q[\HStat](z)$ for positive $z$ (the case of negative $z$ follows analogously via \thref{rem:negReals}).
By \eqref{eqn:staticFunctionFlow}, $\BDynMH(z)=q[\HStat](z)=-\infty$ for $z>\overline d$.
On the other hand, for $z\in[0,\zKink)$ we have $\HStat(z)=z$ so that $q[\HStat](z)=0$ as well as $\BDynMH(z)=0$ by \eqref{eqn:staticFunctionFlow}.
Finally, for $z\in(\zKink,\overline d)$ \thref{lem:Flow}(\ref{item:FlowDerivative}) together with \eqref{eqn:staticFunctionFlow} implies
\begin{equation*}
(\HStat^j)'(z)
=\Flow_j'(z)
=\frac{\BDynMH(\Flow_j(z))}{\BDynMH(z)}
=\frac{\BDynMH(\HStat^j(z))}{\BDynMH(z)}
\end{equation*}
so that application of \thref{thm:BDynMHSmall} yields
\begin{equation*}
\frac{\BDynMH(z)}{q[\HStat](z)}
=\lim_{j\to\infty}
	\frac{\BDynMH(z)\,(\HStat^j)'(z)}{\overline c \cdot (\HStat^j(z)-\HStat^{j-1}(z))}
=\lim_{j\to\infty}
	\frac{\BDynMH(\HStat^j(z))}{\overline c \cdot (\HStat^j(z)-\HStat^{j-1}(z))}
=\lim_{\tilde z\searrow\zKink}
	\frac{\BDynMH(\tilde z)}{\overline c \cdot (\tilde z-\HStat^{-1}(\tilde z))}
=1\,.\qedhere
\end{equation*}
\end{proof}

\begin{proof}[Proof of \thref{thm:staticAsDynamic}\eqref{enm:sufficient}]
We prove by induction that $q_j$ is concave on $[0,\infty)$ for all $j$, which automatically implies the desired concavity of the limit $q[\HStat]$ on $[0,\infty)$ (the proof for $(-\infty,0]$ is analogous by \thref{rem:negReals}).
First note that $q_0(z)=z-\HStat^{-1}(z)$ is indeed concave.
Furthermore, due to $q_{j+1}=\frac{q_j\circ\HStat}{\HStat'}$ we have
\begin{equation*}
q_{j+1}'(z)=q_j'(\HStat(z))-q_j(\HStat(z))\frac{\HStat''(z)}{(\HStat'(z))^2}\,.
\end{equation*}
We need to show that $q_{j+1}'$ is non-increasing.
Now $q_j'$ and thus also $q_j'\circ\HStat$ is non-increasing on $[0,\infty)$.
Likewise, $|q_j|$ and $|q_j\circ\HStat|$ are non-decreasing,
so it remains to show that $|\HStat''(z)|/(\HStat'(z))^2=-\HStat''(z)/(\HStat'(z))^2=(1/\HStat'(z))'$ is non-decreasing.
However, this is equivalent to the convexity of $\frac1{\HStat'}$.
\end{proof}

\begin{proof}[Proof of \thref{thm:staticAsDynamic}\eqref{enm:existence}]
If the static model has an equivalent dynamic formulation, then by \thref{thm:staticAsDynamic}\eqref{enm:uniqueness} we must have $\BDynMH=q[\HStat]$.
Since $\BDynMH$ is concave by \thref{thm:DynConjugateSet}, $q[\HStat]$ is so as well.

It remains to show that concavity of $q[\HStat]$ implies the existence of an equivalent dynamic formulation.
We have already shown $q[\HStat]\leq0$ with $q[\HStat](0)=q[\HStat]'(0)=0$.
Together with the concavity this means that $q[\HStat]$ specifies an admissible dynamic model of type \eqref{eq:DynamicProblem} via
\begin{equation*}
\CDynM(m_0,m_1)
=\sup\{m_0\alpha+m_1\beta\ |\ \alpha\leq q[\HStat](\beta)\}\,.
\end{equation*}
Denote by $\tilde\HStat$ the function specifying the corresponding static problem.
We need to show $\HStat=\tilde\HStat$ (which we do on the positive real line; the negative case follows analogously with \thref{rem:negReals}).
However, we have $\HStat(z)=z=\tilde\HStat(z)$ for all $z\in[0,\zKink]$ and both $\HStat$ and $\tilde\HStat$ constant on $[\overline d,\infty)$.
Furthermore, for $z\in(\zKink,\overline d)$, by the monotone convergence theorem (recall \eqref{eqn:qSeriesMonotonicity}) and a change of variables,
\begin{equation*}
\int_z^{\HStat(z)}\frac1{q[\HStat](x)}\,\d x
=\frac1{\overline c}\lim_{j\to\infty}\int_z^{\HStat(z)}\frac1{q_j(x)}\,\d x
=\frac1{\overline c}\lim_{j\to\infty}\int_{\HStat^j(z)}^{\HStat^{j+1}(z)}\frac1{y-\HStat^{-1}(y)}\,\d y\,.
\end{equation*}
Now first assume that $\HStat$ is differentiable in $\zKink$ and thus $\overline m=\overline c=1$ as well as $\HStat'(\HStat^{j-1}(z))\to1$ as $j\to\infty$.
Note that for a negative increasing function $f$ and $a \leq b$ one has $(a-b)\,f(b) \leq \int_b^a f(y) \d y \leq (a-b)\,f(a)$.
Choosing $a=\HStat^{j+1}(z)$, $b=\HStat^{j}(z)$, and $f(y)=\frac1{y-\HStat^{-1}(y)}$ (which is indeed negative and increasing on $(\zKink,\overline d)$) and using the concavity $\HStat(\HStat^{j-1}(z))\geq\HStat(\HStat^j(z))+\HStat'(\HStat^{j-1}(z)) \cdot (\HStat^{j-1}(z)-\HStat^j(z))$, we have
\begin{equation*}
\HStat'(\HStat^{j-1}(z))
\leq\frac{\HStat^{j}(z)-\HStat^{j+1}(z)}{\HStat^{j-1}(z)-\HStat^j(z)}
\leq\int_{\HStat^j(z)}^{\HStat^{j+1}(z)}\frac1{y-\HStat^{-1}(y)}\,\d y
\leq\frac{\HStat^{j}(z)-\HStat^{j+1}(z)}{\HStat^{j}(z)-\HStat^{j+1}(z)}
=1
\end{equation*}
so that $\int_z^{\HStat(z)}\frac1{q[\HStat](x)}\,\d x=1$.
In the case that $\overline m<1$, by the inverse function theorem the right derivative of $\HStat^{-1}$ in $\zKink$ is $\frac1{\overline m}$. Further, for $z\in(\zKink,\overline d)$ we have $\HStat^j(z) \searrow \zKink$ as $j \to \infty$ so that (using little o-notation)
\begin{multline*}
\frac1{\overline c}\lim_{j\to\infty}\int_{\HStat^j(z)}^{\HStat^{j+1}(z)}\frac1{y-\HStat^{-1}(y)}\,\d y\\
=\frac1{\overline c}\lim_{z\searrow\zKink}\int_{z}^{\HStat(z)}\frac1{y-\HStat^{-1}(y)}\,\d y
=\frac1{\overline c}\lim_{z\searrow\zKink}\int_{z}^{\HStat(z)}\frac1{y-\zKink-\frac1{\overline m}(y-\zKink)+o(z-\zKink)}\,\d y\\
=\frac1{\overline c}\lim_{z\searrow\zKink}\int_{z}^{\HStat(z)}\frac1{(1-\frac1{\overline m})(y-\zKink)}\,\d y
=\frac1{\overline c}\lim_{z\searrow\zKink}\frac1{1-\frac1{\overline m}}\log\frac{\HStat(z)-\zKink}{z-\zKink}
=\frac1{\overline c}\frac1{1-\frac1{\overline m}}\log\overline m
=1
\end{multline*}
and thus again $\int_z^{\HStat(z)}\frac1{q[\HStat](x)}\,\d x=1$.
Now this implies that $\HStat$ is the flow of $q[\HStat]$ at $t=1$ (cf.~proof of \thref{lem:Flow}(\ref{item:FlowIVP})).
However, $\tilde\HStat$ is so as well by construction so that $\tilde\HStat$ and $\HStat$ coincide.
\end{proof}

\section{Examples}

This section will provide some examples of possible static and dynamic models, some of which are well-known.
We shall also calculate in detail the optimal solution to some simple unbalanced mass transport problems,
which will give some additional insight into the solution structure beyond Section~\ref{sec:characterization}.

\subsection{Unbalanced transport between two Dirac masses}\label{sec:twoDiracs}
Here we consider the simple problem of unbalanced transport between two measures $\rho_0$ and $\rho_1$, where both consist of just two Dirac masses at the same positions.
Since the unbalanced transport problem is invariant under translation and rotation of the coordinate system
and since according to \thref{thm:transportChar} transport only happens between the measure supports $\spt\rho_0$ and $\spt\rho_1$,
we may without loss of generality assume the Dirac masses to lie on the one-dimensional real line, one at the origin and the other at some distance $L>0$
(in fact, our example covers the behaviour of unbalanced transport between two Dirac masses at distance $L$ in any metric space).
Due to the one-homogeneity of the unbalanced transport cost $W_S(\rho_0,\rho_1)$ we might even restrict one of the measures $\rho_0$ and $\rho_1$ to be a probability measure,
however, this does not simplify the exposition substantially.
We shall consider the static formulation \eqref{eq:StaticPrimalProblem}, since by \thref{prop:DynamicPrimalOptimizers} the dynamic model behaviour can be directly inferred from the static behaviour.

\begin{example}[Unbalanced transport between two Dirac masses]\thlabel{exm:twoDiracs}
For simplicity we shall assume $\SimLocC$ to be differentiable
(the exact same calculation can also be performed in the non-differentiable case if one simply replaces derivatives by subderivatives;
however, the rigorous justification of the calculation then involves a few more technical arguments that we avoid here for an easier exposition).
We set
\begin{equation*}
\rho_0=m_0^0\delta_0+m_0^L\delta_L\,,\qquad
\rho_1=m_1^0\delta_0+m_1^L\delta_L\qquad
\text{with }m_1^0<m_0^0\,,\; m_1^L>m_0^L\,,
\end{equation*}
that is, mass is removed at the origin and added at $L$.
We seek the optimal transport couplings $\pi_0$ and $\pi_1$ in \eqref{eq:StaticPrimalProblem}.
To this end, we may parameterize them as
\begin{gather}
\pi_0=a\delta_{(0,L)}+(m_0^0-a)\delta_{(0,0)}+m_0^L\delta_{(L,L)}\,,\qquad
\pi_1=b\delta_{(0,L)}+m_1^0\delta_{(0,0)}+(m_1^L-b)\delta_{(L,L)}\label{eqn:couplingsTwoDiracs}\\
\text{with }(a,b)\in S=[0,m_0^0]\times[0,m_1^L]\,,\nonumber
\end{gather}
that is, $\pi_0$ transports mass $a$ and $\pi_1$ transports mass $b$ from the origin to $L$.
That $\pi_0$ and $\pi_1$ must have the above form follows from \thref{thm:transportChar},
which implies $\spt\pi_0,\spt\pi_1\subset\spt(\rho_0+\rho_1)^2=\{0,L\}^2$ and $\pi_0(\{(L,0)\})=\pi_1(\{(L,0)\})=0$.
Indeed, suppose $\pi_0(\{(L,0)\})>0$ (and thus $\pi_0(\{(0,L)\})=0$ by optimality of $\pi_0$), then by \thref{thm:transportChar}
\begin{enumerate}
\item either $\{0,L\}=\Omega_=$ so that no mass change happens at all (thus both $\pi_0$ and $\pi_1$ must transport from $0$ to $L$, leading to a contradiction),
\item or $\{0\}\subset\Omega_+$ so that necessarily $\pi_1(\{(0,L)\})>0$ in order to achieve a net mass removal at the origin.
This again leads to a contradiction since by \thref{prop:GrowthShrinkDecomposition} $\pi_1$ may not transport any mass from $\Omega_+$.
\end{enumerate}
Analogously one obtains $\pi_1(\{(L,0)\})=0$.
The cost \eqref{eq:StaticPrimal} associated with $\pi_0$, $\pi_1$ can be calculated as
\begin{align}
P(a,b)
&=L(a+b)+\SimLocC(m_0^0-a,m_1^0+b)+\SimLocC(m_0^L+a,m_1^L-b)\label{eqn:costTwoDiracs}\\
&=L(a+b)+(m_0^0-a)\SimLocC(1,\alpha)+(m_0^L+a)\SimLocC(1,\beta)\nonumber
\end{align}
for the relative mass changes
\begin{equation}\label{eqn:relativeMassChanges}
\alpha=\frac{m_1^0+b}{m_0^0-a}\geq0
\quad\text{ and }\quad
\beta=\frac{m_1^L-b}{m_0^L+a}\geq0\,.
\end{equation}
Since $P$ is convex,
its optimum is well-defined.
To find the optimal $(a,b)\in S$ there are thus only two possible cases.
\begin{enumerate}
\item \emph{Case $0\neq\nabla P(a,b)$ for all $(a,b)\in S$.}
In that case the optimum satisfies $a=0$ or $b=0$.
Indeed, the optimum must lie in $\partial S$, and if $b=m_1^L$ then $a=0$ is optimal (analogously one shows that $a=m_0^0$ implies $b=0$):
using the notation of \thref{prop:GrowthShrinkDecomposition} we have $\rho_1'(\{L\})=0$ and thus necessarily $L\notin\Omega_+$.
If $L\in\Omega_-$, then by \thref{thm:massTptChng} the structure of $\pi_1$ implies $0\in\Omega_-$ as well and thus $a=0$ since by \thref{prop:GrowthShrinkDecomposition} $\pi_0$ is diagonal on $\Omega_-\times\Omega_-$.
If on the other hand $L\in\Omega_=$, then $\rho_0'(\{L\})=\rho_1'(\{L\})=0$ and thus again $a=0$.
\item \emph{Case $0=\nabla P(a,b)$ for some $(a,b)\in S$.}
In that case the optimum $(a,b)$ has to satisfy
\begin{align*}
0=\frac{\partial P}{\partial a}
&=L-\SimLocC(1,\alpha)+\alpha{\SimLocC}_{,2}(1,\alpha)+\SimLocC(1,\beta)-\beta{\SimLocC}_{,2}(1,\beta)\,,\\
0=\frac{\partial P}{\partial b}
&=L+{\SimLocC}_{,2}(1,\alpha)-{\SimLocC}_{,2}(1,\beta)\,,
\end{align*}
where subscript $,2$ denotes the derivative with respect to the second argument.
These equations can be transformed into
\begin{gather}
L={\SimLocC}_{,2}(1,\beta)-{\SimLocC}_{,2}(1,\alpha)
=T[\beta]'-T[\alpha]'\,,\label{eqn:DiracOptCond1}\\
\begin{aligned}
0=\frac{\partial P}{\partial a}-\frac{\partial P}{\partial b}
&=[\SimLocC(1,\beta)-(\beta-1){\SimLocC}_{,2}(1,\beta)]-[\SimLocC(1,\alpha)+(1-\alpha){\SimLocC}_{,2}(1,\alpha)]\\
&=T[\beta](1)-T[\alpha](1)\,,
\end{aligned}\label{eqn:DiracOptCond2}
\end{gather}
where $T[\gamma]$ denotes the tangent
to $\SimLocC(1,\cdot)$ at $\gamma$.
Note that \eqref{eqn:DiracOptCond1} implies $\beta>\alpha$ and \eqref{eqn:DiracOptCond2} thus implies $T[\alpha]'<0$ and $T[\beta]'>0$ or equivalently $$\alpha\leq1\quad\text{and}\quad\beta\geq1\,.$$
Thus the minimizer $(a,b)$ can be constructed geometrically as follows (compare Figure~\ref{fig:twoDiracs}):
For $s<0$ let
\begin{align*}
T_s^l&:[0,1]\to\R,&x&\mapsto\sup\{s+t(x-1)\ |\ t<0,\,s+t(y-1)\leq\SimLocC(1,y)\,\forall y\in[0,1]\}\,,\\
T_s^r&:[1,\infty)\to\R,&x&\mapsto\sup\{s+t(x-1)\ |\ t>0,\,s+t(y-1)\leq\SimLocC(1,y)\,\forall y\in[1,\infty)\}
\end{align*}
denote the left and the right tangent line to $\SimLocC$ running through the point $(1,s)$, and set
\begin{equation*}\label{eqn:DiracParameterization}
L(s)=(T_s^r)'-(T_s^l)'>0\,.
\end{equation*}
Obviously, $L(s)$ is continuous and strictly decreasing in $s$.
Note that $s$ parameterizes the solutions to \eqref{eqn:DiracOptCond2} and that by \eqref{eqn:DiracOptCond1} there must be one $s$ with
\begin{equation}\label{eqn:DiracOptCondParameterization}
L(s)=L\,.
\end{equation}
The optimal relative mass decrease $\alpha$ and mass increase $\beta$ can now be identified via the condition
\begin{equation}\label{eqn:DiracTangents}
T_s^l=T[\alpha]\,,\qquad T_s^r=T[\beta]
\end{equation}
(note that there may be a closed interval of $\alpha$s or $\beta$s satisfying this condition).
From these we can solve for $(a,b)$ as
\begin{equation}\label{eqn:DiracResolveRelMassChange}
\left(\begin{array}{c}a\\b\end{array}\right)
=\frac1{\beta-\alpha}\left(\begin{array}{cc}-1&-1\\\beta&\alpha\end{array}\right)
\left(\begin{array}{c}\alpha m_0^0-m_1^0\\\beta m_0^L-m_1^L\end{array}\right)\,.
\end{equation}
Obviously, in addition to the obvious constraints
\begin{equation}\label{eqn:twoDiracsConstraints1}
\alpha\geq m_1^0/m_0^0\text{ and }\beta\leq m_1^L/m_0^L\,,
\end{equation}
the condition $(a,b)\in S$ imposes four constraints on $(\alpha,\beta)$,
\begin{equation}\label{eqn:twoDiracsConstraints2}
\beta \leq \frac{\rho_1(\R)-\alpha m_0^0}{m_0^L}\,,\qquad
\beta \geq \frac{\alpha m_1^L}{\alpha\rho_0(\R)-m_1^0}\,,\qquad
\beta\geq\frac{\rho_1(\R)}{\rho_0(\R)}\,,\qquad
\alpha\leq\frac{\rho_1(\R)}{\rho_0(\R)}\,.
\end{equation}
\end{enumerate}
\end{example}

\begin{figure}
\centering
\setlength\unitlength{.3\linewidth}
\begin{picture}(1,.8)
\put(0,0){\includegraphics[width=\unitlength]{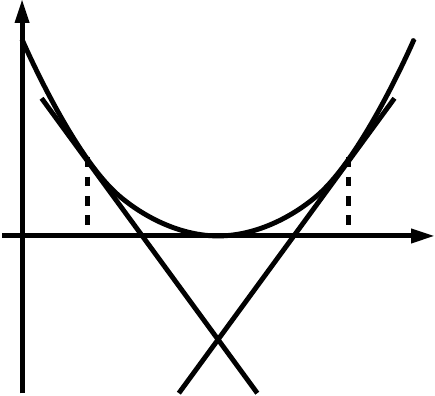}}
\put(.96,.76){\small$\SimLocC(1,\cdot)$}
\put(.18,.3){\small$\alpha$}
\put(.78,.3){\small$\beta$}
\put(.53,.1){\small$(1,s)$}
\put(.13,.17){\small$T_s^l\!=\!T[\alpha]$}
\put(.87,.57){\small$T_s^r\!=\!T[\beta]$}
\end{picture}
\caption{Sketch for the construction of the optimal relative mass changes $\alpha$ and $\beta$ from \thref{exm:twoDiracs}.}
\label{fig:twoDiracs}
\end{figure}

We learn several interesting facts from the above example and particularly optimality conditions \eqref{eqn:DiracOptCondParameterization}-\eqref{eqn:DiracResolveRelMassChange} for $s$, $(\alpha,\beta)$, and $(a,b)$:
\begin{enumerate}
\item A minimizer with $a,b>0$ has to satisfy \eqref{eqn:DiracOptCondParameterization}-\eqref{eqn:DiracResolveRelMassChange}. Consequently there is a minimum distance $L_{\min}=\lim_{s\nearrow0}L(s)\in[0,\infty)$ between both Dirac masses, only depending on $\SimLocC$, below which mass is either only transported before or after the mass change (that is, $a=0$ or $b=0$). $L_{\min}$ is the difference between left and right derivative of $\SimLocC(1,\cdot)$ at $1$, so $L_{\min}>0$ only if $\SimLocC(1,\cdot)$ is not differentiable at $1$.
\item For the same reason and consistent with \thref{thm:maxTransportDistance} there is a maximum distance $L_{\max}=\lim_{s\to-\infty}L(s)\in(0,\infty]$ between both Dirac masses, only depending on $\SimLocC$, beyond which there is no mass transport before or after the mass change ($a=0$ or $b=0$).
\item If there is transport before and after the mass change, then by optimality conditions \eqref{eqn:DiracOptCond1} and \eqref{eqn:DiracOptCond2} the relative mass change $\alpha$ and $\beta$ at each Dirac mass only depends on $\SimLocC$ and the distance between both Dirac masses.
\item If the distance between both Dirac masses satisfies $L\in[L_{\min},L_{\max}]$ and $\alpha$, $\beta$ are the corresponding relative mass changes,
then mass transport happens before as well as after the mass change (that is, $a,b>0$) unless $\alpha,\beta$ violate \eqref{eqn:twoDiracsConstraints1} and \eqref{eqn:twoDiracsConstraints2}.
\item For any $\SimLocC$ we can easily engineer $\rho_0$ and $\rho_1$, each consisting of two Diracs at the same locations, such that transport optimally happens only before, only after, or before as well as after mass change.
Indeed, choose arbitrary $a,b\geq0$ and $s<0$ and let $\alpha,\beta$ be the relative mass changes corresponding to $s$ via \eqref{eqn:DiracTangents}.
Then set $L=L(s)$ and choose any $m_0^0,m_1^0,m_0^L,m_1^L$ satisfying $\alpha=\frac{m_1^0+b}{m_0^0-a}$ and $\beta=\frac{m_1^L-b}{m_0^L+a}$.
Optimality conditions \eqref{eqn:DiracOptCondParameterization}-\eqref{eqn:DiracResolveRelMassChange} are now satisfied by construction.
\item Even in the simple setting with just two Dirac masses nonuniqueness of minimizers can occur in various ways.
First, if $\SimLocC(1,\cdot)$ is not strictly convex, then the tangents $T[\gamma]$ to $\SimLocC(1,\cdot)$ for multiple different values of $\gamma$ coincide
so that \eqref{eqn:DiracOptCond1} and \eqref{eqn:DiracOptCond2} may have multiple solutions.
Second, if $\SimLocC(1,\cdot)$ has nonzero left or right derivative in $1$ and $L$ is small enough, \eqref{eqn:DiracOptCond1} and \eqref{eqn:DiracOptCond2} are solved by $\alpha=\beta=1$
so that \eqref{eqn:relativeMassChanges} has infinitely many solutions $(a,b)$ as long as $\rho_0(\Omega)=\rho_1(\Omega)$.
Third, there may exist optimal $(\pi_0,\pi_1)$ which are not of the form \eqref{eqn:couplingsTwoDiracs}.
\thref{rem:uniquenessTwoDiracs} below examines when uniqueness can be expected.
\end{enumerate}

\begin{remark}[Uniqueness for unbalanced transport between two Dirac masses]\thlabel{rem:uniquenessTwoDiracs}
If $\SimLocC(1,\cdot)$ is strictly convex and differentiable at $1$, then the minimizer $(\pi_0,\pi_1)$ of \eqref{eq:StaticPrimalProblem} for \thref{exm:twoDiracs} is unique.
Indeed, we show below that any minimizer $(\pi_0,\pi_1)$ satisfies $\spt\pi_0,\spt\pi_1\subset\{0,L\}^2$,
and then following the same argument as given in \thref{exm:twoDiracs}, any minimizer must have the form \eqref{eqn:couplingsTwoDiracs}.
Furthermore, for strictly convex $\SimLocC(1,\cdot)$, \eqref{eqn:DiracOptCond1} and \eqref{eqn:DiracOptCond2} can have at most one solution $(\alpha,\beta)$,
which necessarily satisfies $\alpha,\beta\neq1$ (due to the differentiability of $\SimLocC(1,\cdot)$ in $1$) and thus results in at most one solution $(a,b)$.
Thus, there is at most one minimizer $(a,b)$ in the interior of $S$.
Likewise, there can be at most one minimizer on each line segment of $\partial S$ due to the strict convexity of the cost \eqref{eqn:costTwoDiracs} if either $a$ or $b$ are held fixed.
Together with the convexity of the cost \eqref{eqn:costTwoDiracs} these two statements imply that there is a unique minimizer $(a,b)$ and thus a unique minimizer $(\pi_0,\pi_1)$.

It remains to show $\spt\pi_0,\spt\pi_1\subset\{0,L\}^2$ or equivalently $\rho_0'(\Omega\setminus\{0,L\})=\rho_1'(\Omega\setminus\{0,L\})=0$.
For a contradiction, assume the converse and note that by \thref{prop:GrowthShrinkDecomposition} we have $\Omega\setminus\{0,L\}\subset\Omega_=$ so that $\rho_0'\restr(\Omega\setminus\{0,L\})=\rho_1'\restr(\Omega\setminus\{0,L\})$.
The coupling $\pi_0$ must have transported this mass to $\Omega\setminus\{0,L\}$ from $\{0,L\}$ (without loss of generality assume that part of it comes from $0$).
Now note $\pi_0((\Omega_+\cup\Omega_-)\times\Omega_=)=0$ (due to \thref{prop:GrowthShrinkDecomposition} and \thref{thm:transportChar},
whose proof in the case of strictly convex $\SimLocC(1,\cdot)$ actually implies $\pi_0(\Omega_-\times\Omega_=)=\pi_1(\Omega_=\times\Omega_+)=0$ for all minimizers $(\pi_0,\pi_1)$ and not just a particular one)
so that we must have $0\in\Omega_=$.
The optimality of $(\pi_0,\pi_1)$ now implies that $\pi_1$ does not transport mass back from $\Omega_=\setminus\{0\}$ to $0$.
Thus, $\pi_1$ must transport the mass $\rho_0'\restr(\Omega\setminus\{0,L\})=\rho_1'\restr(\Omega\setminus\{0,L\})$ from $\Omega\setminus\{0,L\}=\Omega_=\setminus\{0,L\}$ to $L$.
The fact $\pi_1(\Omega_=\times(\Omega_+\cup\Omega_-))=0$ (again due to \thref{prop:GrowthShrinkDecomposition} and \thref{thm:transportChar}) now implies $L\in\Omega_=$ and thus $\Omega_==\Omega$.
Hence there is no mass change at all, however, this contradicts the differentiability of $\SimLocC(1,\cdot)$ at $1$ with vanishing derivative,
since one can readily reduce the cost of pure transport by introducing a sufficiently small amount of mass change.
\end{remark}

As a further illustration of unbalanced transport between two Dirac masses we specify the previous example to the special case
in which $\rho_0$ and $\rho_1$ each only consist of a single Dirac mass and in which mass change is penalized by the so-called squared Hellinger distance
\begin{equation*}
\SimLocC(m_0,m_1)=(\sqrt{m_0}-\sqrt{m_1})^2\,.
\end{equation*}

\begin{example}[Unbalanced transport between Dirac masses with squared Hellinger metric]\thlabel{exm:twoDiracsHellinger}
Here we pick
\begin{equation*}
\rho_0=m_0^0\delta_0\,,\qquad\rho_1=m_1^L\delta_L
\end{equation*}
as well as the squared Hellinger cost $\SimLocC(m_0,m_1)=(\sqrt{m_0}-\sqrt{m_1})^2$.
As in the previous example we can parameterize $\pi_0$ and $\pi_1$ by $(a,b)\in S=[0,m_0^0] \times [0,m_1^L]$. This can be further restricted to $S=[0,\min(m_0^0,m_1^L)]$. Indeed, if $m_1^L < a \leq m_0^0$, then $L \in \Omega_-$ which must be suboptimal by virtue of \thref{prop:GrowthShrinkDecomposition} and one argues analogously for $b$.
The optimality conditions \eqref{eqn:DiracOptCond1} and \eqref{eqn:DiracOptCond2} turn into
\begin{equation*}
L=\frac1{\sqrt\alpha}-\frac1{\sqrt\beta}\,,\qquad
0=\sqrt\alpha-\sqrt\beta+\frac1{\sqrt\alpha}-\frac1{\sqrt\beta}\,,
\end{equation*}
which can be solved to yield
\begin{equation*}
\alpha=\frac1{S(L)}
\quad\text{and}\quad
\beta=S(L)
\quad\text{for}\quad
S(L)=\left(\frac L2+\sqrt{\frac{L^2}4+1}\right)^2>1
\end{equation*}
and thus
\begin{equation*}
a=\frac{S(L)m_1^L-m_0^0}{S(L)^2-1}\,,\qquad
b=\frac{S(L)m_0^0-m_1^L}{S(L)^2-1}\,.
\end{equation*}
This is admissible (that is, $(a,b)\in S$) as long as
\begin{equation*}
\frac{m_1^L}{m_0^0}\geq\frac1{S(L)}
\quad\text{and}\quad
\frac{m_1^L}{m_0^0}\leq S(L)\,.
\end{equation*}
Otherwise, from the previous example we know $a=0$ or $b=0$ with cost
\begin{align*}
P(0,b)
&=Lb-2\sqrt{bm_0^0}+m_0^0+m_1^L\,,\\
P(a,0)
&=La-2\sqrt{am_1^L}+m_0^0+m_1^L\,,
\end{align*}
from which we obtain $b=\min(m_0^0,m_1^L,\frac{m_0^0}{L^2})$ or $a=\min(m_0^0,m_1^L,\frac{m_1^L}{L^2})$, respectively, by optimization.
Comparing the respective costs, we obtain
\begin{equation*}
a=\begin{cases}
0&\text{if }\frac{m_1^L}{m_0^0}\leq\frac1{S(L)}\,,\\
\frac{S(L)m_1^L-m_0^0}{S(L)^2-1}&\text{if }\frac1{S(L)}<\frac{m_1^L}{m_0^0}<S(L)\,,\\
m_0^0&\text{if }S(L)\leq\frac{m_1^L}{m_0^0}\,,
\end{cases}\qquad
b=\begin{cases}
m_1^L&\text{if }\frac{m_1^L}{m_0^0}\leq\frac1{S(L)}\,,\\
\frac{S(L)m_0^0-m_1^L}{S(L)^2-1}&\text{if }\frac1{S(L)}<\frac{m_1^L}{m_0^0}<S(L)\,,\\
0&\text{if }S(L)\leq\frac{m_1^L}{m_0^0}\,,
\end{cases}
\end{equation*}
which is illustrated via the phase diagram in Figure~\ref{fig:phaseDiagram}.

As can be seen from Figure~\ref{fig:phaseDiagram}, $L_{\min}=0$ and $L_{\max}=\infty$
(it is straightforward to check $T_s^{l/r}=T[1-\frac s2\mp\sqrt{s^2/4-s}]$ with $L(s)=(T_s^r)'-(T_s^l)'=\sqrt{s^2-4s}\to\infty$ as $s\to-\infty$).
Likewise, $L_0=L_1=\infty$ for the maximal transport distances from \thref{thm:maxTransportDistance}.
Thus there will always occur some mass transport, no matter how far the Dirac masses are apart.
This behaviour is fundamentally different from the corresponding unbalanced transport with the Wasserstein-2 transport metric
(the Wasserstein--Fisher--Rao or Hellinger--Kantorovich distance \cite{LieroMielkeSavare-HellingerKantorovich-2015a,ChizatDynamicStatic2015})
in which mass transport is known to cease for distances larger than $\pi$.
\end{example}

\begin{figure}
\centering
\setlength\unitlength{.28\linewidth}
\begin{picture}(1,.8)
\put(0,0){\includegraphics[width=\unitlength]{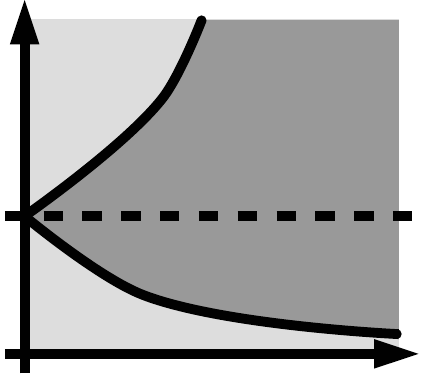}}
\put(.95,-.03){\small$L$}
\put(-.1,.85){\small$m_1^L$}
\put(.4,.9){\small$\frac{m_1^L}{m_0^0}\!=\!S(L)$}
\put(.95,.1){\small$\frac{m_1^L}{m_0^0}\!=\!\frac1{S(L)}$}
\put(-.1,.37){\small$m_0^0$}
\put(.1,.66){\small$b\!=\!0$}
\put(.1,.76){\small$a\!=\!m_0^0$}
\put(.3,.52){\small$a,b>0$}
\put(.3,.43){\small$a\!+\!b\!<\!\min(m_0^0,\!m_1^L\!)$}
\put(.08,.19){\small$a\!=\!0$}
\put(.08,.09){\small$b\!=\!m_1^L$}
\end{picture}
\caption{Phase diagram for the unbalanced transport of one Dirac mass onto another one from \thref{exm:twoDiracsHellinger}.
In the bottom parameter region, the first Dirac mass $m_0^0$ is first shrunk to size $m_1^L$ and then completely moved to the new position;
in the top parameter region it is first completely moved and then grown to $m_1^L$.
In the intermediate region, only part $a$ of the mass is moved initially, then the first Dirac mass is shrunk while the second one is simultaneously grown, and finally the rest $b$ of the mass is moved.}
\label{fig:phaseDiagram}
\end{figure}

\subsection{Examples of different static and corresponding dynamic models}\label{sec:staticAsDynamicExamples}
In the literature there are several examples of static mass change penalties, a few of which are described in \cite{SchmitzerWirthUnbalancedW1-2017} and summarized in Table~\ref{tab:discrepancies}.
By \thref{thm:staticAsDynamic}\eqref{enm:sufficient}, all of these models admit a corresponding dynamic formulation.
For some of them we calculate the corresponding dynamic functions $\BDynMH$ and $\CDynM$ below (they are also provided in Table~\ref{tab:discrepancies} for reference),
for others (for which a closed formula is not straightforward to obtain) we just numerically approximate $\BDynMH$ and $\CDynM$ using \thref{thm:staticAsDynamic}\eqref{enm:uniqueness}.
The numerical approximations are displayed in Table~\ref{tab:discrepancyGraphs}.

\begin{sidewaystable}
\def\arraystretch{1.5}%
\centering
\begin{tabular}{l|c|c|c|c}
&$\SimLocC(m_0,m_1)$ (for $m_0,m_1\geq0$)
&$\HStat(z)$
&$\BDynMH(z)$
&$\CDynM(\rho,\zeta)$ (for $\rho\geq0$)
\\\hline
$\SimDisc$
&$\begin{cases}0&\text{if }m_0=m_1\\\infty&\text{else}\end{cases}$
&$z$
&$0$
&$\begin{cases}0&\text{if }\zeta=0\\\infty&\text{else}\end{cases}$
\\\hline
$\SimTV$
&$|m_1-m_0|$
&$\min(z,1)-\iota_{[-1,\infty)}(z)$
&$-\iota_{[-1,1]}$
&$|\zeta|$
\\\hline
$\SimLoc^{\tn{pwl}}$
&
$\begin{cases}
m_1b(\overline d\!-\!\overline s)\!+\!\\\quad\overline sm_1\!-\!\overline dm_0&\text{if }\frac{m_1}{m_0}\!\leq\! b,\\
(m_1-m_0)\overline s&\text{if }b\!<\!\frac{m_1}{m_0}\!\leq\!1,\\
(m_1-m_0)\underline s&\text{if }1\!<\!\frac{m_1}{m_0}\!\leq\! a,\\
m_1a(\underline d\!-\!\underline s)\!+\!\\\quad\underline sm_1\!-\!\underline dm_0&\text{if }a\!<\!\frac{m_1}{m_0}.
\end{cases}$
&
$\begin{cases}
-\infty&\text{if }z<\underline d,\\
\underline s+a(z-\underline s)&\text{if }z\in[\underline d,\underline s),\\
z&\text{if }z\in[\underline s,\overline s),\\
\overline s+b(z-\overline s)&\text{if }z\in[\overline s,\overline d),\\
\overline s+b(\overline d-\overline s)&\text{else.}
\end{cases}$
&$\begin{cases}
-\infty&\text{if }z\notin[\underline d,\overline d],\\
(z-\underline s)\log a&\text{if }z\in[\underline d,\underline s],\\
0&\text{if }z\in[\underline s,\overline s],\\
(z-\overline s)\log b&\text{if }z\in[\overline s,\overline d].\\
\end{cases}$
&$\begin{cases}
\underline d\zeta+\\\;\rho\log a(\underline d\!-\!\underline s)&\text{if }\zeta\!\leq\!\rho\log\frac1a,\\
\underline s\zeta&\text{if }\rho\log\frac1a\!\leq\!\zeta\!\leq\!0,\\
\overline s\zeta&\text{if }0\!\leq\!\zeta\!\leq\!\rho\log\frac1b,\\
\overline d\zeta+\\\;\rho\log b(\overline d\!-\!\overline s)&\text{if }\zeta\!\geq\!\rho\log\frac1b.
\end{cases}$
\\\hline
$\SimFR$
&$(\sqrt{m_1}-\sqrt{m_0})^2$
&$\frac{z}{1+z}-\iota_{(-1,\infty)}(z)$
&$-z^2$
&$\frac{\zeta^2}{4\rho}$
\\\hline
$\SimJS$
&$m_0\log_2\frac{2m_0}{m_0+m_1}+m_1\log_2\frac{2m_1}{m_0+m_1}$
&$\log_2(2-\frac1{2^z})-\iota_{(-1,\infty)}(z)$
&$\frac{(2^{z/2}-2^{-z/2})^2}{-\log2}$
&$\begin{array}{l}\frac{(\sqrt g-1/\sqrt g)^2}{-\log 2}\rho+\zeta\log_2g\\\text{with }g=\frac\zeta{2\rho}+\sqrt{\frac{\zeta^2}{4\rho^2}+1}\end{array}$
\\\hline
$\SimChi$
&$\tfrac{(m_1-m_0)^2}{m_1+m_0}$
&$\begin{cases}1&\text{if }z>3\\1\!-\!(2\!-\!\sqrt{1\!+\!z})^2&\text{if }z\in[-1,3]\\-\infty&\text{else}\end{cases}$
&num.\ approx.\  in Table~\ref{tab:discrepancyGraphs}
&num.\ approx.\  in Table~\ref{tab:discrepancyGraphs}
\\\hline
$\SimE{0}$
&$m_1-m_0-m_0\log\tfrac{m_1}{m_0}$
&$\log(1+z)-\iota_{(-1,\infty)}(z)$
&num.\ approx.\  in Table~\ref{tab:discrepancyGraphs}
&num.\ approx.\  in Table~\ref{tab:discrepancyGraphs}
\\\hline
$\SimE{1}$
&$m_1\log\tfrac{m_1}{m_0}-m_1+m_0$
&$1-\exp(-z)$
&num.\ approx.\  in Table~\ref{tab:discrepancyGraphs}
&num.\ approx.\  in Table~\ref{tab:discrepancyGraphs}
\\\hline
$\SimEp$
&$m_0\tfrac1{p(p-1)}((\tfrac{m_1}{m_0})^p-p(\tfrac{m_1}{m_0}-1)-1)$
&$\frac{1-(1+(1-p)z)^{\frac p{p-1}}}p-\iota_{[-1,\infty)}((1\!-\!p)z)$
&num.\ approx.\  in Table~\ref{tab:discrepancyGraphs}
&num.\ approx.\  in Table~\ref{tab:discrepancyGraphs}
\end{tabular}
\caption{Formulas of different static and dynamic model functions.}
\label{tab:discrepancies}
\end{sidewaystable}

\begin{table}
\centering
\setlength\unitlength{.15\linewidth}
\vspace*{-.6\unitlength}
\begin{tabular}{lcccc}
&$\SimLocC(1,\cdot)$
&$\HStat$
&$\BDynMH$
&$\CDynM(1,\cdot)$
\\
\raisebox{.45\unitlength}{$\SimDisc$}&\includegraphics[height=\unitlength]{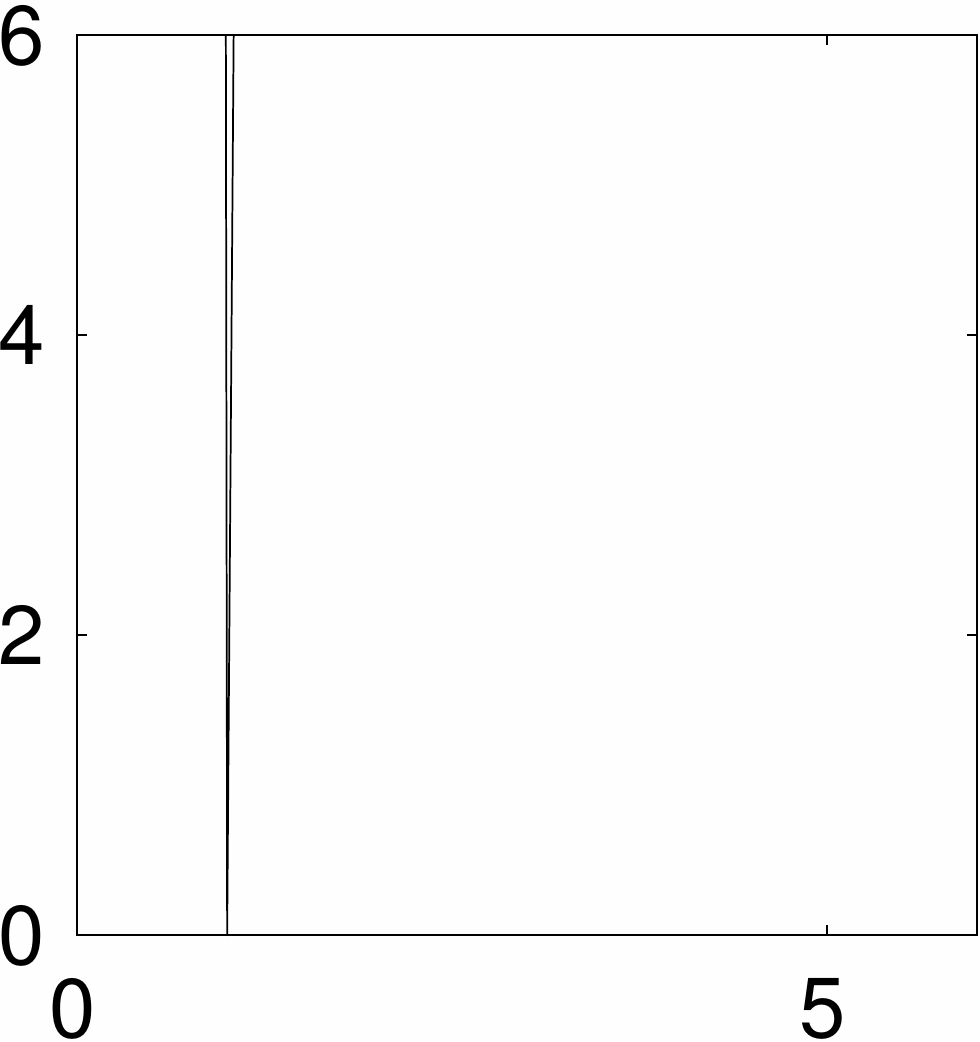}&\includegraphics[height=\unitlength]{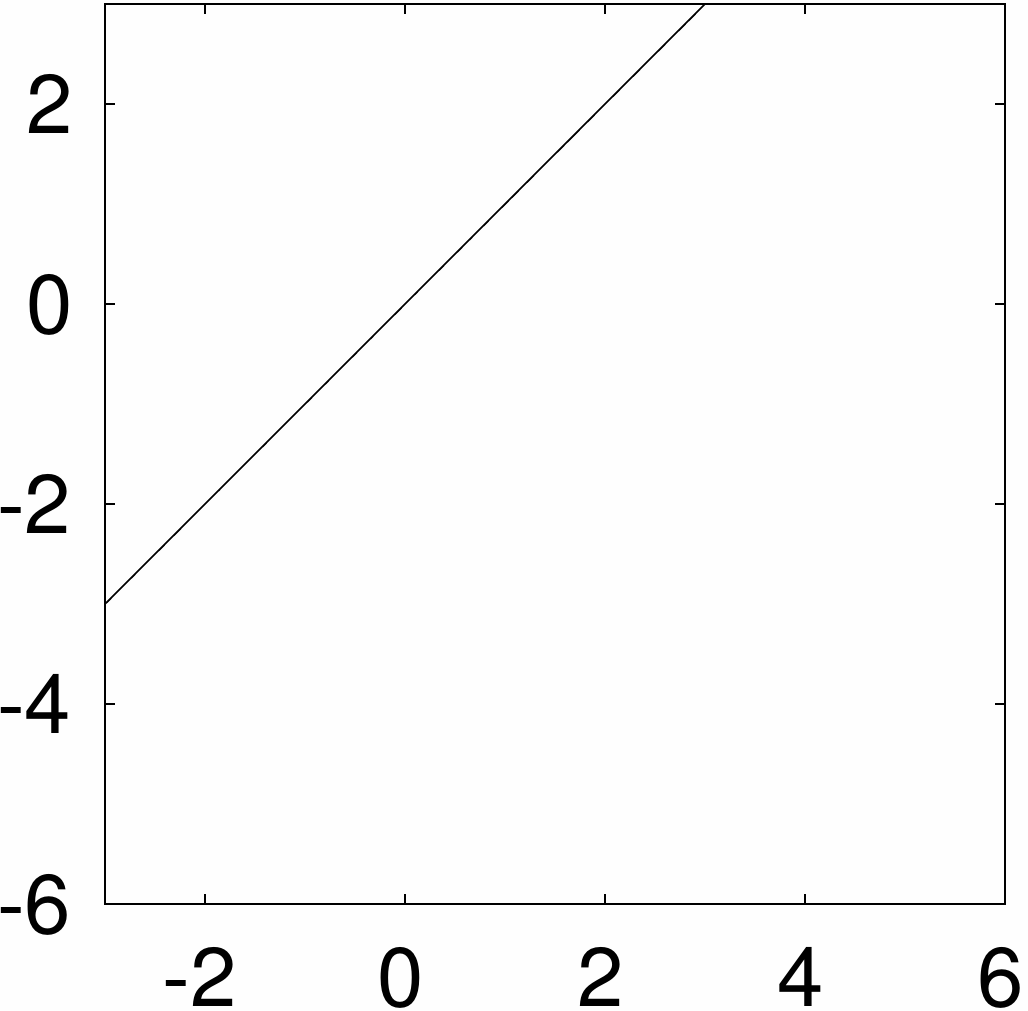}&\includegraphics[height=\unitlength]{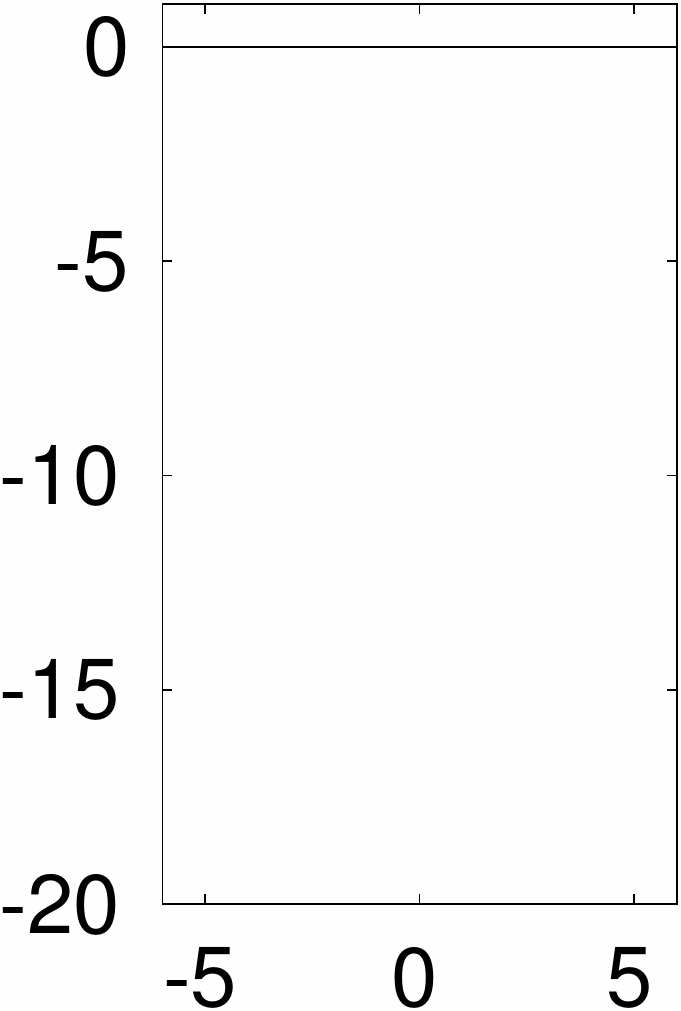}&\raisebox{.15\unitlength}{\includegraphics[height=.7\unitlength]{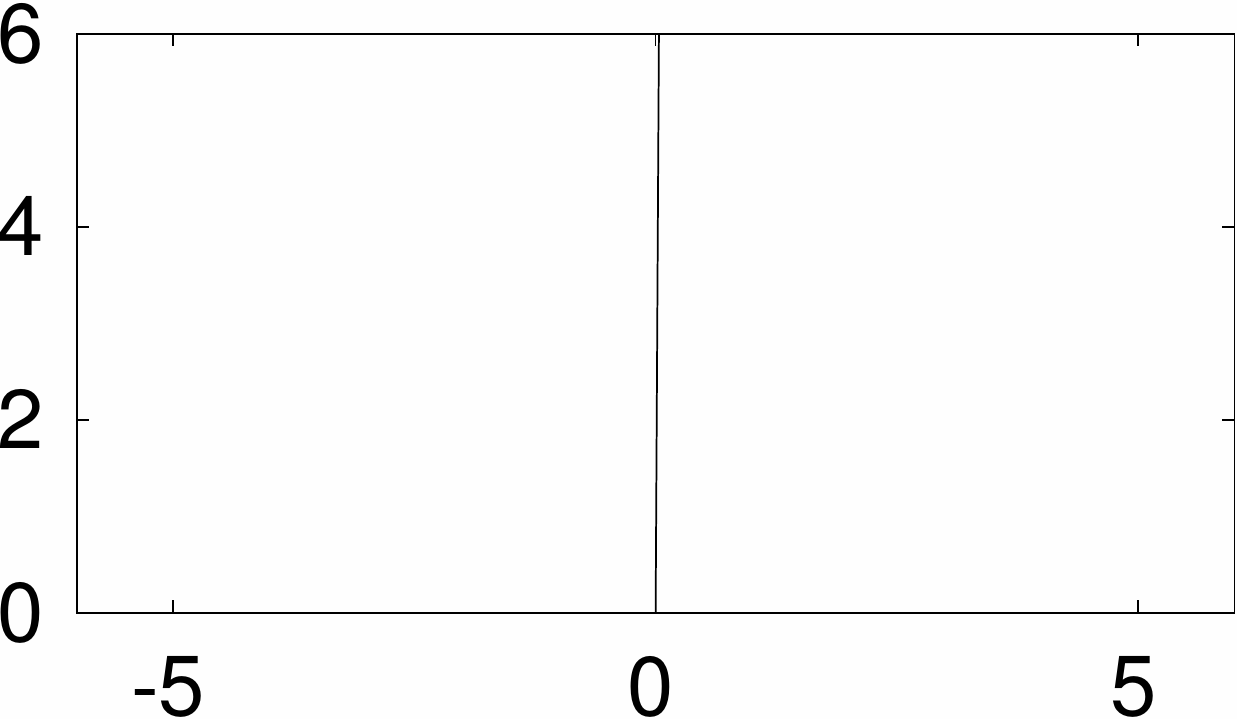}}\\
\raisebox{.45\unitlength}{$\SimTV$}&\includegraphics[height=\unitlength]{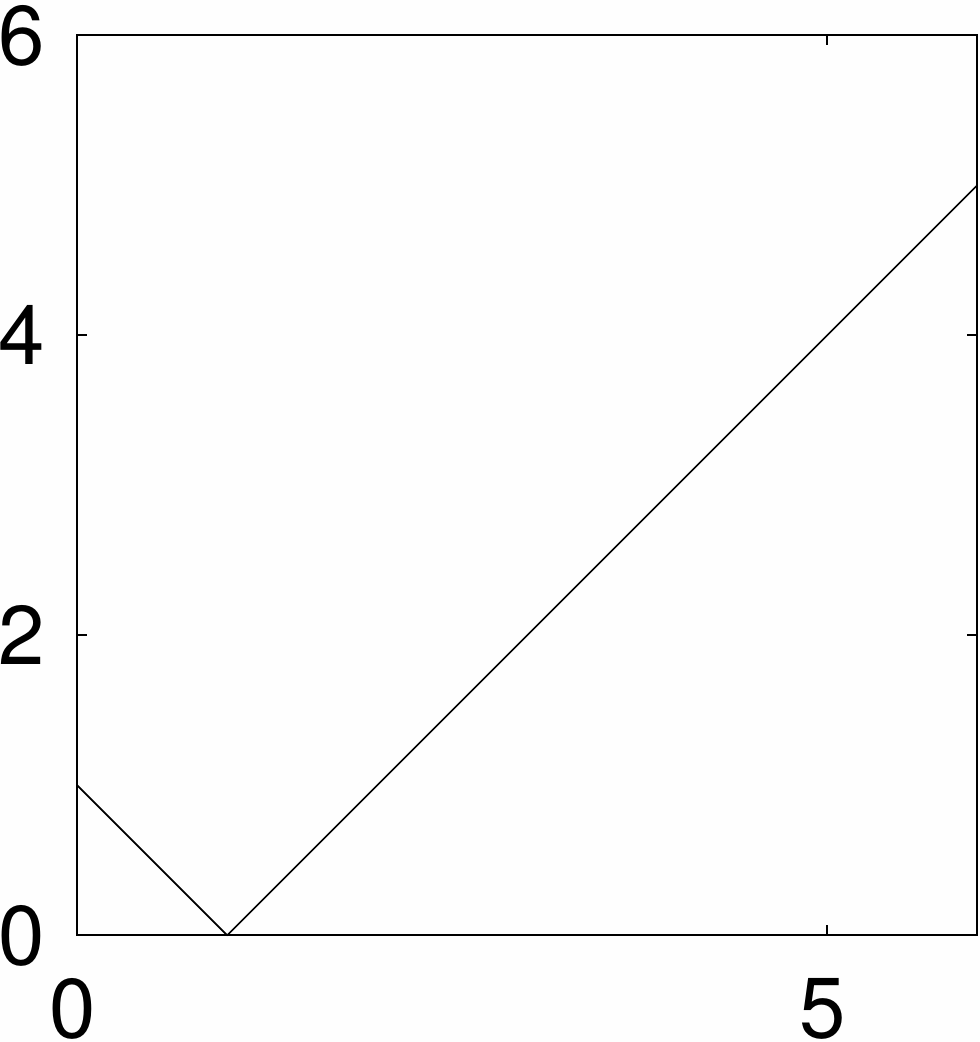}&\includegraphics[height=\unitlength]{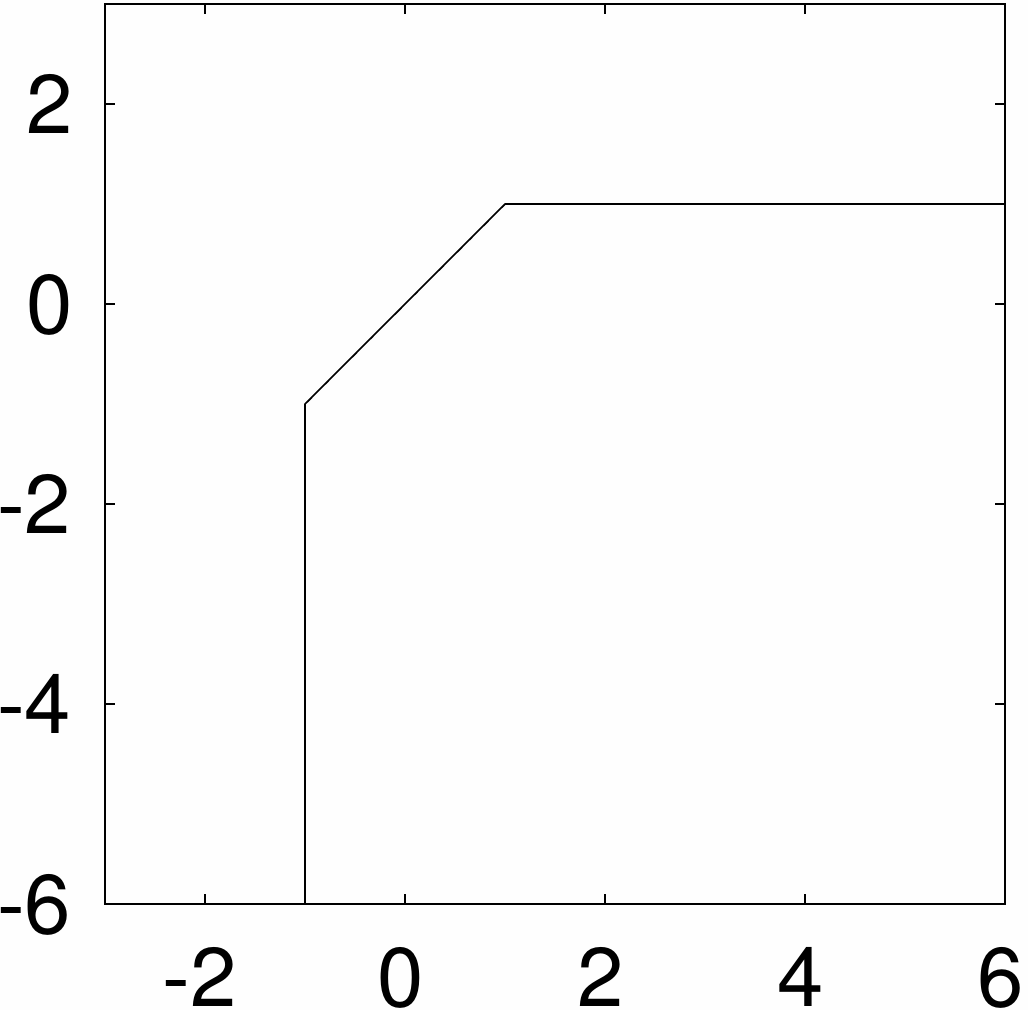}&\includegraphics[height=\unitlength]{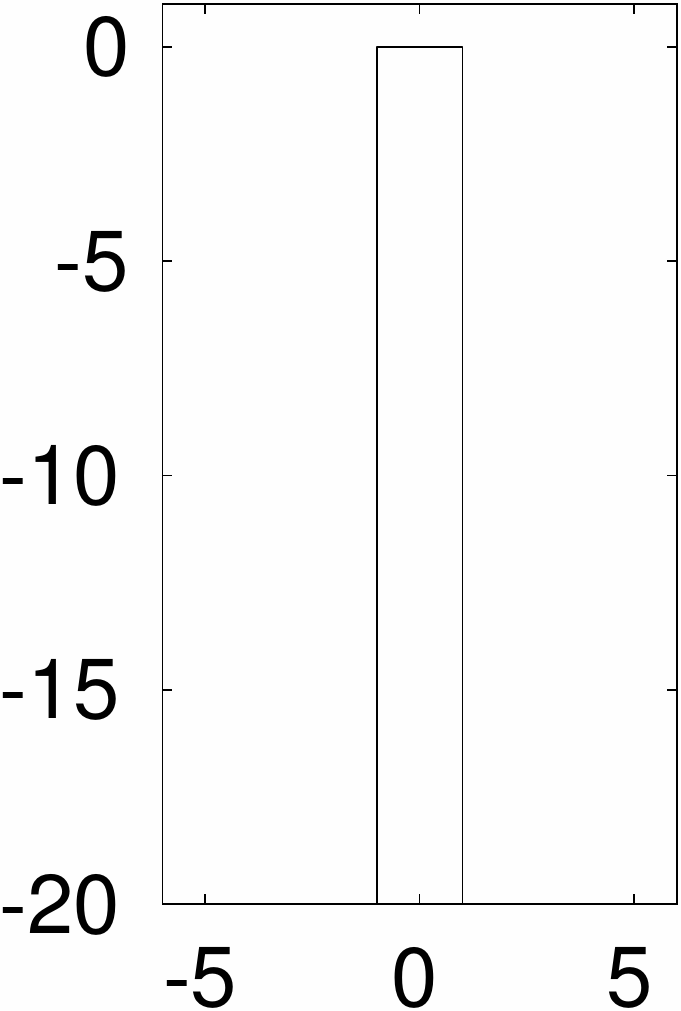}&\raisebox{.15\unitlength}{\includegraphics[height=.7\unitlength]{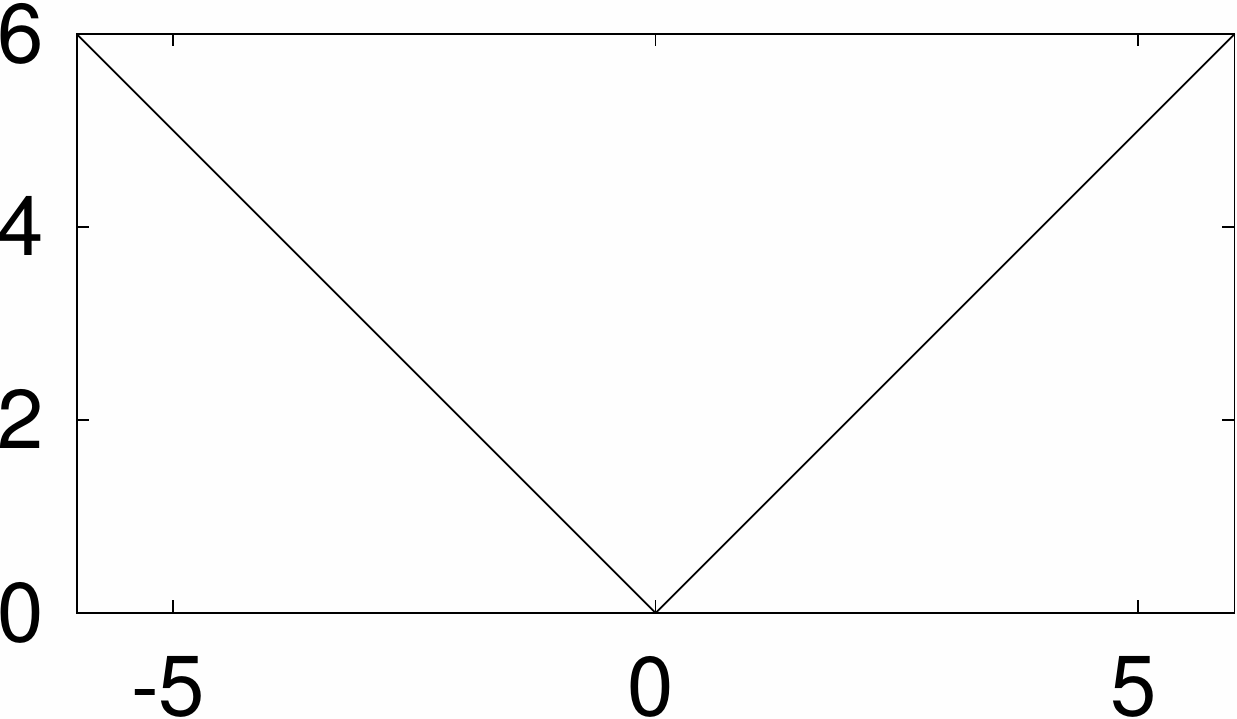}}\\
\raisebox{.45\unitlength}{$\SimLoc^{\tn{pwl}}$}&\includegraphics[height=\unitlength]{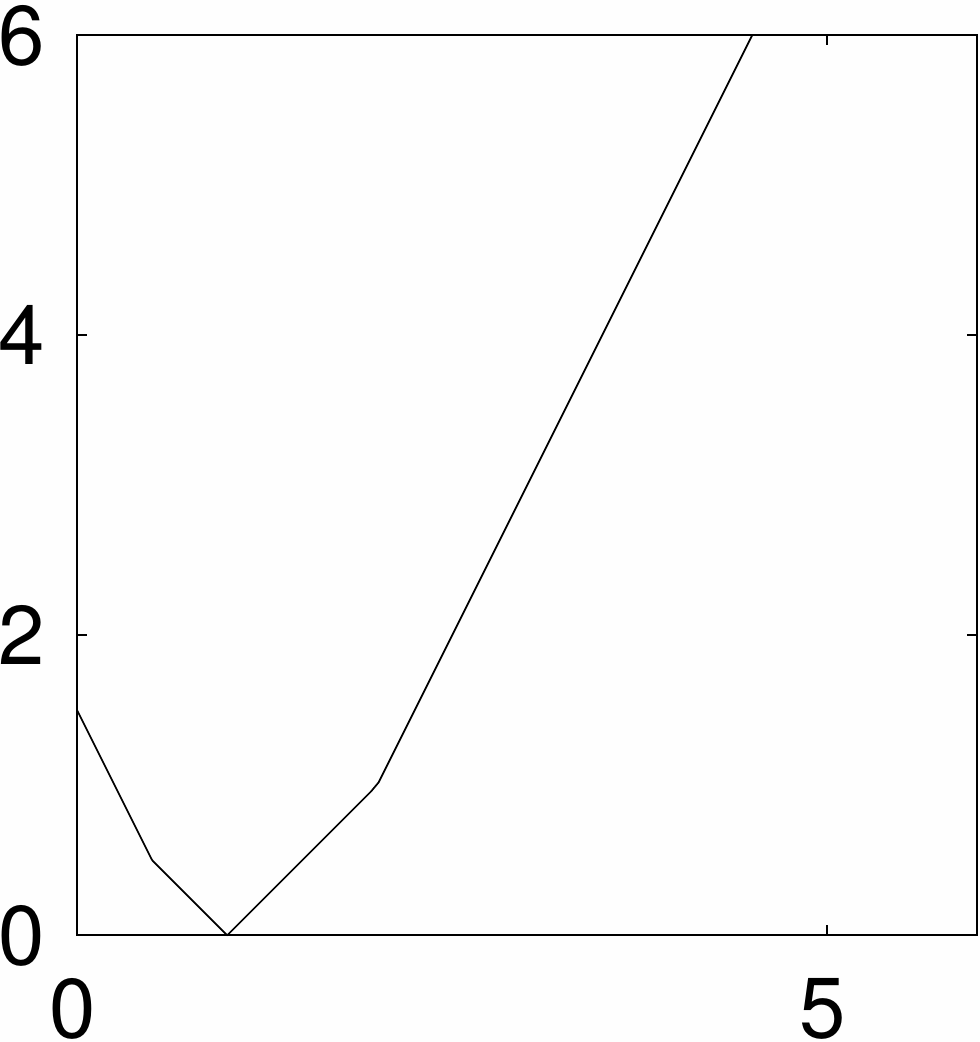}&\includegraphics[height=\unitlength]{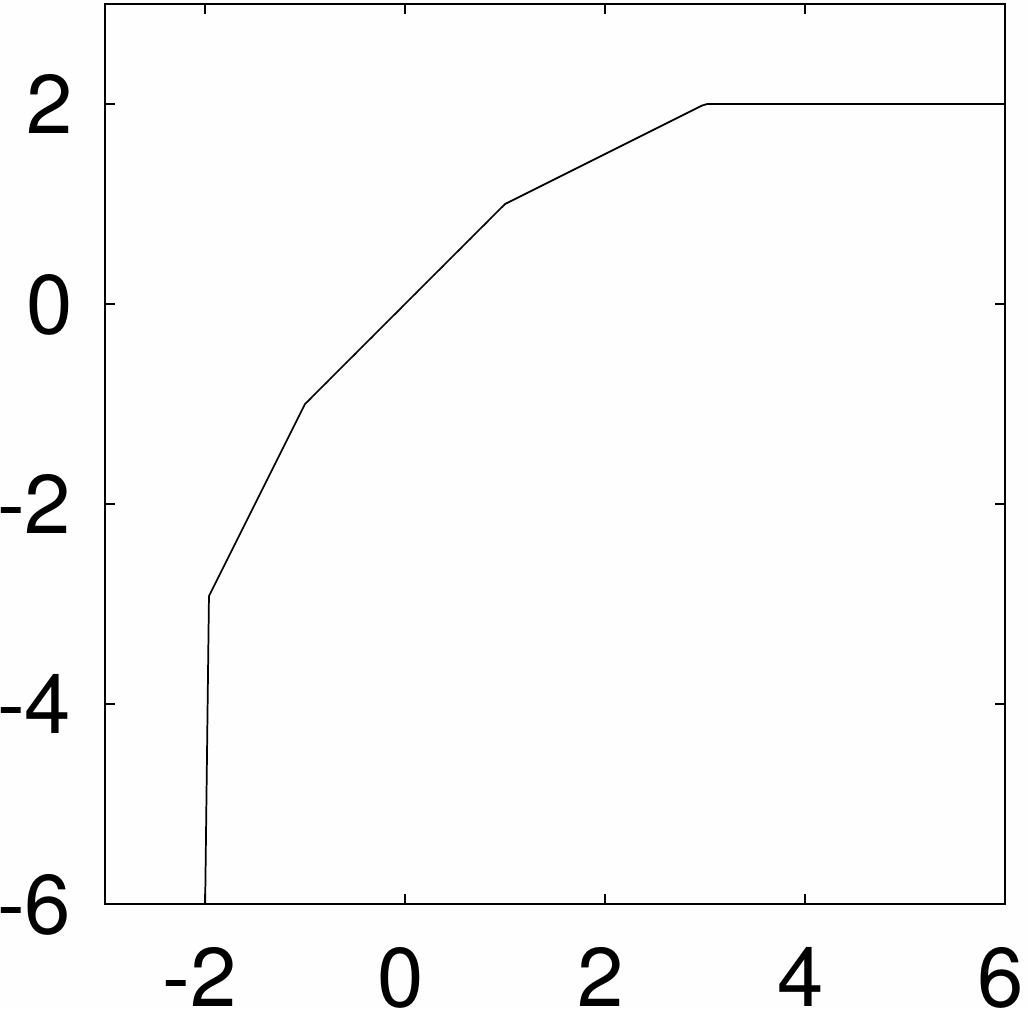}&\includegraphics[height=\unitlength]{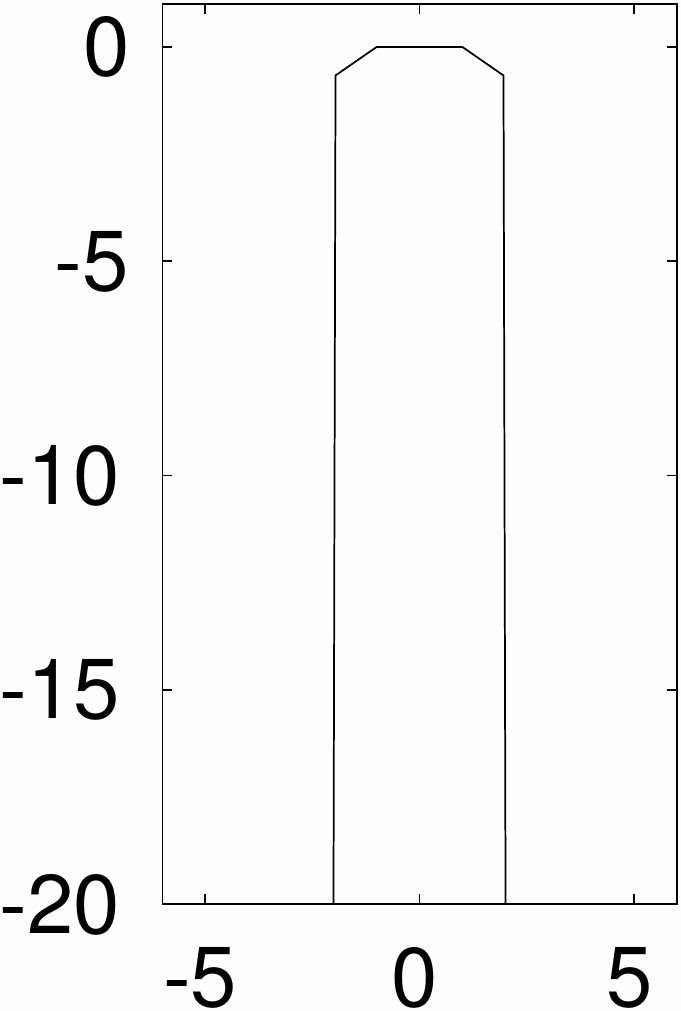}&\raisebox{.15\unitlength}{\includegraphics[height=.7\unitlength]{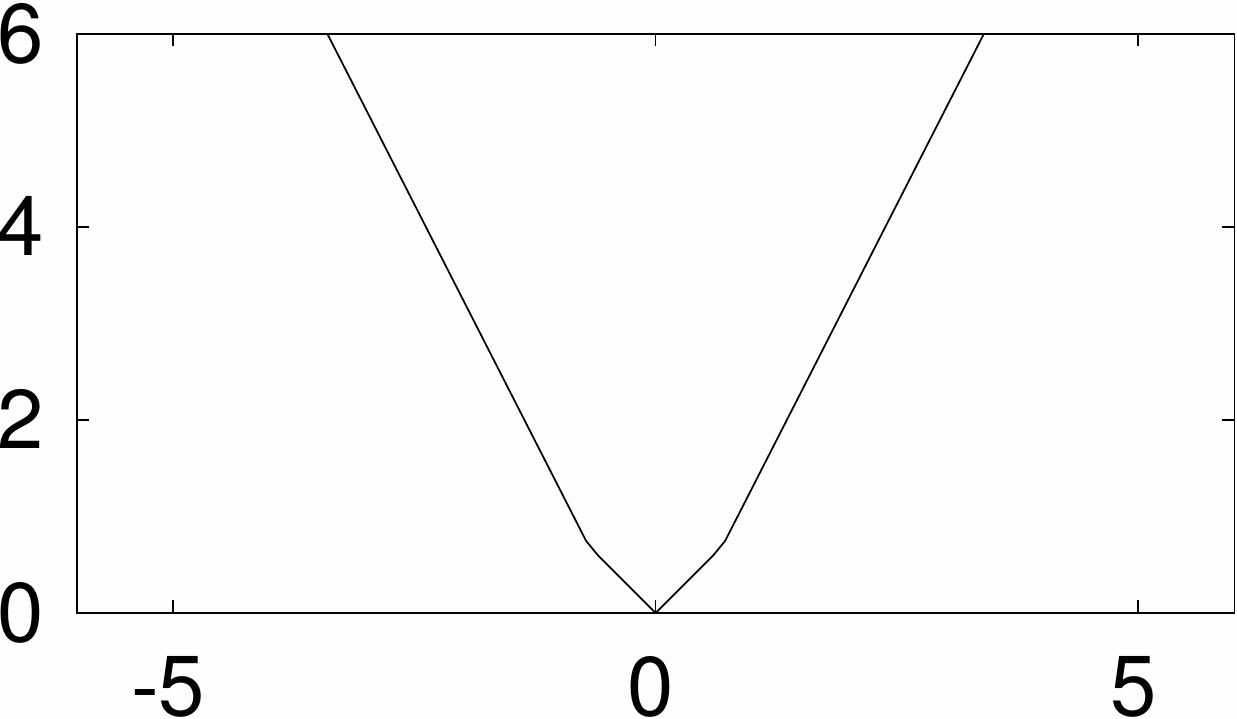}}\\
\raisebox{.45\unitlength}{$\SimFR$}&\includegraphics[height=\unitlength]{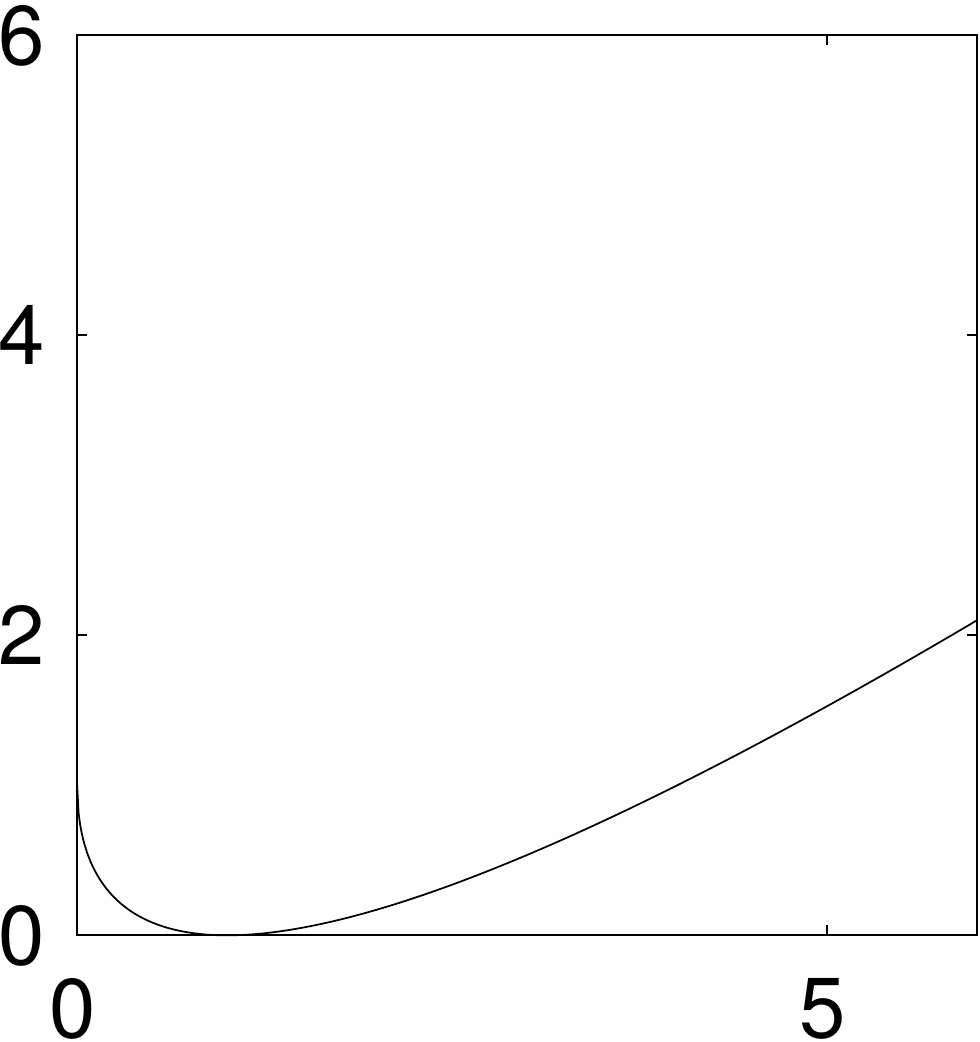}&\includegraphics[height=\unitlength]{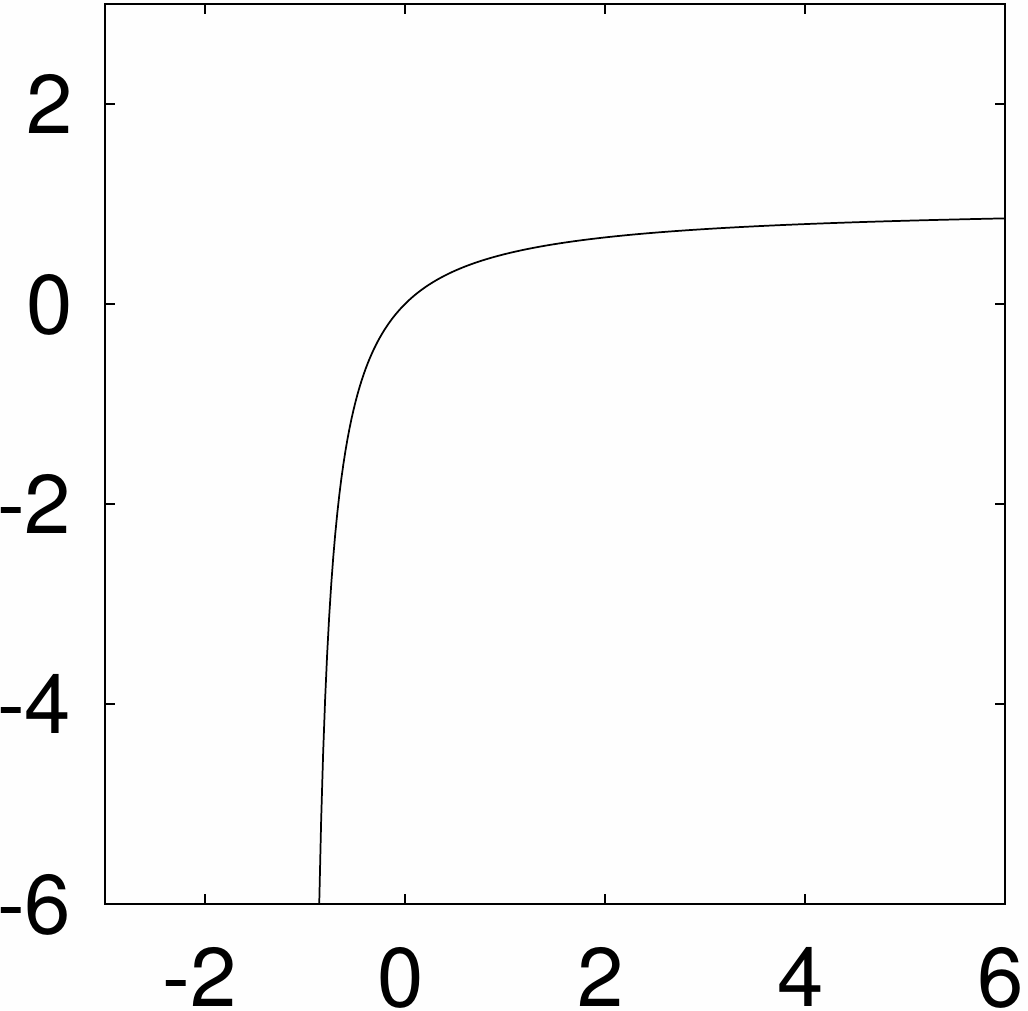}&\includegraphics[height=\unitlength]{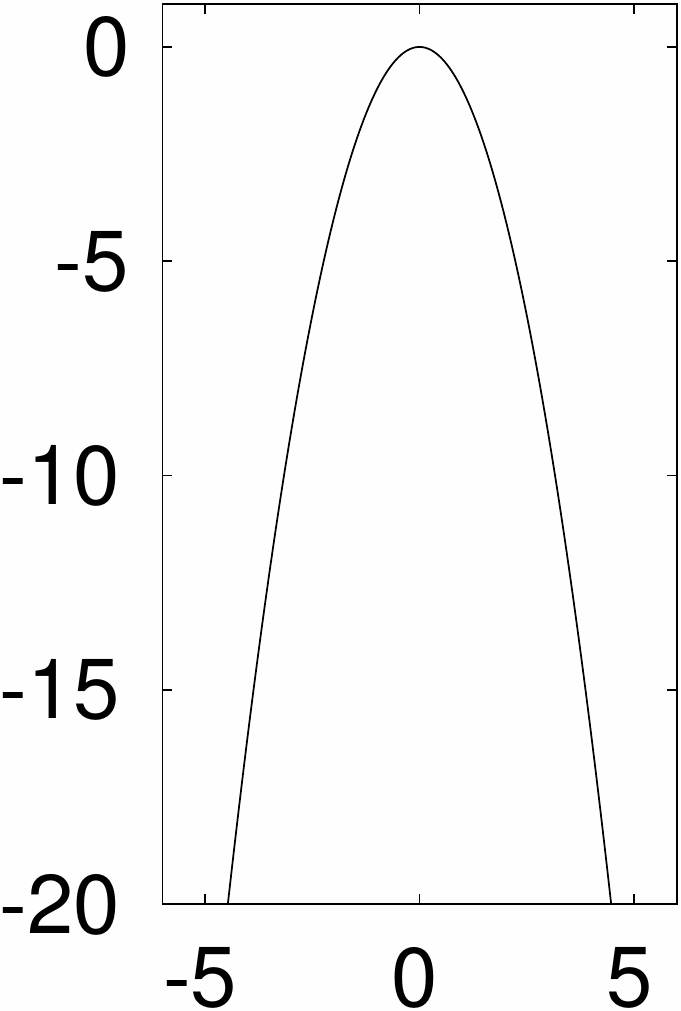}&\raisebox{.15\unitlength}{\includegraphics[height=.7\unitlength]{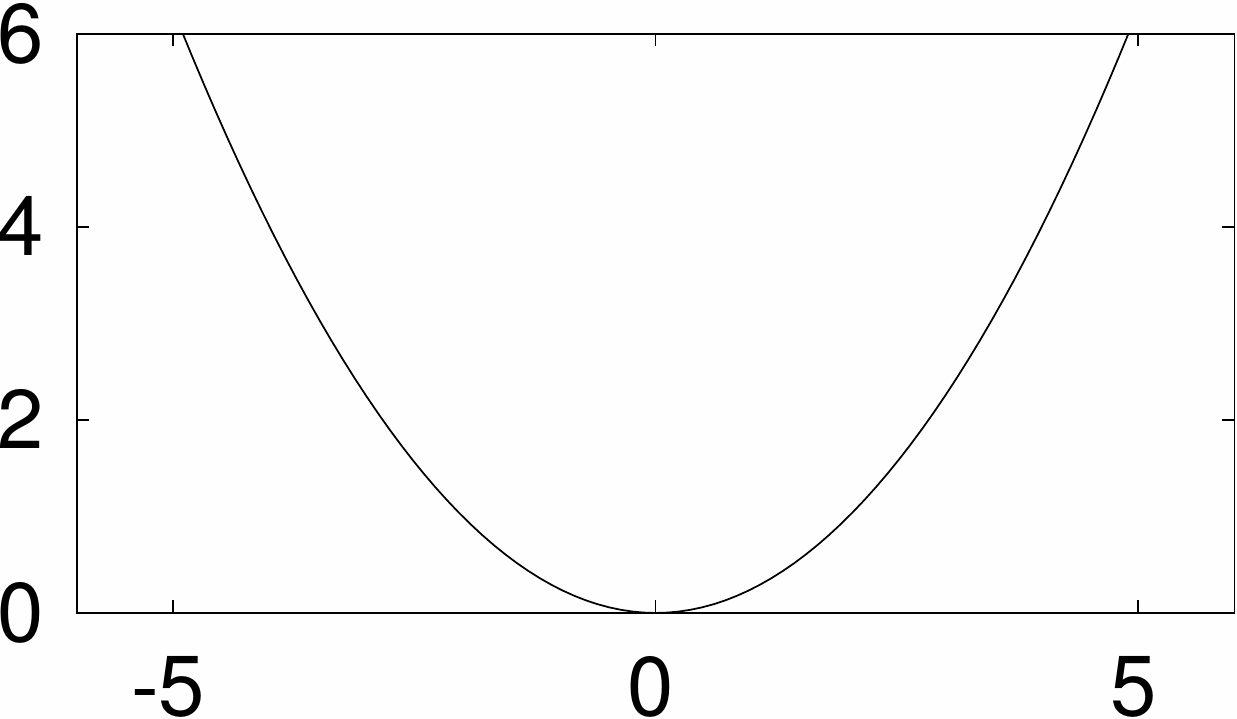}}\\
\raisebox{.45\unitlength}{$\SimJS$}&\includegraphics[height=\unitlength]{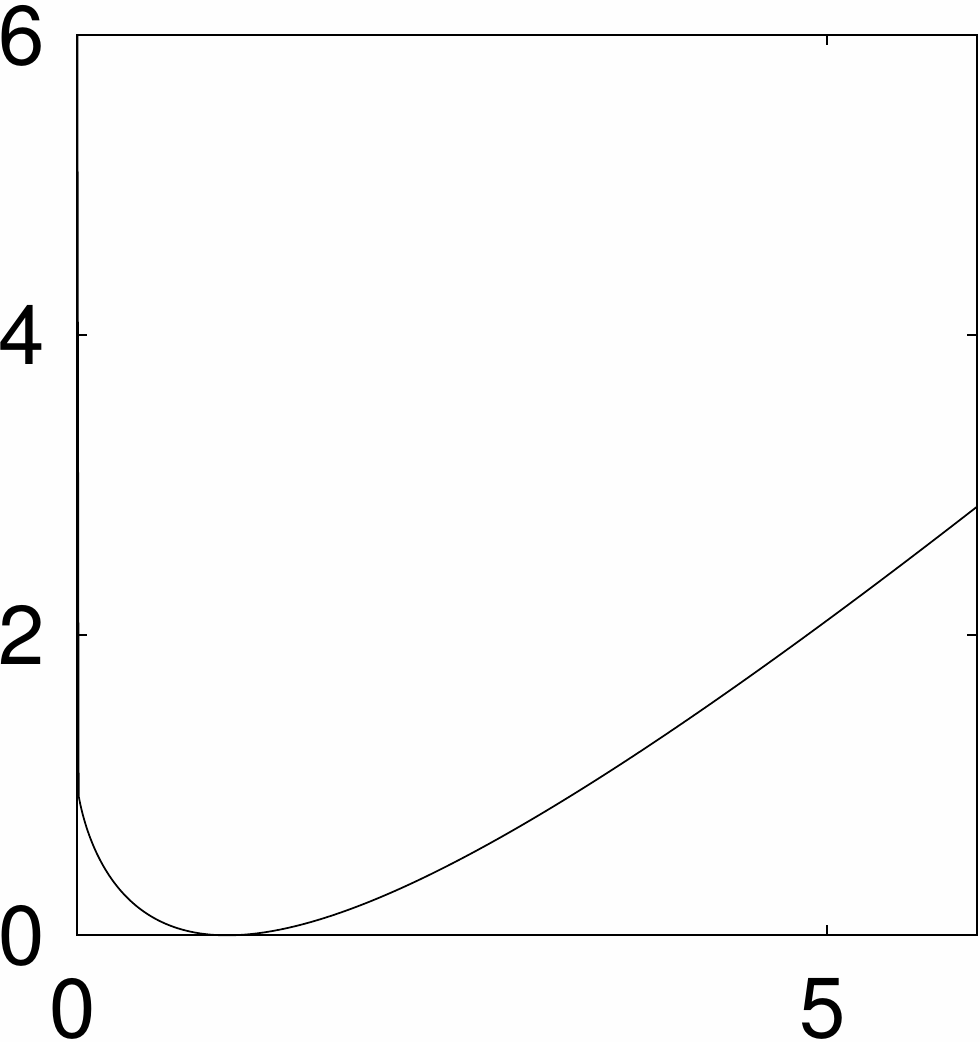}&\includegraphics[height=\unitlength]{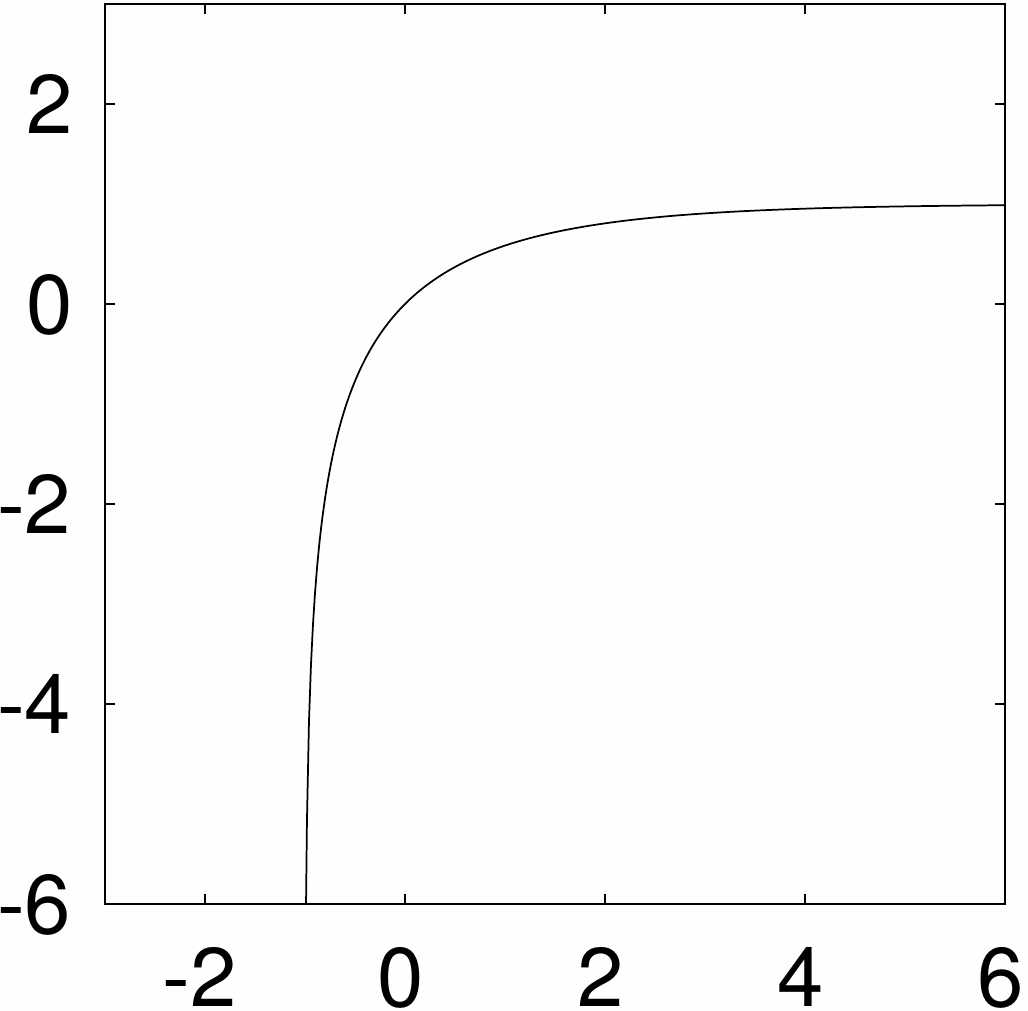}&\includegraphics[height=\unitlength]{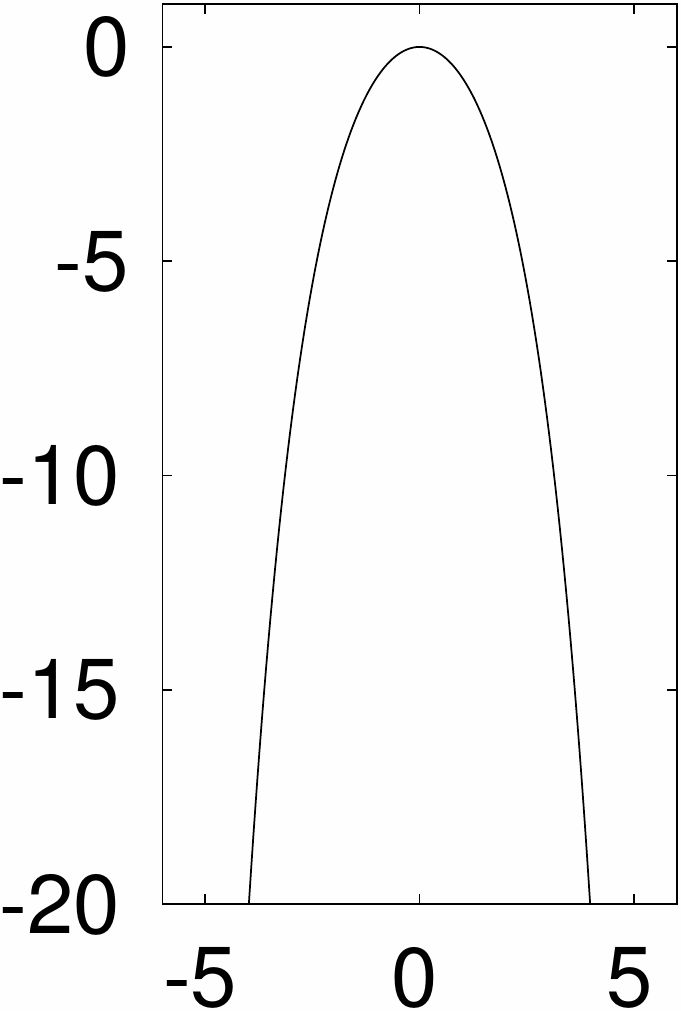}&\raisebox{.15\unitlength}{\includegraphics[height=.7\unitlength]{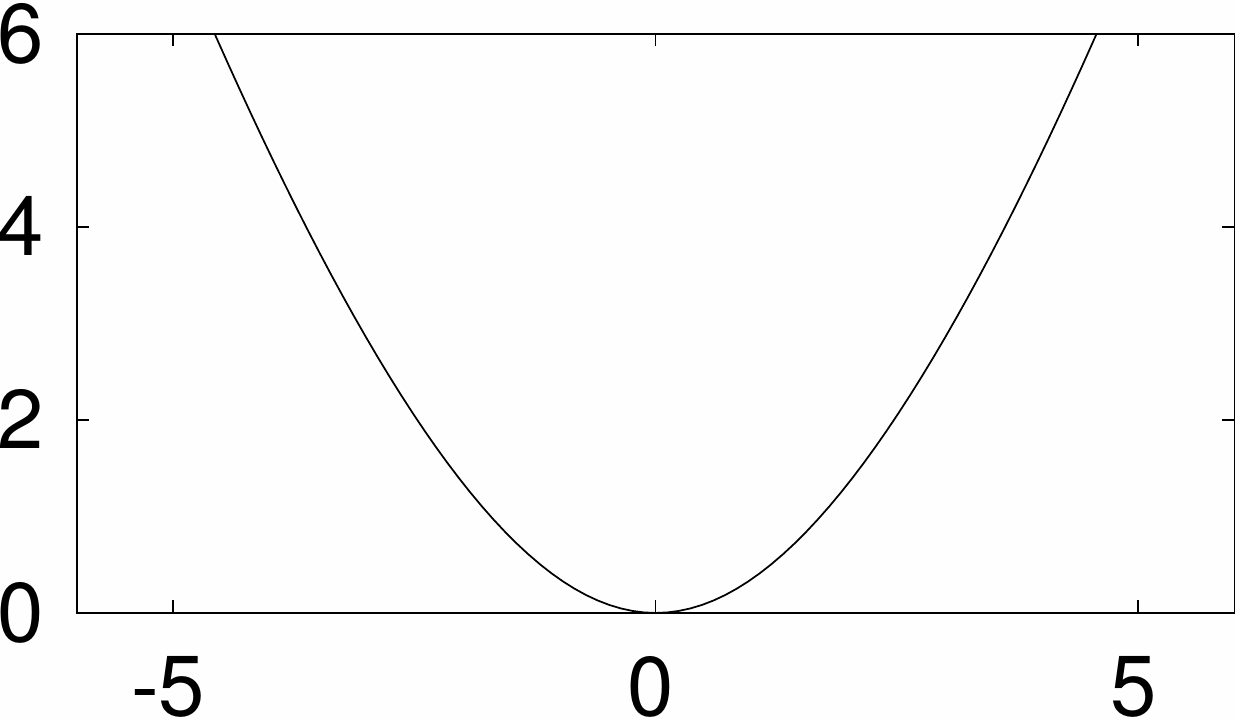}}\\
\raisebox{.45\unitlength}{$\SimChi$}&\includegraphics[height=\unitlength]{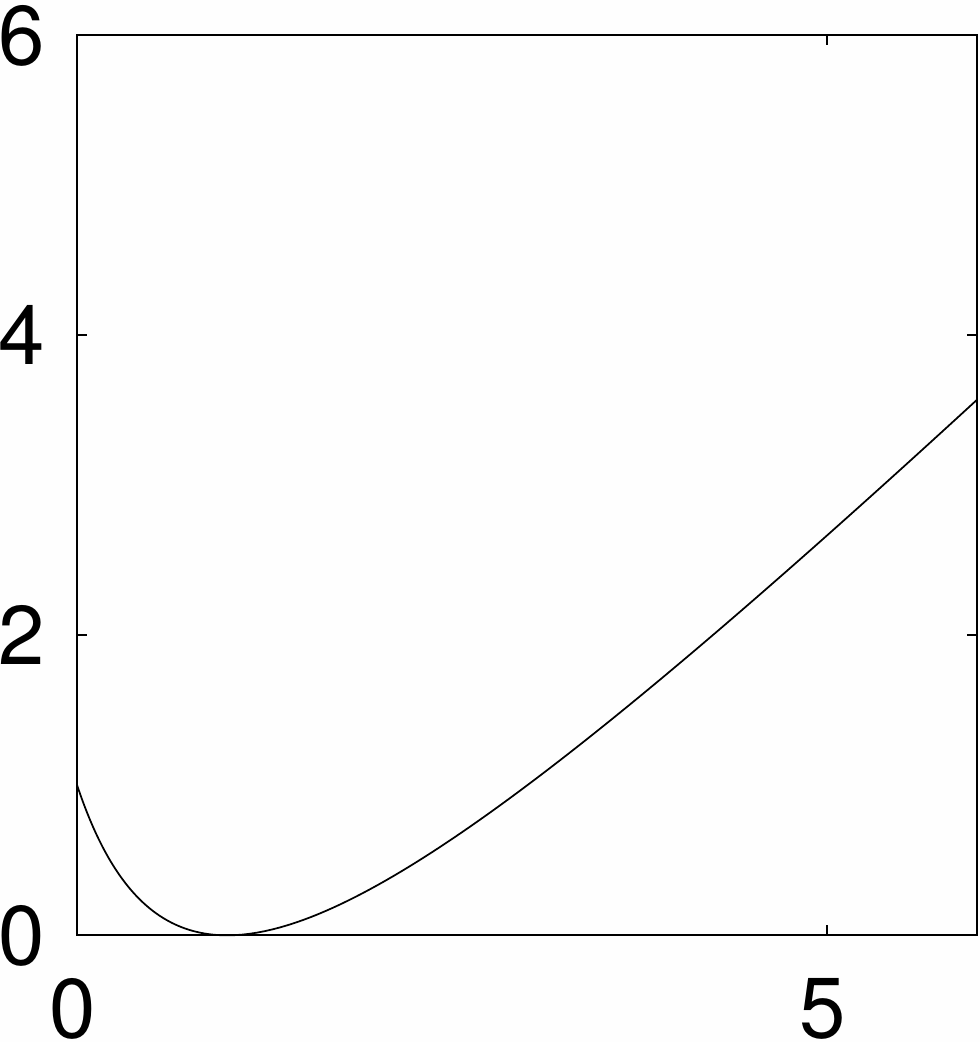}&\includegraphics[height=\unitlength]{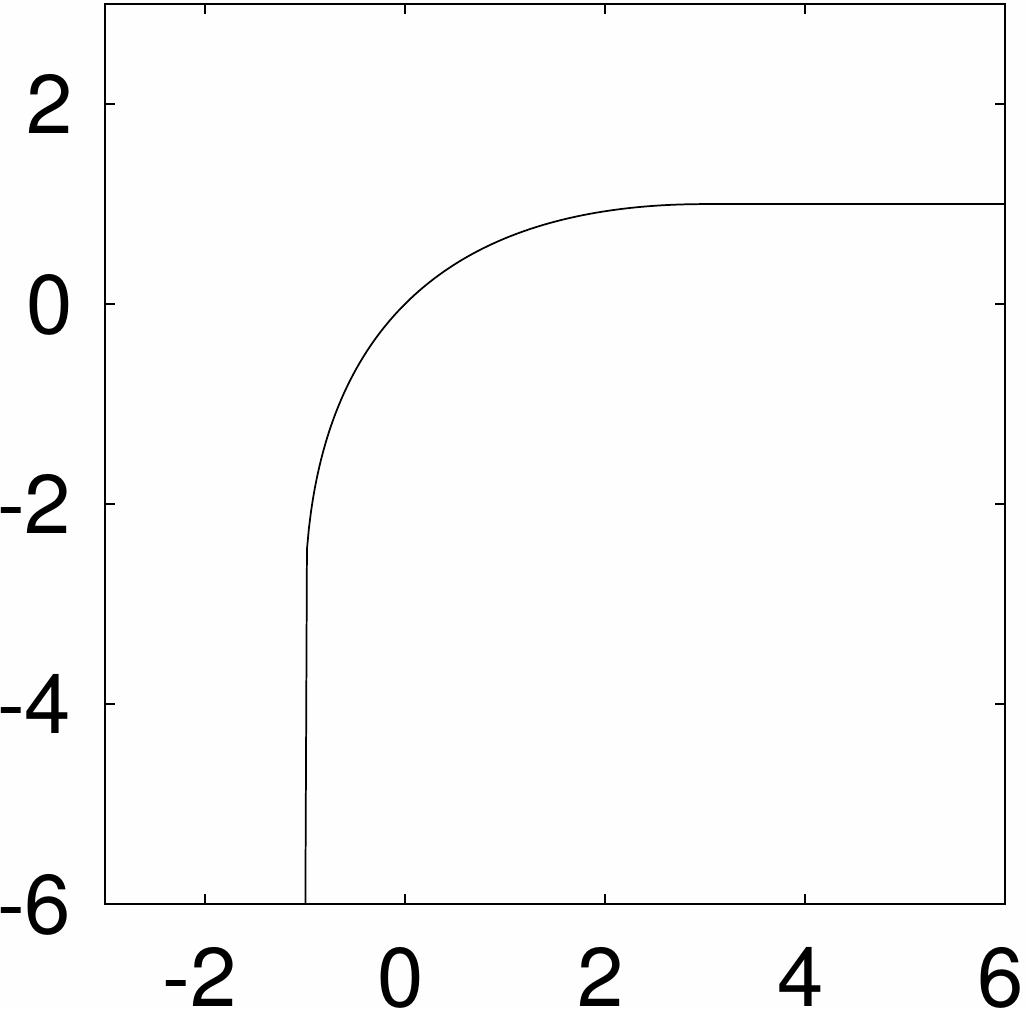}&\includegraphics[height=\unitlength]{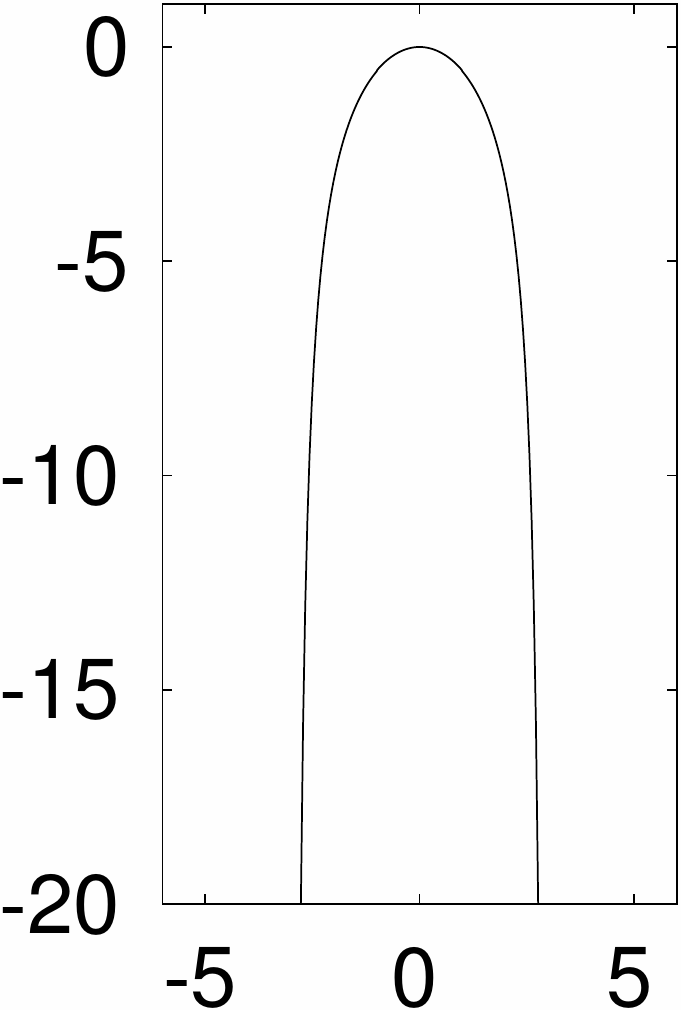}&\raisebox{.15\unitlength}{\includegraphics[height=.7\unitlength]{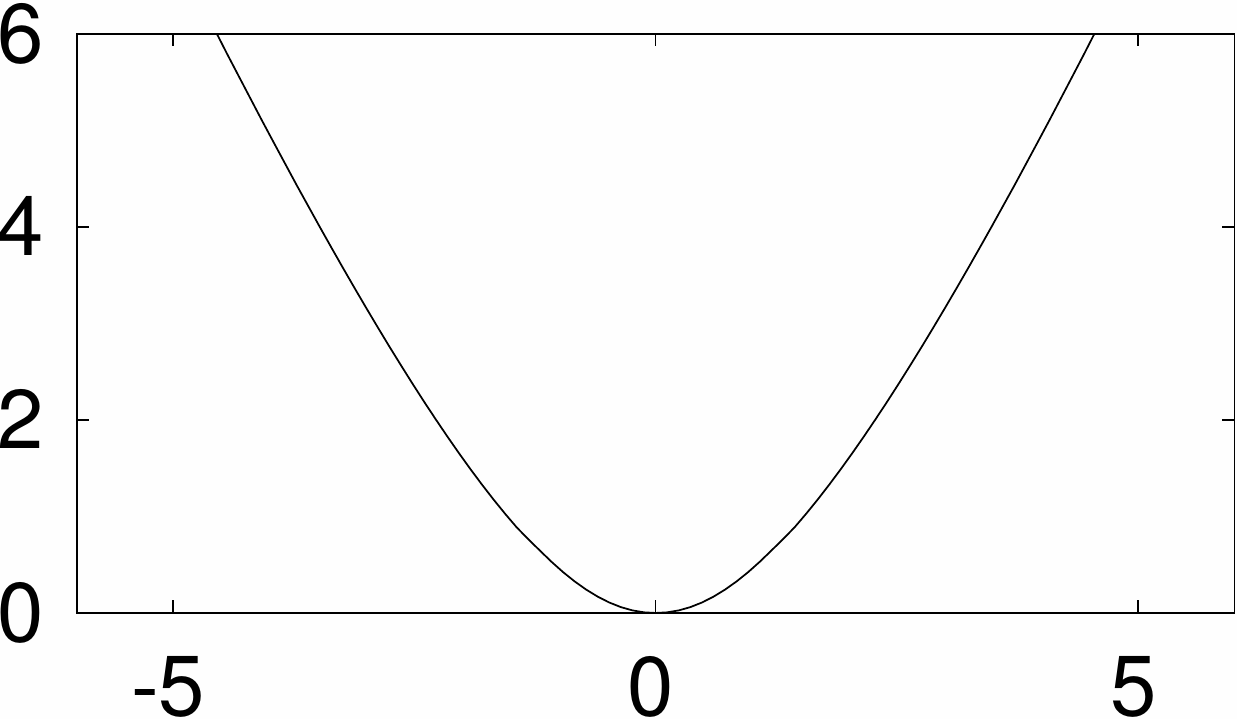}}\\
\raisebox{.45\unitlength}{$\SimE{0}$}&\includegraphics[height=\unitlength]{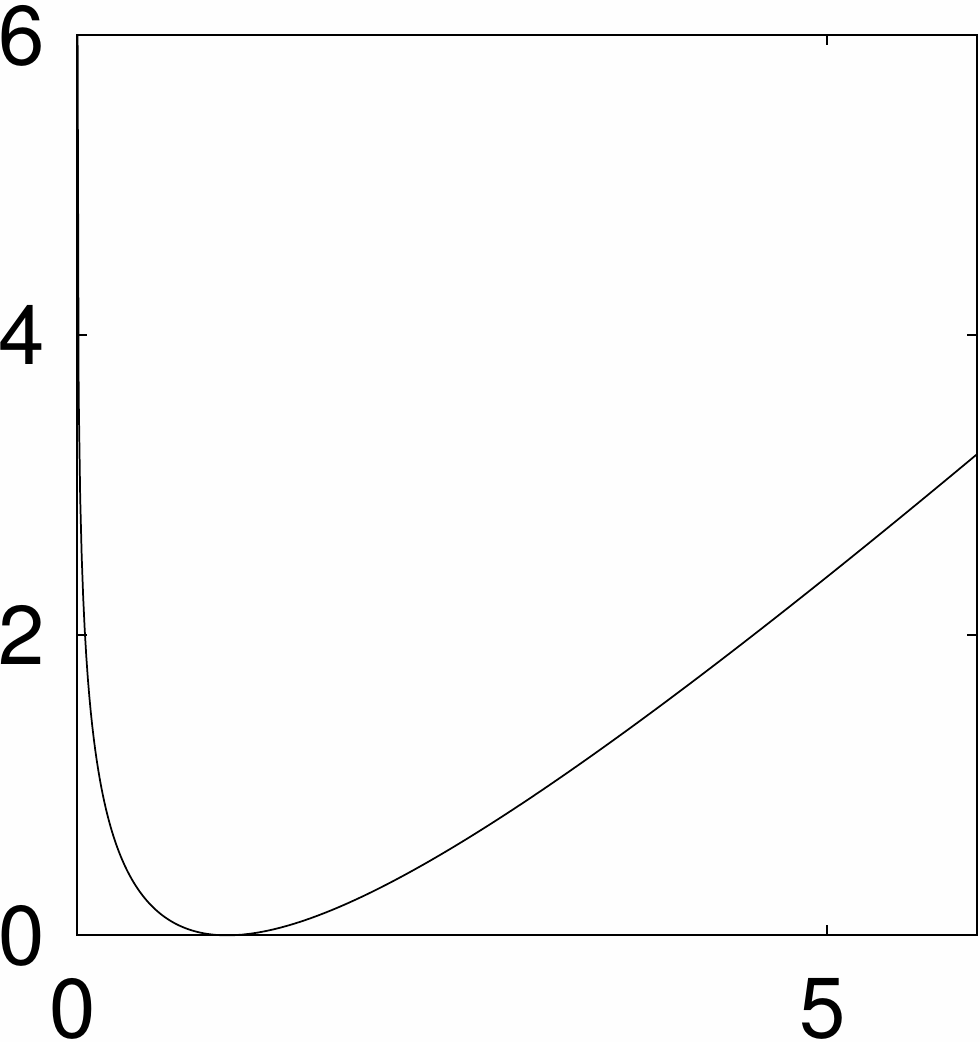}&\includegraphics[height=\unitlength]{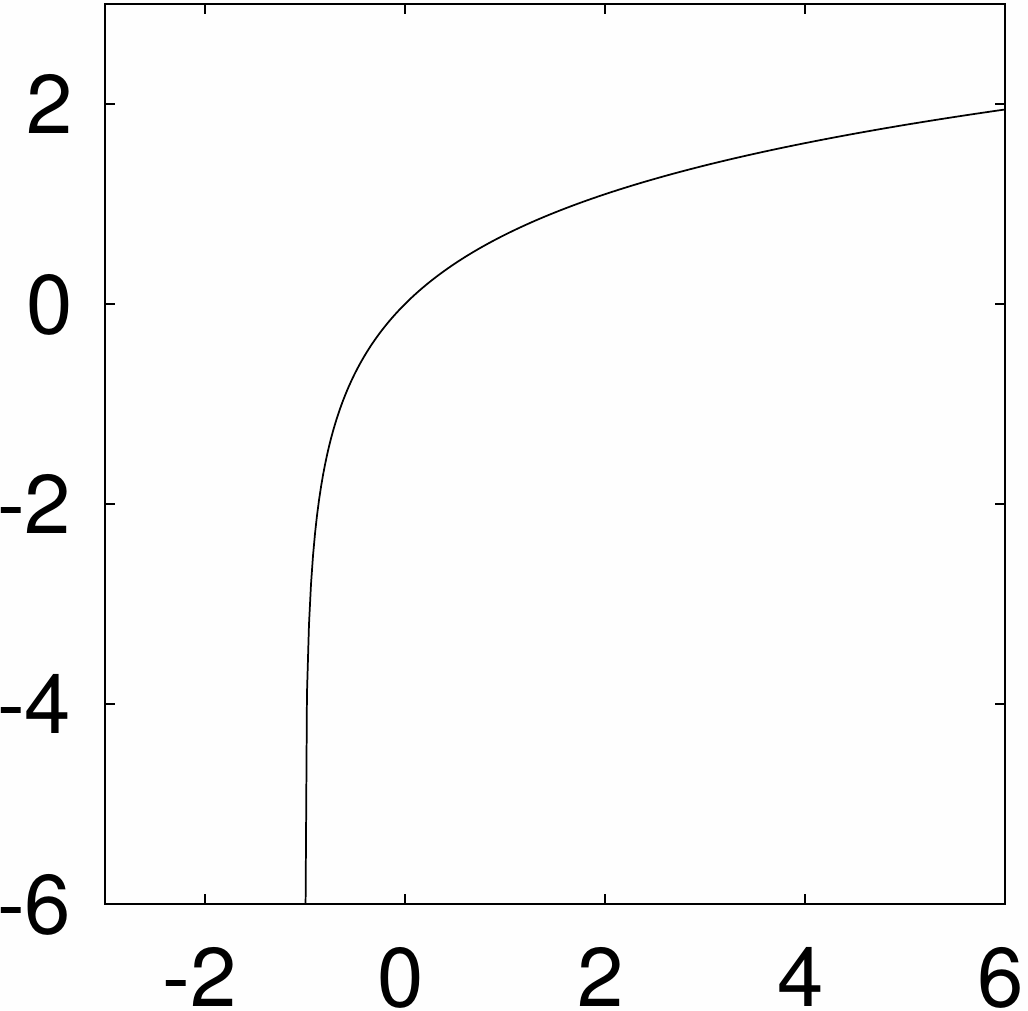}&\includegraphics[height=\unitlength]{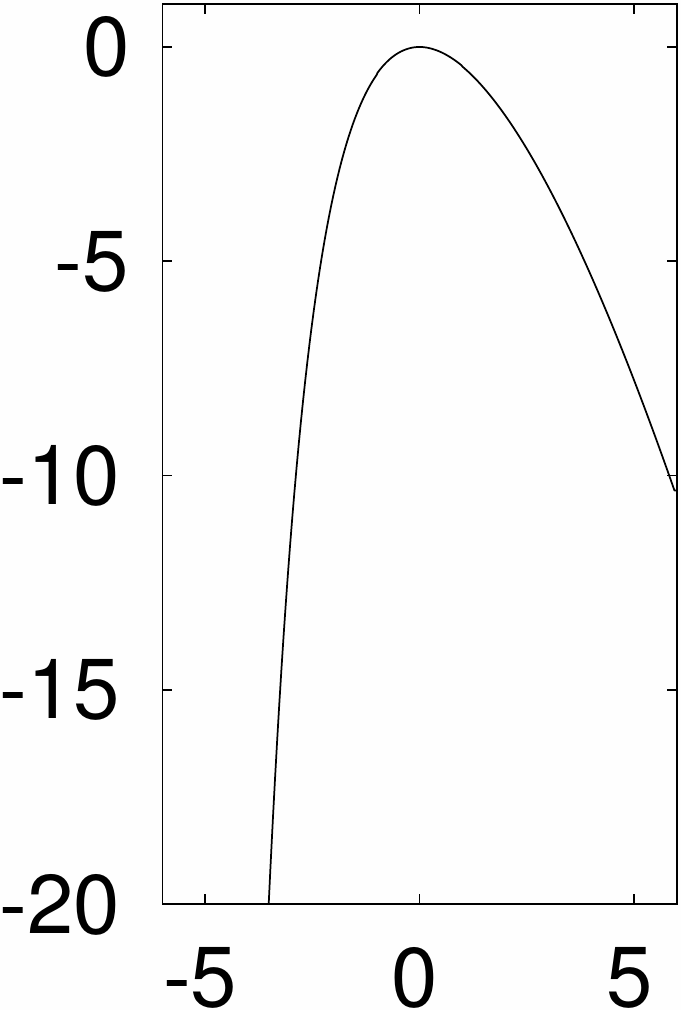}&\raisebox{.15\unitlength}{\includegraphics[height=.7\unitlength]{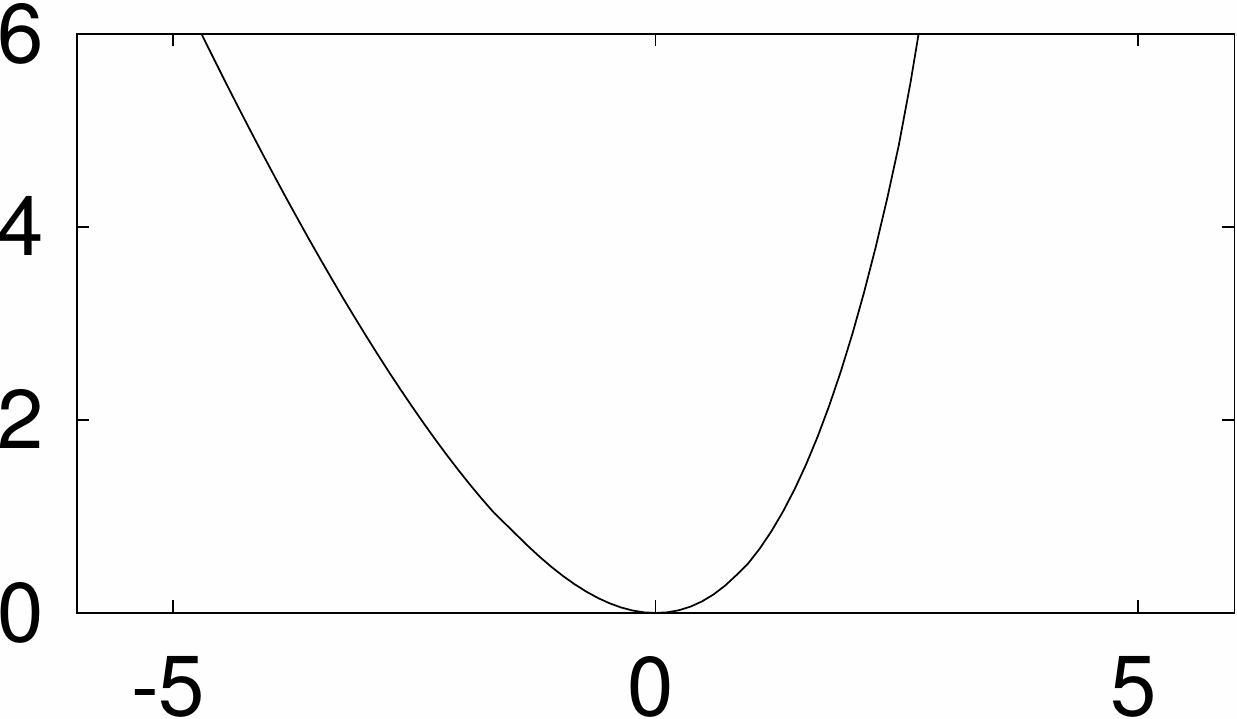}}\\
\raisebox{.45\unitlength}{$\SimE{1}$}&\includegraphics[height=\unitlength]{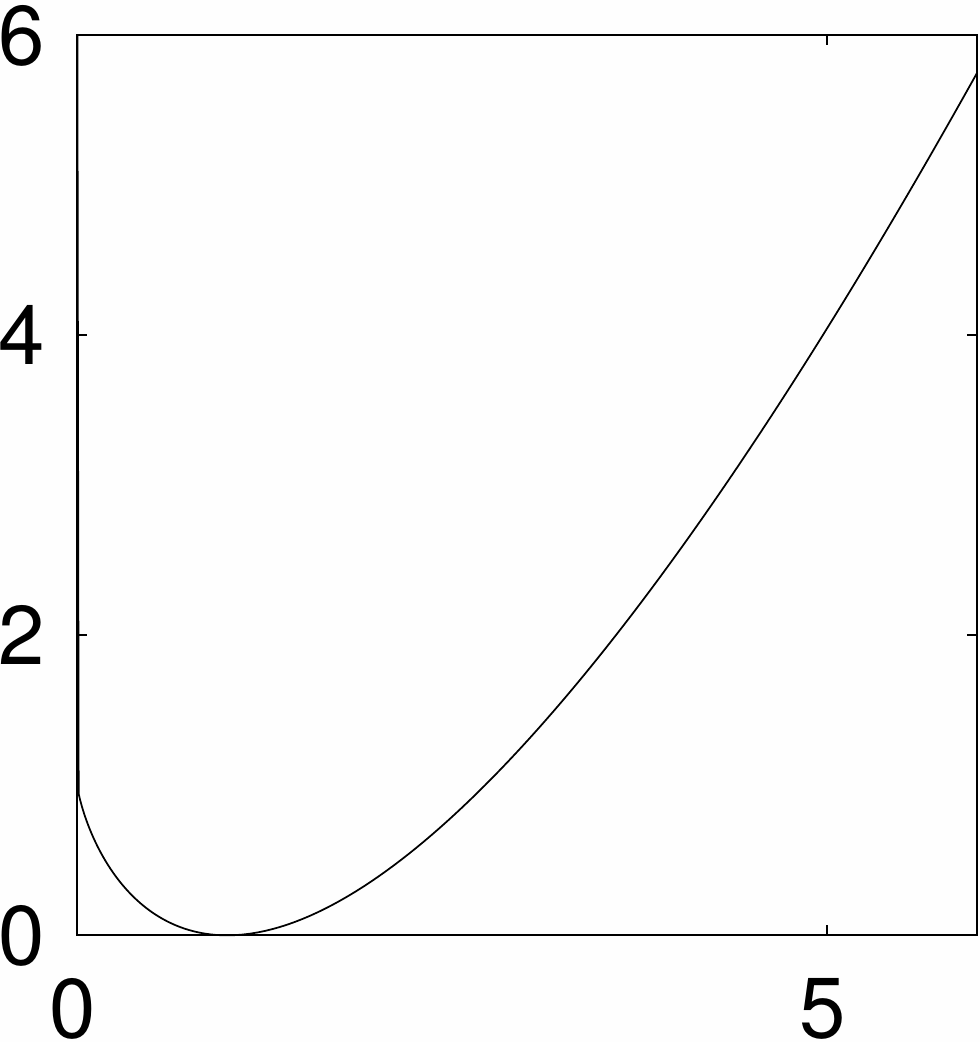}&\includegraphics[height=\unitlength]{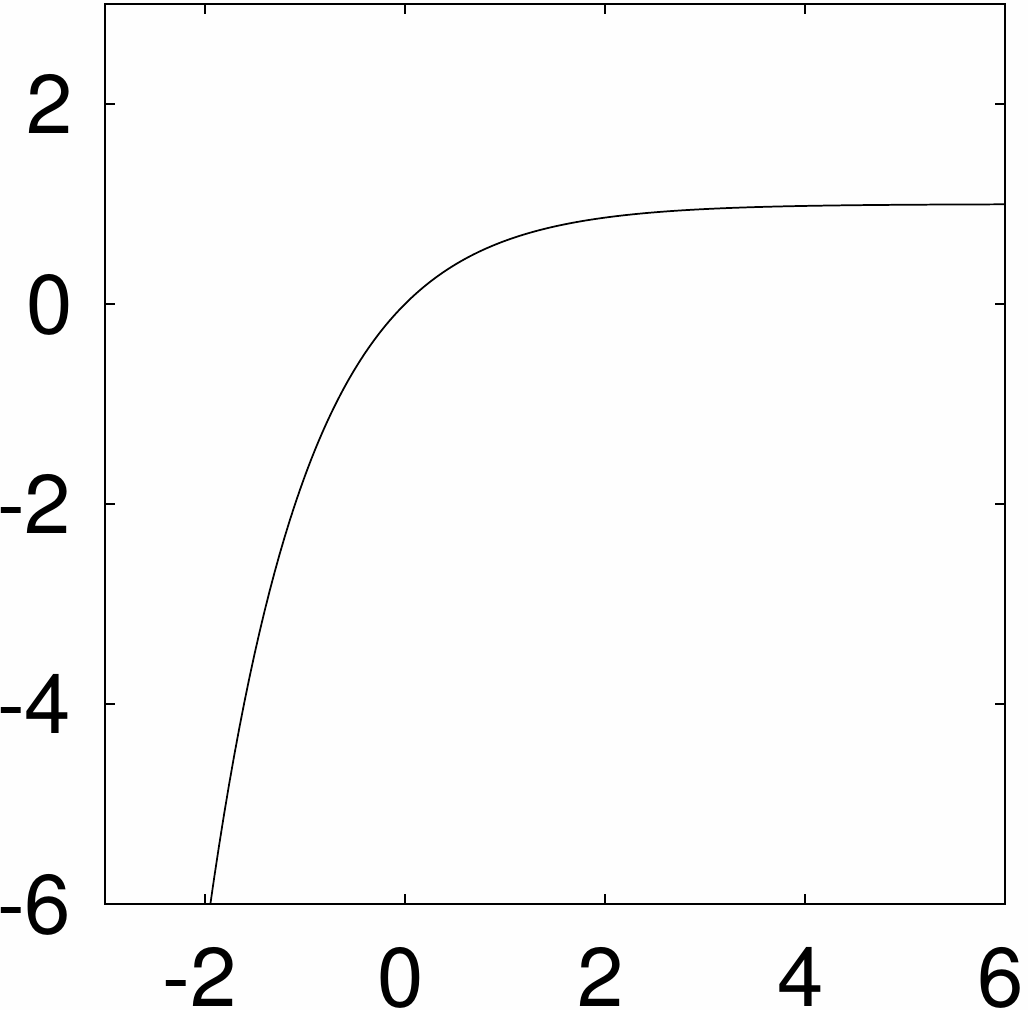}&\includegraphics[height=\unitlength]{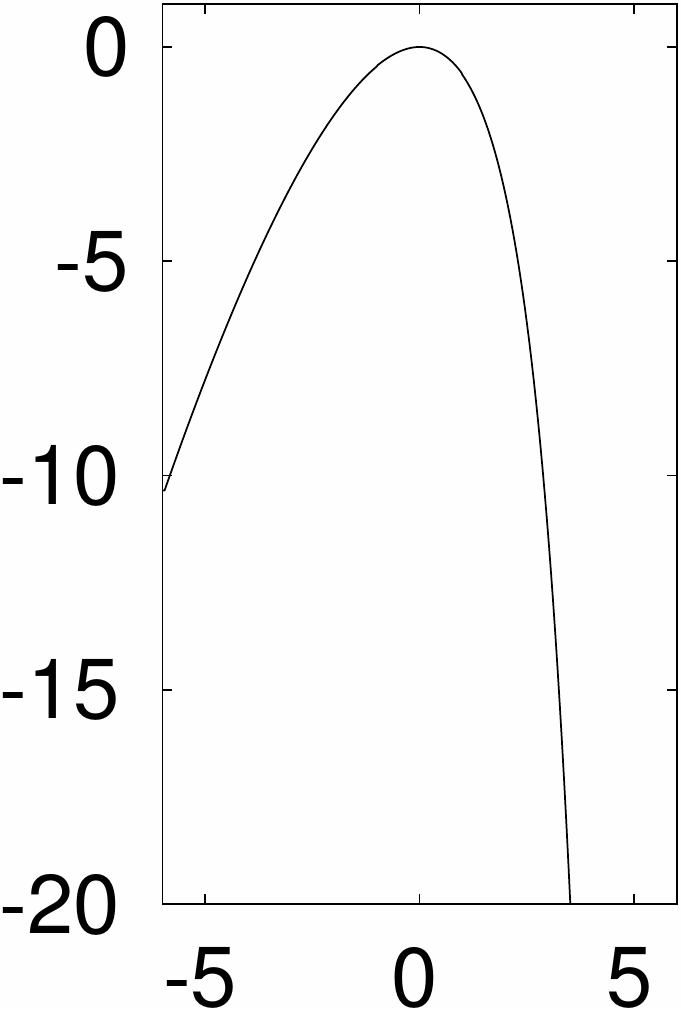}&\raisebox{.15\unitlength}{\includegraphics[height=.7\unitlength]{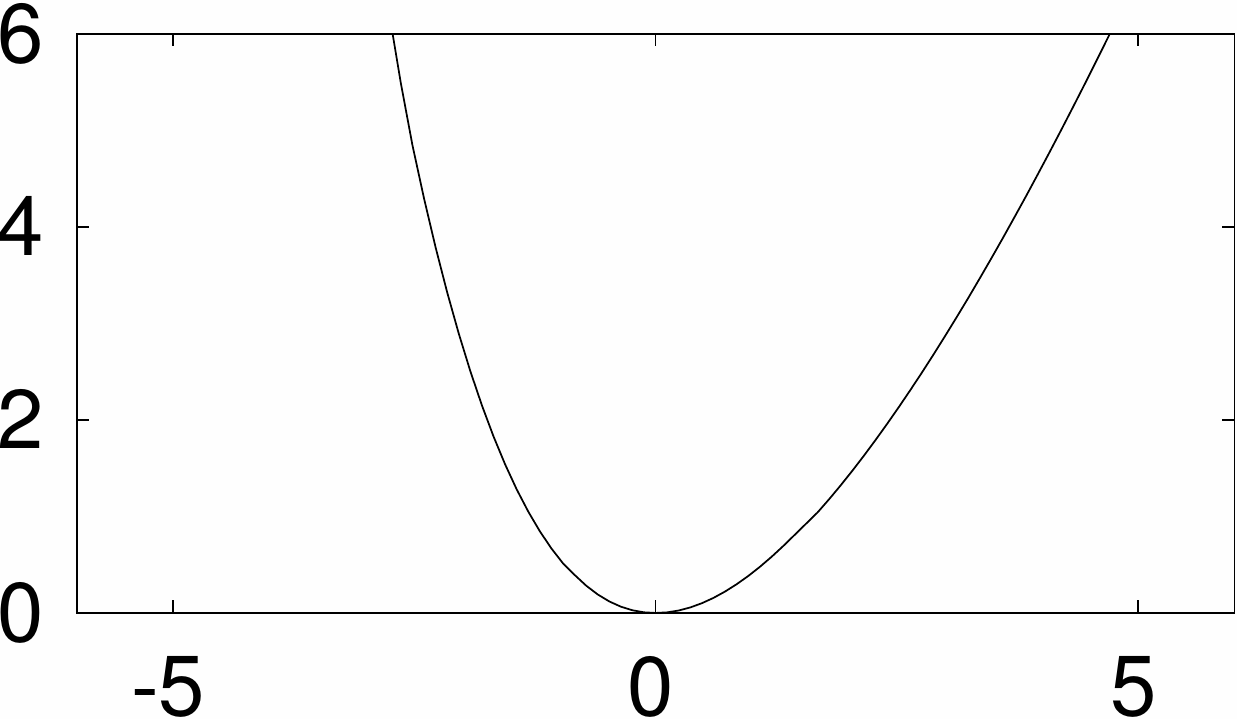}}\\
\raisebox{.45\unitlength}{$\SimLoc^{\tn{nd}}$}&\includegraphics[height=\unitlength]{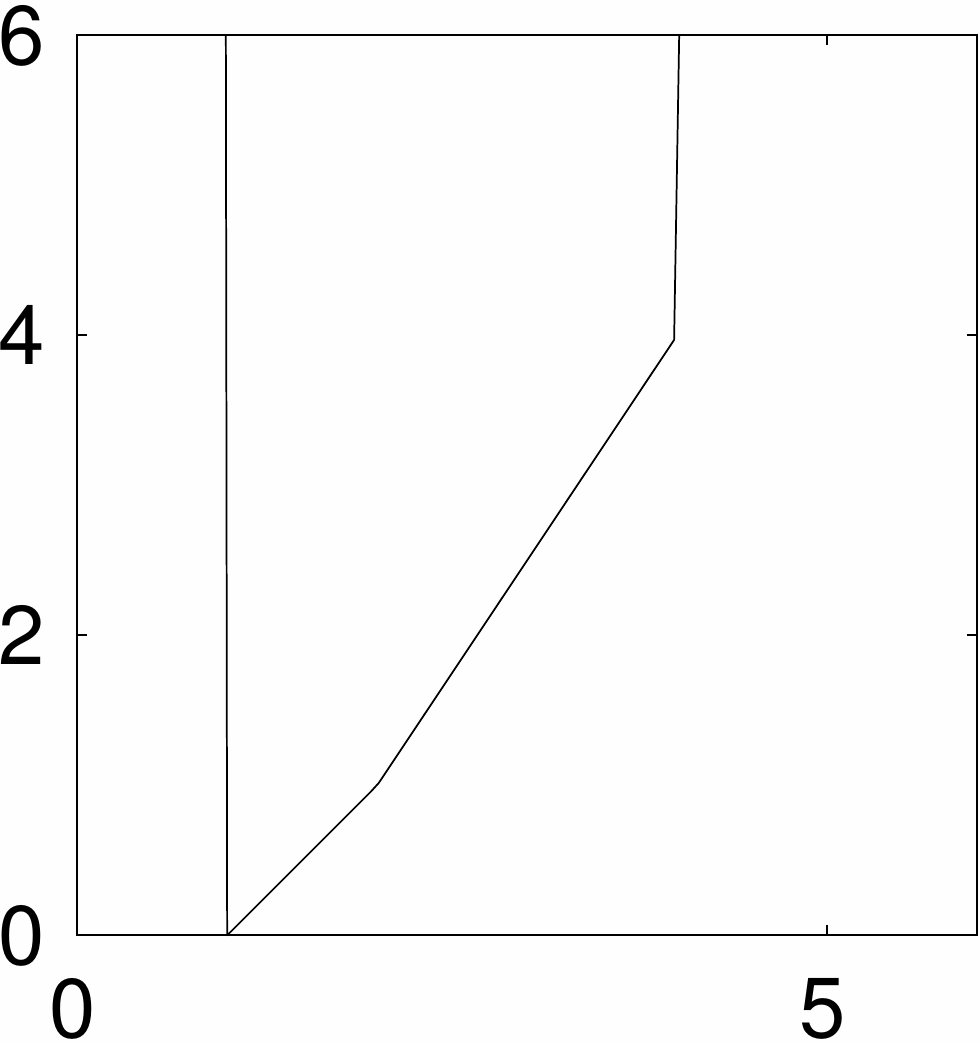}&\includegraphics[height=\unitlength]{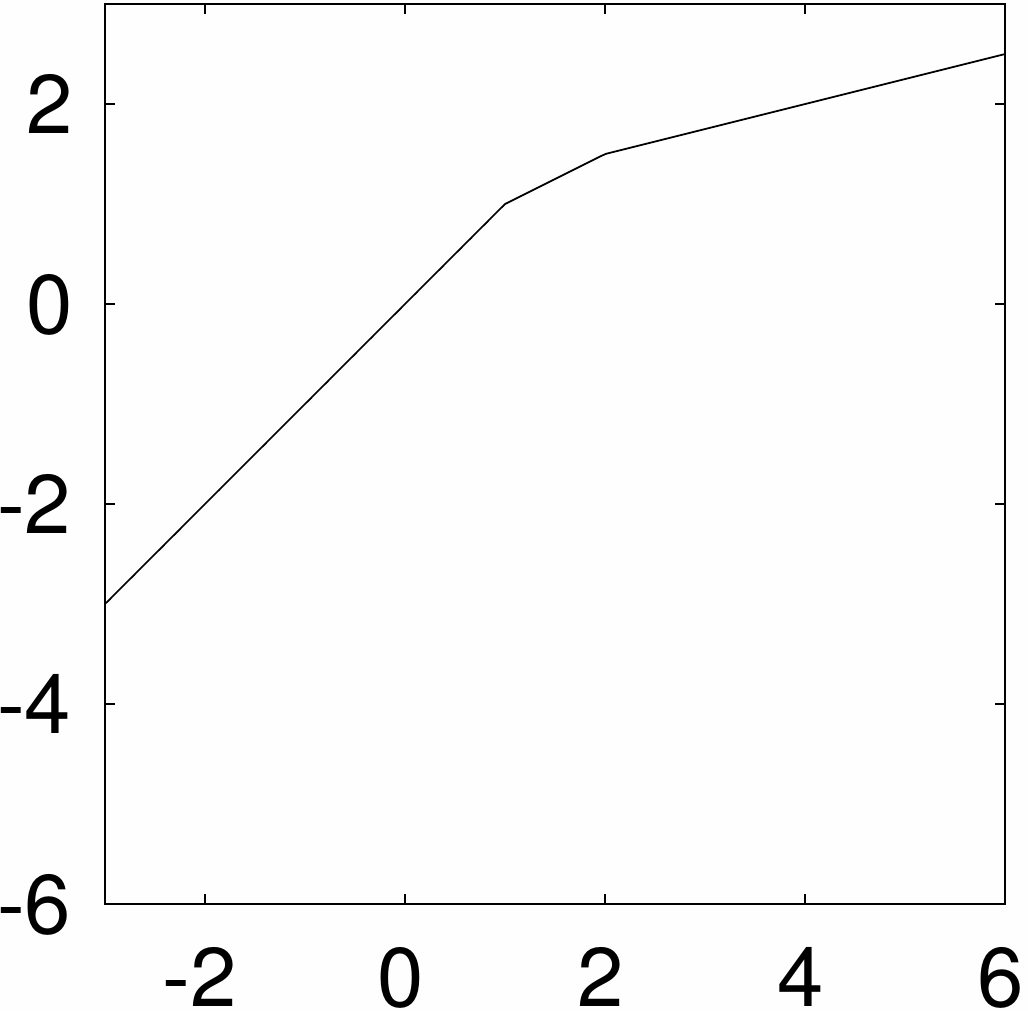}&\includegraphics[height=\unitlength]{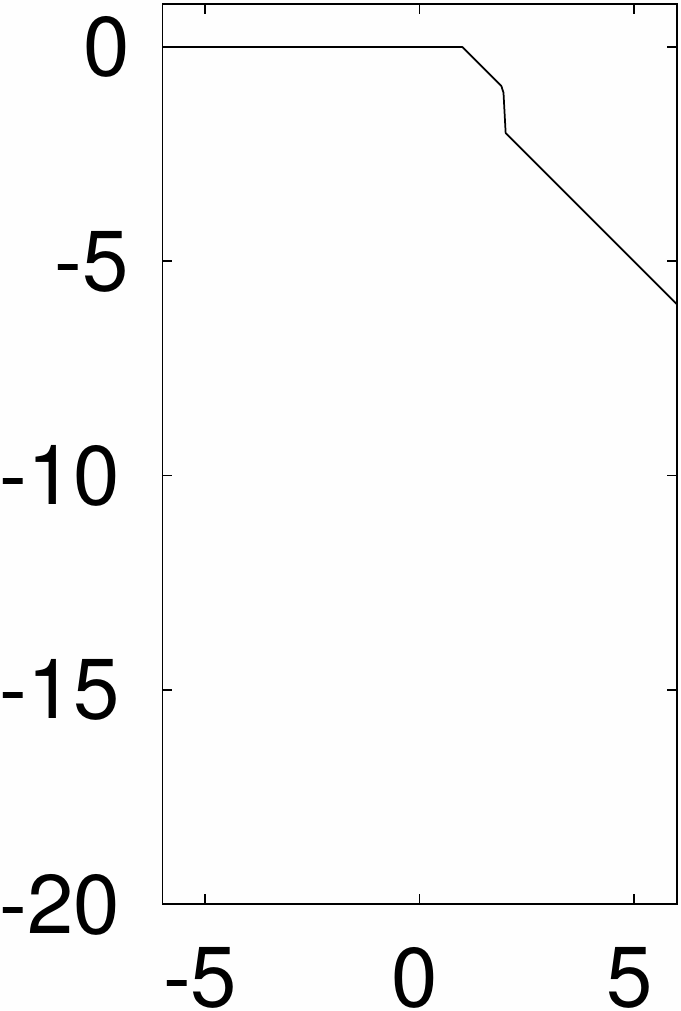}&
\end{tabular}
\caption{Graphs of the functions $\SimLocC$, $\HStat$, $\BDynMH$, and $\CDynM$ for the dissimilarities from Table~\ref{tab:discrepancies}.
The last row shows a dissimilarity $\SimLoc^{\tn{nd}}$ for which $q[\HStat]$ is not convex so that there is no corresponding dynamic model.}
\label{tab:discrepancyGraphs}
\end{table}

\begin{example}[Dynamic formulation for piecewise linear metric $\SimLoc^{\tn{pwl}}$]
The mass change penalty of $\SimLoc^{\tn{pwl}}$ from Table~\ref{tab:discrepancies} has four linear segments:
$\SimLocC(1,\cdot)$ is zero at $1$,
and it increases to the right first with slope $-\underline s$ and then with slope $-\underline d$
while it increases to the left first with slope $\overline s$ and then with slope $\overline d$.
The resulting $\HStat$ is piecewise linear as well and satisfies
\begin{equation*}
\HStatInv(z)=-\underline s+\tfrac1a(z+\underline s)
\quad\text{for }z\in[-\underline s,-\underline d]\,,\qquad
\HStat(z)=\overline s+b(z-\overline s)
\quad\text{for }z\in[\overline s,\overline d]\,.
\end{equation*}
Note that $\HStatInv^j(z)=\tfrac1{a^j}z-(1-\tfrac1{a^j})\underline s$ for $z\in[\underline d,\underline s]$ and $\HStat^j(z)=b^jz+(1-b^j)\overline s$ for $z\in[\overline s,\overline d]$.
\thref{thm:staticAsDynamic}\eqref{enm:uniqueness} thus implies
\begin{equation*}
\BDynMH(z)
=\begin{cases}
-\infty&\text{if }z\notin[\underline d,\overline d],\\
(\underline s-z)\log\frac1a&\text{if }z\in[\underline d,\underline s],\\
0&\text{if }z\in[\underline s,\overline s],\\
(z-\overline s)\log b&\text{if }z\in[\overline s,\overline d]
\end{cases}
\end{equation*}
so that $\CDynM$ can readily be computed via $\CDynM(\rho,\zeta)=\sup\{\rho\BDynMH(z)+\zeta z\ |\ z\in\R\}$.
Note that the mass change penalties $\SimDisc$ and $\SimTV$ can be seen as special cases
with $-\underline d=-\underline s=\overline s=\overline d=\infty$ and $-\underline d=-\underline s=\overline s=\overline d=1$, respectively.
\end{example}

\begin{example}[Dynamic formulation for Hellinger metric $\SimFR$]
The squared Hellinger metric $\SimLocC(m_0,m_1)=(\sqrt{m_0}-\sqrt{m_1})^2$ readily results in $\HStat(z)=\HStatInv(z)=\frac z{1+z}$ for $z\in(-1,\infty)$.
Note $\HStat^j(z)=\HStatInv^j(z)=\frac z{1+jz}$, as can easily be shown by induction.
\thref{thm:staticAsDynamic}\eqref{enm:uniqueness} thus implies
\begin{equation*}
\BDynMH(z)
=\lim_{j\to\infty}\frac{\HStat^j(z)-\HStat^{j-1}(z)}{(\HStat^j)'(z)}
=\lim_{j\to\infty}\frac{z/(1+jz)-z/(1+(j-1)z)}{1/(1+jz)^2}
=-z^2
\end{equation*}
for $z\geq0$ and likewise $\BDynMH(z)=-z^2$ for $z\leq0$.
From this we can now readily compute
\begin{equation*}
\CDynM(\rho,\zeta)
=\sup\{\rho\BDynMH(z)+\zeta z\ |\ z\in\R\}
=\begin{cases}
\infty&\text{if }\rho<0\,,\\
\frac{\zeta^2}{4\rho}&\text{else.}
\end{cases}
\end{equation*}
\end{example}

\begin{example}[Dynamic formulation for Jensen--Shannon divergence $\SimJS$]
The Jensen--Shannon divergence $\SimLocC(m_0,m_1)=m_0\log_2\frac{2m_0}{m_0+m_1}+m_1\log_2\frac{2m_1}{m_0+m_1}$ results in $\HStat(z)=\HStatInv(z)=\log_2(2-\frac1{2^z})$ on $(-1,\infty)$.
Note that $\HStat^j(z)=\HStatInv^j(z)=\log_2\frac{(j+1)2^z-j}{j2^z-(j-1)}$.
Thus \thref{thm:staticAsDynamic}\eqref{enm:uniqueness} implies
\begin{equation*}
\BDynMH(z)
=\lim_{j\to\infty}\frac{\HStat^j(z)-\HStat^{j-1}(z)}{(\HStat^j)'(z)}
=\frac{(2^{z/2}-2^{-z/2})^2}{-\log2}
\end{equation*}
for $z\geq0$ and likewise $\BDynMH(z)=\frac{(2^{z/2}-2^{-z/2})^2}{-\log2}$ for $z\leq0$,
from which we obtain
\begin{equation*}
\CDynM(\rho,\zeta)
=\sup\{\rho\BDynMH(z)+\zeta z\ |\ z\in\R\}
=\begin{cases}
\infty&\text{if }\rho<0\,,\\
\frac{(\sqrt g-1/\sqrt g)^2}{-\log 2}\rho+\zeta\log_2g&\text{else}
\end{cases}
\end{equation*}
with $g=\frac\zeta{2\rho}+\sqrt{\frac{\zeta^2}{4\rho^2}+1}$.
\end{example}

The graphs of the functions $\SimLocC$, $\HStat$, $\BDynMH$ and $\CDynM$ for the above examples and a few further ones are provided in Table~\ref{tab:discrepancyGraphs}.
\begin{remark}[Static model without dynamic correspondence]%
\thlabel{rem:NoDynamicCounterexample}%
There are static models which are not induced by a corresponding dynamic model.
Indeed, the last row of Table~\ref{tab:discrepancyGraphs} shows a static model with
\begin{align*}
	\SimLocC(1,z) & = \begin{cases}
		+\infty & \tn{if } z \notin [1,4], \\
		\max\{z-1,\tfrac{3}{2}z-2\} & \tn{if } z \in [1,4],
		\end{cases} &
	\HStat(z) & = \min\{z,\tfrac12 z+\tfrac12,\tfrac14 z+1\}\,.
\end{align*}
By \thref{thm:staticAsDynamic}\eqref{enm:necessary} it does not admit a corresponding dynamic model;
and indeed, the calculated $q[\HStat]$ shown in Table~\ref{tab:discrepancyGraphs} is nonconvex. Note that the non-existence of a dynamic model in this example is unrelated to the behaviour of $\SimLocC(1,z)$ for $z \leq 1$.
\end{remark}

As a final remark, let us mention that the cost \eqref{eq:StaticPrimalProblem} with $\SimTV$ is also known as the flat norm difference between two zero-dimensional currents or flat chains $\rho_0,\rho_1$ \cite[Sec.\,4.1-2]{Fe69}.
Hence, the unbalanced transport cost with $\SimTV$ metrizes flat convergence.
Since flat convergence of uniformly bounded measures coincides with weak convergence,
the following proposition implies that weak convergence is equivalent to convergence of the unbalanced transport cost
for all static models with dissimilarities from Table~\ref{tab:discrepancies} except for $\SimDisc$, $\SimE{0}$, and $\SimE{1}$.

\begin{proposition}[Topological equivalence]\thlabel{thm:topology}
The cost $W_S$ from \eqref{eq:StaticPrimalProblem} for the dissimilarities $\SimTV$, $\SimLoc^{\tn{pwl}}$ with $(\underline d,\underline s,a)=(-\overline d,-\overline s,\frac1b)$, $\SimFR$, $\SimJS$, or $\SimChi$ is a semimetric
on the space $\{\rho\in\measp(\Omega)\ |\ \rho(\Omega)\leq M\}$ of nonnegative measures with mass uniformly bounded by $M>1$.
Convergence in this semimetric is equivalent among all those dissimilarities.
\end{proposition}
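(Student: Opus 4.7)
My plan is to verify the semimetric axioms directly and then show that every $W_S$ in the list metrizes the same topology, namely narrow convergence on $\{\rho\in\measp(\Omega)\,:\,\rho(\Omega)\le M\}$; topological equivalence then follows immediately. The approach splits into three clearly separated steps.

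\textbf{Semimetric properties.} Each listed $\SimLocC$ (under the stated parameter symmetry $(\underline d,\underline s,a)=(-\overline d,-\overline s,1/b)$ for $\SimLoc^{\tn{pwl}}$) satisfies $\SimLocC(m_0,m_1)=\SimLocC(m_1,m_0)$, so that $\SimLoc$ is symmetric in its arguments; transposing both couplings under $(x,y)\mapsto(y,x)$ in \eqref{eq:StaticPrimalProblem} preserves admissibility and cost, giving $W_S$ symmetric. Nonnegativity is trivial and diagonal couplings show $W_S(\rho,\rho)=0$. Conversely, if $W_S(\rho_0,\rho_1)=0$ then \thref{prop:StaticEquivalence} supplies an optimal $(\pi_0,\pi_1)$ whose vanishing transport cost forces both to be supported on the diagonal, so that $\rho_0'=\rho_0$ and $\rho_1'=\rho_1$, while $\SimLoc(\rho_0,\rho_1)=0$ combined with $\SimLocC(m_0,m_1)>0$ for $m_0\neq m_1$ yields $\rho_0=\rho_1$.

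\textbf{Narrow $\Rightarrow W_S\to 0$.} Write $m_n=\rho_n(\Omega)$ and $m=\rho(\Omega)$, so $m_n\to m$ since $\mathbf{1}\in C(\Omega)$. If $m=0$, I would take $\pi_{0,n}$ as the diagonal of $\rho_n$ and $\pi_{1,n}=0$; the total cost $\SimLocC(1,0)\,m_n$ tends to $0$ since each listed penalty has finite $\SimLocC(1,0)$. If $m>0$, I would take $\pi_{0,n}$ as the diagonal of $\rho_n$ (so $\rho_{0,n}'=\rho_n$) and $\pi_{1,n}$ as an optimal $W_1$-coupling from the rescaled measure $\rho_{1,n}'=(m/m_n)\rho_n$ to $\rho$, both of mass $m$. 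The cost $m_n\SimLocC(1,m/m_n)+W_1((m/m_n)\rho_n,\rho)$ then vanishes: the first term by continuity of $\SimLocC(1,\cdot)$ at $1$, the second by standard continuity of $W_1$ under narrow convergence with equal total mass on the compact $\Omega$.

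\textbf{$W_S\to 0\Rightarrow$ narrow.} Through the dual formulation of \thref{prop:StaticEquivalence}, I would test with pairs $(\alpha,\beta)=(\veps\phi,-\veps\phi-\delta_\veps)$ for $\phi\in\Lip(\Omega)$, $\|\phi\|_\infty\le L$ and small $\veps>0$. Such a pair lies in $\BStat$ precisely when $\delta_\veps\ge\sup_{|z|\le\veps L}\bigl(z-\HStat(z)\bigr)$. Each listed $\HStat$ is concave with $\HStat(0)=0$ and $\HStat'(0)=1$, so the nonnegative defect $z-\HStat(z)$ vanishes to second order at $0$: for the $C^2$ cases $\SimFR,\SimJS,\SimChi$ a Taylor expansion yields $z-\HStat(z)\le Cz^2$ on a neighborhood of the origin, while for $\SimTV$ and $\SimLoc^{\tn{pwl}}$ one has the stronger $z-\HStat(z)\equiv 0$ on the flat segment $[-\zKink,\zKink]$ of half-length $\zKink>0$. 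Choosing $\delta_\veps=CL^2\veps^2$ respectively $\delta_\veps=0$ (for $\veps L\le\zKink$), the dual inequality reads
\begin{equation*}
\veps\,\Bigl|\int_\Omega\phi\,\d(\rho_0-\rho_1)\Bigr|\le W_S(\rho_0,\rho_1)+M\delta_\veps.
\end{equation*}
Optimizing over $\veps$ yields $|\int\phi\,\d(\rho_0-\rho_1)|\le 2L\sqrt{CM\,W_S}$ in the smooth cases and the linear bound $|\int\phi\,\d(\rho_0-\rho_1)|\le(L/\zKink)\,W_S$ in the piecewise linear cases. Hence $W_S(\rho_n,\rho)\to 0$ forces $\int\phi\,\d\rho_n\to\int\phi\,\d\rho$ for every $\phi\in\Lip(\Omega)$, and density of $\Lip(\Omega)$ in $C(\Omega)$ on the compact $\Omega$ combined with the uniform mass bound promotes this to narrow convergence.

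The main technical obstacle will be the quadratic control $z-\HStat(z)\le Cz^2$ uniformly near $0$ for each of the $C^2$ entries of the list; I expect this to reduce to a routine Taylor expansion using $\HStat'(0)=1$ and finite $|\HStat''(0)|$, noting that strict concavity of $\HStat$ at $0$ is inherited from the strict positivity of $\SimLocC$ off the diagonal. The piecewise linear/$\SimTV$ cases require no such estimate, being handled by the flat segment through the origin provided by the parameter symmetry.
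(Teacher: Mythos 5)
Your proof is correct but follows a genuinely different route from the paper's. The paper proves topological equivalence by a sandwich argument at the level of the local discrepancies: every listed $\SimLocC$ is bounded above by a multiple of $\SimTV$ (immediately giving one implication), and bounded below by a multiple of a fixed auxiliary piecewise function $\SimLocC_0$, for which a direct estimate yields $\SimLoc_0(\rho',\sigma')\geq\tfrac{1}{4M}\SimTV(\rho',\sigma')^2$; plugging the optimal couplings of one problem as competitors into the other then transfers $W_S\to 0$ between the two penalties. You instead show directly that each listed $W_S$ metrizes narrow convergence on $\{\rho\in\measp(\Omega):\rho(\Omega)\le M\}$, via explicit diagonal plus rescaled-$W_1$ couplings for the forward implication and the dual pair $(\veps\phi,-\veps\phi-\delta_\veps)$ with the bound $z-\HStat(z)\le C z^2$ (resp.\ the exact flat segment of $\HStat$ through $0$ for $\SimTV$ and $\SimLoc^{\tn{pwl}}$) for the reverse. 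Your approach is more structural: it identifies the common topology explicitly (which the paper only asserts in the remark preceding the proposition via the external fact that the flat norm metrizes weak convergence, rather than proving inside the proposition), and the dual estimate makes transparent exactly where the square root in the paper's $\varepsilon+\sqrt{4M\varepsilon}$ bound comes from ($\HStat$ being tangent to the diagonal to second order at the origin). One small thing worth flagging in your write-up: for $\SimLoc^{\tn{pwl}}$ the nontriviality of the flat segment of $\HStat$ is not an extra hypothesis but is forced by the required differentiability $\HStat'(0)=1$ together with piecewise linearity, so all admissible parameter choices are covered; and the finiteness of $W_S$ on the bounded-mass cone (needed for the semimetric claim) follows from $\SimLocC(1,0)<\infty$ for every entry in the list, which you use implicitly in the $m=0$ case of Step~2.
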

\begin{proof}
It is straightforward to see that the cost $W_S(\rho_0,\rho_1)$ is positive unless $\rho_0=\rho_1$, and that $W_S(\rho_0,\rho_1)=W_S(\rho_1,\rho_0)$.
Thus, $W_S$ is a semimetric.

Consider two sequences of nonnegative measures $\rho_j,\sigma_j$, $j=1,2,\ldots$.
Since all listed dissimilarities can be bounded above by a multiple of $\SimTV$,
convergence $W_S(\rho_j,\sigma_j)\to0$ for $\SimTV$ also implies $W_S(\rho_j,\sigma_j)\to0$ for any of the other dissimilarities.
On the other hand, all dissimilarities can be bounded below by a multiple of the dissimilarity
$\SimLoc(\rho_0,\rho_1)=\int_\Omega\SimLocC(\RadNik{\rho_0}{\gamma},\RadNik{\rho_1}{\gamma})\,\d\gamma$ for
\begin{equation*}
\SimLocC(m_0,m_1)
=\begin{cases}
\infty&\text{if }m_0<0\text{ or }m_1<0,\\
2(m_0-m_1)-m_1&\text{if }\frac{m_1}{m_0}<\frac12,\\
\frac{(m_0-m_1)^2}{m_1}&\text{if }\frac{m_1}{m_0}\in[\frac12,1),\\
\frac{(m_1-m_0)^2}{m_0}&\text{if }\frac{m_1}{m_0}\in[1,2),\\
2(m_1-m_0)-m_0&\text{if }\frac{m_1}{m_0}\geq2.\\
\end{cases}
\end{equation*}
Thus it remains to show that $W_S(\rho_j,\sigma_j)\to0$ for $\SimLoc$ implies $W_S(\rho_j,\sigma_j)\to0$ also for $\SimTV$.
For $\varepsilon\in(0,1)$ let $N$ be large enough such that $W_S(\rho_j,\sigma_j)<\varepsilon$ for $\SimLoc$ and all $j>N$.
Let $\rho'_j=P_Y\pi_0$ and $\sigma'_j=P_X\pi_1$ for the optimal $\pi_0$ and $\pi_1$ in \eqref{eq:StaticPrimalProblem} with $\SimLoc$
and abbreviate $\gamma=\rho'_j+\sigma'_j$ and $\tilde\Omega=\{x\in\Omega\ |\ \RadNik{\rho'_j}\gamma/\RadNik{\sigma'_j}\gamma\in(\frac12,2)\}$, then
\begin{multline*}
\SimLoc(\rho'_j,\sigma'_j)=
\int_\Omega\SimLocC(\RadNik{\rho'_j}\gamma,\RadNik{\sigma'_j}\gamma)\,\d\gamma
=\int_{\Omega\setminus\tilde\Omega}2|\RadNik{\rho'_j}\gamma-\RadNik{\sigma'_j}\gamma|-\min(\RadNik{\rho'_j}\gamma,\RadNik{\sigma'_j}\gamma)\,\d\gamma
+\int_{\tilde\Omega}\frac{|\RadNik{\rho'_j}\gamma-\RadNik{\sigma'_j}\gamma|^2}{\min(\RadNik{\rho'_j}\gamma,\RadNik{\sigma'_j}\gamma)}\,\d\gamma\\
\geq\int_{\Omega\setminus\tilde\Omega}|\RadNik{\rho'_j}\gamma-\RadNik{\sigma'_j}\gamma|\,\d\gamma
+\int_{\tilde\Omega}|\RadNik{\rho'_j}\gamma-\RadNik{\sigma'_j}\gamma|^2\,\d\gamma
\geq\int_{\Omega\setminus\tilde\Omega}|\RadNik{\rho'_j}\gamma-\RadNik{\sigma'_j}\gamma|\,\d\gamma
+\frac1{\gamma(\tilde\Omega)}\left(\int_{\tilde\Omega}|\RadNik{\rho'_j}\gamma-\RadNik{\sigma'_j}\gamma|\,\d\gamma\right)^2\\
\geq\int_{\Omega\setminus\tilde\Omega}|\RadNik{\rho'_j}\gamma-\RadNik{\sigma'_j}\gamma|\,\d\gamma
+\frac1{2M}\left(\int_{\tilde\Omega}|\RadNik{\rho'_j}\gamma-\RadNik{\sigma'_j}\gamma|\,\d\gamma\right)^2
\geq\frac1{4M}\left(\int_{\Omega}|\RadNik{\rho'_j}\gamma-\RadNik{\sigma'_j}\gamma|\,\d\gamma\right)^2
=\frac1{4M}\SimTV(\rho'_j,\sigma'_j)^2\,.
\end{multline*}
Thus, the cost with $\SimTV$ satisfies $W_S(\rho_j,\sigma_j)\leq\varepsilon+\sqrt{4M\varepsilon}$ for all $j>N$.
The result follows now from the arbitrariness of $\varepsilon$.
\end{proof}

The same argument shows that weak convergence is also equivalent to convergence with respect to the cost $W_S$ using the dissimilarities $\SimE{p}$ with $p\in(0,1)$ or a general $\SimLoc^{\tn{pwl}}$.
However, those costs are not symmetric and thus not a semimetric.

\section{Conclusion}
We considered a dynamic formulation of general unbalanced transport models with $1$-homo\-gen\-eous transport costs as in the Wasserstein-$1$ distance.
We showed its equivalence to a corresponding static formulation,
and we showed how the dynamic and static model parameters (the functions $\CDynM$ and $\SimLocC$) as well as the dynamic and static optimizers relate to each other.
This is particularly relevant from an application viewpoint,
since the static models have the advantage of being much lower-dimensional (in particular in their dual formulation in \thref{prop:StaticEquivalence}) and thus easier to compute.

We also answered the question which static models allow a dynamic formulation, arriving at a necessary and sufficient condition that can for instance be checked numerically.
Due to our focus on $W_1$-type models, this characterization becomes much simpler than a characterization of what dynamic and static unbalanced transport models correspond to each other in the general case as in \cite[Thm.\,4.3]{ChizatOTFR2015}.
An interesting, non-obvious result of this analysis is that the class of static models is richer than the class of dynamic models,
even though one might have thought that modelling the full time dynamics allows more modelling freedom.
This phenomenon is connected to the fact that the optimal dynamics are controlled by an autonomous ordinary differential equation (ode) \eqref{eqn:FlowIVP} describing the mass change,
which imposes some a priori regularity on the resulting mass change.
In other settings without our particular $W_1$ structure, a similar situation should be expected.
Allowing the mass change penalty to change over time (thereby turning the above autonomous into a nonautonomous ode)
might remove this asymmetry between dynamic and static models and could be further investigated.

The analysis of which static models allow a dynamic formulation turns out to be rather decoupled from the actual transport problem.
Thus the question arises whether it might have a broader applicability.
Essentially it answers the question what mass change penalties (that is, convex, $1$-homogeneous premetrics)
are generated by geodesic paths with respect to a convex $1$-homogeneous infinitesimal cost in the space of positive measures.
A completely different, dynamical systems or inverse problems perspective is the following:
If the static model has an equivalent dynamic formulation, then we showed that the static model function $\HStat$ is the time-one map of the autonomous scalar ode governed by the dynamic model function $\BDynMH$.
Of course, for any time-one map there are many possible compatible right-hand sides of the ode.
Thus we essentially answered the question which monotone maps $\HStat:\R\to\R$ with a single interval of fixed points are the time-one map of an autonomous ode with concave right-hand side $\BDynMH$,
and we provided an explicit formula to recover this $\BDynMH$.

We furthermore gave a detailed characterization of the structure of dynamic optimizers.
This characterization heavily exploits the $W_1$-type nature of our problem,
and it will be fundamentally different for other dynamic unbalanced transport models, in which typically mass change and transport occur simultaneously.
In such cases no clean separation of dynamic processes into transport and mass change can be expected.
This clean separation is actually advantageous from the application perspective,
since it avoids that even a simple, mere transport task will be accompanied by mass loss throughout the first half of the transport and subsequent mass gain throughout the second half,
just to reduce transport costs.
A similar situation of clean separation between both processes is expected if not the transport costs, but rather the mass change costs are restricted to being $1$-homogeneous, a case which seems worth investigating.

\appendix
\section{A version of the Stone--Weierstra\ss\ theorem}

Throughout, let $\Omega\subset\R^n$ be the closure of an open bounded connected set, and let the metric $d$ on $\Omega$ be induced by shortest curves inside $\Omega$ (those exist, since $\Omega$ is geodesically complete).
Let $B_\delta(x)\subset\R^n$ denote the closed ball of radius $\delta>0$ around $x\in\R^n$.

\begin{lemma}[Approximation property of distance]
For $\delta>0$ denote the $\delta$-dilation of $\Omega$ by $\Omega_\delta=\{x\in\R^n\ |\ \exists y\in\Omega:\|x-y\|\leq\delta\}$, and denote the metric induced by shortest curves on $\Omega_\delta$ by $d_\delta$.
For any $x,y\in\Omega$ we have $d_\delta(x,y)\to d(x,y)$ monotonically from below as $\delta\to0$.
\end{lemma}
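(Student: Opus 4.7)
The plan is to prove the three assertions separately: the bound $d_\delta(x,y)\le d(x,y)$, monotonicity of $\delta\mapsto d_\delta(x,y)$ in $\delta\searrow 0$, and the convergence $d_\delta(x,y)\to d(x,y)$. The first two follow immediately from set inclusions. Since $\Omega\subset\Omega_\delta$, every rectifiable curve in $\Omega$ from $x$ to $y$ is admissible for $d_\delta$, yielding $d_\delta(x,y)\le d(x,y)$; and for $0<\delta_1<\delta_2$ we have $\Omega_{\delta_1}\subset\Omega_{\delta_2}$, so $d_{\delta_2}(x,y)\le d_{\delta_1}(x,y)$. Hence $\delta\mapsto d_\delta(x,y)$ is monotonically increasing as $\delta\searrow 0$ and dominated by $d(x,y)$, so the limit $L:=\lim_{\delta\searrow 0}d_\delta(x,y)$ exists and satisfies $L\le d(x,y)$.

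For the reverse inequality I would argue by a compactness/lower semicontinuity argument. Fix a sequence $\delta_n\searrow 0$; for each $n$ pick a $(1/n)$-near-minimizing Lipschitz curve $\gamma_n:[0,1]\to\Omega_{\delta_n}$ from $x$ to $y$, parameterized proportional to arc length, so that its length $L_n$ satisfies $L_n\le d_{\delta_n}(x,y)+1/n\le d(x,y)+1$. The constant-speed parameterization gives a uniform Lipschitz bound on the family $\{\gamma_n\}$, so Arzel\`a--Ascoli yields a subsequence (not relabeled) converging uniformly to a Lipschitz curve $\gamma:[0,1]\to\R^n$ with $\gamma(0)=x$ and $\gamma(1)=y$. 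For each $t$, $\gamma_n(t)\in\Omega_{\delta_n}$ lies within distance $\delta_n$ of some point of $\Omega$, hence $\gamma(t)$ is a limit of points of $\Omega$ and lies in $\Omega$ by closedness. Standard lower semicontinuity of arc length under uniform convergence then gives $d(x,y)\le\mathrm{length}(\gamma)\le\liminf_n L_n=L$.

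The step I expect to be the main obstacle is verifying that the uniform limit really lies in $\Omega$, for one might worry about pathological boundary behavior of $\Omega$ (inward cusps or slits) where a naive ``projection'' of $\gamma_n$ onto $\Omega$ could inflate length arbitrarily. My argument deliberately sidesteps any projection and instead takes the limit of the curves themselves; the candidate limit automatically lives in the closed set $\Omega$, and the lower semicontinuity of length under uniform convergence (which can be proved by writing length as the supremum over polygonal approximations on finite partitions and passing to the limit termwise) handles the length bound without requiring extra boundary regularity. Existence of a genuine minimizer inside $\Omega_{\delta_n}$ is not needed since $(1/n)$-minimizers suffice.
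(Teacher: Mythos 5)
Your proof is correct, but it takes a different route from the paper, which invokes the Blaschke compactness theorem for Hausdorff convergence of compact sets and Go\l\c{a}b's theorem on lower semicontinuity of $\mathcal H^1$ under Hausdorff convergence. You instead work with parameterized curves: constant-speed reparameterization plus Arzel\`a--Ascoli gives a uniformly convergent subsequence, closedness of $\Omega$ shows the limit curve stays inside, and classical lower semicontinuity of arc length under uniform convergence finishes the argument. The two approaches are morally the same compactness-plus-semicontinuity scheme, but yours uses more elementary machinery (no Hausdorff-metric compactness, no rectifiability theory for general compact connected sets), and your substitution of $(1/n)$-near-minimizers for exact geodesics neatly avoids the need to prove existence of a length-minimizing curve in $\Omega_{\delta_n}$, which the paper takes for granted. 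The paper's version is shorter because it offloads the work to the cited theorems; yours is more self-contained. Both are valid.
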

\begin{proof}
Obviously, $d_\delta(x,y)$ is monotonically increasing as $\delta\to0$ and satisfies $d_\delta(x,y)\leq d(x,y)$.
Let $\gamma_\delta\subset\Omega_\delta$ be a curve connecting $x$ and $y$ with minimal length, that is, $\mathcal H^1(\gamma_\delta)=d_\delta(x,y)$.
By the Blaschke Compactness Theorem \cite[Thm.\,4.4.15]{AmTi04} there exists a subsequence $\gamma_{\delta_i}$ converging in the Hausdorff distance to a compact set $\gamma$ connecting $x$ and $y$.
By Go\l\c{a}b's Theorem \cite[Thm.\,4.4.17]{AmTi04}, $\gamma$ is connected with $\mathcal H^1(\gamma)\leq\liminf_{i\to\infty}\mathcal H^1(\gamma_{\delta_i})=\liminf_{i\to\infty}d_{\delta_i}(x,y)$.
Furthermore, $\gamma\subset\Omega$ so that $d(x,y)\leq\liminf_{i\to\infty}d_{\delta_i}(x,y)$, concluding the proof.
Indeed, if $z\in\gamma\setminus\Omega$, let $D=\mathrm{dist}(z,\Omega)$.
Then for all $i$ large enough there is some $z_i\in\gamma_{\delta_i}$ with $\|z-z_i\|\leq\frac D4$, which contradicts $\gamma_{\delta_i}\subset\Omega_{\delta_i}$ for $\delta_i<\frac D4$.
\end{proof}

\begin{lemma}[Point separation]\thlabel{thm:pointSeparation}
For any nonidentical $x,y\in\Omega$ and $a,b\in\R$ with $|a-b|<d(x,y)$ there is a function $f\in C^1(\Omega)\cap\Lip(\Omega)$ with $f(x)=a$ and $f(y)=b$.
\end{lemma}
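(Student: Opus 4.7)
The plan is to produce $f$ as an affine rescaling of a mollified version of the distance function $z\mapsto d(x,z)$. The scaling factor will be chosen strictly below $1$ in absolute value (which is possible because $|a-b|<d(x,y)$) so that the resulting function is $1$-Lipschitz in the sense of $\Lip(\Omega)$. The main obstacle is that the raw distance function is only $1$-Lipschitz with respect to $d$ (not with respect to the Euclidean metric), so ordinary mollification need not preserve the Lipschitz bound; this is why we first lift to the slightly dilated domain $\Omega_\delta$ and exploit that geodesics in $\Omega$ remain in $\Omega_\delta$ after translation by small vectors.

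First, using the previous lemma, fix $\delta>0$ small enough that $d_\delta(x,y)$ is still close enough to $d(x,y)$ to have $|a-b|<d_\delta(x,y)$; set $g_\delta(z)=d_\delta(x,z)$ on $\Omega_\delta$, which is $1$-Lipschitz with respect to $d_\delta$. Then pick a standard smooth mollifier $\eta_{\delta'}$ supported in $B_{\delta'}(0)$ with $\delta'<\delta/2$ and define
\begin{equation*}
\tilde g(z)=\int_{\R^n} g_\delta(z-w)\,\eta_{\delta'}(w)\,\d w\qquad\text{for }z\in\Omega,
\end{equation*}
which is well-defined since $z-w\in\Omega_\delta$ whenever $z\in\Omega$, $\|w\|\leq\delta'$. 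By standard properties of convolution, $\tilde g$ is smooth on an open neighborhood of $\Omega$.

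The key estimate is that $\tilde g$ is $1$-Lipschitz with respect to $d$ on $\Omega$. Given $z_1,z_2\in\Omega$ and $w\in B_{\delta'}(0)$, pick a shortest path $\gamma:[0,1]\to\Omega$ between $z_1$ and $z_2$ of length $d(z_1,z_2)$. Then $\gamma-w$ is a path from $z_1-w$ to $z_2-w$ contained in $\Omega-w\subset\Omega_{\delta'}\subset\Omega_\delta$ of the same length, hence $d_\delta(z_1-w,z_2-w)\leq d(z_1,z_2)$. Using that $g_\delta$ is $1$-Lipschitz with respect to $d_\delta$ and integrating against $\eta_{\delta'}$ yields $|\tilde g(z_1)-\tilde g(z_2)|\leq d(z_1,z_2)$.

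Finally, set
\begin{equation*}
\mu=\frac{b-a}{\tilde g(y)-\tilde g(x)},\qquad f(z)=a+\mu\,\bigl(\tilde g(z)-\tilde g(x)\bigr),
\end{equation*}
noting that $\tilde g(x)$ is close to $g_\delta(x)=0$ and $\tilde g(y)$ is close to $g_\delta(y)=d_\delta(x,y)$, which itself is close to $d(x,y)$ for small $\delta,\delta'$. Hence for $\delta,\delta'$ chosen small enough, $|\mu|\leq|a-b|/d_\delta(x,y)+o(1)<1$. By construction $f(x)=a$, $f(y)=b$, $f\in C^1(\Omega)$, and $f$ is $|\mu|$-Lipschitz (and hence $1$-Lipschitz) with respect to $d$, so $f\in C^1(\Omega)\cap\Lip(\Omega)$.
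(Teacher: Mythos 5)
Your proposal is correct and follows essentially the same route as the paper's proof: mollify the distance function $g_\delta=d_\delta(x,\cdot)$ on the dilated domain $\Omega_\delta$ and then affinely rescale. The only stylistic difference lies in the Lipschitz estimate for the mollification, where you translate a geodesic in $\Omega$ to a competitor path in $\Omega_\delta$ and use $1$-Lipschitzness of $g_\delta$ with respect to $d_\delta$ directly, while the paper averages the a.e.\ gradient of $g_\delta$ over Euclidean balls contained in $\Omega_\delta$; both arguments are valid.
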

\begin{proof}
Without loss of generality $b>a$.
For $\delta>0$ define $g_\delta:\Omega_\delta\to\R$, $g_\delta(z)=d_\delta(x,z)$, and extend $g_\delta$ by zero outside $\Omega_\delta$.
Then convolve $g_\delta$ with a smooth mollifier of support in $B_{\frac\delta2}(0)$ to obtain $h_\delta\in C^1(\R^n)$.
The value of $\nabla h_\delta$ at each point $z\in\Omega$ is the weighted mean of $\nabla g_\delta$ over $B_{\frac\delta2}(z)$,
which lies completely inside $\Omega_\delta$ so that $g_\delta\in\Lip(B_{\frac\delta2}(z))$ and thus $\|\nabla h_\delta(z)\|\leq1$ (since $g_\delta$ satisfied the same condition).
Thus $h_\delta|_{\Omega}\in C^1(\Omega)\cap\Lip(\Omega)$.
Finally, define $f_\delta:\Omega\to\R$, $f_\delta(z)=a+(h_\delta(z)-h_\delta(x))\frac{b-a}{h_\delta(y)-h_\delta(x)}$.
By definition, $f_\delta(x)=a$ and $f_\delta(y)=b$.
Furthermore, as $\delta\to0$, $h_\delta\to g_\delta$ uniformly on $\Omega$ and $g_\delta(y)\to d(x,y)$ by the previous lemma.
Thus, $\frac{b-a}{h_\delta(y)-h_\delta(x)}\leq1$ for small enough $\delta$ so that $f_\delta\in C^1(\Omega)\cap\Lip(\Omega)$.
\end{proof}

\begin{lemma}[Smoothed Heaviside function]
For any $\varepsilon,\delta>0$ there exists a function $H_{\varepsilon,\delta}\in C^\infty(\R)$ with $|xH_{\varepsilon,\delta}'(x)|\leq\delta$ for all $x\in\R$
and $H_{\varepsilon,\delta}(x)=0$ for all $x\leq-\varepsilon$ and $H_{\varepsilon,\delta}(x)=1$ for all $x\geq\varepsilon$.
\end{lemma}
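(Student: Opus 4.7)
The key observation is that the bound $|xH'(x)|\leq\delta$ is logarithmic in nature: for $x>0$, the substitution $x=e^s$ turns it into $|\partial_s H(e^s)|\leq\delta$, so $H$ may only rise by a bounded amount over each multiplicative factor. Thus it is natural to build $H_{\varepsilon,\delta}$ as a smooth function of $\log|x|$, separately on each side of $0$, and to let it equal the midpoint value $1/2$ on a sufficiently small neighbourhood of the origin so that the entire jump of size $1$ is spread over a logarithmically long interval.

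Concretely, my plan is as follows. First, fix once and for all an auxiliary function $g\in C^\infty(\R)$ satisfying $g(s)=0$ for $s\leq0$, $g(s)=1$ for $s\geq1$, $0\leq g\leq1$, and $\|g'\|_\infty\leq M$ for some constant $M$; such a $g$ can be produced by convolving the indicator of $[\tfrac14,\tfrac34]$ with a standard mollifier and is flat to infinite order at $s=0$ and at $s=1$. Then set $x_0=\varepsilon\exp(-M/\delta)\in(0,\varepsilon)$ and define
\begin{equation*}
H_{\varepsilon,\delta}(x)=\begin{cases}
0 & \text{if }x\leq-\varepsilon,\\
\tfrac12-\tfrac12\,g\!\left(\dfrac{\delta}{M}\log\dfrac{|x|}{x_0}\right) & \text{if }-\varepsilon\leq x\leq -x_0,\\
\tfrac12 & \text{if }|x|\leq x_0,\\
\tfrac12+\tfrac12\,g\!\left(\dfrac{\delta}{M}\log\dfrac{x}{x_0}\right) & \text{if }x_0\leq x\leq\varepsilon,\\
1 & \text{if }x\geq\varepsilon.
\end{cases}
\end{equation*}
By construction, $H_{\varepsilon,\delta}$ takes the prescribed boundary values on $(-\infty,-\varepsilon]$ and $[\varepsilon,\infty)$.

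Smoothness is then immediate on each of the five open regions, and at the four transition points $\pm\varepsilon$ and $\pm x_0$ it follows from the fact that $g$ is flat to infinite order at the endpoints $0$ and $1$ of its transition interval; therefore all derivatives of $H_{\varepsilon,\delta}$ match (and equal zero) from either side, so $H_{\varepsilon,\delta}\in C^\infty(\R)$. For the derivative bound, on the constant regions $(-\infty,-\varepsilon]\cup[-x_0,x_0]\cup[\varepsilon,\infty)$ we have $H_{\varepsilon,\delta}'=0$ and nothing to check, while for $x\in[x_0,\varepsilon]$ the chain rule gives
\begin{equation*}
xH_{\varepsilon,\delta}'(x)=\tfrac12\,g'\!\left(\tfrac{\delta}{M}\log\tfrac{x}{x_0}\right)\cdot\tfrac{\delta}{M},
\end{equation*}
so $|xH_{\varepsilon,\delta}'(x)|\leq\tfrac{\delta}{2M}\|g'\|_\infty\leq\tfrac{\delta}{2}\leq\delta$, and the same computation with $|x|$ in place of $x$ handles $x\in[-\varepsilon,-x_0]$.

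The only potential obstacle is the smooth pasting at $x=\pm x_0$ and $x=\pm\varepsilon$, but this is automatic once $g$ is chosen to be flat to infinite order at its own transition endpoints (which is the standard mollifier construction); the logarithmic rescaling by $\tfrac{\delta}{M}$ is precisely what is needed to turn the uniform bound on $g'$ into the desired bound on $xH_{\varepsilon,\delta}'(x)$. No estimate more delicate than this is required.
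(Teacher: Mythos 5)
Your proof is correct, and it shares the same underlying idea as the paper's (exploit that the bound $|xH'(x)|\leq\delta$ is a bound on the derivative with respect to $\log|x|$, so spread the jump over a logarithmically long interval and plateau at $\tfrac12$ near the origin), but it executes it differently and, I would argue, more cleanly. The paper writes down an explicit continuous function built from literal logarithms ($H=\tfrac12\pm\delta\log\tfrac{|x|+\eta}\eta$ on $(-\varepsilon,\varepsilon]$ with $\eta$ chosen so the boundary values match) and then says ``since we can always mollify, it suffices to exhibit a piecewise differentiable $H$.'' That last remark glosses over two points: mollification enlarges the support so the boundary conditions at $\pm\varepsilon$ would fail without a prior rescaling, and it is not obvious that convolution preserves $|xH'(x)|\leq\delta$ exactly (one gets $\leq\delta+O(\sigma\|H'\|_\infty)$ from the mollification kernel, so one would have to start from a strictly smaller constant). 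Your version sidesteps all of this by composing a pre-smoothed step $g$ (flat to infinite order at $0$ and $1$) with the logarithmic change of variables $x\mapsto\tfrac{\delta}{M}\log\tfrac{x}{x_0}$, setting $x_0=\varepsilon e^{-M/\delta}$ so that the transition interval has the right logarithmic length. Smoothness at the four pasting points $\pm x_0,\pm\varepsilon$ is automatic from the flatness of $g$, and the chain rule gives $xH'(x)=\tfrac{\delta}{2M}g'(\cdot)$, hence $|xH'(x)|\leq\tfrac\delta2\leq\delta$, with no estimate to patch up afterwards. The trade-off is that the paper's formula is fully explicit in elementary functions, while yours needs a fixed auxiliary bump, but yours actually delivers a genuinely $C^\infty$ function satisfying the exact stated inequality without appealing to an unproved mollification step.

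Your proof is complete; the only things worth stating explicitly if you were to write it up in full are that $x_0=\varepsilon e^{-M/\delta}\in(0,\varepsilon)$ since $M,\delta>0$, and the one-line Fa\`a di Bruno/chain-rule argument for why flatness of $g$ at $0$ and $1$ propagates to flatness of $g\bigl(\tfrac{\delta}{M}\log\tfrac{x}{x_0}\bigr)$ at $x_0$ and $\varepsilon$ (all derivatives of the inner function are bounded near $x_0,\varepsilon>0$, and every term in the $k$-th derivative contains a factor $g^{(j)}$ with $j\geq1$ evaluated where it vanishes).
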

\begin{proof}
Since we can always mollify $H_{\varepsilon,\delta}$, it suffices to show the existence of a piecewise differentiable $H_{\varepsilon,\delta}$ satisfying the above conditions.
Set $\eta=\varepsilon/(\exp(\frac1{2\delta})-1)$ and define
\begin{equation*}
H_{\varepsilon,\delta}(x)=\begin{cases}
0&\text{if }x\leq-\varepsilon,\\
\frac12-\delta\log\frac{|x|+\eta}\eta&\text{if }x\in(-\varepsilon,0],\\
\frac12+\delta\log\frac{|x|+\eta}\eta&\text{if }x\in(0,\varepsilon],\\
1&\text{if }x>\varepsilon.
\end{cases}
\end{equation*}
It is straightforward to check that $H_{\varepsilon,\delta}$ is continuous and satisfies the required conditions almost everywhere.
\end{proof}

\begin{lemma}[Smooth min and max operation]
For $f,g\in C^1(\Omega)\cap\Lip(\Omega)$ and $\delta>0$ define
\begin{align*}
f\wedge_\delta g&=gH_{\delta,\delta}(f-g)+f(1-H_{\delta,\delta}(f-g))\,,\\
f\vee_\delta g&=fH_{\delta,\delta}(f-g)+g(1-H_{\delta,\delta}(f-g))\,.
\end{align*}
Then $f\wedge_\delta g,f\vee_\delta g\in C^1(\Omega)\cap(1+2\delta)\Lip(\Omega)$, and
\begin{align*}
\min(f,g)-\delta&\leq f\wedge_\delta g\leq\min(f,g)+\delta\,,\\
\max(f,g)-\delta&\leq f\vee_\delta g\leq\max(f,g)+\delta\,.
\end{align*}
\end{lemma}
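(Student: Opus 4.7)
The plan is to verify the three claimed properties one after another, all essentially by direct computation from the defining formula, exploiting the key bound $|xH_{\delta,\delta}'(x)|\leq\delta$. Smoothness is the easiest: since $H_{\delta,\delta}\in C^\infty(\R)$ and $f,g\in C^1(\Omega)$, the expression $gH_{\delta,\delta}(f-g)+f(1-H_{\delta,\delta}(f-g))$ is a composition/product of $C^1$ functions and therefore lies in $C^1(\Omega)$; the same holds for $f\vee_\delta g$.

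For the Lipschitz bound I would compute the gradient of $f\wedge_\delta g$ via the product and chain rules. Writing $t=H_{\delta,\delta}(f-g)\in[0,1]$ and $s=(g-f)H_{\delta,\delta}'(f-g)$, one obtains
\begin{equation*}
\nabla(f\wedge_\delta g)=(1-t+s)\nabla f+(t-s)\nabla g\,.
\end{equation*}
Since the coefficients sum to $1$, one has $\|\nabla(f\wedge_\delta g)\|\leq(|1-t|+|t|+2|s|)\max(\|\nabla f\|,\|\nabla g\|)\leq(1+2|s|)$, and the key bound $|xH_{\delta,\delta}'(x)|\leq\delta$ applied to $x=f-g$ gives $|s|\leq\delta$, yielding the Lipschitz constant $1+2\delta$. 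The argument for $f\vee_\delta g$ is identical by symmetry.

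For the approximation inequalities I would note that $f\wedge_\delta g=tg+(1-t)f$ is a pointwise convex combination of $f$ and $g$, hence lies in the interval $[\min(f,g),\max(f,g)]$. I would then split into two cases at each point: if $|f-g|\geq\delta$, the properties of $H_{\delta,\delta}$ force $t\in\{0,1\}$ in such a way that $f\wedge_\delta g=\min(f,g)$ exactly (on $f-g\geq\delta$ one has $t=1$ and $f\wedge_\delta g=g=\min(f,g)$; on $f-g\leq-\delta$ one has $t=0$ and $f\wedge_\delta g=f=\min(f,g)$); if instead $|f-g|<\delta$, then $\max(f,g)-\min(f,g)<\delta$, so the convex combination again lies within $\delta$ of $\min(f,g)$. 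Both cases combined yield $\min(f,g)\leq f\wedge_\delta g\leq\min(f,g)+\delta$, which is (slightly stronger than) the claimed bound. The analogous reasoning with $\max$ in place of $\min$ handles $f\vee_\delta g$.

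There is no real obstacle here; the only mildly delicate point is keeping the sign conventions straight when reading off the value of $H_{\delta,\delta}(f-g)$ for the two cases $f\geq g+\delta$ and $f\leq g-\delta$, but this is cosmetic. The entire proof is a handful of lines.
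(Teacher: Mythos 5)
Your proof is correct and follows essentially the same path as the paper's: smoothness by composition, the Lipschitz bound by differentiating and applying the key estimate $|xH_{\delta,\delta}'(x)|\leq\delta$ to the $(g-f)H_{\delta,\delta}'(f-g)$ term, and the approximation bounds by the pointwise convex-combination observation with a case split at $|f-g|=\delta$. Your regrouping of the gradient into $(1-t+s)\nabla f+(t-s)\nabla g$ is cosmetically different from the paper's decomposition but yields the identical estimate $1+2\delta$, and your observation that the lower bound can be sharpened to $\min(f,g)\leq f\wedge_\delta g$ is a true (though unused) refinement.
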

\begin{proof}
The continuous differentiability follows from the continuous differentiability of $f$, $g$, and $H_{\delta,\delta}$. Furthermore,
\begin{multline*}
\|\nabla(f\wedge_\delta g)\|
=\|\nabla gH_{\delta,\delta}(f-g)+\nabla f(1-H_{\delta,\delta}(f-g))+(g-f)H_{\delta,\delta}'(f-g)\nabla(f-g)\|\\
\leq\|\nabla gH_{\delta,\delta}(f-g)+\nabla f(1-H_{\delta,\delta}(f-g))\|+|(g-f)H_{\delta,\delta}'(f-g)|(\|\nabla f\|+\|\nabla g\|)
\leq1+\delta(1+1)\,.
\end{multline*}
Analogously one shows $f\vee_\delta g\in(1+2\delta)\Lip(\Omega)$.
Finally, if $|f-g|\geq\delta$, then $\min(f,g)=f\wedge_\delta g$ and $\max(f,g)=f\vee_\delta g$, while otherwise we obtain a convex combination of $f$ and $g$, which implies the rest of the statement.
\end{proof}

The following is a simple variant of the classical proof by Stone.

\begin{proposition}[Stone--Weierstra\ss]\thlabel{thm:StoneWeierstrass}
$C^1(\Omega)\cap\Lip(\Omega)$ is dense in $\Lip(\Omega)$ with respect to the supremum norm.
\end{proposition}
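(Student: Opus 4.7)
The plan is to mimic the classical Stone lattice proof of the Stone--Weierstra\ss{} theorem, using \thref{thm:pointSeparation} for point separation and the approximate minimum/maximum operations $\wedge_\delta, \vee_\delta$ in place of the usual exact lattice operations. The main obstacle will be that $\Lip(\Omega)$ requires the Lipschitz constant to be no larger than $1$, which is preserved only approximately by $\wedge_\delta$ and $\vee_\delta$, so we must first give ourselves slack by approximating the target with a strictly contracting function.

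Fix $g \in \Lip(\Omega)$ and $\epsilon > 0$. First I would set $c = g(x_0)$ for some fixed $x_0 \in \Omega$ and define $\tilde g = (1-\eta)(g-c) + c$ for $\eta > 0$ chosen so small that $\|g - \tilde g\|_\infty \leq \eta \diam(\Omega) < \epsilon/3$. Then $\tilde g$ is Lipschitz with constant at most $1-\eta < 1$, so for every $x \neq y$ in $\Omega$ we have $|\tilde g(x) - \tilde g(y)| < d(x,y)$, and \thref{thm:pointSeparation} yields $f_{x,y} \in C^1(\Omega) \cap \Lip(\Omega)$ with $f_{x,y}(x) = \tilde g(x)$, $f_{x,y}(y) = \tilde g(y)$; for $x=y$ simply take $f_{x,x} \equiv \tilde g(x)$.

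Now I would run the classical two-stage compactness argument. For each fixed $x$, the sets $V_y = \{z \in \Omega \,|\, f_{x,y}(z) < \tilde g(z) + \epsilon/6\}$ form an open cover of $\Omega$ (with $y \in V_y$), so compactness extracts a finite subcover $V_{y_1}, \ldots, V_{y_{m_x}}$; the exact minimum $h_x^{\text{ex}} := \min_i f_{x,y_i}$ then satisfies $h_x^{\text{ex}}(x) = \tilde g(x)$, $h_x^{\text{ex}} < \tilde g + \epsilon/6$ on $\Omega$, with Lipschitz constant $\leq 1-\eta$. Similarly, the open sets $U_x = \{z : h_x^{\text{ex}}(z) > \tilde g(z) - \epsilon/6\}$ cover $\Omega$, yielding a finite subcover $U_{x_1}, \ldots, U_{x_n}$ whose associated exact maximum satisfies $|h^{\text{ex}} - \tilde g| < \epsilon/6$ everywhere.

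Finally, I would replace every occurrence of $\min$ and $\max$ in this construction by $\wedge_\delta$ and $\vee_\delta$ respectively, iterated pairwise. A straightforward induction using the bounds $|\cdot \wedge_\delta \cdot - \min| \leq \delta$ and the Lipschitz-constant factor $(1+2\delta)$ per application shows that the resulting $h \in C^1(\Omega)$ satisfies $\|h - h^{\text{ex}}\|_\infty \leq N\delta$ and has Lipschitz constant at most $(1-\eta)(1+2\delta)^{N}$, where $N = n - 1 + \sum_j (m_{x_j}-1)$ depends only on $\epsilon$ and $\tilde g$. Since $N$ is now fixed, I can choose $\delta$ so small that simultaneously $(1-\eta)(1+2\delta)^{N} \leq 1$ and $N\delta < \epsilon/3$, which gives $h \in C^1(\Omega) \cap \Lip(\Omega)$ with $\|h-g\|_\infty \leq \|h-h^{\text{ex}}\|_\infty + \|h^{\text{ex}}-\tilde g\|_\infty + \|\tilde g - g\|_\infty < \epsilon$. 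The only subtle point is the order of choices --- one must fix the covers (and hence $N$) before selecting $\delta$ --- but there is no circularity since the covers are determined purely by $\tilde g$ and $\epsilon$.
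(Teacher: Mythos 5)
Your overall strategy matches the paper's: point separation via \thref{thm:pointSeparation}, the classical two-stage compactness argument of Stone, and the smooth $\wedge_\delta,\vee_\delta$ in place of exact lattice operations. The organizational difference (fixing the covers with exact $\min$/$\max$ first, then introducing a single $\delta$ at the very end) is a nice simplification, and it is precisely why you introduce the slack $\eta$. But there is a genuine gap in how that slack is tracked.

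You claim that $h_x^{\text{ex}}=\min_i f_{x,y_i}$ has Lipschitz constant $\leq 1-\eta$, and hence that the final smoothed $h$ has Lipschitz constant $\leq (1-\eta)(1+2\delta)^N$. This does not follow. The contracting factor $1-\eta$ lives in $\tilde g$, but \thref{thm:pointSeparation} only returns $f_{x,y}\in C^1(\Omega)\cap\Lip(\Omega)$, i.e.\ Lipschitz constant $\leq 1$, regardless of how small $|\tilde g(x)-\tilde g(y)|/d(x,y)$ is. (Indeed, its proof constructs $f$ as a rescaled mollified distance function whose gradient bound is only $1$.) Consequently every $h_x^{\text{ex}}$, and then $h^{\text{ex}}$ and $h$, have Lipschitz constant bounded only by $1$ and $(1+2\delta)^N$ respectively, and no choice of $\delta>0$ brings $(1+2\delta)^N$ down to $1$. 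Without the $(1-\eta)$ factor the final step collapses.

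Two repairs are available, and either is short. (a) Push the slack into the point-separation step: apply \thref{thm:pointSeparation} to the values $\tilde g(x)/(1-\eta/2)$ and $\tilde g(y)/(1-\eta/2)$ (admissible since $|\tilde g(x)-\tilde g(y)|/(1-\eta/2)\leq \tfrac{1-\eta}{1-\eta/2}d(x,y)<d(x,y)$), and then set $f_{x,y}$ to be $1-\eta/2$ times the resulting function; this yields $f_{x,y}\in C^1(\Omega)\cap(1-\eta/2)\Lip(\Omega)$ with the prescribed values, after which your bound holds with $1-\eta$ replaced by $1-\eta/2$. (b) Alternatively, rescale at the end: replace $h$ by $h/(1+2\delta)^N\in\Lip(\Omega)$ and absorb the additional uniform error $\|h\|_\infty\bigl(1-(1+2\delta)^{-N}\bigr)$, which tends to $0$ as $\delta\to0$ because $h$ is uniformly bounded in terms of $g$. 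The paper's own proof uses option (b), but does so \emph{per round}, rescaling each $\tilde F_x$ by $(1+2\delta)^{-k_x}$ and again $\tilde f$ by $(1+2\delta)^{-l}$; your single-$\delta$ version with repair (a) or (b) is equally valid and arguably a bit cleaner, but the Lipschitz bookkeeping must be fixed.
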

\begin{proof}
Given $g\in\Lip(\Omega)$ and $\varepsilon>0$ we seek some $f\in C^1(\Omega)\cap\Lip(\Omega)$ with $\|f-g\|\leq\varepsilon$.
Actually, since $\lambda g\to g$ uniformly as $\lambda\to1$, it suffices to consider $g\in\lambda\Lip(\Omega)$ for an arbitrary $\lambda\in(0,1)$.

For given $x\in\Omega$, by \thref{thm:pointSeparation} there exists for each $y\in\Omega$ a function $f_y\in C^1(\Omega)\cap\Lip(\Omega)$ with $f_y(x)=g(x)$ and $f_y(y)=g(y)$.
Introduce the open set $V_y=\{z\in\Omega\ |\ f_y(z)<g(z)+\frac\varepsilon4\}$.
Since the family $V_y$ for $y\in\Omega$ forms an open cover of $\Omega$ (note that $x,y\in V_y$) and $\Omega$ is compact,
we can extract a finite subcover $V_{y_1},\ldots,V_{y_k}$.
For $\delta>0$ define
\begin{equation*}
\tilde F_x=(\ldots((f_{y_1}\wedge_\delta f_{y_2})\wedge_\delta f_{y_3})\ldots\wedge_\delta f_{y_k})\,.
\end{equation*}
By the previous lemma, $\tilde F_x\in C^1(\Omega)\cap(1+2\delta)^k\Lip(\Omega)$ with $|\tilde F_x-\min(f_{y_1},\ldots,f_{y_k})|\leq k\delta$.
Thus, by choosing $\delta$ small enough (depending on $x$) we have $F_x=\frac1{(1+2\delta)^k}\tilde F_x\in C^1(\Omega)\cap\Lip(\Omega)$ with $F_x\leq g+\frac\varepsilon2$.

Now introduce the open set $W_x=\{z\in\Omega\ |\ F_x(z)>g(z)-\frac\varepsilon4\}$, which contains $x$.
Again the $W_x$ form an open cover of $\Omega$ from which we can extract a subcover $W_{x_1},\ldots,W_{x_l}$, and analogously to before we set
\begin{equation*}
\tilde f=(\ldots((F_{x_1}\vee_\delta F_{x_2})\vee_\delta F_{x_3})\ldots\vee_\delta F_{x_l})\,.
\end{equation*}
Again, by choosing $\delta$ small enough we have $f=\frac1{(1+2\delta)^l}\tilde f\in C^1(\Omega)\cap\Lip(\Omega)$ with $g-\varepsilon\leq f\leq g+\varepsilon$.
\end{proof}

\begin{remark}[Higher smoothness]
One may replace $C^1$ by $C^\infty$.
\end{remark}

\bibliography{references}{}
\bibliographystyle{plain}

\end{document}